\tikzset{anchorbase/.style={baseline={([yshift=-0.5ex]current bounding box.center)}}}
\tikzstyle directed=[postaction={decorate,decoration={markings,
    mark=at position #1 with {\arrow{>}}}}]
\tikzstyle rdirected=[postaction={decorate,decoration={markings,
    mark=at position #1 with {\arrow{<}}}}]
 \newlength{\baseunit}               
\newtheorem{theorem}[subsubsection]{Theorem}
\newtheorem{lemma}[theorem]{Lemma}
\newtheorem{prop}[theorem]{Proposition}
\newtheorem{corollary}[subsubsection]{Corollary}
\newtheorem{conjecture}[theorem]{Conjecture}
\theoremstyle{definition}
\newtheorem{definition}[subsubsection]{Definition}
\newtheorem{remark}[theorem]{Remark}
\newtheorem{example}[subsubsection]{Example}
\newtheorem{question}[theorem]{Question}
\newtheorem{thmA}{Theorem}
\newtheorem{conjA}[thmA]{Conjecture}
\newcommand{\mG}{\mathbb{G}}
\newcommand{\cA}{\mathcal{A}}
\newcommand{\cC}{\mathcal{C}}
\numberwithin{equation}{section}
\newcommand{\ev}{\mathrm{ev}}
\newcommand{\co}{\mathrm{co}}
\newcommand{\im}{\mathrm{im}}
\newcommand{\bC}{\mathcal{C}}
\newcommand{\bD}{\mathcal{D}}
\newcommand{\bT}{\mathcal{C}}
\newcommand{\cU}{\mathcal{U}}
\newcommand{\cS}{\mathcal{S}}
\newcommand{\cD}{\mathcal{D}}
\newcommand{\Ver}{\mathsf{Ver}}
\newcommand{\Mod}{\mathsf{Mod}}
\newcommand{\Rep}{\mathsf{Rep}}
\newcommand{\Tens}{\mathsf{Tens}}
\newcommand{\unit}{{\mathbbm{1}}}
\newcommand{\tto}{\twoheadrightarrow}
\newcommand{\cO}{\mathcal{O}}
\newcommand{\mN}{\mathbb{N}}
\newcommand{\mZ}{\mathbb{Z}}
\newcommand{\mA}{\mathbb{A}}
\newcommand{\mC}{\mathbb{C}}
\newcommand{\mR}{\mathbb{R}}
\newcommand{\End}{\mathrm{End}}
\newcommand{\Ext}{\mathrm{Ext}}
\newcommand{\Hom}{\mathrm{Hom}}
\newcommand{\Sym}{\mathrm{Sym}}
\newcommand{\id}{\mathrm{id}}
\newcommand{\Ind}{\mathrm{Ind}}
\newcommand{\inv}{\mathrm{inv}}
\newcommand{\Vecc}{\mathsf{Vec}}
\newcommand{\sVecc}{\mathsf{sVec}}
\newcommand{\sVec}{\mathsf{sVec}}
\newcommand{\Tilt}{\mathsf{Tilt}}
\newcommand{\Alg}{\mathsf{Alg}}
\newcommand{\PTann}{\mathsf{PTann}}
\newcommand{\Tann}{\mathsf{Tann}}
\newcommand{\MdGr}{\mathsf{MdGr}}
\newcommand{\FrEx}{\mathsf{FrEx}}
\newcommand{\DTann}{\mathcal{D}\mbox{-}\Tann}
\newcommand{\Tr}{\mathrm{Tr}}
\newcommand{\Id}{\mathrm{Id}}
\newcommand{\Aut}{\mathrm{Aut}}
\newcommand{\sgd}{\mathsf{sgd}}
\newcommand{\gd}{\mathsf{gd}}
\newcommand{\gr}{\mathrm{gr}}
\newcommand{\Fr}{\mathrm{Fr}}
\newcommand{\FPdim}{\mathrm{FPdim}}
\newcommand{\uAut}{\underline{\mathrm{Aut}}^{\otimes}}
\newcommand{\fk}{\mathbf{k}}
\newcommand{\bk}{\mathbf{k}}
\newcommand{\mF}{\mathbb{F}}
\newcommand{\HS}{\mathsf{HS}}
\newcommand{\Dim}{\mathrm{Dim}}
\begin{document}
\title{Incompressible tensor categories}
\author{Kevin Coulembier, Pavel Etingof and Victor Ostrik}

\date{\today}

\keywords{tensor categories, incompressible categories, higher Verlinde categories, \'etale algebras, tannakian reconstruction}

\begin{abstract} A symmetric tensor category $\mathcal D$ over an algebraically closed field $\bold k$ is called {\bf incompressible} if its objects have finite length ($\cD$ is pretannakian) and every tensor functor out of $\mathcal D$ is an embedding of a tensor subcategory. E.g., the categories $\Vecc$, $\sVecc$  
of vector and supervector spaces are incompressible. 
Moreover, by Deligne's theorem \cite{Del02}, if ${\rm char}(\bold k)=0$ then any tensor category of moderate growth uniquely fibres over $\sVecc$. This implies that $\Vecc$, $\sVecc$ are the only incompressible categories over $\bk$ in this class, and perhaps altogether, as we expect that all incompressible categories have moderate growth. 

Similarly, in characteristic $p>0$, we also have incompressible 
Verlinde categories $\Ver_p,\Ver_p^+$, 
and by \cite{CEO} any Frobenius exact category of moderate growth uniquely fibres over $\Ver_p$, meaning that, in this class, the above categories are the only incompressible ones. More generally, the Verlinde categories $\Ver_{p^n}$, $\Ver_{p^n}^+$, $n\le \infty$ introduced in \cite{BEO,AbEnv} are incompressible, and a key conjecture is that every tensor category of moderate growth uniquely fibres over $\Ver_{p^\infty}$. This would make the above the only incompressible categories in this class (and perhaps altogether).

We prove a part of this conjecture, showing that every tensor category of moderate growth fibres over an incompressible one. So it remains to understand incompressible categories, and we prove several results in this direction. Namely, let $\mathcal D$-Tann be the category of tensor categories that fibre over $\mathcal D$. Then we say that $\mathcal D$ is {\bf subterminal} if it is a terminal object of $\mathcal D$-Tann (i.e., fibre functors to $\mathcal D$ are unique when exist), and that $\mathcal D$ is a {\bf Bezrukavnikov category} 
if $\mathcal D$-Tann is closed under taking images of tensor functors (=quotient categories). Clearly, a subterminal Bezrukavnikov category 
is incompressible, and we conjecture that the converse also holds; for instance, tensor subcategories of $\Ver_p$ are known to be subterminal. We prove that furthermore they are Bezrukavnikov categories, generalizing the result of Bezrukavnikov \cite{B} in the case of 
$\Vecc$, $\sVecc$. Finally, we show that a tensor subcategory of a finite incompressible category is incompressible. 

We also find intrinsic sufficient conditions for incompressibility and subterminality. Namely, $\mathcal D$ is called {\bf maximally nilpotent} if the growth rate of the length of the symmetric powers of every object is the minimal one theoretically possible. We show that a maximally nilpotent category 
is incompressible, and also subterminal if it satisfies an additional 
{\bf geometric reductivity} condition (for every morphism $X\tto\unit$ in $\cD$, there exists $n>0$ for which $\Sym^n X\tto\unit$ is split). Then we verify these conditions for the category $\Ver_{2^n}$, thereby showing that it is subterminal. 

\end{abstract}

\maketitle

\centerline{\bf To Roman Bezrukavnikov on his 50th birthday with admiration} 

\tableofcontents

\section{Introduction}

\subsection{Deligne's theorem and its analogue in positive characteristic} 
Let $\fk$ be an algebraically closed field. We study the structure of the category of pretannakian categories over $\bk$ (symmetric tensor categories with objects of finite length), see \cite{DM, EGNO}. In characteristic zero, this is governed by Deligne's classical results in \cite{Del90, Del02, Del07}, with central result that a tensor category admits a tensor functor to the category $\sVec_{\bk}$ of supervector spaces (and is thus the representation category of a supergroup) if and only if it is of moderate growth. The latter condition demands that for any given object, the growth of the length of its tensor powers is bounded by an exponential function.

For $\bk$ of characteristic $p>0$, the subject has developed rapidly in the last few years, see for instance \cite{BE, BEO, CEO, Tann, EG1, EG2, EOf, Os}. An analogue of Deligne's theorem for Frobenius exact  tensor categories (meaning the Frobenius functor from \cite{Tann, EOf} is exact) was obtained in \cite{CEO}, where one has to replace the category of supervector spaces with the larger {\it Verlinde category} $\Ver_p$.

\subsection{Incompressible categories} 
During these developments,
 the notion of an {\bf incompressible tensor category} has emerged. A pretannakian category is said to be incompressible if every tensor functor out of it is an embedding of a tensor subcategory. Deligne's results imply that in characteristic zero, the only incompressible categories of moderate growth are $\sVec_{\bk}$ and its unique proper tensor subcategory $\Vecc_{\bk}$. By sharp contrast, in characteristic $p>0$, in \cite{BEO}, see also \cite{BE, AbEnv}, an infinite chain 
\begin{equation}\label{VerChain}
\Ver_p\;\subset\;\Ver_{p^2}\;\subset\; \Ver_{p^3}\;\subset\;\cdots\;\subset\; \Ver_{p^n}\;\subset\;\cdots
\end{equation}
of incompressible tensor categories of moderate growth was constructed. Moreover, it was conjectured in \cite{BEO} that any tensor category of moderate growth (not necessarily Frobenius exact) admits a tensor functor to the union $\Ver_{p^\infty}=\cup_n\Ver_{p^n}$.

\subsection{Tensor functors to incompressible categories} 
Our first main result is the following weak form of this conjecture. 

\begin{thmA}\label{ThmIncom}
For $\fk$ of arbitrary characteristic, each pretannakian category of moderate growth admits a tensor functor to {\em some} incompressible category of moderate growth.
\end{thmA}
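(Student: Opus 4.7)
I would attempt a Zorn-type argument on the collection $\mathcal{F}$ of equivalence classes of quotient tensor functors $F\colon \cC \tto \cD$, where $\cD$ is pretannakian of moderate growth and tensor-generated by $F(\cC)$. Order $\mathcal{F}$ by $F \leq G$ iff $F$ factors through $G$ via some tensor functor $\cD_G \to \cD_F$, so that $\leq$ means ``is a further quotient of''; under this order $\id_\cC$ is the unique maximum and the goal is to find a minimum. That $\mathcal{F}$ is a set (not a proper class) should follow from the bound $\ell_\cD(F(X)) \leq \FPdim_\cC(X)$, which restricts the cardinality of the simples of $\cD$ (using that $\FPdim$ is preserved by tensor functors and $\ell \leq \FPdim$ in pretannakian categories).

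Any minimal $F_0\colon\cC\tto\cD_0$ would automatically have $\cD_0$ incompressible: given a tensor functor $G\colon\cD_0\to\cE$ with $\cE$ pretannakian of moderate growth, after replacing $\cE$ by the tensor subcategory generated by $G(\cD_0)$, the composition $G\circ F_0$ lies in $\mathcal{F}$ with $G\circ F_0\leq F_0$; minimality then forces equivalence, so $G$ is already an embedding of a tensor subcategory.

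The main work is to verify that every chain in $\mathcal{F}$ admits a lower bound. Given a chain $\{F_\alpha\colon \cC \tto \cD_\alpha\}$ with induced quotient tensor functors $\cD_\alpha \tto \cD_\beta$ for $\alpha \leq \beta$, I would construct a common further quotient $\cD_\infty$ fitting into a compatible family $\cD_\alpha \tto \cD_\infty$ through Tannakian reconstruction: encode each $F_\alpha$ by a commutative algebra $A_\alpha \in \Ind(\cC)$ (with the appropriate coalgebroid or groupoid structure) so that the quotients $\cD_\alpha \tto \cD_\beta$ correspond to algebra inclusions $A_\alpha \hookrightarrow A_\beta$; then take $A_\infty := \varinjlim_\alpha A_\alpha$ and define $\cD_\infty$ as the associated module/descent category. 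Moderate growth of $\cD_\infty$ would follow from the uniform bound $\ell_{\cD_\infty}(F_\infty(X)^{\otimes n}) \leq \FPdim_\cC(X)^n$ for each $X\in\cC$.

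The principal obstacle is the colimit construction in the third step: ensuring that $\cD_\infty$ is genuinely pretannakian (abelian with finite-length objects) and remains of moderate growth. Since $2$-colimits of abelian categories are poorly behaved in general, the argument must exploit the Tannakian encoding very carefully, most likely by invoking a reconstruction result for module categories over ind-algebras in tensor categories, and by showing directly (using $\FPdim$ estimates) that no infinite-length objects or non-moderate subcategories appear in the limit.
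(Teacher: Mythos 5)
Your overall strategy -- a Zorn-type argument on surjective tensor functors out of $\cC$, with a minimal quotient being incompressible and chains handled by colimits -- is the same as the paper's, but two of your steps conceal genuine gaps. First, ``minimality then forces equivalence'' is not formal: minimality only produces a tensor functor $H\colon\cE\to\cD_0$ with $H\circ G\circ F_0\simeq F_0$, and one must then prove that a tensor endofunctor of $\cD_0$ fixing the surjection $F_0$ is an auto-equivalence. This is the paper's Lemma~\ref{LemHF}, proved via a structure-theoretic analysis of exact faithful endofunctors of module categories of finite-dimensional algebras (Lemma~\ref{LemVictor}); the same fact is needed for your relation $\le$ to be antisymmetric on equivalence classes. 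Because surjective tensor functors are not epimorphisms in $\PTann_{\bk}$ (e.g.\ $\Rep_{\bk}S_3\to\Rep_{\bk}\mZ/3$), the factorizations $\cD_G\to\cD_F$ are genuinely non-unique; the paper therefore replaces ordinary Zorn by a categorical version (Lemma~\ref{LemZorn}) whose extra hypothesis is exactly the conclusion of Lemma~\ref{LemHF}, and your poset-of-equivalence-classes formulation runs into the same non-uniqueness when choosing compatible transition functors along an arbitrary chain.

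Second, the proposed Tannakian encoding of a chain by algebras $A_\alpha\in\Ind\cC$ would fail: a surjective tensor functor $F\colon\cC\to\cD$ corresponds to an algebra in $\Ind\cC$ only when $F_\ast$ is faithful and exact (Lemma~\ref{LemBeck}), which holds for finite tensor categories but not for general pretannakian ones. Fortunately the obstacle you flag as principal is a non-issue: the naive filtered colimit of tensor categories along faithful exact tensor functors is again a tensor category and preserves moderate growth (Proposition~\ref{PropFiltLim}(1), using \cite{CEO}), so no reconstruction is needed. A minor further point: $\FPdim$ is not defined for infinite pretannakian categories, so your smallness argument does not apply as stated; the collection of quotients is shown to be a set by bounding the coalgebra of the target instead (Lemmas~\ref{LemCo} and~\ref{LemSmall0}).
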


Thus Conjecture~1.4 in \cite{BEO} is now equivalent to the following (a priori) weaker conjecture: 

\begin{conjA} 
If $\mathrm{char}(\fk)=p>0$, then  every incompressible category of moderate growth over $\fk$ is a tensor subcategory of $\Ver_{p^\infty}$.
\end{conjA}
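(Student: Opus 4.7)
The plan is to produce, for an arbitrary incompressible pretannakian category $\cD$ of moderate growth in characteristic $p$, a tensor functor $F\colon\cD\to\Ver_{p^\infty}$; because $\cD$ is incompressible, any such $F$ is automatically an embedding of a tensor subcategory, which is exactly the statement of the conjecture. Note that Theorem~A alone is not enough here: it furnishes an embedding $\cD\hookrightarrow\cE$ into \emph{some} incompressible category $\cE$ of moderate growth, but we must pin down that $\cE$ can be taken inside $\Ver_{p^\infty}$, and the only levers we have on the target are the Frobenius functor and the known properties of the Verlinde tower.

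The approach I would pursue is iterated Frobenius. Since the chain $\Ver_p\subset\Ver_{p^2}\subset\cdots$ of \cite{BEO, AbEnv} is constructed inductively by Frobenius-type semisimplification, one expects for each $n\ge 1$ a canonical tensor functor of the form $\Fr^{(n)}\colon\cD\to\Ver_{p^n}\boxtimes\cD^{(n)}$, where $\cD^{(n)}$ is an $n$-fold ``Frobenius twist'' of $\cD$. By incompressibility, each $\Fr^{(n)}$ is automatically a tensor embedding. The steps are then: (i) construct the $\Fr^{(n)}$ coherently and show they are compatible with the inclusions $\Ver_{p^n}\hookrightarrow\Ver_{p^{n+1}}$; (ii) use the moderate-growth hypothesis to control the growth of the twists $\cD^{(n)}$; (iii) show that for every $X\in\cD$, the image of $X$ under $\Fr^{(n)}$ eventually lies in $\Ver_{p^n}\boxtimes\langle\unit\rangle\simeq\Ver_{p^n}$, so that passing to the colimit assembles the $\Fr^{(n)}$ into a single tensor functor $\cD\to\Ver_{p^\infty}$.

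The main obstacle is step (iii): one needs a numerical invariant of incompressible categories of moderate growth that strictly decreases under Frobenius twist, forcing eventual stabilization on each object. In lieu of such an invariant, I would fall back on the conjectural equivalence \emph{incompressible} $=$ \emph{subterminal} $+$ \emph{Bezrukavnikov} stated in the introduction and combine it with the maximally nilpotent / geometrically reductive machinery used to handle $\Ver_{2^n}$ at the end of the paper; extending that machinery to all primes $p$ and all $n\le\infty$ would reduce the conjecture to classifying subterminal Bezrukavnikov categories of moderate growth. Either way, the crux is to identify $\Ver_{p^\infty}$ as the target -- equivalently, to rule out any ``exotic'' incompressible category not sitting inside $\Ver_{p^\infty}$ -- and this appears to require genuinely new input beyond Theorem~A.
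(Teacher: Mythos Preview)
The statement you are attempting to prove is Conjecture~B, which the paper explicitly leaves \emph{open}: it is introduced as the reformulation of \cite[Conjecture~1.4]{BEO} that remains after Theorem~A is established. There is no proof in the paper to compare against, and indeed no proof is currently known.

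Your outline has a concrete gap already at step~(i), before the obstacle you yourself flag at step~(iii). The Frobenius functor $\Fr\colon\cD\to\cD\boxtimes\Ver_p$ of \cite{Tann,EOf} is always defined as an additive functor, but it is a \emph{tensor} functor (that is, exact) precisely when $\cD$ is Frobenius exact. In that case the conjecture is already a theorem: by \cite{CEO} a Frobenius exact category of moderate growth fibres over $\Ver_p\subset\Ver_{p^\infty}$, and incompressibility upgrades this to an embedding. The entire content of Conjecture~B therefore lies in the case where $\cD$ is \emph{not} Frobenius exact (for instance $\Ver_{p^n}$ with $n\ge 2$), and there $\Fr$ is not exact, so it is not a tensor functor and incompressibility of $\cD$ tells you nothing about it. Nor is there a known ``higher Frobenius'' tensor functor $\Fr^{(n)}\colon\cD\to\Ver_{p^n}\boxtimes\cD^{(n)}$ for arbitrary $\cD$: the inductive construction of the chain \eqref{VerChain} in \cite{BEO,AbEnv} proceeds via tilting modules for $SL_2$ and abelian envelopes, not via a functor one can apply to a general pretannakian category. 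So the very first arrow in your scheme does not exist in the cases that matter.

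Your fallback---assuming \emph{incompressible $=$ subterminal $+$ Bezrukavnikov} and extending the {\bf MN}/{\bf GR} analysis of $\Ver_{2^n}$ to all $p$ and all $n$---is a reasonable research programme, but it is a reduction of one open problem to several others (the equivalence is itself conjectural, and {\bf MN} for $\Ver_{p^n}$ with $p>2$, $n\ge 3$ is not established in the paper). You are right that genuinely new input is required; what is missing is not a numerical invariant for step~(iii) but the construction in step~(i) itself.
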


Note that the tensor subcategories of $\Ver_{p^\infty}$ are classified, see~\cite[Corollary~4.61]{BEO}. For instance, if $p>2$, they are given by 
$$\Ver_{p^n} \quad\mbox{and}\quad \Ver_{p^n}^+, \qquad\mbox{with $n\in\mN\cup\{\infty\}$,}$$
where (for $n=0$) we set $\Ver_{1}=\sVec_{\bk}$ and $\Ver^+_1=\Vecc_{\bk}$.

We conjecture that Theorem~\ref{ThmIncom} extends to an if and only if statement.
\begin{conjA}\label{ConjConj}
A pretannakian category admits a tensor functor to an incompressible category if and only if it is of moderate growth. Equivalently, all incompressible pretannakian categories are of moderate growth.
\end{conjA}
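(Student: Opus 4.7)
The plan is first to reduce Conjecture~\ref{ConjConj} to its second, equivalent formulation: every incompressible pretannakian category is of moderate growth. The ``if'' direction of the biconditional is immediate from Theorem~\ref{ThmIncom}. For the equivalence, I would use that any tensor functor $F:\cC\to\cD$ between pretannakian categories is exact and faithful, so a composition series of $X\in\cC$ maps to a filtration of $F(X)$ with nonzero successive quotients, yielding $\ell_\cC(X)\le\ell_\cD(F(X))$ and hence $\ell_\cC(X^{\otimes n})\le\ell_\cD(F(X)^{\otimes n})$. Thus moderate growth of $\cD$ transfers to $\cC$. Applied to the identity $\cD\to\cD$ of an incompressible $\cD$, the ``only if'' direction would force $\cD$ itself to be of moderate growth, so everything reduces to showing that an incompressible pretannakian category is of moderate growth.

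For this reduction, I would argue by contradiction. Suppose $\cD$ is incompressible and contains an object $X$ with $\ell(X^{\otimes n})$ growing super-exponentially. The goal is to construct a tensor functor out of $\cD$ that is not fully faithful, contradicting incompressibility. Several candidate constructions suggest themselves. First, I would try the semisimplification by the ideal of negligible morphisms: whenever this ideal is compatible with the tensor product and rigidity, the quotient is a genuine tensor functor and collapses any non-semisimple $\cD$ to a proper quotient. Second, I would look for a nontrivial \'etale algebra $A$ in the ind-completion $\Ind(\cD)$ and consider the free-module tensor functor $\cD\to A\text{-}\mathrm{mod}_\cD$, whose failure to be an embedding is controlled by the non-triviality of $A$. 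Third, in characteristic $p>0$, I would iterate the Frobenius functor $\Fr:\cD\to\cD\boxtimes\Ver_p$ from \cite{Tann, EOf} and examine when the composite must merge simple objects---heuristically, super-exponential growth ought to force such merges after enough iterations.

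The hardest step is the construction itself: every existing technique for producing tensor functors out of a pretannakian category---Deligne's theorem in characteristic zero, the main theorem of \cite{CEO} in characteristic $p$, and even Theorem~\ref{ThmIncom} above---hypothesizes moderate growth. A proof of Conjecture~\ref{ConjConj} must therefore either produce tensor functors by a mechanism insensitive to growth, or develop a growth-sensitive invariant (perhaps a well-behaved Frobenius--Perron theory for ind-objects in pretannakian categories) that forces a nontrivial identification among simples when growth is super-exponential. An indirect route would be to prove the preceding Conjecture (every incompressible of moderate growth is a tensor subcategory of $\Ver_{p^\infty}$) first, since tensor subcategories of $\Ver_{p^\infty}$ and of $\sVec_\fk$ are automatically of moderate growth; but that conjecture appears no less deep. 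In my view the most plausible path is a careful refinement of the proof of Theorem~\ref{ThmIncom} itself, aiming to weaken or eliminate the moderate-growth hypothesis it currently requires.
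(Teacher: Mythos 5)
The statement you are addressing is labelled as a \emph{conjecture} in the paper, and the paper offers no proof of it; so there is no ``paper's proof'' to match. What you have done is correct as far as it goes: the equivalence of the two formulations via Theorem~\ref{ThmIncom} (for the ``if'' direction) together with the observation that a faithful exact tensor functor satisfies $\ell_{\cC}(X)\le\ell_{\cD}(F(X))$, so moderate growth pulls back along tensor functors, and applying this to $\id_{\cD}$ reduces everything to showing that incompressible pretannakian categories have moderate growth. That residual statement is genuinely open, and you correctly refrain from claiming it. It is worth noting that your second suggested attack --- detecting a nontrivial \'etale algebra in $\Ind(\cD)$ and using that tensor functors preserve \'etale algebras --- is exactly the mechanism the paper uses for its partial evidence toward the conjecture: Corollary~\ref{CorEta} shows that a pretannakian category containing an \'etale algebra not of moderate growth (e.g.\ $(\Rep S)_t^{ab}$) cannot map to any incompressible category, precisely because the image algebra would remain \'etale and nontrivial, contradicting Lemma~\ref{LemIncEx}. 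Your first suggestion (semisimplification by negligibles) is problematic as stated, since the quotient by the negligible ideal of a non-semisimple pretannakian category is generally not exact, hence not a tensor functor in the sense of this paper; the Frobenius iteration idea likewise only applies under Frobenius exactness hypotheses. In short: your reductions are sound, your assessment of where the difficulty lies agrees with the paper's, and no complete proof exists on either side.
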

Working towards this conjecture, we prove in Corollary~\ref{CorEta} that Deligne's universal monoidal category $(\Rep S)_{t}$ (which is not of moderate growth) does not admit a tensor functor to an incompressible category.

Denote by $\MdGr_{\bk}$ the category of tensor categories of moderate growth, a full subcategory of the category $\PTann_{\bk}$ of pretannakian categories, which is in turn a full subcategory of the category $\Tens_{\bk}$ of all (symmetric) tensor categories over~$\bk$. When $\bk$ is clear from context, we will sometimes leave out reference to it.
Deligne's theorem implies that in characteristic zero, 
the category $\MdGr_{\fk}$ has a terminal object, $\sVec_{\bk}$. In contrast, our next result is 

\begin{thmA}\label{ThmDel} (Proposition~\ref{PropFilt})
If $\mathrm{char}(\fk)=0$ then the categories $\PTann_{\fk}$ and $\Tens_{\fk}$ are not filtered, so in particular have no terminal object.
\end{thmA}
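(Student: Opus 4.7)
The plan is to refute filteredness by exhibiting two pretannakian categories $\cA,\cB\in\PTann_\fk$ admitting no common target in $\Tens_\fk$; this simultaneously rules out a terminal object, so the second clause follows for free. The natural candidates are Deligne's interpolation categories---such as $\underline{\Rep}(\GL_t)$, $\underline{\Rep}(\OO_t)$, $\underline{\Rep}(S_t)$, and the universal monoidal category $(\Rep\SG)_t$---since these are (typically semisimple) pretannakian categories not of moderate growth, so Deligne's classical theorem does not apply and $\sVec$ is not available as a universal target. Their universal properties (e.g., a tensor functor $\underline{\Rep}(\GL_t)\to\cC$ is the same as a choice of dualizable object of categorical dimension $t$ in $\cC$) give concrete handles on any hypothetical common target.

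The key idea I would pursue: for a carefully chosen pair $(\cA,\cB)$, assume a common pretannakian $\cC$ exists and then combine Corollary~\ref{CorEta} (which states that $(\Rep\SG)_t$ admits no tensor functor to any incompressible category) with Theorem~\ref{ThmIncom} (every moderate-growth pretannakian fibres over an incompressible one). If $\cC$ receives a tensor functor from $(\Rep\SG)_t$, then $\cC$ cannot be of moderate growth. One then wants to show that the image of $\cB$ inside $\cC$ forces $\cC$ either to be of moderate growth, or to admit a tensor functor to an incompressible category, contradicting the constraint coming from $\cA=(\Rep\SG)_t$.

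The main obstacle is that the Deligne tensor product $\cA\boxtimes\cB$ is itself pretannakian whenever both factors are semisimple, and thus automatically serves as a common target for any pair of generic Deligne categories; so the naive attempt to use two semisimple Deligne categories fails. The pair must instead exploit non-semisimple behaviour (for instance, abelian envelopes at integer parameters, where $\boxtimes$ can fail to remain pretannakian) or pair one classical $\Rep G$ with $(\Rep\SG)_t$ in such a way that Corollary~\ref{CorEta} yields a contradiction applicable to \emph{every} hypothetical $\cC$, not merely the obvious Deligne product. Pinpointing such a pair, and proving non-existence of any common target rather than just the Deligne product, is the crux of the argument.
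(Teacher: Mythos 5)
There is a genuine gap: your strategy attacks the wrong clause of filteredness. A (nonempty) category is filtered iff every pair of objects has a cocone \emph{and} every pair of parallel arrows can be equalised. The first clause can never fail in $\PTann_{\fk}$ or $\Tens_{\fk}$: the Deligne product $\cA\boxtimes\cB$ is the coproduct in $\Tens_{\fk}$ (see \ref{DTP}), and by Deligne's construction it is again pretannakian whenever $\cA$ and $\cB$ are --- semisimplicity is irrelevant, and the paper uses this freely (Example~\ref{ExDTP}, Lemma~\ref{LemTan2}, Corollary~\ref{CorFam}, Lemma~\ref{IncProd}). So the hope expressed in your last paragraph, that non-semisimple factors (abelian envelopes at integer parameters) might destroy the common target, is unfounded; no pair $(\cA,\cB)$ without a cocone exists. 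Your auxiliary argument also would not close the gap even if it did: a tensor functor $\cB\to\cC$ only constrains its image, not all of $\cC$, so one cannot force an arbitrary common target $\cC$ to be of moderate growth and thereby contradict Corollary~\ref{CorEta}.

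What actually fails, and what the paper exploits, is the equalising clause. For $\PTann_{\fk}$ one takes the identity of $(\Rep GL)_0^{ab}$ and the endofunctor sending the generator $X_0$ to $X_0\oplus X_0$: any equaliser $F$ would produce $F(X_0)\simeq F(X_0)\oplus F(X_0)$, impossible in a Krull--Schmidt category. For $\Tens_{\fk}$ (where targets need not be artinian, so the Krull--Schmidt argument is unavailable) the paper uses two functors $(\Rep GL)_t\rightrightarrows(\Rep S)_t\boxtimes\sVec$ sending $X_t$ to $(T_t,\unit)$ and to $(T_t,\unit)\oplus(\unit,\bk^{1|1})$, and rules out an equaliser by a separable/\'etale-algebra argument: the image of the \'etale algebra $(T_t,\unit)$ would acquire a nonzero nilpotent ideal coming from $\bk^{0|1}$, contradicting separability. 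If you want to salvage your idea, redirect it at parallel arrows rather than at pairs of objects.
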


\subsection{The category $\cD\mbox{-}\Tann$}\label{SecDTann}
Before formulating the rest of our results, let us introduce some notation. For any $\cD\in \PTann_{\bk}$, let $\cD\mbox{-}\Tann\subset \PTann_{\bk}$ denote the full subcategory of $\PTann_{\bk}$ consisting of tensor categories which admit a tensor functor to $\cD$. By Deligne's tannakian reconstruction principle from \cite{Del90}, $\DTann$ comprises precisely the (appropriately defined) categories of representations of affine group schemes in $\cD$. For example, $\Vecc_{\bk}\mbox{-}\Tann=\Tann_{\bk}$ is the category of tannakian categories and $\sVec_{\bk}\mbox{-}\Tann={\rm s}\Tann_{\bk}$ is the category of supertannakian categories. 

In terms of this notation, the main result of \cite{Del02} states that if $\bk$ has characteristic zero, then $\MdGr_{\bk}=\sVec_{\bk}\mbox{-}\Tann$. Likewise the main result of \cite{CEO} states that if $\bk$ has characteristic $p>0$, then $\Ver_p\mbox{-}\Tann$ is the full subcategory $\FrEx_{\bk}\subset \MdGr_{\bk}$ consisting of Frobenius exact categories, and \cite[Conjecture~1.4]{BEO} states that $\MdGr_{\bk}=\Ver_{p^\infty}\mbox{-}\Tann$. Finally, Theorem~\ref{ThmIncom} states that $\MdGr_{\bk}$ is the union of $\DTann$ for $\cD$ ranging over all incompressible categories $\mathcal D$ of moderate growth.

\subsection{Subterminal and Bezrukavnikov categories}
The classical incompressible categories $\Vecc_{\bk}$ and $\sVec_{\bk}$ satisfy some further important properties. Namely, let us say 
that a pretannakian category $\cD$ is {\bf subterminal} if 
it is the terminal object of $\cD\mbox{-}\Tann$, i.e., if 
tensor functors to $\cD$ are unique if exist (equivalently, $\cD$ is a subterminal object of $\Tens_{\bk}$). It is proved in \cite{DM, Del02, Tann} that $\Vecc_{\bk}$ and $\sVec_{\bk}$ are subterminal. 
 Additionally, in \cite{B} it is proved that every surjective tensor functor out of a (super)tannakian category lands in a (super)tannakian category. Applying this notion to an arbitrary pretannakian category $\cD$, we say that $\cD$ is a {\bf Bezrukavnikov category} if $\DTann$ is closed under taking images of tensor functors (i.e., quotient tensor categories).
In characteristic zero, we will show in Proposition \ref{incoo}(3) 
that the only subterminal pretannakian categories are precisely $\Vecc_{\bk}$ and $\sVec_{\bk}$. 

It is easy to show that if a pretannakian category is both subterminal and Bezrukavnikov then it must be incompressible, and as noted above, 
the converse is expected to hold at least in characteristic zero, by Conjecture~\ref{ConjConj}. Whether incompressible categories in positive characteristic (such as $\Ver_{p^n}$) must be subterminal and Bezrukavnikov are interesting open questions which we 
answer in special cases. Our next result 
shows that these questions are again intimately related to the structure of the category of tensor categories.
\begin{thmA}\label{SummThm}(Theorem~\ref{ThmFilF1})
For a fixed field $\fk$, the following statements are equivalent:
\begin{enumerate}
\item $\MdGr_{\fk}$ is filtered;
\item Every incompressible category in $\MdGr_{\fk}$ is subterminal.
\item Every incompressible category in $\MdGr_{\fk}$ is subterminal and Bezrukavnikov.
\end{enumerate}
\end{thmA}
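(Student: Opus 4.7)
The plan is to prove the cyclic chain $(3) \Rightarrow (2) \Rightarrow (1) \Rightarrow (3)$, with $(3) \Rightarrow (2)$ trivial by definition. The proof pivots on one basic fact: a tensor functor whose target is incompressible is automatically a fully faithful embedding onto a tensor subcategory of the target. This lets filteredness manufacture both the subterminal and the Bezrukavnikov property, and conversely lets Theorem~\ref{ThmIncom} push every coequalization problem into a subterminal target.

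For $(1) \Rightarrow (3)$, fix an incompressible $\cD \in \MdGr_\fk$. For subterminality, given two tensor functors $F,G \colon \cC \to \cD$, filteredness provides $H \colon \cD \to \cE$ with $HF \cong HG$; incompressibility of $\cD$ makes $H$ an embedding, so $F \cong G$. For the Bezrukavnikov property, let $F \colon \cC \to \cD$ and let $\pi \colon \cC \to \cC'$ be a surjective tensor functor (so $\cC' \in \MdGr$ as well). Two successive applications of the filteredness axioms (first a common cocone for $\cC'$ and $\cD$, then a coequalizer of the two composites $\cC \rightrightarrows \cdot$) produce a pushout-style cocone $u \colon \cC' \to \cF$, $v \colon \cD \to \cF$ with $u\pi \cong vF$. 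Since $\cD$ is incompressible, $v$ identifies $\cD$ with a tensor subcategory $v(\cD) \subset \cF$. Because $\pi$ is essentially surjective, the image of $u$ in $\cF$ coincides with the image of $u\pi = vF$, and this lies inside $v(\cD)$. Inverting $v$ on its image yields the desired tensor functor $\cC' \to \cD$, showing $\cC' \in \cD\mbox{-}\Tann$.

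For $(2) \Rightarrow (1)$, I verify both filteredness axioms. The two-object cocone is supplied by the Deligne tensor product: $\cC \boxtimes \cD$ lies in $\MdGr_\fk$ (since lengths of tensor powers are multiplicative in $\boxtimes$) and receives tensor functors $\cC \hookrightarrow \cC \boxtimes \cD \hookleftarrow \cD$. For parallel tensor functors $F,G \colon \cC \to \cD$, apply Theorem~\ref{ThmIncom} to obtain a tensor functor $H \colon \cD \to \cE$ with $\cE$ incompressible in $\MdGr$. By hypothesis $\cE$ is subterminal, so $HF \cong HG$, giving coequalization.

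The main obstacle I anticipate is the handling of the Deligne tensor product in positive characteristic: one needs $\cC \boxtimes \cD$ to actually lie in $\PTann_\fk$, which is subtle and may require passing through an abelian envelope of the naive tensor product (or restricting to the class of categories where $\boxtimes$ is known to behave well). A secondary concern is the bookkeeping in the $(1) \Rightarrow (3)$ step around the notion of \emph{image of a tensor functor}: one must invoke the standard image factorization in $\PTann$ (surjection followed by embedding) to make the factorization of $u$ through $v(\cD)$ rigorous, and verify that essential surjectivity of $\pi$ transfers to equality of the tensor subcategory images of $u$ and $u\pi$ inside $\cF$.
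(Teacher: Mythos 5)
Your proposal is correct and follows essentially the same route as the paper: $(1)\Rightarrow(3)$ via incompressibility forcing the coequalizing/cocone functors out of $\cD$ to be embeddings (your factorization of $u$ through $v(\cD)$ is exactly the paper's Remark~\ref{RemSq} applied to the square with surjective top $\pi$ and injective bottom $v$), and $(2)\Rightarrow(1)$ via Deligne's tensor product for cocones plus Theorem~\ref{ThmIncom} and subterminality for coequalization. The technical points you flag (that $\cC\boxtimes\cD$ stays in $\MdGr_{\fk}$, and the surjective-followed-by-injective image factorization) are handled in the paper by the standard facts that $\boxtimes$ is the coproduct in $\Tens_{\fk}$ preserving moderate growth and by Corollary~\ref{CorEq}/Remark~\ref{RemSq}, so no gap remains.
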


\subsection{Maximally nilpotent and geometrically reductive
categories} 
Another interesting problem is trying to characterise {\em intrinsically} when a tensor category is incompressible. We make some progress on this question using the notions of maximal nilpotence and 
geometric reductivity introduced in \cite{ComAlg}. In particular, we 
connect it with the above questions whether 
incompressible categories must be subterminal and Bezrukavnikov. 

Namely, we say that a pre-tannakian category $\mathcal D$ is {\bf maximally nilpotent} if for any $X\in \mathcal D$, the image of the natural morphism of symmetric algebras 
$$
{\rm Sym}{\rm Ker}(X\to {\rm Hom}(X,\unit)^*\otimes \unit)\to {\rm Sym}X
$$ 
has finite length. We prove that a pretannakian category $\bT$ is  maximally nilpotent if and only if for every $X\in\bT$ 
$$\dim_{\bk}\Hom(X,\unit)\;=\;\limsup_{n\to\infty}\frac{ \log(\ell (\Sym^{\le n}X))}{\log (n)},$$
with $\ell$ the length of an object. We also show that the left-hand side is always a lower bound for the right-hand side. Using this notion and that of {\bf geometric reductivity} (for every non-zero morphism $X\tto\unit$ in $\bT$, there exists $n>0$ for which $\Sym^n X\tto\unit$ is split), we prove:

\begin{thmA} (Theorems~\ref{RemLimInc}(2) and~\ref{ThmF1})
\begin{enumerate}
\item If a finite tensor category is maximally nilpotent then it is incompressible.
\item If a pretannakian category is maximally nilpotent and geometrically reductive, then it is subterminal.
\end{enumerate}
\end{thmA}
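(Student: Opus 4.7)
For (1) the strategy is to compare growth rates of symmetric powers in $\cD$ and in $\cE$. Let $F\colon\cD\to\cE$ be any tensor functor out of a finite maximally nilpotent category $\cD$. Between pretannakian categories $F$ is automatically faithful and exact, so $\ell_{\cE}(F(Y))\ge\ell_{\cD}(Y)$ for every $Y\in\cD$. Since $\cD$ is finite, it has only finitely many simples $S_1,\dots,S_r$; setting $C=\max_i\ell_{\cE}(F(S_i))<\infty$ yields the reverse estimate $\ell_{\cE}(F(Y))\le C\cdot\ell_{\cD}(Y)$. The two bounds together force
\[
\limsup_{n\to\infty}\frac{\log\ell_{\cE}(\Sym^{\le n}F(X))}{\log n}\;=\;\limsup_{n\to\infty}\frac{\log\ell_{\cD}(\Sym^{\le n}X)}{\log n}.
\]
By the maximal nilpotence characterization recalled in the excerpt, the right side equals $\dim_{\bk}\Hom_{\cD}(X,\unit)$; the general lower bound (applied in $\cE$) makes the left side an upper bound for $\dim_{\bk}\Hom_{\cE}(F(X),\unit)$; and faithfulness of $F$ on $\Hom(X,\unit)$ gives the reverse bound. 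Pinching forces $\dim_{\bk}\Hom_{\cE}(F(X),\unit)=\dim_{\bk}\Hom_{\cD}(X,\unit)$ for every $X\in\cD$. Applying this identity with $X$ replaced by $Y^{\ast}\otimes X$ and using that $F$ preserves duals and tensor products, rigidity promotes it to an isomorphism on every $\Hom(X,Y)$, so $F$ is fully faithful and, being exact, an embedding of a tensor subcategory.

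For (2) the plan is to exhibit a natural tensor isomorphism $F\cong G$ between any two tensor functors $F,G\colon\cE\to\cD$ via an ``isomorphism torsor'' argument. Form the ind-algebra $R=\underline{\mathrm{Isom}}^{\otimes}(F,G)$ in $\cD$ parametrizing tensor isomorphisms $F\Rightarrow G$; producing such an isomorphism is equivalent to producing an algebra section $R\to\unit$, i.e.\ a point of the torsor. Geometric reductivity is the tool designed exactly for splitting surjections $X\twoheadrightarrow\unit$ after passing to some $\Sym^n X$, so it should supply such a section once one exhibits a concrete finite-length surjection built out of $R$. Maximal nilpotence of $\cD$ is then used to extract such a finite-length surjection, by controlling the growth of the pieces of $R$. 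The main obstacle is precisely this first step: $R$ is typically an infinite-length ind-object, and carefully using the maximal nilpotence hypothesis to cut out a finite-length subquotient onto which geometric reductivity can be applied, and then to interpret the resulting splitting as the desired natural tensor isomorphism, is the technical heart of the argument.
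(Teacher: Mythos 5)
Your argument is correct and reaches the same intermediate statement as the paper, namely that $\Hom_{\cD}(X,\unit)\to\Hom_{\cE}(FX,\unit)$ is an isomorphism for every $X$, whence fullness by rigidity. The route differs in one step: you prove $\sgd_{\cE}(FX)=\sgd_{\cD}(X)$ by squeezing lengths between $\ell_{\cD}(Y)\le\ell_{\cE}(FY)\le C\,\ell_{\cD}(Y)$, which uses finiteness of $\cD$ to get the constant $C$; the paper instead transports the condition ``the image of $\Sym\widetilde{X}\to\Sym X$ is finite'' along $F$ (finiteness of that image is visibly preserved by any tensor functor) and then invokes Lemma~\ref{Lemtd}(2). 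The paper's version costs a structural lemma on graded Hopf algebras (Theorem~\ref{ThmSym}) but buys fullness of every tensor functor out of an \emph{arbitrary} maximally nilpotent category (Theorem~\ref{LemNoSim}(1)), which is needed later; your version is more elementary but confined to finite $\cD$, which suffices for the statement at hand. One small imprecision: ``fully faithful and, being exact, an embedding of a tensor subcategory'' is false for general pretannakian sources (cf.\ the restriction $\Rep G\to\Rep B$ to a Borel); you need to cite that for a \emph{finite} source, fully faithful implies injective in the strong sense (\cite[Proposition~6.3.1]{EGNO}, as in Remarks~\ref{RemInj} and~\ref{RemLimInc}(2)).

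\textbf{Part (2).} Here there is a genuine gap: what you have written is a plan whose ``technical heart'' you explicitly leave open, and the mechanism you gesture at is not the one that works. First, you never address why $R=\Lambda(F,G)$ is nonzero; this is Deligne's \cite[Lemme~3.6]{Del02} and is an essential input. Second, and more seriously, the hypotheses {\bf GR} and {\bf MN} are not used to ``split a finite-length surjection cut out of $R$.'' Geometric reductivity does not directly produce an algebra section $R\to\unit$ from a splitting of some $\Sym^n X\tto\unit$; a splitting of a symmetric power is a morphism $\unit\to\Sym^nX$, not a multiplicative retraction of $R$. The actual argument runs as follows: (i) reduce to finitely generated $\bT$ (this reduction, by transfinite induction and a gluing argument for isomorphisms over an exhaustion by finitely generated subcategories, is itself nontrivial and absent from your sketch); (ii) show that $\Lambda(F,F)$ is a \emph{finitely generated} algebra, using that the monomorphism $\mG_{F,F}\to GL(FZ)$ is a closed immersion --- this is where {\bf GR} and {\bf MN} enter, via the theorem that monomorphisms of affine group schemes are subgroups in such categories; (iii) transfer finite generation to $\Lambda(F,G)$ via the torsor isomorphism $\Lambda(F,G)\otimes\Lambda(F,G)\simeq\Lambda(F,G)\otimes\Lambda(F,F)$ and descent; (iv) quotient $\Lambda(F,G)$ by a maximal ideal to get a simple quotient, and use {\bf MN} again (Theorem~\ref{LemNoSim}(4): simple algebras in $\Ind\cD$ are field extensions of $\bk$) together with Zariski's lemma to conclude this quotient is $\unit$, yielding the desired algebra map $\Lambda(F,G)\to\unit$. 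None of steps (i)--(iv) is supplied or correctly anticipated by your outline, so part (2) cannot be accepted as proved.
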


We also establish
\begin{thmA}  (Theorems~\ref{PropVer2}, \ref{ThmVer22} and Theorem~\ref{ThmVerp})

\begin{enumerate}
\item 
The category $\Ver_{2^\infty}=\cup_{n}\Ver_{2^n}$ (equivalently, every $\Ver_{2^n}$) is geometrically reductive and maximally nilpotent, so in particular subterminal.
\item The category $\Ver_p$ is subterminal and Bezrukavnikov.
\end{enumerate} 
\end{thmA}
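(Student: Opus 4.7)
The plan is to address the two parts separately. For part~(1), we invoke Theorem~\ref{ThmF1}(2): it suffices to show each finite category $\Ver_{2^n}$ is both maximally nilpotent and geometrically reductive; the conclusion for $\Ver_{2^\infty}$ then follows by passage to the filtered colimit, since both properties transfer along the embeddings $\Ver_{2^n}\hookrightarrow\Ver_{2^{n+1}}$.

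For maximal nilpotence of $\Ver_{2^n}$, given $X\in\Ver_{2^n}$ set $X_0=\ker(X\to\Hom(X,\unit)^*\otimes\unit)$, so $\Hom(X_0,\unit)=0$; the task is to show $\Sym(X_0)$ has finite length. Using the realization of $\Ver_{2^n}$ via (indecomposable) tilting modules for $SL_2$ in characteristic $2$ as in \cite{BEO}, I would analyze the composition factors of $\Sym^k(X_0)$ through iterated Clebsch-Gordan together with the Steinberg-type relations that cut out $\Ver_{2^n}$ inside the larger tilting category, and show these stabilize for $k\gg 0$. For geometric reductivity, given a nonzero $f\colon X\tto\unit$, I would exploit the Frobenius-twist structure intrinsic to $\Ver_{2^n}$: Frobenius-power endomorphisms of $\Sym X$ produce, for some $i$, a splitting of the induced map $\Sym^{2^i}X\tto\unit$, leveraging that the relevant simple subquotients inherit the required splitting from the ambient tilting category.

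For part~(2), subterminality of $\Ver_p$ reduces to the main theorem of \cite{CEO}: any tensor functor $\cC\to\Ver_p$ forces $\cC\in\Ver_p\mbox{-}\Tann$, i.e., $\cC$ is Frobenius exact, and \cite{CEO} then identifies this functor with the canonical one uniquely up to unique isomorphism. For the Bezrukavnikov property, I would adapt Bezrukavnikov's argument \cite{B}: given a surjective tensor functor $F\colon\cC\tto\cD$ with $\cC$ Frobenius exact, we must show $\cD$ is Frobenius exact. Since $F$ is a tensor functor it intertwines the Frobenius functors on source and target, and surjectivity of $F$ together with exactness of the Frobenius functor on $\cC$ descends exactness to the Frobenius functor on $\cD$ via a short exact sequence argument that leans on the intrinsic characterization of Frobenius exactness from \cite{CEO}.

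The main obstacle will be part~(1): the explicit combinatorial verification of maximal nilpotence inside $\Ver_{2^n}$ demands precise control over symmetric powers of non-semisimple tilting modules in characteristic $2$, where phenomena such as $\Sym^2\cong\Lambda^2$-style identifications and non-semisimple Frobenius kernels make the fusion analysis substantially more delicate than in odd characteristic.
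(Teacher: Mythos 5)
Your high-level skeleton matches the paper's: prove \textbf{MN} and \textbf{GR} for each $\Ver_{2^n}$ and invoke Theorem~\ref{ThmF1}; quote \cite{CEO} for subterminality of $\Ver_p$; and reduce the Bezrukavnikov property to showing that a surjective image of a Frobenius exact category of moderate growth is Frobenius exact. But all three substantive verifications have gaps. First, your reformulation of maximal nilpotence is incorrect: setting $X_0=\ker(X\to\Hom(X,\unit)^*\otimes\unit)$, it is generally false that $\Hom(X_0,\unit)=0$ (take $X$ a nonsplit self-extension of $\unit$ in $\Ver_4^+$, where $X_0=\unit$), and the condition is that the \emph{image} of $\Sym(X_0)\to\Sym(X)$ be finite, not $\Sym(X_0)$ itself (in that same example $\Sym(X_0)=\bk[y]$ is infinite while the image $\bk[y]/(y^2)$ is finite). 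The paper handles this via Theorem~\ref{Cond4Fib}, splitting \textbf{MN} into a statement about simples (\textbf{MNa}) and one about extensions by $\unit^m$ (\textbf{MNb}), each needing its own argument (Proposition~\ref{Proppb} and Theorem~\ref{PropVer2}). Your ``iterated Clebsch--Gordan'' sketch supplies no mechanism forcing $\Sym^k L=0$ for $k\gg0$; the paper's mechanism is an induction on $n$ exploiting that $L^{\otimes 2}$ lies in $\Ver_{2^n}^+$ for $L$ simple in $\Ver_{2^n}$, combined with the nilpotency criterion of Lemma~\ref{LemPavn}/Corollary~\ref{CorNInv} applied to the kernel of $\Sym^2L\tto\unit$. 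Similarly, your \textbf{GR} sketch via ``Frobenius-power endomorphisms'' is unsubstantiated (note $\Ver_{2^n}$ is not Frobenius exact for $n\ge2$, so Frobenius twists do not behave as in the classical Haboush-type argument); the paper instead proves the stronger Theorem~\ref{ThmVer22} (every finitely generated algebra is of finite type) by induction, realizing $\cC_{2n-1}$ as modules over a triangular Hopf algebra in $\cC_{2n-2}$ and constructing invariants $y_i=x_i^2-a(x_i)x_i$, then deduces \textbf{GR} from Corollary~\ref{CorFinTyp}.

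For part (2), the Bezrukavnikov step is where your argument would actually fail. Surjectivity of $F:\cC\tto\cD$ only says every object of $\cD$ is a \emph{subquotient} of some $F(X)$; a short exact sequence in $\cD$ need not lift to one in $\cC$, so exactness of $\Fr$ does not ``descend via a short exact sequence argument,'' and the proof is not an adaptation of \cite{B}. The paper's Theorem~\ref{ThmVerp} is instead a quantitative growth argument: one shows $\gd(\Fr Y)=\gd(Y)$ for every $Y\in\cD$ by sandwiching, using $\ell(FX)\le\gd(X)^2$ (Lemma~\ref{LemQua}), the bound $\gd(\Fr Z)\le D_n(X)^p$ for simple constituents $Z$ of $FX^{\otimes n}$, and the subadditivity of $\gd\circ\Fr$ on short exact sequences; the numerical criterion of Lemma~\ref{esti}(2) then converts the equality $\gd(\Fr Y)=\gd(Y)$ into exactness of $\Fr$ on $\cD$. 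Your citation of \cite{CEO} for subterminality of $\Ver_p$ is fine as stated.
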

In particular, it follows that an affirmative answer to \cite[Question~1.2]{BE} (see \cite[Conjecture~1.4]{BEO}) would also prove that all properties in Theorem~\ref{SummThm} are satified in characteristic~$2$, and also that all incompressible categories are maximally nilpotent and geometrically reductive. This demonstrates the potential relevance of the latter two conditions. Furthermore, in \cite{ComAlg} it is observed how these two conditions are relevant for the development of algebraic geometry in these categories.

\subsection{Other results} 
Besides the above results we also derive some required results which are not directly connected to incompressible categories. In Section~\ref{TKduality} we complete Deligne's results from \cite{Del90} on generalised tannakian reconstruction, by proving a duality between affine group schemes in a pretannakian category and tensor functors to said category. In Section~\ref{SecAlgebras} we investigate for which commutative algebras in a tensor category the resulting monoidal module category is again a tensor category. Finally, in Section~\ref{SecEHO}, we use our results on $\Ver_{p^n}$ to answer some questions from \cite{EHO} about $p$-adic dimensions in tensor categories.


{\bf Acknowledgements.}  P. E.'s work was partially supported by the NSF grant DMS - 2001318. K. C.'s work was partially supported by the ARC grant DP210100251.

\section{Preliminaries}

Throughout we let $\fk$ be an algebraically closed field and set $\mN=\{0,1,2,\cdots\}.$

\subsection{Abelian categories}\label{SecCard}

\subsubsection{} For an essentially small abelian category $\cA$ we denote by $\sharp \cA$ the minimal possible cardinality of a set $E=\{X\in \cA\}$ for which every object in $\cA$ is a subquotient of a finite direct sum of $X\in E$. By additivity, either $\sharp\cA=1$ or $\sharp\cA$ is some infinite cardinality.

\begin{lemma}\label{LemCo}
Let $C$ be a coalgebra over $\fk$ and $\cC$ its category of finite dimensional comodules. For any infinite cardinal $\alpha$, we have
$$\sharp\cC <\alpha\quad\Leftrightarrow\quad \dim_{\bk}C<\alpha.$$
\end{lemma}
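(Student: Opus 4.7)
The plan is to exploit the fundamental theorem of coalgebras (every element of $C$ lies in a finite-dimensional subcoalgebra) together with the coefficient subcoalgebra construction. Recall that for any right $C$-comodule $V$ with basis $\{v_i\}_{i\in I}$ and coaction $\rho(v_i)=\sum_j v_j\otimes c_{ji}$, the span $C_V\subset C$ of the matrix coefficients $c_{ji}$ is a subcoalgebra of $C$ with $\dim_\bk C_V\le\dim_\bk V$, and $\rho$ factors as a $C$-comodule embedding $V\hookrightarrow V\otimes C_V$; when $V$ is finite-dimensional this gives $V\hookrightarrow C_V^{\oplus\dim V}$. An application of $\varepsilon\otimes\id$ to $\Delta|_D$ moreover shows that when $D\subset C$ is itself a subcoalgebra one has $C_D=D$.

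For the implication $(\Leftarrow)$, if $\dim_\bk C$ is finite then $\{C\}$ already witnesses $\sharp\cC=1<\alpha$, since every finite-dimensional comodule embeds into $C^{\oplus n}$ for some $n$. Otherwise, pick a basis $\{c_i\}_{i\in I}$ of $C$ and for each $i$ a finite-dimensional subcoalgebra $D_i\ni c_i$; the set $E:=\{D_i\}_{i\in I}$ has cardinality $|I|=\dim_\bk C$. For any $V\in\cC$, the finite-dimensional subcoalgebra $C_V$ is contained in $D_{i_1}+\cdots+D_{i_n}$ for some finite subset, hence is a subquotient of $\bigoplus_k D_{i_k}$, and $V\hookrightarrow C_V^{\oplus\dim V}$ is therefore a subquotient of a finite direct sum of elements of $E$. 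This gives $\sharp\cC\le\dim_\bk C<\alpha$.

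For the implication $(\Rightarrow)$, let $E=\{W_j\}_{j\in J}$ realise $\sharp\cC=\beta<\alpha$ and set $V:=\bigoplus_{j\in J}W_j$, so that $\dim_\bk V\le\aleph_0\cdot\beta=\beta$ when $\beta$ is infinite, and $\dim_\bk V<\aleph_0$ when $\beta=1$. Every finite-dimensional subcoalgebra $D\subset C$ is, by the defining property of $E$, a subquotient of a finite direct sum of the $W_j$'s, and hence $D=C_D\subset C_V$. Taking the union over all such $D$, the fundamental theorem of coalgebras yields $C=C_V$, and therefore $\dim_\bk C\le\dim_\bk C_V\le\dim_\bk V<\alpha$. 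The only mildly subtle ingredient is the bound $\dim_\bk C_V\le\dim_\bk V$, which rests on the observation that each $v_i$ contributes only finitely many nonzero coefficients $c_{ji}$, so that $C_V$ stays small even when $V$ is infinite-dimensional; everything else is the standard dictionary between comodules and subcoalgebras.
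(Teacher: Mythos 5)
Your proof is correct and follows essentially the same route as the paper's: finite-dimensional subcoalgebras (resp.\ subcomodules) through a basis of $C$ give $\sharp\cC\le\dim_{\bk}C$, and the surjectivity of the matrix-coefficient map from a generating family onto $C$ (which you justify via $C_D=D$ and the fundamental theorem) gives the converse bound. The only quibble is that the stated inequality $\dim_{\bk}C_V\le\dim_{\bk}V$ is false for finite-dimensional $V$ (e.g.\ the standard comodule of a matrix coalgebra; the correct general bound is $(\dim_{\bk}V)^2$, or $\max(\aleph_0,\dim_{\bk}V)$ by your own counting of nonzero coefficients), but since $\alpha$ is infinite this affects no step of the argument.
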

\begin{proof}
By taking a basis $e_i\in C$ and choosing a finite dimensional subcomodule $V_i\subset C$ containing $e_i$, for each $i$, it follows that
$$\sharp\cC\;\le\; \dim_{\bk} C.$$

Now conversely, let $\{V_i\}$ be a set (of cardinality $\sharp \cC$) of $C$-comodules such that every finite dimensional $C$-comodule is a subquotient of a finite direct sum of them. It follows that
$$\bigoplus_i V_i\otimes V_i^\ast\;\to \; C$$
is surjective. The conclusion now follows easily.
\end{proof}

\subsubsection{} We call an essentially small $\fk$-linear abelian category {\bf artinian} (over $\fk$) if it has finite dimensional morphism spaces and all objects have finite length. By Takeuchi's theorem, see~\cite[Theorem~1.9.15]{EGNO}, a category is artinian if and only if it is equivalent to the category of finite dimensional comodules of a coalgebra. 

In particular, the case $\alpha=\aleph_0$ (the cardinality of $\mN$) of Lemma~\ref{LemCo} recovers a classical result by Gabber.

\begin{corollary}\label{CorGab}(\cite[Proposition~2.14]{Del90})
An artinian category $\cC$ is the category of finite dimensional modules of a finite dimensional algebra if and only if $\sharp\cC=1$.
\end{corollary}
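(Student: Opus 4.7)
The corollary should follow immediately from Lemma~\ref{LemCo} applied at $\alpha=\aleph_0$, together with Takeuchi's theorem and the observation (made just before Lemma~\ref{LemCo}) that $\sharp\cC$ is always either $1$ or an infinite cardinal. So my plan is essentially bookkeeping: translate both sides of the biconditional into statements about a coalgebra and invoke the lemma.

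First I would unpack the algebra side. For a finite-dimensional algebra $A$, the linear dual $C=A^\ast$ is a finite-dimensional coalgebra, and the category of finite-dimensional $A$-modules is equivalent to the category of finite-dimensional $C$-comodules; conversely, every finite-dimensional coalgebra arises as $A^\ast$ for a finite-dimensional algebra $A$. So, writing the given artinian $\cC$ via Takeuchi's theorem as the category of finite-dimensional comodules over some coalgebra $C$, the left-hand side of the corollary becomes the statement that $\cC$ can be represented by some coalgebra with $\dim_{\bk}C<\aleph_0$.

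Second, I would apply Lemma~\ref{LemCo} with $\alpha=\aleph_0$ to get $\dim_{\bk}C<\aleph_0 \Leftrightarrow \sharp\cC<\aleph_0$. By the dichotomy $\sharp\cC\in\{1\}\cup\{\text{infinite cardinals}\}$, the right-hand condition is equivalent to $\sharp\cC=1$. Chaining the two equivalences yields the corollary. There is no real obstacle; the only point requiring mild care is that the coalgebra $C$ chosen to represent $\cC$ is not canonical, but the lemma ties the existence of \emph{some} finite-dimensional such $C$ to the intrinsic invariant $\sharp\cC$, which is all that is needed.
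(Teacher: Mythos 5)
Your proof is correct and follows exactly the route the paper intends: the paper offers no separate argument beyond remarking that the corollary is the case $\alpha=\aleph_0$ of Lemma~\ref{LemCo}, and your write-up simply supplies the routine translation via Takeuchi's theorem, the duality between finite-dimensional coalgebras and algebras, and the dichotomy $\sharp\cC\in\{1\}\cup\{\text{infinite cardinals}\}$. Nothing further is needed.
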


For a class, or category, $\cS$ of essentially small abelian categories and an infinite cardinality $\alpha$, we denote by $\cS^{<\alpha}$ the subclass, or full subcategory, of categories $\cA$ with $\sharp\cA<\alpha$.

\subsection{Tensor categories and tensor functors}

\subsubsection{}\label{12345}

Following \cite{Del90}, an essentially small $\bk$-linear symmetric monoidal category $(\bC,\otimes,\unit)$ is a {\bf tensor category over $\bk$} if
\begin{enumerate}
\item $\bC$ is abelian;
\item $\bk\to\End(\unit)$ is an isomorphism;
\item $(\bC,\otimes,\unit)$ is rigid, meaning that every object $X$ has a monoidal dual $X^\vee$.
\end{enumerate}
By \cite[Proposition~1.17]{DM}, it then follows that $\unit$ is simple. If additionally, we have
\begin{enumerate}
\item[(4)] Every object in $\bC$ has finite length,
\end{enumerate}
then using (2) and (3) shows also that morphism spaces are finite dimensional, so $\cC$ is artinian. Tensor categories satisfying (4) will be called {\bf pretannakian}. In the terminology of \cite{EGNO}, `pretannakian categories' are called `symmetric tensor categories'.

Under condition (4) the following condition is well-defined:
\begin{enumerate}
\item[(5)] For every object $X\in\bC$, the function
$$\mN\to\mN,\quad n\mapsto \ell(X^{\otimes n})$$
is bounded by an exponential.
\end{enumerate}
Pretannakian categories satisfying (5) are called (tensor categories) of {\bf moderate growth}.

A {\bf tensor subcategory} of a tensor category is said to be a topologising subcategory ({\it i.e.} a full subcategory closed under finite products (direct sums) and subquotients) closed under tensor products and duals. A tensor category is {\bf finitely generated} if it has an object $X$ which is not contained in any proper tensor subcategory, or equivalently if every object is a subquotient of a polynomial in $X,X^\vee$. Note that a finitely generated tensor category $\cC$ satisfies $\sharp \cC\le \aleph_0$, but not necessarily $\sharp\cC=1$.

We denote the category of commutative associative algebras in $\Ind\bT$ by $\Alg\cC$. We will mostly refer to them as `algebras' as we will not consider non-commutative algebras.

Following \cite{Ve} we say that a tensor category $\cC$ satisfies the {\bf Hilbert basis property} if every finitely generated algebra $A\in \Alg\cC$ is noetherian, i.e., every submodule of a finitely generated $A$-module is again finitely generated.

\subsubsection{}
We consider a chain of 2-full 2-subcategories
$$\underline{\MdGr}_{\bk}\;\subset\; \underline{\PTann}_{\bk}\;\subset\; \underline{\Tens}_{\bk},$$
where $\underline{\Tens}_{\bk}$ has as objects tensor categories over $\bk$, as 1-morphisms tensor functors ($\bk$-linear symmetric monoidal exact functors) and as 2-morphisms natural transformations of monoidal functors (which here are automatically isomorphisms). The two 2-subcategories correspond to those tensor categories which satisfy (4) and (4,5). 

Our considerations will be most natural in the 1-truncations (the categories which have as morphisms isomorphism classes of tensor functors), for which we use the same notation without underlining.

\begin{example}\label{ExInnAut}Consider two affine affine group schemes $G,H$ over $\bk$.
It is well-known that associating restriction along a homomorphism induces a bijection between $\Hom(H,G)/G(\bk)$ and $\Tens(\Rep G,\Rep H)$, where $g\in G(\bk)$ acts on $\Hom(H,G)$ via composition with the associated inner automorphism of $G$.

For example, with $S_3$ the symmetric group on 3 symbols, we have
$$|\Tens_{\bk}(\Rep_{\bk} S_3,\Rep_{\bk} \mZ/3)|=2,\quad\mbox{but}\quad |\Tens_{\bk}(\Rep_{\bk} \mZ/3,\Rep_{\bk} \mZ/3)|=3.$$
Ignoring functors which factor via $\Vecc$, these functors are induced by the embedding $\mZ/3\hookrightarrow S_3$ (unique up to conjugacy), the identity homomorphism of $\mZ/3$ and the inversion on $\mZ/3$.
\end{example}

\subsubsection{}\label{DefsVer}Assume $\mathrm{char}(\bk)=p>0$. We refer to \cite{BEO}, see also~\cite{BE,AbEnv}, for details on the chain of incompressible tensor categories of moderate growth in \eqref{VerChain}. The simple objects in $\Ver_{p^n}$ are labelled as $L_i$ for $0\le i \le p^{n-1}(p-1)-1$, with $L_0=\unit$ (we thus have a shift from the convention for $\Ver_p$ in \cite{Os}). The simple object $L_i$ in $\Ver_{p^{n-1}}$ becomes $L_{pi}$ when interpreted in $\Ver_{p^n}\supset \Ver_{p^{n-1}}$.

The category $\Ver_{p^n}$ comes with a defining monoidal functor
$$\Tilt SL_2\;\to\; \Ver_{p^n}.$$
We denote by $T_i$ the $SL_2$-tilting module with highest weight $i\in\mN$ times the fundamental weight. The above functor then satisfies $T_i\mapsto L_i$ for $0\le i\le p-1$ ($i<p-1$ if $n=1$).

The simple objects in $\Ver_{p^n}$ satisfy a `Steinberg tensor product theorem':
$$L_{p^{n-1}i_1}\otimes L_{p^{n-2}i_2}\otimes \cdots\otimes L_{p i_{n-1}}\otimes L_{i_n}\;\simeq\; L_{\sum_{j=1}^{n}p^{n-j}i_j},$$
for $0\le i_j\le p-1$ with $i_1\le p-2$, see \cite[Theorem~1.3(8)]{BEO}.

\subsubsection{Adjoints of tensor functors}\label{monad} We will use the same notation $F:\cC\to\cD$ for a tensor functor and its cocontinuous extension to the ind-completions. The latter extension always has a right adjoint, which we denote by $F_\ast$.

The functor $FF_\ast$ is then a comonad on $\Ind\cD$, with counit $\epsilon :FF_\ast\Rightarrow\Id$ and co-multiplication $F \eta F_\ast: FF_\ast\Rightarrow FF_\ast F F_\ast$. Since $F$ is faithful and exact, Barr-Beck comonadicity implies that $F$ lifts to an equivalence from $\Ind\cC$ to the category of comodules over $FF_\ast$ in $\Ind\cD$, see for instance \cite[\S 4.1]{Del90}.

Dually, $F_\ast F$ is a monad on $\Ind\cC$, yielding an equivalence between $\Ind\cD$ and the category of modules in $\Ind\cC$ whenever $F_\ast$ is faithful and exact.

\subsubsection{Deligne tensor product}\label{DTP} For two pretannakian categories $\bT_1,\bT_2$ over $\bk$, the Deligne product was introduced in \cite{Del90}. It is the coproduct of $\bT_1$ and $\bT_2$ in $\Tens_{\bk}$.

By construction, see also \cite[Proposition~6.1.3]{CEOP}, it contains the ordinary tensor product $\bT_1\otimes\bT_2$ as $\bk$-linear categories as a full subcategory such that every object in $\bT_1\boxtimes\bT_2$ is a quotient (and also a subobject) of a direct sum of objects in $\bT_1\otimes\bT_2$.

We can apply the definitions from Section~\ref{SecCard} to tensor categories:

\begin{example}
\begin{enumerate}
\item By Corollary~\ref{CorGab}, the category $\PTann_{\bk}^{<\aleph_0}$ is precisely the category of {\bf finite} (symmetric) {\bf tensor categories} in the terminology of~\cite{EGNO}.
\item We have
$$\sharp \Ver_{p^n}\,=\, 1\qquad\mbox{and}\qquad \sharp \Ver_{p^\infty}=\aleph_0.$$
\item  $\sharp(\bT_1\boxtimes\bT_2)$ is the maximum of $\sharp\bT_1$ and $\sharp\bT_2$. 
\end{enumerate}

\end{example}

\begin{lemma}\label{LemSmall0}
The categories $\PTann^{<\alpha}$ and $\MdGr^{<\alpha}$ are essentially small.
\end{lemma}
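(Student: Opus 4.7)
The plan is to reduce the question to a bookkeeping count of coalgebras together with extra structure. By condition (4) in Section~\ref{12345}, every $\cC\in\PTann^{<\alpha}$ is artinian, so by Takeuchi's theorem (\cite[Theorem~1.9.15]{EGNO}) it is equivalent, as a $\bk$-linear abelian category, to the category of finite dimensional comodules over some coalgebra $C$. Lemma~\ref{LemCo} allows one to take $\dim_{\bk}C<\alpha$. Thus it suffices to show that the collection of pairs ``(a coalgebra $C$ with $\dim_{\bk}C<\alpha$, plus a pretannakian structure on $\mathrm{Comod}(C)$)'' is a set up to the obvious equivalence.

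For the first half, fix, for each cardinal $\beta<\alpha$, a single $\bk$-vector space $V_{\beta}$ of dimension $\beta$. Any coalgebra of dimension $\beta$ is isomorphic to one whose underlying vector space is $V_{\beta}$; its structure maps lie in the set $\Hom_{\bk}(V_{\beta},V_{\beta}\otimes V_{\beta})\times\Hom_{\bk}(V_{\beta},\bk)$. Hence coalgebras of dimension $<\alpha$ form a set up to isomorphism.

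For the second half, fix such a coalgebra $C$ and set $\cA=\mathrm{Comod}(C)$. The iso\-morphism classes of objects of $\cA$ form a set (simples are sub-coalgebra-like data inside $C$, and a general object is a finite direct sum of indecomposables, each of which has a set's worth of isomorphism classes), and all morphism spaces are finite dimensional. A pretannakian structure on $\cA$ consists of a tensor bifunctor $\otimes$, a unit object with unit isomorphisms, an associator, a symmetric braiding, and rigidity data satisfying coherence axioms; by naturality each of these is determined by its values on chosen representatives of isomorphism classes of objects, and there are set-many such values. Hence the collection of pretannakian structures on $\cA$ is a set. Combining with the previous step, $\PTann^{<\alpha}$ is essentially small. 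Since moderate growth (condition (5)) is a property, $\MdGr^{<\alpha}$ is a full subcategory of $\PTann^{<\alpha}$ and therefore also essentially small.

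The main (minor) obstacle is just the usual universe-hygiene issue: one needs to be sure that replacing $\cC$ by an equivalent category $\mathrm{Comod}(C)$ does not destroy the tensor structure. This is handled by transporting the tensor structure along the equivalence, which changes the data by a set's worth of choices and is therefore harmless; the resulting list of pretannakian categories exhausts $\PTann^{<\alpha}$ up to tensor equivalence.
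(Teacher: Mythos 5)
Your proposal is correct and follows the same route as the paper, whose entire proof is the one-line observation that the claim for $\PTann^{<\alpha}$ is ``a direct consequence of Lemma~\ref{LemCo}''; you have simply written out the bookkeeping (coalgebras of dimension $<\alpha$ form a set up to isomorphism via Takeuchi's theorem, and the possible tensor structures on each essentially small comodule category form a set) that the paper leaves implicit.
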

\begin{proof}
It suffices to prove this for $\PTann^{<\alpha}$, which is a direct consequence of Lemma~\ref{LemCo}.
\end{proof}

\subsection{Affine group schemes in tensor categories}

\subsubsection{} An affine group scheme $G$ in a tensor category $\bT$ is a representable functor from $\Alg\bT$ to the category of groups. It is thus represented by a commutative Hopf algebra $\cO(G)$ in $\Ind\bT$. We refer to \cite[Section~7]{ComAlg} for a detailed study of the basic theory of affine group schemes in~$\cC$ and their representations.

\subsubsection{}\label{SecFaith} Assume that the pretannakian category $\cC$ is geometrically reductive and maximally nilpotent (see Definition~\ref{Defs}), or more generally admits a tensor functor to such a category.
It is proved in \cite[Lemma~7.1.3]{ComAlg} and \cite[Theorem~6.3.1]{ComAlg} that a homomorphism of affine group schemes $H\to G$ is then a monomorphism (has trivial kernel) if and only if $H$ is a subgroup of $G$, {\it i.e.} $\cO(G)\to \cO(H)$ is an epimorphism in $\Ind\cC$.

In case $\cC$ is geometrically reductive and maximally nilpotent, or more generally admits a tensor functor to such a category, we call a representation $X$ in $\cC$ {\bf faithful} if $G\to GL_X$ is the inclusion of a subgroup. Similarly, a family of representations is called faithful if the intersections of their kernels is trivial.

\section{Potential properties of tensor categories and functors}

\subsection{Properties of tensor functors}

Let $\bT,\bT'$ be pretannakian tensor categories over $\fk$.
\begin{lemma}\label{ThmInj}
For a tensor functor $F:\bT\to\bT'$, the following are equivalent:
\begin{enumerate}
\item $F$ is fully faithful and sends simple objects to simple objects;
\item $F$ is fully faithful and every subobject of $F(X)$ is of the form $F(Y)$ for some $Y\subset X$;
\item $F$ is an equivalence between $\bT$ and a tensor subcategory of $\bT'$.
\end{enumerate}
In this case, we call $F$ {\bf injective}.
\end{lemma}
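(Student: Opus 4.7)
The plan is to prove the cycle of implications $(3) \Rightarrow (1) \Rightarrow (2) \Rightarrow (3)$. The two easy implications can be dispatched first. For $(3) \Rightarrow (1)$, an equivalence with a tensor subcategory is automatically fully faithful; and since a tensor subcategory is closed under subobjects, any subobject of $F(L)$ (for $L$ simple) lies in the subcategory and so corresponds via the equivalence to a subobject of $L$, forcing $F(L)$ to be simple. For $(2) \Rightarrow (3)$, the essential image $\bT''$ of $F$ is already closed under direct sums, tensor products, the unit, and duals by the tensor-functor axioms; it is closed under subobjects by hypothesis (2), and hence also under quotients via the duality $X/Y \cong ((X^\vee)\cap Y^\perp)^\vee$, so $\bT''$ is a tensor subcategory of $\bT'$ onto which $F$ is essentially surjective and fully faithful, hence an equivalence.

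The main content is therefore $(1)\Rightarrow(2)$, which I would establish by induction on $\ell(X)$. The cases $\ell(X)\le 1$ are trivial since $F(X)$ is either zero or simple by (1). For $\ell(X)>1$, pick a simple subobject $L\subset X$ and a subobject $Y\subset F(X)$. In Case A, if $F(L)\subset Y$, then $Y/F(L)\subset F(X)/F(L)=F(X/L)$ (using exactness of $F$), so by induction $Y/F(L)=F(W)$ for some $W\subset X/L$; letting $Z\subset X$ be the preimage of $W$, both $F(Z)$ and $Y$ are the preimages of $F(W)$ under $F(X)\to F(X/L)$, so $F(Z)=Y$. In Case B, where $F(L)\not\subset Y$, simplicity of $F(L)$ (from (1)) forces $Y\cap F(L)=0$, so $Y\hookrightarrow F(X/L)$; induction yields $Y=F(W)$ as subobjects of $F(X/L)$, for some $W\subset X/L$. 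Setting $Z':=\pi^{-1}(W)\subset X$ with $\pi:X\to X/L$, exactness gives $F(Z')=F(\pi)^{-1}(F(W))$, and then $Y\cap F(L)=0$ together with $Y+F(L)=F(Z')$ yield $F(Z')=F(L)\oplus Y$.

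The remaining and most delicate point, which I expect to be the main obstacle, is to produce $Z\subset Z'$ in $\bT$ with $F(Z)=Y$ as subobjects of $F(X)$, not merely abstractly isomorphic. The decomposition $F(Z')=F(L)\oplus Y$ exhibits a section of $F(Z')\twoheadrightarrow F(W)$, and by full faithfulness of $F$ this section is $F$ of a morphism $W\to Z'$ in $\bT$; faithfulness and exactness ensure it is a genuine section, so $0\to L\to Z'\to W\to 0$ splits. Now the set of subobjects $Z\subset Z'$ with $Z\cap L=0$ and $Z\twoheadrightarrow W$ an isomorphism is a torsor under $\Hom(W,L)$, while the set of subobjects of $F(Z')$ splitting $F(Z')\twoheadrightarrow F(W)$ and avoiding $F(L)$ is a torsor under $\Hom(F(W),F(L))$. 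The functor $F$ is equivariant between these torsors with respect to the bijection $\Hom(W,L)\xrightarrow{\sim}\Hom(F(W),F(L))$ provided by full faithfulness, so it induces a bijection of torsors. In particular there is a unique $Z\subset Z'$ whose image under $F$ equals the specified splitting $Y$, completing the induction.
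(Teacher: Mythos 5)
Your proof is correct. The two easy implications $(3)\Rightarrow(1)$ and $(2)\Rightarrow(3)$ are handled exactly as in the paper (the latter via duality to get closure under quotients from closure under subobjects), and the real content in both treatments is $(1)\Rightarrow(2)$ by induction. The difference lies in how the induction is organized. The paper inducts on the length of the subobject $Z\subset F(X)$ and peels off a simple \emph{quotient} of $Z$: writing $Z'\hookrightarrow Z\tto L$, the induction hypothesis gives $Z'=F(Y')$, the simple $L$ is identified with $F(L')$ for a simple $L'\subset X/Y'$ via full faithfulness, and $Z$ is recovered as the preimage of $F(L')$ under $F(X)\to F(X/Y')$ in a single step with no case analysis. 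You instead induct on the length of the ambient object $X$ and peel off a simple \emph{subobject} $L\subset X$, which forces the dichotomy $F(L)\subset Y$ versus $Y\cap F(L)=0$; the second case is where your argument has to work harder, since the subobject $Y$ need not be aligned with the chosen filtration of $X$, and you resolve this with the splitting-plus-torsor argument (the complements of $L$ in $Z'$ form a torsor under $\Hom(W,L)$, compatibly under $F$ with the complements of $F(L)$ in $F(Z')$). That step is carried out correctly and is in fact spelled out more carefully than the corresponding final step of the paper's proof, but the paper's choice of induction variable buys a shorter argument by making the filtration of the subobject, rather than of the ambient object, the organizing principle.
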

\begin{proof}
We show that (1) implies (2). We consider $Z\subset F(Y)$ and prove the claim by induction on the length of $Z$, with base case length $0$. Consider a short exact sequence $Z'\hookrightarrow Z\tto L$ for simple $L$. Then $Z'=F(Y')$ for some $Y'\subset X$, by our induction hypothesis. Moreover, by full faithfulness, we can assume that $Z'\hookrightarrow F(X)$ is induced from a $Y'\hookrightarrow X$. Hence we have a commutative diagram
$$\xymatrix{
Z'\ar@{^{(}->}[r]\ar@{=}[d]&Z\ar@{->>}[r]\ar@{^{(}->}[d]&L\ar@{^{(}->}[d]\\
F(Y')\ar@{^{(}->}[r]&F(X)\ar@{->>}[r]&F(X/Y').}$$
By considering a Jordan-H\"older filtration of $X/Y'$ and using the fact that simple objects go to simple objects, clearly $L$ must be isomorphic to $F(L')$ for a simple subquotient $L'$ of $X/Y'$. By full faithfulness we see that in fact $L'\subset X/Y'$. Let $Y$ be the pre-image of $L'$ under $X\to X/Y'$. It follows that $Z\simeq F(Y)$.

Next we show that (2) implies (3). If $F$ is fully faithful (and exact) it yields an equivalence between $\bT$ and an abelian monoidal subcategory of $\bT'$ closed under taking duals. By the extra assumption in (2), and the application of duality to it, in this case the subcategory is closed under taking subquotients and hence a tensor subcategory.

That (3) implies (1) is obvious. 
\end{proof}

\begin{remark}\label{RemInj}
In \cite{EGNO}, any fully faithful tensor functor is called injective. If $\bT$ is a finite tensor category (and $\bT'$ arbitrary), the two definitions coincide, see \cite[Proposition~6.3.1]{EGNO}. For general tensor categories, the notion of injective functor as above is stronger, consider for instance the restriction functor $\Rep G\to \Rep B$ for a Borel subgroup $B$ of a non-abelian connected reductive group $G$, e.g. $G=SL_2$ and $B$ 
the subgroup of upper triangular matrices. 

\end{remark}

\begin{example}
\label{ExDTP}
For tensor subcategories $\cD\subset\cC$ and $\cD'\subset\cC'$, the canonical tensor functor
$$\cD\boxtimes\cD'\;\to\; \cC\boxtimes\cC'$$
is injective. Indeed, one verifies easily that simple objects in $\cD\boxtimes\cD'$ are those of the form $(V_1,V_2)$ for simple objects $V_1\in\cD,V_2\in\cD'$. Full faithfulness follows from the property that every object in $\cD\boxtimes\cD'$ is a quotient as well as a subobject of some objects in $\cD\otimes\cD'$.
\end{example}

We recall \cite[Definition~1.8.4]{EGNO}.

\begin{definition}
A tensor functor $F:\bT\to\bT'$ is {\bf surjective} if every object in $\bT'$ is a subquotient of one in the essential image of $F$.
\end{definition}

\begin{example}\label{ExGH} \cite{DM}
Consider a homomorphism $f:H\to G$ of affine group schemes over~$\fk$, with corresponding restriction functor $F: \Rep G\to \Rep H$.
\begin{enumerate}
\item The functor $F$ is injective if and only if $f$ is faithfully flat (which here just means that $\cO(G)\to\cO(H)$ is injective).
\item The functor $F$ is surjective if and only if $f$ is a closed immersion. 
\end{enumerate}
\end{example}

\begin{corollary}\label{CorEq}
\begin{enumerate}
\item Every tensor functor is a composition of a surjective followed by an injective tensor functor, in a unique way (up to equivalence).
\item A tensor functor is an equivalence if and only if it is both surjective and injective.
\end{enumerate}
\end{corollary}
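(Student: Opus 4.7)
The plan is to establish (2) as a direct consequence of Lemma~\ref{ThmInj} and then to prove (1) by constructing the intermediate category explicitly as the tensor subcategory of $\bT'$ generated by the essential image of $F$.

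For (2), one direction is immediate: every equivalence is both injective and surjective. For the converse, suppose $F:\bT\to\bT'$ is both injective and surjective. By Lemma~\ref{ThmInj}(3), $F$ identifies $\bT$ with a tensor subcategory of $\bT'$, which by the definition in Section~\ref{12345} is closed under subquotients. Surjectivity of $F$ then forces every object of $\bT'$ to lie in this subcategory, giving essential surjectivity on top of the full faithfulness coming from injectivity, so $F$ is an equivalence.

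For the existence part of (1), I will let $\cD\subseteq\bT'$ be the full subcategory of all subquotients of objects of the form $F(X)$, $X\in\bT$. Since $F$ is exact and symmetric monoidal and intertwines duals up to canonical isomorphism, the essential image of $F$ is already closed under finite direct sums, tensor products, and duals; closing under subquotients yields a topologising subcategory still stable under these operations, hence a tensor subcategory. The inclusion $\iota:\cD\hookrightarrow\bT'$ is an equivalence onto a tensor subcategory and so is injective by Lemma~\ref{ThmInj}(3). The factorisation $F=\iota\circ\bar F$ determines $\bar F:\bT\to\cD$ uniquely up to canonical isomorphism, and by the very definition of $\cD$ every object of $\cD$ is a subquotient of some $\bar F(X)$, so $\bar F$ is surjective.

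For uniqueness, suppose $F\simeq I\circ S$ is any other factorisation with $S:\bT\to\cE$ surjective and $I:\cE\to\bT'$ injective. Injectivity of $I$ identifies $\cE$ with the tensor subcategory $I(\cE)\subseteq\bT'$; this subcategory contains every $F(X)$ and is closed under subquotients, so contains $\cD$. Conversely, surjectivity of $S$ means every object of $\cE$ is a subquotient of some $S(X)$, so every object of $I(\cE)$ is a subquotient of $F(X)$ and therefore lies in $\cD$. Hence $I(\cE)=\cD$, and the resulting equivalence $\cE\xrightarrow{\sim}\cD$ intertwines the two factorisations. The only mildly technical point I anticipate is checking that $\cD$ is stable under duals: this uses $F(X^\vee)\simeq F(X)^\vee$ together with the fact that in a rigid category taking duals exchanges subobjects and quotient objects, so a subquotient of $F(X)$ admits a dual realised as a subquotient of $F(X^\vee)$.
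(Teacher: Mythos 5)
Your proposal is correct and follows the same route the paper intends: the paper dismisses part (1) as "a triviality" and derives part (2) from Lemma~\ref{ThmInj}, which is exactly what you do, just with the routine details (closure of the subquotient-closure of the essential image under $\otimes$ and duals, and the uniqueness of the factorisation) written out explicitly. No gaps.
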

\begin{proof}
Part (1) is a triviality. Part (2) follows from~\autoref{ThmInj}, or from \cite[Proposition~4.4.10]{CEH}.
\end{proof}

The following observation is straightforward, but useful.

\begin{remark}\label{RemSq}
Consider the commutative square (of solid arrows) in $\PTann_{\fk}$
$$\xymatrix{
\bT_1\ar@{->>}[rr]\ar[d]&& \bT_2\ar[d]\ar@{-->}[dll]\\
\bT_3\ar@{^{(}->}[rr]&&\bT_4
}$$
where the upper horizontal arrow is surjective and the lower horizontal arrow is injective. Then $\bT_2\to\bT_4$ actually factors through $\bT_3\to \bT_4$.
\end{remark}

\subsection{Potential properties of tensor categories}

Part (1) of the following definition was introduced by the third author, see \cite[Definition~4.3]{BE}. Note that we actually use a slight variation of the latter definition of incompressibility, which could in principle deviate for non-finite incompressible categories (see Remark~\ref{RemInj}). However, all known examples of incompressible categories are (unions of) finite tensor categories. Parts (4) and (5) were introduced in \cite{ComAlg}, the latter as a direct generalisation of the same term for algebraic groups.

\begin{definition}\label{Defs}
Let $\cD\in\PTann_{\bk}$.
\begin{enumerate}

\item $\cD$ is {\bf incompressible} if one of the following equivalent conditions is satisfied:
\begin{enumerate}
\item Every tensor functor $\cD\to\bT$ is injective;
\item Every surjective tensor functor $\cD\to\cC$ is an equivalence.
\end{enumerate} 

\item $\cD$ is {\bf subterminal} if for every tensor category $\bT$, there is at most one (up to isomorphism) tensor functor $\bT\to\bD$ ({\it i.e.} $\cD$ is subterminal in the category $\PTann_{\bk}$).
\item $\cD$ is a {\bf Bezrukavnikov category} if for each pair of a surjective tensor functor $\bT_1\to \bT_2$ between pretannakian categories and a tensor functor $\bT_1\to\bD$, there also exists a tensor functor $\bT_2\to \bD$.
\item $\cD$ is {\bf Geometrically Reductive} (or simply {\bf GR}) if for every morphism $X\tto\unit$ in $\cC$, there exists $n>0$ for which $\Sym^n X\tto\unit$ is split.
\item $\cD$ is {\bf Maximally Nilpotent} (or simply {\bf MN}) if:
\begin{enumerate}
\item For every non-trivial simple $L\in\bT$, the algebra $\Sym(L)$ is finite, and
\item For every non-split $\unit\to X$ in $\bT$, $\unit\to\Sym^n X$ is zero for some $n\in\mN$.
\end{enumerate}
\end{enumerate}

\end{definition}

In order to motivate briefly the new definitions, we can observe that a necessary condition for $\MdGr_{\bk}$ to have a terminal object is that every incompressible category in $\MdGr_{\bk}$ is subterminal and a Bezrukavnikov category.

\begin{question}\label{QIncomp}
Is every tensor subcategory of an incompressible pretannakian category incompressible?
\end{question}
We will answer this question affirmatively for finite tensor categories in Section~\ref{SecAlgebras}.

\begin{remark}\label{RemLimInc}
\begin{enumerate}
\item That the two characterisations in \ref{Defs}(1) are indeed equivalent follows immediately from Corollary~\ref{CorEq}.
\item If $\bT$ is a finite tensor category, it is incompressible if and only if every tensor functor $\bT\to\bT'$ is full, see Remark~\ref{RemInj}.
\item A direct limit of incompressible categories is incompressible.
\end{enumerate}
\end{remark}

\begin{example}
In \cite{B}, Bezrukavnikov proved that $\Vecc$ satisfies the condition in Definition~\ref{Defs}(2). The same proof applies to $\sVec$. An alternative proof follows from \cite[Lemma~3.3.2(ii) and Proposition~3.3.5(iii)]{Tann}.
\end{example}

\subsection{Symmetric algebras}
Let $\cC$ be a tensor category over $\bk$.
\subsubsection{}
We consider a short exact sequence
$$0\to X\to Y\to Z\to 0$$
in $\cC$. This filtration on $Y$, with $\gr Y\simeq X\oplus Z$, induces a filtration on the tensor and symmetric powers of $Y$ and we have a canonical (bigraded) epimorphism in $\Ind\cC$
\begin{equation}\label{grSym}
\Sym(X)\otimes \Sym(Z)\simeq \Sym(\gr Y)\;\tto\; \gr\Sym(Y).
\end{equation}
This epimorphism is always an isomorphism if and only if $\cC$ is Frobenius exact, see~\cite{Tann, EOf}. In this section we show that in general we can easily control the kernel of ~\eqref{grSym}.

\begin{theorem}\label{ThmSym}
Let $R\in\Alg\cC$ denote the image of the algebra morphism $\Sym(X)\to\Sym(Y)$. Then \eqref{grSym} induces an isomorphism
$$R\otimes \Sym (Z)\;\simeq\;\gr\Sym( Y).$$
\end{theorem}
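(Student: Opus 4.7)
The plan is to reformulate the filtration on $\Sym^n Y$ in ideal-theoretic terms and reduce the injectivity of the map $R\otimes\Sym Z\to\gr\Sym Y$ to a single kernel-containment statement via the snake lemma. Let $I\subset \Sym Y$ denote the ideal generated by the image of $X\hookrightarrow Y\hookrightarrow \Sym Y$. Then $F^k\Sym^n Y$ coincides with the image of the multiplication map $\mu_{k,n}\colon R^k\otimes \Sym^{n-k} Y\to \Sym^n Y$, because $R^k\subset \Sym^k Y$ is generated by $k$-fold products of elements of $X$. Moreover $\Sym Y/I\simeq \Sym Z$, since the kernel of the surjection $\Sym Y\twoheadrightarrow \Sym Z$ is precisely the ideal generated by $X$. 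These facts make the map $R\otimes \Sym Z\to \gr\Sym Y$ well-defined: on the bigraded piece $R^k\otimes \Sym^{n-k}Z$, it arises from the composite $R^k\otimes \Sym^{n-k}Y\twoheadrightarrow F^k\Sym^n Y\twoheadrightarrow \gr^k\Sym^n Y$, which annihilates $R^k\otimes X\cdot \Sym^{n-k-1}Y$ (whose image in $\Sym^n Y$ lies in $F^{k+1}$). Surjectivity on each graded piece is immediate.

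For injectivity, I would apply the snake lemma to the commutative diagram with exact rows
\begin{equation*}
\begin{array}{ccccccccc}
0 &\to& R^k\otimes X\cdot\Sym^{n-k-1}Y &\to& R^k\otimes \Sym^{n-k}Y &\to& R^k\otimes \Sym^{n-k}Z &\to& 0 \\
&& \downarrow && \downarrow && \downarrow && \\
0 &\to& F^{k+1}\Sym^n Y &\to& F^k\Sym^n Y &\to& \gr^k\Sym^n Y &\to& 0.
\end{array}
\end{equation*}
The top row is obtained by applying the exact functor $R^k\otimes(-)$ to the short exact sequence $0\to X\Sym^{n-k-1}Y\to \Sym^{n-k}Y\to \Sym^{n-k}Z\to 0$; the bottom row is exact by construction; the left and middle vertical arrows are surjective by the ideal-theoretic description of $F^{k+1}$ and $F^k$. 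The snake lemma identifies the kernel of the right vertical arrow with the cokernel of the induced map on kernels of the left and middle verticals, so injectivity is equivalent to the containment $\ker(\mu_{k,n})\subseteq R^k\otimes X\cdot \Sym^{n-k-1}Y$ inside $R^k\otimes \Sym^{n-k}Y$.

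The main obstacle will be this containment: one must show that any relation in $\Sym Y$ between elements of $R^k$ and $\Sym^{n-k}Y$ that multiplies to zero in $\Sym^n Y$ already involves an $X$-factor in the $Y$-slot. I would proceed by induction on $n-k$. The base case $n=k$ is immediate, since $\mu_{k,k}$ is the inclusion $R^k\hookrightarrow \Sym^k Y$. For the inductive step, I would use the surjection $\Sym^{n-k-1}Y\otimes Y\twoheadrightarrow \Sym^{n-k}Y$ together with the short exact sequence $0\to X\to Y\to Z\to 0$ to express $\ker(\mu_{k,n})$ in terms of the kernels of lower-order multiplications and the $S_n$-symmetry relations that cut out $\Sym^n Y$ inside $Y^{\otimes n}$. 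The crucial move is that commutativity of $\Sym Y$ allows one to absorb any $X$-factor appearing in the $\Sym^{n-k}Y$-slot into the $R^k$-slot, producing an element of $R^{k+1}\otimes \Sym^{n-k-1}Y$ to which the inductive hypothesis applies.
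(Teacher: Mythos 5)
Your reduction is sound as far as it goes: the identification $F^k\Sym^nY=\mathrm{im}\bigl(R^k\otimes\Sym^{n-k}Y\to\Sym^nY\bigr)$, the exactness of the top row (using $\ker(\Sym^mY\tto\Sym^mZ)=X\cdot\Sym^{m-1}Y$, which does follow from the splitting $\Sym^mZ\to\gr^0\Sym^mY\to\Sym^mZ$), and the snake-lemma argument correctly reduce the theorem to the containment $\ker(\mu_{k,n})\subseteq R^k\otimes X\cdot\Sym^{n-k-1}Y$. But that containment \emph{is} the theorem, merely restated, and your third paragraph does not prove it. The proposed induction is circular in its key move: "absorbing an $X$-factor appearing in the $Y$-slot into the $R^k$-slot" presupposes that the kernel element already lies in $R^k\otimes X\cdot\Sym^{n-k-1}Y$, which is exactly what has to be shown; for the complementary part (the image in $R^k\otimes\Sym^{n-k-1}Y\otimes Z$ after lifting along $\Sym^{n-k-1}Y\otimes Y\tto\Sym^{n-k}Y$) no mechanism is offered. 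Moreover, "expressing $\ker(\mu_{k,n})$ in terms of lower-order kernels and the $S_n$-symmetry relations" is precisely where the difficulty lives: in characteristic $p$ the kernel of $\Sym^{m-1}Y\otimes Y\tto\Sym^mY$ has no naive Koszul-type description, and this failure is exactly why \eqref{grSym} itself is generally \emph{not} injective. An argument that never distinguishes why $R\otimes\Sym Z\to\gr\Sym Y$ should be injective when $\Sym X\otimes\Sym Z\to\gr\Sym Y$ is not cannot be complete. (The element-chasing language is also not directly available in a general tensor category, but that is a secondary, fixable issue.)

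The missing structural input is the coalgebra structure on $\Sym Y$. The paper's proof observes that $\Sym Y$ is a Hopf algebra with $Y$ primitive, that the coproduct respects the filtration, hence that the kernel of \eqref{grSym} is a bigraded \emph{coideal} of $\Sym X\otimes\Sym Z$; Lemma~\ref{l1} (proved by passing to graded duals) then forces this kernel to equal $\ker(\Sym X\to R)\otimes\Sym Z+\Sym X\otimes 0$. In your formulation, the same input appears as follows: the component $\Sym^nY\to\Sym^kY\otimes\Sym^{n-k}Y$ of the coproduct, followed by $\id\otimes(\Sym^{n-k}Y\tto\Sym^{n-k}Z)$, kills $F^{k+1}\Sym^nY$ and restricts on $F^k$ to a map whose composite with $\mu_{k,n}$ is the injection $R^k\otimes\Sym^{n-k}Z\hookrightarrow\Sym^kY\otimes\Sym^{n-k}Z$; this retraction is what actually yields your containment. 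Without the coproduct (or an equivalent device), the snake-lemma reduction leaves the essential point unproved.
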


We need some preparation for the proof. By a graded (co)algebra 
in $\cC$ we will mean a $\mN$-graded (co)algebra $A$ in $\Ind\cC$ with graded pieces in $\cC$
and $A[0]=\unit$. Let $A^\vee=\oplus_i A[i]^\vee$ be the graded dual to a graded algebra (coalgebra) $A$. 
Then $A^\vee$ is a graded coalgebra (algebra).

\begin{lemma}\label{l1}
Let $A,B$ be graded coalgebras in $\cC$. 
For a bigraded coideal $I\subset A\otimes B$, we have 
$$I=(I\cap A)\otimes B + A\otimes (I\cap B).$$   
\end{lemma}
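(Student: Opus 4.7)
The plan is to exploit the tensor-product coalgebra structure on $C := A \otimes B$, whose comultiplication is $\Delta_C = (1 \otimes \tau_{A,B} \otimes 1)(\Delta_A \otimes \Delta_B)$ and whose counit is $\epsilon_C = \epsilon_A \otimes \epsilon_B$, together with the two canonical coalgebra morphisms $\pi_A := \id_A \otimes \epsilon_B \colon C \to A$ and $\pi_B := \epsilon_A \otimes \id_B \colon C \to B$. The central identity, a direct consequence of the counit axioms for $A$ and $B$ after unravelling the braiding, is
$$(\pi_A \otimes \pi_B) \circ \Delta_C = \id_C.$$
Moreover, since $A[0]=B[0]=\unit$ and $I$ is bigraded, the counit $\epsilon_B$ annihilates every piece of $I$ of positive $B$-degree, so that $\pi_A(I) = I \cap A$; symmetrically $\pi_B(I) = I \cap B$.

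For the inclusion $I \subseteq (I \cap A) \otimes B + A \otimes (I \cap B)$ I would argue bidegree by bidegree. Pieces of $I$ in bidegree $(i,0)$ lie in $I \cap A$ and pieces in bidegree $(0,j)$ lie in $I \cap B$, giving the inclusion at once. For a piece $x$ in bidegree $(i,j)$ with $i,j>0$, apply the swapped projection $\pi_B \otimes \pi_A$ to $\Delta_C(x)$: mirroring the derivation of $(\pi_A \otimes \pi_B) \circ \Delta_C = \id_C$ and using that $\epsilon_A,\epsilon_B$ kill positive-degree terms, one checks $(\pi_B \otimes \pi_A)(\Delta_C(x)) = \tau_{A,B}(x)$. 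The coideal condition $\Delta_C(I) \subseteq I \otimes C + C \otimes I$ then forces
$$\tau_{A,B}(x) \;\in\; \pi_B(I) \otimes A + B \otimes \pi_A(I) \;=\; (I \cap B) \otimes A + B \otimes (I \cap A),$$
and inverting the braiding yields $x \in (I \cap A) \otimes B + A \otimes (I \cap B)$.

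The reverse inclusion $(I \cap A) \otimes B + A \otimes (I \cap B) \subseteq I$ reduces by symmetry to $(I \cap A) \otimes B \subseteq I$, which I expect to follow from a bidegree analysis showing that the bigraded coideal $I$ must absorb the tensor products built from $I \cap A$ with $B[j]$ in each bidegree $(i,j)$, using comultiplication-compatibility of the bigrading together with the inclusion $I \cap A \subseteq I$. The main technical obstacle is the careful bookkeeping of the braiding $\tau_{A,B}$ in the key identity $(\pi_A \otimes \pi_B) \circ \Delta_C = \id_C$; this computation is transparent in a strict symmetric setting but requires attention when the underlying tensor category $\cC$ is presented with explicit associators and symmetries, and it is the one place where one must resist the temptation to manipulate coalgebra elements as if they were ordinary vectors.
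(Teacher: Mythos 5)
Your argument for the inclusion $I\subseteq(I\cap A)\otimes B+A\otimes(I\cap B)$ is correct and takes a genuinely different route from the paper. The paper first replaces $A,B$ by $A/(I\cap A)$, $B/(I\cap B)$ to reduce to showing that $I\cap A=0=I\cap B$ forces $I=0$, and proves this by passing to the graded dual: $I^\perp\subset A^\vee\otimes B^\vee$ is a bigraded subalgebra containing $A^\vee$ and $B^\vee$, hence all of $A^\vee\otimes B^\vee$. Your counit-projection argument avoids graded duals entirely, and it can be compressed further: the identity $(\pi_A\otimes\pi_B)\circ\Delta_C=\id_C$ applied directly to $\Delta_C(I)\subseteq I\otimes C+C\otimes I$ gives
$$I=(\pi_A\otimes\pi_B)(\Delta_C(I))\subseteq \pi_A(I)\otimes B+A\otimes\pi_B(I)=(I\cap A)\otimes B+A\otimes(I\cap B)$$
in one step, with no case distinction on bidegrees and no need for the swapped projection. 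All the identities you use ($(\pi_A\otimes\pi_B)\circ\Delta_C=\id_C$, $\pi_A(I)=I\cap A$ via connectedness, and the behaviour of images of sums of subobjects under $\otimes$ of morphisms) are morphism-level statements valid in $\Ind\cC$, so the element-notation worry you flag at the end is harmless.

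The genuine gap is the reverse inclusion. You assert that $(I\cap A)\otimes B\subseteq I$ should ``follow from a bidegree analysis,'' but no such analysis can succeed, because the statement is false for a general bigraded coideal. Take $A=B=\unit\oplus V$ concentrated in degrees $0,1$: connectedness forces $A[1]=V$ to be primitive, so $I:=A[1]\otimes B[0]$ is a bigraded coideal of $A\otimes B$ with $I\cap A=A[1]$ and $I\cap B=0$, yet $(I\cap A)\otimes B=A[1]\otimes B$ strictly contains $I$. You are not alone here: the paper's own reduction only yields $I\subseteq\ker\bigl(A\otimes B\to (A/(I\cap A))\otimes(B/(I\cap B))\bigr)$, i.e.\ the forward inclusion, and in the one place the lemma is used (the proof of Theorem~\ref{ThmSym}) the coideal $I$ is the kernel of a morphism of Hopf algebras, hence also an ideal, so that $(I\cap A)\otimes B+A\otimes(I\cap B)\subseteq I\cdot(A\otimes B)\subseteq I$ comes for free. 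The honest fix is therefore either to weaken the conclusion to the inclusion $\subseteq$ (which is all the application needs from the coalgebra side) or to add the hypothesis that $I$ is also an ideal and derive the reverse inclusion from that; your planned bidegree argument using only the coideal and grading structure cannot be completed.
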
  
 
\begin{proof} 
We can replace $A$ by $A/(I\cap A)$ and $B$ by $B/(I\cap B)$, so that it is sufficient to show that $I\cap A=0=I\cap B$ implies that we have $I=0$.
Let $C=I^\perp \subset A^\vee\otimes B^\vee$, a bigraded subalgebra. 
Then $C$ contains $A^\vee$ and $B^\vee$, hence $C=A^\vee\otimes B^\vee$, or equivalently $I=0$. 
\end{proof}

\begin{proof}[Proof of Theorem~\ref{ThmSym}]
We consider the canonical Hopf algebra structure of $\Sym( Y)$ (with $Y$ primitive). The coproduct respects the filtration on $\Sym (Y)$, so that \eqref{grSym} becomes a morphism of Hopf algebras. We can thus apply Lemma~\ref{l1} and the fact that $\Sym (Z)\to \gr\Sym (Y)$ is a monomorphism (since we can compose with the epimorphism $\gr \Sym (Y)\tto \Sym (Z)$ (coming from $\Sym (Y)\tto \Sym (Z)$) to get the identity on $\Sym (Z)$.
\end{proof}


\section{Tannaka - Krein duality for tensor categories}\label{TKduality}

Recall that $\bk$ is an algebraically closed field and fix a pretannakian category $\bD$ over~$\bk$. All results in this section remain valid for perfect fields, or even arbitrary fields if we assume that the base category $\bD$ is `absolute', see~\cite{CEOP}.

\subsection{$\cD$-groups}

\subsubsection{} 
To any tensor functor $F:\bT\to\bD$, we can associate an affine group scheme $\uAut(F)$ in~$\bD$ as in \cite[\S 8]{Del90}, with
$$\cO(\uAut(F))\;=\;\int^{X\in\bT} (FX)\otimes (FX)^\vee.$$ In particular, we have the `fundamental group' $\pi(\bD)=\uAut(\Id_{\bD})$.
By construction, the affine group scheme $\pi(\bD)$ acts {\em canonically} on every object of $\Ind\bD$. We refer to \cite[\S 8]{Del90} for details.

\subsubsection{}
For an affine group scheme $G$ in $\bD$ equipped with a homomorphism 
$$
\phi: \pi(\bD)\to G
$$ 
consider the adjoint coaction
$$\cO(G)\to \cO(G)^{\otimes 3}\to \cO(\pi(\bD))\otimes \cO(G)\otimes \cO(\pi(\bD))\to \cO(\pi(\bD))\otimes \cO(G),$$
where the first arrow is twice the comultiplication, the second arrow comes from the morphism $\phi^*: \cO(G)\to \cO(\pi(\bD))$ in the left and right factor, followed by the antipode on the right factor, and the third arrow is multiplication of the two factors $\cO(\pi(\bD))$.

If this morphism corresponds to the canonical coaction on the object $\cO(G)\in \Ind\bD$, we call the diagram $\pi(\bD)\to G$ a {\bf $\bD$-group}. The category of $\bD$-groups is the corresponding full subcategory of the slice category of affine group schemes under $\pi(\bD)$. In other words, a morphism $(H,\psi)\to (G,\phi)$ is a homomorphism $f:H\to G$ such that $f\circ \psi=\phi$.

\begin{example}\label{ExTriv}
For an affine group scheme $G$ over~$\bk$ interpreted as an affine group scheme in $\bD=\sVec$ (where $\pi(\sVec)=\mZ/2$), a diagram $\phi:\mZ/2\to G$ is a $\sVec$-group if and only if $\phi$ is central.
\end{example}

\subsubsection{}For a tensor functor $F:\bT\to\bD$, the obvious morphism $\psi:\pi(\bD)\to\uAut(F)$ yields a $\bD$-group. Conversely, to any $\bD$-group $\phi:\pi(\bD)\to G$, we can associate the tensor category $\Rep(G,\phi)$ of $G$-representations in $\bD$ such that the restriction to $\pi(\bD)$ yields the canonical $\pi(\bD)$-action. We denote the forgetful functor to $\bD$ by $\omega$.  If in Example~\ref{ExTriv} we let $\phi$ be trivial, then $\Rep(G,\phi)$ yields the ordinary category of (non-super) representations of $G$.

The category of all $G$-representations in $\bD$ is denoted by $\Rep G$. By \cite[Lemma~7.3.2]{ComAlg} the ind-completion of $\Rep G$ is the category of all $\cO(G)$-comodules in $\Ind\bD$.

\subsubsection{} Consider the following 2-category. Objects are tensor functors $F:\cC\to\cD$, for arbitrary $\cC\in\Tens_{\bk}$ and our fixed $\cD\in\PTann_{\bk}$. The 1-morphisms from $F_1:\cC_1\to\cD$ to $F_2:\cC_2\to\cD$ are given by pairs $(G,\alpha)$ of a tensor functor $G:\cC_1\to \cC_2$ and a natural transformation (automatically an isomorphism), $\alpha: F_1\Rightarrow F_2\circ G$. Finally, a 2-morphism from $(G,\alpha)$ to $(G',\alpha')$, where both are 1-morphisms from $F_1$ to $F_2$ as above, is given by a natural transformation $\beta: G\Rightarrow G'$ which satisfies $(F_2\beta)\circ \alpha =\alpha'$.

Since $F_2$ is faithful and $\alpha$ invertible, such a $\beta$ is necessarily unique, so the 2-category is actually equivalent to its 1-truncation, which we denote by $\PTann(\cD)$. Note that isomorphisms in $\PTann(\cD)$ correspond to (classes of) pairs $(G,\alpha)$ where $G$ is an equivalence.

\begin{remark}
Recall the category $\cD\mbox{-}\Tann$ from Section~\ref{SecDTann} and consider the slice category $\PTann_{\bk}/\cD$ of $\cD$ as an object in the category $\PTann_{\bk}$. Morphisms in the latter are thus equivalence classes of tensor functors leading to commutative triangles. We have obvious forgetful functors
$$\PTann(\cD)\;\to\; \PTann_{\bk}/\cD\;\to\; \DTann.$$
These are generally not equivalences.
Obviously the second functor is an equivalence if and only if $\cD$ is subterminal. However, even when $\cD$ is subterminal, the first functor is not faithful (but always full), see for instance Example~\ref{ExInnAut} and the (classical well-known) case $\cD=\Vecc$ in Theorem~\ref{ThmTann}.
\end{remark}

\subsection{Tannaka-Krein duality}

\begin{theorem}\label{ThmTann}
The assignment
$$(F: \bT\to\bD)\quad \mapsto\quad \left( \psi:\pi(\bD)\to \underline{\Aut}^{\otimes}(F)\right)$$
yields a contravariant equivalence between $\PTann(\cD)$ and the category of $\bD$-groups, with inverse given by
$$(\phi:\pi(\bD)\to G)\quad\mapsto\quad (\omega:\Rep(G,\phi)\to \bD).$$
\end{theorem}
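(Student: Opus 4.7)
The plan is to construct the two mutually inverse (contravariant) functors and verify on objects and morphisms. The analytic content, namely the recovery of a tensor functor from its automorphism group, is essentially the tannakian reconstruction of \cite[\S8]{Del90}; the new content is organising it into a \emph{morphism} of 2-categories using the slice formalism of $\PTann(\cD)$ and the newly-introduced category of $\cD$-groups.

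First I would check that both assignments are well defined on objects. For $F:\bT\to\cD$, the canonical action of $\pi(\cD)$ on every object of $\Ind\cD$ produces, naturally in $X\in\bT$, a $\pi(\cD)$-action on $FX$ and therefore a homomorphism $\psi:\pi(\cD)\to\uAut(F)$. Writing $\cO(\uAut(F))=\int^{X\in\bT}FX\otimes FX^\vee$ and noting that the canonical $\pi(\cD)$-action is induced factor-by-factor on each $FX\otimes FX^\vee$, a direct computation identifies the associated coaction with the adjoint coaction described in the definition of a $\cD$-group; so $(\uAut(F),\psi)$ is a $\cD$-group. Conversely, for $(\phi:\pi(\cD)\to G)$ a $\cD$-group, $\Rep(G,\phi)$ is pretannakian and $\omega:\Rep(G,\phi)\to\cD$ is a tensor functor, with the $\cD$-group axiom ensuring that the forgetful functor lands in $\cD$ (not just in $\Ind\cD$ with some auxiliary $\pi(\cD)$-twist).

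Next I would show that the composition $F\mapsto(\uAut(F),\psi)\mapsto(\omega:\Rep(\uAut(F),\psi)\to\cD)$ recovers $F$ via the tautological comparison functor $\bT\to\Rep(\uAut(F),\psi)$ sending $X\mapsto FX$ equipped with its canonical $\uAut(F)$-action, which commutes with the forgetful functors to $\cD$. Applying the Barr--Beck comonadicity discussed in Section~\ref{monad}, $F$ identifies $\Ind\bT$ with comodules over the comonad $FF_\ast$ on $\Ind\cD$. The crux is to identify $FF_\ast$ with the comonad given by tensoring with $\cO(\uAut(F))$ subject to the constraint that the underlying $\pi(\cD)$-coaction is the canonical one: this is exactly what the $\cD$-group structure $\psi$ encodes. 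Restricting from $\Ind$-categories to finite length objects yields the equivalence $\bT\simeq\Rep(\uAut(F),\psi)$ over $\cD$. For the other composite, from $(\phi:\pi(\cD)\to G)$ one obtains $\omega:\Rep(G,\phi)\to\cD$ and must identify $\uAut(\omega)\simeq G$ as $\cD$-groups; this uses the explicit coend formula for $\cO(\uAut(\omega))$ together with \cite[Lemma~7.3.2]{ComAlg} which identifies $\Ind\Rep G$ with $\cO(G)$-comodules in $\Ind\cD$, and the $\pi(\cD)$-compatibility is automatic from how $\Rep(G,\phi)$ was defined.

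Finally, on morphisms, a 1-morphism $(H,\alpha)$ in $\PTann(\cD)$ from $F_1$ to $F_2$ yields a $\cD$-group homomorphism $\uAut(F_2)\to\uAut(F_1)$ by naturality of the $\uAut(-)$ construction, and conversely a $\cD$-group homomorphism $G_2\to G_1$ produces a restriction tensor functor $\Rep(G_1,\phi_1)\to\Rep(G_2,\phi_2)$ commuting with the forgetful functors. That these two procedures are mutually inverse bijections on isomorphism classes of 1-morphisms uses the equivalence of the previous paragraph; the 2-categorical subtleties collapse because $F_2$ is faithful, so any two-morphism is forced to be unique, as already noted in the definition of $\PTann(\cD)$.

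The main obstacle is the identification in the second paragraph: that the comonad $FF_\ast$ on $\Ind\cD$ coincides with the comonad whose coalgebras are $\uAut(F)$-representations in $\cD$ restricting along $\psi$ to the canonical $\pi(\cD)$-action. All other steps are formal manipulations with coends and restriction functors once this identification is in hand, and this step is essentially Deligne's tannakian reconstruction \cite[\S8]{Del90} reinterpreted through the $\cD$-group framework.
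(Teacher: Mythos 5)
Your treatment of the first composite (start from $F$, pass to $\psi:\pi(\bD)\to\uAut(F)$, return to $\omega$) matches the paper: both reduce to Deligne's reconstruction \cite[Th\'eor\`eme~8.17]{Del90}, which gives the equivalence $\bT\xrightarrow{\sim}\Rep(\uAut(F),\psi)$ over $\cD$. The functoriality discussion and the collapse of the 2-categorical data by faithfulness of $F_2$ are also as in the paper.

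The gap is in the second composite. You assert that the identification $\uAut(\omega)\simeq G$ follows from ``the explicit coend formula for $\cO(\uAut(\omega))$ together with \cite[Lemma~7.3.2]{ComAlg}'' and that the rest is ``formal manipulations with coends.'' It is not formal. The coend defining $\cO(\uAut(\omega))$ runs over $X\in\Rep(G,\phi)$ only, whereas \cite[Lemma~7.3.2]{ComAlg} concerns all of $\Rep G$; to conclude that the canonical map $\cO(\uAut(\omega))\to\cO(G)$ (equivalently the homomorphism $G\to\uAut(\omega)$) is an isomorphism, one must know that the subcategory $\Rep(G,\phi)\subset\Rep G$ is large enough to reconstruct $\cO(G)$ --- a priori it could fail to contain a faithful family of representations, in which case $\uAut(\omega)$ would be a proper quotient of $G$. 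This is exactly where the $\cD$-group condition does real work, and the paper isolates it in Lemma~\ref{LemTan2}: the canonical functor $\Rep(G,\phi)\boxtimes\bD\to\Rep G$ is an equivalence, proved by comparing the comonads $\Psi\Psi_\ast$ and $\Phi\Phi_\ast$ through the equivalence $\bD\boxtimes\bD\simeq\Rep\pi(\bD)$ of \cite[Corollaire~8.23]{Del90} and an explicit computation of $\omega_\ast$ as $\pi(\bD)$-invariants in $\cO(G)\otimes(-)$. Combined with Lemma~\ref{LemTan1} this yields Proposition~\ref{PropTann} (a homomorphism of $\cD$-groups inducing an equivalence of $\Rep(-,\phi)$ categories is an isomorphism), which the paper applies after observing that the composite $\Rep(G,\phi)\to\Rep(\uAut(\omega),\psi)\to\Rep(G,\phi)$ is isomorphic to the identity. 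Your proposal contains no substitute for this ingredient, so the backward direction is not established as written.
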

\begin{proof}
We start by verifying that the first assignment corresponds to a functor. We consider a morphism in $\PTann(\cD)$, which is represented by a pair $(G,\alpha)$:
$$
\xymatrix{
\cC_1\ar[rd]^G\ar[dd]_{F_1}\\
\ar@{=>}[r]^-{\alpha}&\cC_2\ar[ld]^{F_2}\\
\cD.
}$$
This morphism is sent to the homomorphism of $\cD$-groups
$$\underline{\Aut}^{\otimes}(F_2)\to \underline{\Aut}^{\otimes}(F_1),\quad\xi\mapsto \alpha^{-1}\circ (\xi G)\circ \alpha.$$
It then follows directly that this does not depend on the representative $(G,\alpha)$.

It is straightforward to see that the second assignment corresponds to well-defined functors. By \cite[Th\'eor\`eme~8.17]{Del90}, the canonical tensor functor (which lifts $F$)
$$\bT\;\to\; \Rep(\underline{\Aut}^{\otimes}(F),\psi)$$
is an equivalence.
It follows that the composition of the functors in order of their appearance in the theorem is isomorphic to the identity.

For the other composition, we start from a $\bD$-group $(G,\phi)$. We set $H:= \underline{\Aut}^\otimes(\omega)$ and consider the natural homomorphism $\psi:\pi(\bD)\to H$. We have a canonical homomorphism of $\bD$-groups $(G,\phi)\to (H,\psi)$ coming from the co-action $$\omega X \otimes\omega X^\vee\to \cO(G).$$ In particular it follows that the composite of the equivalence from the first paragraph with restriction along $G\to H$,
$$\Rep(G,\phi)\;\xrightarrow{\sim}\; \Rep(H,\psi)\;\to\; \Rep(G,\phi),$$ 
is isomorphic to the identity. In particular, the second arrow is also an equivalence, 
so by Proposition~\ref{PropTann} below $G\to H$ is an isomorphism. The conclusion thus follows.
\end{proof}

\begin{lemma}\label{LemTan1}
If a homomorphism $G\to H$ of affine group schemes in $\bD$ induces an equivalence
$$\Rep H\to\Rep G,$$
then it must be an isomorphism.
\end{lemma}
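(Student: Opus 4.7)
The plan is to prove that the Hopf algebra map $f^*: \cO(H) \to \cO(G)$ in $\Ind\cD$ is an isomorphism, which is equivalent to $f: G \to H$ being an isomorphism of affine group schemes. My approach exploits the coend formula for the coordinate Hopf algebra of $\uAut^{\otimes}$ recalled in the paper right before Theorem~\ref{ThmTann}, applied to the forgetful functors $\omega_G : \Rep G \to \cD$ and $\omega_H : \Rep H \to \cD$. By Deligne's internal Tannakian reconstruction (\cite[Th\'eor\`eme~8.17]{Del90}), we have canonical identifications $G \cong \uAut^\otimes(\omega_G)$ and $H \cong \uAut^\otimes(\omega_H)$, equivalently
$$\cO(G) \,\cong\, \int^{W \in \Rep G} \omega_G(W)\otimes \omega_G(W)^\vee, \qquad \cO(H) \,\cong\, \int^{V \in \Rep H} \omega_H(V)\otimes \omega_H(V)^\vee.$$

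Next I would identify $f^*$ in these coend terms. Since $r$ is by hypothesis the restriction functor along $f$, there is a canonical identity $\omega_G \circ r = \omega_H$, and hence for every $V \in \Rep H$ the object $\omega_H(V)\otimes \omega_H(V)^\vee$ in $\cD$ equals $\omega_G(r(V))\otimes \omega_G(r(V))^\vee$. Composing the identity on this object with the cowedge of the right-hand coend at index $r(V)$ yields a cowedge on the left-hand coend, and by functoriality of $G \mapsto \cO(G)$ the induced map $\cO(H) \to \cO(G)$ is precisely $f^*$.

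Finally I would invoke the general principle that for any equivalence $r : \cC_1 \xrightarrow{\sim} \cC_2$ and any bifunctor $T : \cC_2^{\op}\times \cC_2 \to \cE$ into a cocomplete target, the re-indexing map $\int^{X \in \cC_1} T(r(X), r(X)) \to \int^{Y \in \cC_2} T(Y,Y)$ is an isomorphism; an inverse is constructed from a quasi-inverse to $r$ using the universal property of the right-hand coend. Applied with $T(V,W) = \omega_G(W)\otimes \omega_G(V)^\vee$, this forces $f^*$ to be an isomorphism. The only genuinely non-formal point is the identification of $f^*$ with the coend re-indexing map in the second paragraph, which is really just the naturality of Deligne's coend presentation of $\cO$; once that is in hand, the conclusion reduces to a standard manipulation of coends under equivalences.
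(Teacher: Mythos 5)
There is a genuine gap at the very first step: the identification $\cO(G)\cong \int^{W\in\Rep G}\omega_G(W)\otimes\omega_G(W)^\vee$ is false in general. By the paper's definition, that coend is $\cO(\uAut(\omega_G))$, and $\uAut(\omega_G)$ is strictly larger than $G$ whenever $\pi(\bD)$ is non-trivial. Already for $G$ the trivial group and $\bD=\sVec$ one has $\Rep G=\sVec$ and $\omega_G=\Id_{\sVec}$, so the coend is $\int^{W\in\sVec}W\otimes W^\vee\simeq\unit\oplus\unit=\cO(\mZ/2)=\cO(\pi(\sVec))$, not $\cO(G)=\unit$. Deligne's Th\'eor\`eme~8.17 does not assert $G\cong\uAut(\omega_G)$; it reconstructs the \emph{category} $\bT$ as $\Rep(\uAut(F),\psi)$ from a fibre functor $F$. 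The statement you actually need --- that a group is recovered as $\uAut$ of its forgetful functor --- holds for $\Rep(G,\phi)$ rather than $\Rep G$, and is precisely Proposition~\ref{PropTann} (Deligne's Proposition~8.22), whose proof in this paper passes through the very lemma you are proving; invoking it here would be circular. As written, your re-indexing argument only shows that $\uAut(\omega_H)\to\uAut(\omega_G)$ is an isomorphism, and extracting from this that $G\to H$ is an isomorphism requires knowing how $G$ sits inside $\uAut(\omega_G)$, which is again the content of Lemma~\ref{LemTan2} and Proposition~\ref{PropTann}.

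The repair is to replace the $\bk$-linear coend by the comonad of the forgetful functor, which is what the paper does: for $\Omega:\Rep G\to\bD$ the right adjoint $\Omega_\ast$ is the cofree-comodule functor, so $\Omega\Omega_\ast\simeq\cO(G)\otimes-$, and this presentation of $\cO(G)$, unlike the coend, is intrinsic to the pair $(\Rep G,\Omega)$. Since the restriction functor $r$ satisfies $\Omega_G\circ r=\Omega_H$, an equivalence $r$ identifies the two comonads, whence $\cO(H)\to\cO(G)$ is an isomorphism. The purely formal part of your argument --- invariance under an equivalence and the identification of the induced map with $f^\ast$ --- then goes through essentially unchanged in this comonadic formulation.
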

\begin{proof}
This follows from the observation that with $\Omega:\Rep G\to\bD$ the forgetful functor, we have an isomorphism of comonads
$$\Omega\Omega_\ast\;\simeq\; \cO(G)\otimes -,$$
showing that $\cO(H)\to\cO(G)$ must be an isomorphism.
\end{proof}

For a $\bD$-group $(G,\phi)$, we have obvious tensor subcategries
$$\Rep(G,\phi)\;\subset\; \Rep G\;\supset\;\bD,$$
leading to the functor in the lemma below.
This lemma is the crucial technical result in our proof of Theorem~\ref{ThmTann}. We point out that it can also be considered a consequence of the theorem, via \cite[Proposition~8.22]{Del90}.
\begin{lemma}\label{LemTan2}
For a $\bD$-group $(G,\phi)$, the canonical tensor functor
$$F:\Rep(G,\phi)\boxtimes\bD\;\to\;\Rep G $$
is an equivalence.
\end{lemma}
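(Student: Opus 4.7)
The plan is to verify that the canonical tensor functor $F:\Rep(G,\phi)\boxtimes\cD\to\Rep G$ is both injective and surjective, so that Corollary~\ref{CorEq}(2) yields the equivalence.

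For \emph{injectivity}, by Lemma~\ref{ThmInj} I reduce to proving that $F$ is fully faithful. Using the Hom-space description in the Deligne product, this amounts to the natural isomorphism
$$\Hom_{\Rep G}(X_1\otimes Y_1,X_2\otimes Y_2)\;\cong\;\Hom_{\Rep(G,\phi)}(X_1,X_2)\otimes_\bk\Hom_\cD(Y_1,Y_2)$$
for $X_i\in\Rep(G,\phi)$ and $Y_i\in\cD$. Since the $Y_i$ carry trivial $G$-action, standard invariants manipulations identify the left-hand side with $\Hom_\cD(\unit,(X_1^\vee\otimes X_2)^G\otimes Y_1^\vee\otimes Y_2)$. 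The key observation is that $(X_1^\vee\otimes X_2)^G\in\cD$ has trivial canonical $\pi(\cD)$-action: on $X_1^\vee\otimes X_2\in\Rep(G,\phi)$ the $\phi$-action coincides with the canonical one, and the $G$-invariants are manifestly $\phi$-fixed. Deligne's reconstruction \cite[Th\'eor\`eme~8.17]{Del90} identifies the objects of $\cD$ with trivial canonical $\pi(\cD)$-action as direct sums of $\unit$, i.e.\ $\Vecc_\bk\subset\cD$, over which the Hom from $\unit$ commutes with external tensor products to yield the required identification.

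For \emph{surjectivity}, the $\cD$-group hypothesis is equivalent to $\phi(\pi(\cD))$ being central in $G$ in the $\cD$-theoretic sense. Hence for any simple $V\in\Rep G$ the $\phi$-action is $G$-equivariant, and Schur's lemma reduces the $\phi$-coaction to a $\pi(\cD)$-character $\chi_V$. Via the identification $\cD\simeq\Rep\pi(\cD)$, this character corresponds to an invertible object, and after comparing $\chi_V$ with the canonical $\pi$-structure on $V$ one produces an invertible $L\in\cD$ such that $V\otimes L^\vee\in\Rep(G,\phi)$. Then $V\simeq F((V\otimes L^\vee)\boxtimes L)$. For non-simple $V$, one applies the argument $\phi$-isotypic-component-wise (the components are $G$-subobjects by centrality) and assembles the results into a single object of $\Rep(G,\phi)\boxtimes\cD$ whose image contains $V$ as a subquotient.

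The main technical obstacle is the surjectivity step, in particular coherently matching the $\pi(\cD)$-characters attached to the simple composition factors of $V$ with invertible objects of $\cD$ so that the overall twisting produces a genuine object of the Deligne product. An alternative route that bypasses this bookkeeping is the comonadic framework of Section~\ref{monad}: one shows that the forgetful $\omega_0:\Rep G\to\cD$ and the composite $\omega_0\circ F:\Rep(G,\phi)\boxtimes\cD\to\cD$ induce the same comonad $\cO(G)\otimes-$ on $\Ind\cD$, with the $\cD$-group condition ensuring agreement of coalgebra structures; by Barr-Beck this directly yields the equivalence.
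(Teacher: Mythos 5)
There is a genuine gap, and it sits at the first step of your main route. You reduce injectivity of $F$ to full faithfulness, but by Lemma~\ref{ThmInj} an injective tensor functor must in addition send simple objects to simple objects, and full faithfulness alone does not give this (see Remark~\ref{RemInj}: the restriction $\Rep G\to\Rep B$ to a Borel is fully faithful \emph{and} surjective, yet not an equivalence — so even "fully faithful $+$ surjective" does not yield an equivalence via Corollary~\ref{CorEq}(2)). The missing ingredient is precisely that $L_1\otimes L_2$ is simple in $\Rep G$ for simple $L_1\in\Rep(G,\phi)$ and $L_2\in\cD$; the paper's remark following the lemma identifies this as the crux of any "injective $+$ surjective" proof, and your proposal does not address it. Your full-faithfulness computation itself (reducing to $(X_1^\vee\otimes X_2)^G$ being a trivial $\pi(\cD)$-object) is fine, but it is not the hard part.

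The surjectivity half is also incomplete. For simple $V$ you need the discrepancy between the $\phi$-restriction and the canonical $\pi(\cD)$-action to be a character of $\pi(\cD)$ that is realized as the canonical action on some invertible $L\in\cD$; this is clear for $\cD=\sVec$ but is an unproved assertion in general (there is no a priori reason every such character of $\pi(\cD)$ comes from $\mathrm{Pic}(\cD)$), and you yourself flag the assembly for non-simple $V$ as unresolved. Note also that the identification you invoke is $\Rep\pi(\cD)\simeq\cD\boxtimes\cD$, not $\cD$. Your closing comonadic suggestion is indeed essentially the paper's actual proof — the paper runs Barr--Beck over $\Rep\pi(\cD)\simeq\cD\boxtimes\cD$ via the square comparing $\Psi=\omega\boxtimes\Id$ with restriction along $\phi$, rather than all the way down to $\cD$ — but the entire content there is the verification that the two comonads agree, i.e.\ that $F\Psi_\ast\Rightarrow\Phi_\ast H$ is an isomorphism, which the paper checks on the dense subcategory $\cD\otimes\cD$ using the explicit description of $\omega_\ast$ as $\pi(\cD)$-invariants in $\cO(G)\otimes(-)$. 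In your write-up that verification is compressed to ``one shows'', so the proof is not yet there on either route.
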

\begin{proof}
We consider the commutative diagram of tensor functors
$$
\xymatrix{
\Rep(G,\phi)\boxtimes\bD\ar[rr]^F\ar[d]^\Psi&&\Rep G\ar[d]^\Phi\\
\bD\boxtimes\bD\ar[rr]^{\sim}_H&&\Rep \pi(\bD).
}
$$
Here $\Psi=\omega\boxtimes \Id_{\bD}$ (the tensor functor defined from $\omega$ and $\Id_{\bD}$ via the coproduct property), while $\Phi$ is restriction along $\phi$. Finally, $H$ is the equivalence of tensor categories from \cite[Corollaire 8.23]{Del90}, which sends $(X,Y)\in\cD\otimes\cD$ to $X\otimes Y$ in $\Rep\pi(\cD)$ where the $\pi(\cD)$ action on $X\otimes Y$ comes from the canonical action on $X$ and trivial action on $Y$.

We claim that the canonical natural transformation $ F\Psi_\ast\Rightarrow \Phi_\ast H$ is an isomorphism. Indeed, both compositions are left exact, so it suffices to prove the claim on the full subcategory $\bD\otimes\bD$ of $\bD\boxtimes\bD$, see \ref{DTP}. The latter is now a straightforward exercise using the following description of $\omega_\ast$; it is given by sending an object $X\in\bD$ to the $\pi(\bD)$-invariants in $\cO(G)\otimes X$, for the action coming from the right action on $\cO(G)$ via $\phi$ and the canonical left action on $X$. Indeed, this is equivalent to taking the maximal $G$-subrepresentation of $\cO(G)\otimes X$ for which the restriction of the regular left $G$-action to $\pi(\bD)$ equals the canonical $\pi(\bD)$-action.

The claim from the previous paragraph states that the equivalence $H$ (or more precisely its extension to the ind-completions) exchanges the comonads $\Psi\Psi_\ast$ and $\Phi\Phi_\ast$, proving that~$F$ is an equivalence via Barr-Beck comonadicity, see \ref{monad}.
\end{proof}

\begin{prop}\label{PropTann}
If a homomorphism $(G,\phi)\to (H,\psi)$ of $\bD$-groups induces an equivalence
$$\Rep (H,\psi)\to\Rep (G,\phi),$$
then it must be an isomorphism.
\end{prop}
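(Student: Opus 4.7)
The plan is to reduce to Lemma~\ref{LemTan1} by Deligne-tensoring the hypothesis with $\cD$. Write $f:(G,\phi)\to (H,\psi)$ for the given homomorphism of $\cD$-groups, and let $r:\Rep(H,\psi)\to\Rep(G,\phi)$ denote the restriction functor, assumed to be an equivalence. We also have the restriction $R:\Rep H\to \Rep G$ of $G$-representations in $\cD$, and by Lemma~\ref{LemTan1} it suffices to prove that $R$ is an equivalence as well.

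I would apply Lemma~\ref{LemTan2} to both $(G,\phi)$ and $(H,\psi)$, producing equivalences of tensor categories
$$F_G:\Rep(G,\phi)\boxtimes\cD\;\xrightarrow{\sim}\;\Rep G,\qquad F_H:\Rep(H,\psi)\boxtimes\cD\;\xrightarrow{\sim}\;\Rep H.$$
The first key step is then to verify the compatibility square
$$F_G\circ (r\boxtimes\Id_{\cD})\;\simeq\;R\circ F_H$$
as tensor functors $\Rep(H,\psi)\boxtimes\cD\to \Rep G$. By the universal property of the Deligne tensor product as coproduct in $\Tens_{\bk}$, it is enough to check the equality after restriction along the two canonical inclusions $\Rep(H,\psi)\hookrightarrow \Rep(H,\psi)\boxtimes\cD$ and $\cD\hookrightarrow \Rep(H,\psi)\boxtimes\cD$. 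On $\cD$ both composites equal the canonical inclusion $\cD\hookrightarrow \Rep G$ (which factors through $\Rep(G,\phi)\hookrightarrow \Rep G$ via the identity on $\cD$), while on $\Rep(H,\psi)$ both composites equal the composition of $r$ with the canonical inclusion $\Rep(G,\phi)\hookrightarrow \Rep G$. This is immediate from the construction of $F_G$ and $F_H$ and the fact that $R$ restricts to $r$ on $\Rep(H,\psi)\subset \Rep H$ and to $\Id_{\cD}$ on $\cD\subset \Rep H$.

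The second step is to observe that Deligne product with a fixed pretannakian category preserves equivalences: if $r$ is an equivalence then so is $r\boxtimes \Id_{\cD}$. This is again a direct application of the coproduct universal property applied to a quasi-inverse of $r$. Combining with the compatibility square and the fact that $F_G,F_H$ are equivalences, we conclude that $R$ is an equivalence, hence by Lemma~\ref{LemTan1} the map $f:G\to H$ is an isomorphism of affine group schemes, which is the required isomorphism of $\cD$-groups.

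The main (minor) obstacle is the bookkeeping in the compatibility square, especially the need to identify carefully how $R$ decomposes under $F_H$; once this is set up via the coproduct universal property of $\boxtimes$ the argument is formal, and the preservation of equivalences under $-\boxtimes\Id_\cD$ is standard.
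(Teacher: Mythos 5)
Your proof is correct and is essentially the paper's argument: the paper's proof of this proposition is literally the one line ``combine Lemmata~\ref{LemTan1} and~\ref{LemTan2}'', and you have simply written out the implicit bookkeeping (the compatibility square identifying $R$ with $r\boxtimes\Id_{\cD}$ under the equivalences of Lemma~\ref{LemTan2}, checked via the coproduct universal property of $\boxtimes$). No issues.
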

\begin{proof}
This follows from the combination of Lemmata~\ref{LemTan1} and~\ref{LemTan2}.
\end{proof}


\begin{lemma}\label{LemTan3}  Assume that $\cD$ is {\bf GR} and {\bf MN} (or admits a tensor functor to such a category).
For a $\cD$-group $(G,\phi)$, consider a tensor subcategory $\cC\subset\Rep(G,\phi)$ and the canonical tensor functor
$$F_{\cC}:\cC\boxtimes\bD\;\to\;\Rep G$$
\begin{enumerate}
\item
The following properties are equivalent:
\begin{enumerate}
\item $\cC$ contains a faithful family of $G$-representations;
\item $F_{\cC}$ is surjective;
\end{enumerate} 
\item The properties in (1) are satisfied if and only if $\cC=\Rep(G,\phi)$.
\end{enumerate}
\end{lemma}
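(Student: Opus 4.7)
The plan is to show that each of (a) and (b) is equivalent to $\cC=\Rep(G,\phi)$, thereby establishing (1) and (2) simultaneously. The implications from $\cC=\Rep(G,\phi)$ are immediate: Lemma~\ref{LemTan2} makes $F_{\Rep(G,\phi)}$ an equivalence, hence surjective (so (b) holds), while the regular representation $\cO(G)\in\Rep(G,\phi)$ is a single faithful object, yielding (a).

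For (b) $\Rightarrow$ (a), note that the essential image of $F_\cC$ is generated by objects $X\otimes Y$ with $X\in\cC$ and $0\ne Y\in\cD$, on which $G$ acts only through $X$. Since $\Rep G$ admits faithful families of $G$-representations (e.g.\ the collection of all its objects), surjectivity of $F_\cC$ forces the family $\{X\otimes Y:X\in\cC,\;0\ne Y\in\cD\}$ to be faithful, which in turn forces $\{X:X\in\cC\}$ itself to be faithful.

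The key implication is (a) $\Rightarrow\;\cC=\Rep(G,\phi)$. I would apply Theorem~\ref{ThmTann} to $\cC$ equipped with the restriction $\omega|_\cC:\cC\to\cD$ of the forgetful functor $\omega:\Rep(G,\phi)\to\cD$: this produces a $\cD$-group $(\tilde H,\tilde\psi)$ together with an equivalence $\cC\simeq\Rep(\tilde H,\tilde\psi)$, under which the inclusion $F:\cC\hookrightarrow\Rep(G,\phi)$ is identified with restriction along a homomorphism of $\cD$-groups $\iota:(G,\phi)\to(\tilde H,\tilde\psi)$. It then suffices, by another application of Theorem~\ref{ThmTann}, to show $\iota$ is an isomorphism. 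Since the $G$-action on any $X\in\cC$ factors through $\iota$, assumption (a) forces $\ker\iota=1$, and by the GR+MN hypothesis on $\cD$ together with the characterisation in Section~\ref{SecFaith}, $\iota$ is then a monomorphism; equivalently, $\cO(\tilde H)\tto\cO(G)$ is an epimorphism in $\Ind\cD$. In the opposite direction, $F$ is a fully faithful embedding of a subcategory closed under subobjects, so the counit $\epsilon:FF_\ast\Rightarrow\Id$ is componentwise monic in $\Ind\Rep(G,\phi)$. Applying the exact functor $\omega$ to $\epsilon$ at $\omega_\ast(\unit)$ and invoking the comonadic description of~\ref{monad} to identify $\omega FF_\ast\omega_\ast(\unit)\simeq\cO(\tilde H)$ and $\omega\omega_\ast(\unit)\simeq\cO(G)$, we conclude that $\cO(\tilde H)\hookrightarrow\cO(G)$ is a monomorphism. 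Being both mono and epi, $\cO(\tilde H)\to\cO(G)$ is an isomorphism, so $\iota$ is an isomorphism of $\cD$-groups, and $\cC=\Rep(G,\phi)$.

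The main obstacle I expect lies in this final comonadic comparison: verifying that the counit $\epsilon$ is componentwise monic for a fully faithful tensor subcategory inclusion (which uses that $\cC\subset\Rep(G,\phi)$ is closed under subobjects), and confirming that the image $\omega\epsilon\omega_\ast$ evaluated at $\unit$ coincides with the coordinate-algebra map $\cO(\tilde H)\to\cO(G)$ dual to $\iota$ from Theorem~\ref{ThmTann}. This should be a formal consequence of~\ref{monad} and the Tannaka-Krein formalism, but care is needed in translating between the coend presentation $\cO(\uAut(F))=\int^{X}F(X)\otimes F(X)^\vee$ and the comonadic presentation $FF_\ast\simeq\cO(\uAut(F))\otimes-$.
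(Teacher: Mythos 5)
Your argument is essentially correct, but it takes a genuinely different route from the paper's for both halves. For (a)$\Rightarrow$(b) the paper argues directly and explicitly: a faithful family $\{X_\alpha\}$ gives an epimorphism $\bigotimes_\alpha\cO(GL(X_\alpha))\tto\cO(G)$, each $\cO(GL(X_\alpha))$ is a quotient of $\Sym(X_\alpha\otimes\omega X_\alpha^\vee)\otimes\Sym(\omega X_\alpha\otimes X_\alpha^\vee)$, whose constituents lie in $\Rep(G,\phi)\otimes\cD$, and hence the family $\{(X_\alpha,\omega X_\alpha^\vee)\}\cup\{(\unit,X)\}$ generates $\Rep G$. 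For part (2) the paper then uses that a surjective $F_\cC$ is an equivalence (Lemma~\ref{LemTan2} plus Example~\ref{ExDTP}) and deduces $(G,\phi)=(H,\chi)$ from Lemmata~\ref{LemTan1} and~\ref{LemTan2}. You instead prove (a)$\Rightarrow[\cC=\Rep(G,\phi)]$ directly via a mono-plus-epi argument on $\cO(\tilde H)\to\cO(G)$ and recover (a)$\Rightarrow$(b) from Lemma~\ref{LemTan2} afterwards. This works: trivial kernel of $\iota$ gives the epimorphism by \ref{SecFaith}, and the counit of the coreflection of $\Ind\Rep(G,\phi)$ onto $\Ind\cC$ is indeed monic, so exactness of $\omega$ gives the monomorphism --- provided one verifies, as you flag, that the comonad morphism $\omega FF_\ast\omega_\ast\Rightarrow\omega\omega_\ast$ is the coend map $\cO(\uAut(\omega|_\cC))\to\cO(\uAut(\omega))$ dual to $\iota$; this is standard from \cite[\S 8]{Del90} and is the same identification the paper already uses in Lemmata~\ref{LemTan1} and~\ref{LemTan2}. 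What your route loses is independence from Lemma~\ref{LemTan2}: the remark following the lemma exploits the fact that the paper's proof of part (1) does not use Lemma~\ref{LemTan2} in order to sketch an alternative proof of that lemma, and your proof of part (1) (which passes through $\cC=\Rep(G,\phi)$, Theorem~\ref{ThmTann} and Lemma~\ref{LemTan2}) would be circular in that application.

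One genuine error, though harmless for your logical structure: the left regular representation $\cO(G)$ is not ``a single faithful object'' of $\Rep(G,\phi)$. It is an ind-object rather than an object, and, more importantly, its restriction along $\phi$ to $\pi(\bD)$ is left translation rather than the canonical $\pi(\bD)$-action, so it lies in $\Ind\Rep G$ but in general not in $\Ind\Rep(G,\phi)$ (already for $G=\mZ/2=\pi(\sVec)$ with $\phi=\id$). Since your chain (a)$\Rightarrow[\cC=\Rep(G,\phi)]\Rightarrow$(b)$\Rightarrow$(a) already closes, this step is redundant and should simply be deleted; the existence of a faithful family in $\Rep(G,\phi)$ is best obtained, as in the paper, by applying (b)$\Rightarrow$(a) to $\cC=\Rep(G,\phi)$.
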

\begin{proof}

By \ref{SecFaith}, we have an unambiguous notion of faithful (collections of) representations.

We start with part (1). Consider a faithful family $\{X_\alpha\in \Rep(G,\phi)\}$. By definition, we have an epimorphism
$$\bigotimes_\alpha \cO(GL(X_\alpha))\;\tto\; \cO(G)$$
in $\Ind\cD$, which can be interpreted in $\Ind\Rep G$, for the canonical left actions. Since $\cO(GL(X_\alpha))$ (as a $G$-representation) can be written as a quotient of the algebra
$$\Sym(X_\alpha\otimes\omega X_\alpha^\vee)\otimes\Sym(\omega X_\alpha\otimes X_\alpha^\vee),$$
it follows that the family
$$\{(X_\alpha,\omega X_\alpha^\vee)\}\cup\{(\unit,X)\,|\, X\in\cD\}$$
of objects in $\Rep(G,\phi)\otimes\cD\subset \Rep G$
generates $\Rep G$. Conversely, it is clear that if a collection of objects $\{(X_\alpha,Y_\beta)\in \Rep(G,\phi)\otimes \cD\}$ generates $\Rep G$, then $\{X_\alpha\}$ is a faithful family.

For part (2), if $\cC=\Rep(G,\phi)$, then $F_{\cC}$ is surjective by Lemma~\ref{LemTan2}. Conversely, assume that $F_\cC$ is surjective, and therefore an equivalence by Lemma~\ref{LemTan2} and Example~\ref{ExDTP}. We can identify $\cC=\Rep(H,\chi)$ for $H$ the automorphism group of the restriction of $\omega$ to $\cC$, and have a canonical morphism $G\to H$ which induces the inclusion. It is an easy consequence of Lemmata~\ref{LemTan1} and~\ref{LemTan2} that $(G,\phi)=(H,\chi)$, so $\cC=\Rep(G,\phi)$.\end{proof}


\begin{remark}
An alternative proof of the crucial Lemma~\ref{LemTan2} would be showing directly that $F$ is injective and surjective. 

One verifies easily that $F$ is fully faithful, so that injectivity corresponds to proving that $L_1\otimes L_2$ is simple in $\Rep G$ for simple objects $L_1\in\Rep(G,\phi)$ and $L_2\in\cD$.

By Lemma~\ref{LemTan3}(1), which does not depend on Lemma~\ref{LemTan2}, for surjectivity we need to prove that $F$ contains a faithful collection of representations. The latter is clear when~$\cD$ is semisimple. Concretely, we can take the collection of (finite subrepresentations of) $\{\omega_\ast(X)\}$, with $ X\in\bD$. Indeed, in $\Ind\Rep G$ we have
$$\bigoplus_{X\in\cC}\omega_\ast(X)\otimes X^\vee \simeq\Phi_\ast (\cO(\pi(\bD)))\simeq \cO(G),$$
for the left regular $G$-action on $\cO(G)$.
\end{remark}


%
%

\subsection{Tensor subcategories}

Assume for the entire section that $\cD$ is {\bf GR} and {\bf MN}.

\begin{theorem}\label{ThmNSub}

For a $\bD$-group $(G,\phi)$, the assignment
$$(N\lhd G)\mapsto \Rep(G/N,\phi)$$
yields a bijection between kernels of homomorphisms out of $G$ and tensor subcategories of $\Rep(G,\phi)$.
\end{theorem}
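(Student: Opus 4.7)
The plan is to construct mutually inverse maps using Tannaka-Krein duality (Theorem~\ref{ThmTann}), Lemma~\ref{LemTan3}(2), and the GR/MN characterisation of subgroup inclusions from Section~\ref{SecFaith}. The forward map $\Phi$ sends a kernel $N=\ker(\pi\colon G\to K)$ to the tensor subcategory $\Rep(G/N,\phi)\subset\Rep(G,\phi)$, where $G/N:=\mathrm{im}(\pi)$ is a subgroup of $K$ (by Section~\ref{SecFaith}) and inherits its $\cD$-group structure from $\pi(\cD)\to G\twoheadrightarrow G/N$; equivalently, $\Rep(G/N,\phi)\subset\Rep(G,\phi)$ is just the full subcategory of objects on which $N$ acts trivially. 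The backward map $\Psi$ sends a tensor subcategory $\cC\subset\Rep(G,\phi)$ to $\Psi(\cC):=\ker\bigl(G\to\uAut(\omega|_\cC)\bigr)$, where $\omega$ denotes the forgetful functor.

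For $\Psi\circ\Phi=\id$, Theorem~\ref{ThmTann} applied to $(G/N,\phi)$ identifies $\uAut(\omega|_{\Rep(G/N,\phi)})$ canonically with $G/N$ and the natural homomorphism from $G$ (coming from its action on objects of $\Rep(G/N,\phi)$) with the quotient map $G\twoheadrightarrow G/N$; its kernel is precisely $N$. For $\Phi\circ\Psi=\id$, fix $\cC$, set $H=\uAut(\omega|_\cC)$ and $N=\Psi(\cC)$. Each $X\in\cC$ has its $G$-action factor through $H$ by the very construction of the canonical $G\to H$, and therefore through $G/N$, which gives the inclusion $\cC\subseteq \Rep(G/N,\phi)$ inside $\Rep(G,\phi)$. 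To promote this to equality I would apply Lemma~\ref{LemTan3}(2) to the $\cD$-group $(G/N,\phi)$ and the tensor subcategory $\cC\subset\Rep(G/N,\phi)$: it suffices to exhibit a faithful family of $G/N$-representations inside $\cC$. By Section~\ref{SecFaith}, $G/N\hookrightarrow H$ is a subgroup inclusion (its kernel is trivial by definition as the image of $G\to H$), and the defining universal property of $\uAut(\omega|_\cC)$ forces $H$ to act faithfully on $\{\omega X\}_{X\in\cC}$; restricting along $G/N\hookrightarrow H$ preserves this faithfulness, so $\cC=\Rep(G/N,\phi)=\Phi(N)$, as required.

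The main obstacle is the careful control of factorisations of homomorphisms of affine group schemes in $\cD$: one needs that the image of $\pi\colon G\to K$ exists as an actual subgroup of $K$ (so that $G/N$ is a bona fide affine group scheme), that the analogous $G/N\hookrightarrow H$ in the second verification is also a subgroup inclusion, and that faithful families restrict well to subgroups. Each of these rests on the GR and MN hypotheses via Section~\ref{SecFaith} and Lemma~\ref{LemTan3}, which is precisely why these assumptions are imposed at the start of the subsection; once they are in place, the bijection is a formal consequence of Tannaka-Krein duality.
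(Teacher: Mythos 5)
Your proposal is correct and follows essentially the same route as the paper: the paper also defines the inverse map by sending $\cC$ to the intersection of the kernels of its representations (which is your $\ker(G\to\uAut(\omega|_{\cC}))$) and reduces both composites to the statement that the trivial subgroup and the full category $\Rep(G/N,\phi)$ correspond to each other, which is exactly Lemma~\ref{LemTan3}(2) together with the reconstruction $G/N\simeq\uAut(\omega)$ from Theorem~\ref{ThmTann}. Your write-up simply makes explicit the same reduction (and the same reliance on the GR/MN image-factorisation facts from \ref{SecFaith}) that the paper's two-sentence proof leaves implicit.
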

\begin{proof}
For a tensor subcategory $\bC\subset \Rep(G,\phi)$, we consider the intersection $N\lhd G$ of all kernels of representations contained in $\bC$.
That the two assignments are mutually inverse can be deduced from the special case that they exchange the trivial subgroup of $G$ and~$\Rep(G,\phi)$ regarded as a subcategory of itself. The latter is just a reformulation of Lemma~\ref{LemTan3}(2).
\end{proof}
%
%
%

\begin{remark}
Excluding the case $\bD=\Vecc$, the statement in Theorem~\ref{ThmNSub} is not true about tensor subcategories of $\Rep G$.
\end{remark}

\begin{lemma}\label{LemAlgGrp}
For a $\bD$-group $(G,\phi)$, the affine group scheme $G$ is of finite type ($\cO(G)$ is a finitely generated algebra) if and only $\Rep (G,\phi)$ is finitely generated.
\end{lemma}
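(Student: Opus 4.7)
The plan is to phrase both conditions as the existence of a single faithful $G$-representation and then exploit Lemmata~\ref{LemTan2} and~\ref{LemTan3}.

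For the backward implication, suppose $X\in\Rep(G,\phi)$ generates $\Rep(G,\phi)$ as a tensor category. Since the tensor subcategory it generates is the whole category, Lemma~\ref{LemTan3}(2) forces $\{X\}$ to be a faithful family, i.e., $X$ itself is a faithful $G$-representation. By \S\ref{SecFaith}, $G\hookrightarrow GL_X$ is then the inclusion of a subgroup, inducing an epimorphism $\cO(GL_X)\twoheadrightarrow\cO(G)$ in $\Ind\bD$. Since $\cO(GL_X)$ is a finitely generated algebra (a localisation of $\Sym(X^\vee\otimes X)$ at the analogue of the determinant), its quotient $\cO(G)$ is too.

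For the forward implication, fix a finite-length subobject $W_0\subset\cO(G)$ in $\bD$ that generates $\cO(G)$ as an algebra. Using the standard principle that every finite-length subobject of a coalgebra is contained in a finite-length subcoalgebra, enlarge $W_0$ to a finite-length subcoalgebra $V\subset\cO(G)$; it still generates $\cO(G)$ as an algebra, and is a $G$-subrepresentation of the regular representation, so $V\in\Rep G$. Running the previous argument with $GL_V$ (the image of $\cO(GL_V)\to\cO(G)$ contains $V$, hence equals $\cO(G)$) shows that $V$ is a faithful $G$-representation.

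Now apply Lemma~\ref{LemTan2} to write $\Rep G\simeq\Rep(G,\phi)\boxtimes\bD$. By \S\ref{DTP}, $V\in\Rep G$ embeds in a direct sum $\bigoplus_\alpha X_\alpha\boxtimes Y_\alpha$ with $X_\alpha\in\Rep(G,\phi)$ and $Y_\alpha\in\bD$. Set $X:=\bigoplus_\alpha X_\alpha$. In $\Rep G$ the object $X_\alpha\boxtimes Y_\alpha$ is $X_\alpha\otimes Y_\alpha$ with $G$ acting only on $X_\alpha$, so its kernel as a $G$-representation equals the kernel of $X_\alpha$. Consequently the kernel of $X$ is contained in the (trivial) kernel of $V$, so $X$ is a faithful object of $\Rep(G,\phi)$, and Lemma~\ref{LemTan3}(2) then identifies the tensor subcategory of $\Rep(G,\phi)$ generated by $X$ with the whole of $\Rep(G,\phi)$.

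The main delicate step is the forward direction's passage from a finite-length algebra generator of $\cO(G)$ in $\bD$ to a finite-length $G$-subrepresentation that still generates $\cO(G)$ as an algebra; this rests on the coalgebra fact above together with the automatic $G$-stability of subcoalgebras of $\cO(G)$.
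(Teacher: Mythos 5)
Your proof is correct and follows essentially the same route as the paper's: both reduce the statement, via Lemma~\ref{LemTan2} and Lemma~\ref{LemTan3}(2), to the equivalence between finite generation of $\Rep(G,\phi)$ and the existence of a single faithful representation generating $\Rep G$ over $\bD$. The only difference is that the paper outsources the equivalence ``$\cO(G)$ finitely generated $\Leftrightarrow$ $\Rep G$ is generated by $\bD$ plus one object'' to \cite[Lemma~7.3.3]{ComAlg}, whereas you prove it directly (via the epimorphism $\cO(GL_X)\tto\cO(G)$ in one direction, and local finiteness of the regular comodule plus the matrix-coefficient argument in the other), which is a correct and self-contained substitute.
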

\begin{proof}
In \cite[Lemma~7.3.3]{ComAlg}, it was proved that $G$ is of finite type if and only if $\Rep G$ is generated by $\bD$ plus one additional object. If $\Rep (G,\phi)$ is generated by $X$, then it follows from Lemma~\ref{LemTan2} that $\Rep G$ is generated by $(X,\unit)\in \Rep(G,\phi)\otimes \bD$ (under the equivalence in the lemma) and~$\bD$. Conversely, if there is a single representation $X$ in $\Rep G$ which generates the category together with $\bD$, then the representation must be faithful. Viewing $X$ as a quotient of some object $(X_1,Y_1)\in\Rep(G,\phi)\otimes \bD$ demonstrates that $X_1$ must be a faithful representation and therefore generates $\Rep(G,\phi)$, by Lemma~\ref{LemTan3}(2).
\end{proof}

\begin{corollary}\label{CorAlgGrp}
 If a tensor category $\bT$ which admits a tensor functor $\bT\to\bD$ is finitely generated, then every tensor subcategory of $\bT$ is also finitely generated.
\end{corollary}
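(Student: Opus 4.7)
The plan is to pass through the Tannakian reconstruction of Section~\ref{TKduality}. First, Theorem~\ref{ThmTann} identifies $\bT$ with $\Rep(G,\phi)$ for some $\bD$-group $(G,\phi)$ coming from the tensor functor $\bT\to\bD$, and Lemma~\ref{LemAlgGrp} translates finite generation of $\bT$ into $G$ being of finite type. Second, for any tensor subcategory $\bC\subset\bT$, Theorem~\ref{ThmNSub} writes $\bC=\Rep(G/N,\bar\phi)$ for some normal subgroup $N\lhd G$, with $\bar\phi:\pi(\bD)\to G/N$ the induced homomorphism; the forgetful functor $\bC\to\bD$ inherited from $\bT\to\bD$ coincides with the canonical one on $\Rep(G/N,\bar\phi)$. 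Hence it suffices to show that $G/N$ is of finite type, after which a second application of Lemma~\ref{LemAlgGrp} yields the conclusion.

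The essential remaining point is therefore that a quotient of a finite type affine group scheme in $\bD$ is again of finite type. This is the main obstacle. In the classical case $\bD=\Vecc$ it is standard, typically via a Chevalley-style embedding theorem. In the present $\bD$-group setting, with $\bD$ assumed GR and MN, the expected route is to adapt this via the invariant-theoretic machinery of~\cite{ComAlg}: a generator $X$ of $\bT$ is necessarily a faithful $G$-representation (apply Lemma~\ref{LemTan3}(2) to the family $\{X\}$, since the corresponding tensor subcategory is all of $\bT$), so $G$ embeds as a closed subgroup of $GL_X$, and a suitable tensor/invariant construction on $X\oplus X^\vee$ then produces a faithful $G/N$-representation. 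The GR and MN hypotheses on $\bD$ are crucial to make such invariant-theoretic constructions behave as in the classical setting, and in particular to have an unambiguous notion of faithfulness (cf.~Section~\ref{SecFaith}) which relates finite type for $G/N$ to the existence of a faithful object in $\Rep(G/N,\bar\phi)$ in the sense of Lemma~\ref{LemAlgGrp}.
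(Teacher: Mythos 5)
Your reduction is fine as far as it goes: writing $\bT=\Rep(G,\phi)$, identifying a tensor subcategory with $\Rep(G/N,\bar\phi)$ via Theorem~\ref{ThmNSub}, and translating finite generation into $G$, resp.\ $G/N$, being of finite type via Lemma~\ref{LemAlgGrp} is exactly the framework the paper works in. But the step you yourself flag as ``the main obstacle''---that a quotient of a finite type affine group scheme in $\bD$ is again of finite type---is precisely the content of the corollary after this translation, and you do not prove it. The sketch ``a suitable tensor/invariant construction on $X\oplus X^\vee$ then produces a faithful $G/N$-representation'' is not an argument: in the classical case this is Chevalley's theorem, whose proof constructs a representation in which the subgroup is the stabiliser of a line and then passes to exterior or tensor powers, and it is far from clear that those constructions (in particular the notion of ``line'' and the stabiliser argument) transfer to an arbitrary \textbf{GR} and \textbf{MN} base $\bD$. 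As written, the proposal assumes the conclusion in group-scheme language and defers the proof to an unverified analogy.

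The paper avoids this entirely by a noetherian argument. Since $\cD$ is \textbf{MN} it satisfies the Hilbert basis property, so the finitely generated algebra $\cO(G)$ (finitely generated by Lemma~\ref{LemAlgGrp}) is noetherian. An increasing chain of tensor subcategories of $\bT$ corresponds under Theorem~\ref{ThmNSub} to a decreasing chain of normal subgroups $N_i\lhd G$, hence to an increasing chain of ideals of $\cO(G)$, which must stabilise. This ascending chain condition on tensor subcategories immediately forces every tensor subcategory to be finitely generated (otherwise one builds a strictly increasing chain of finitely generated subcategories inside a non-finitely-generated one). Note that this indirect route does, a posteriori, establish the ``quotients of finite type are of finite type'' statement you wanted---but via noetherianity of $\cO(G)$, not via a Chevalley-style embedding. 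If you want to salvage your approach, the cleanest fix is to replace your final paragraph with exactly this chain-of-ideals argument.
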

\begin{proof}We can write $\bT=\Rep(G,\phi)$ for a $\bD$-group $(G,\phi)$.
Consider an increasing chain 
$$\bT_1\subset\bT_2\subset\cdots\subset \bT$$
of tensor subcategories of $\bT$. By Theorem~\ref{ThmNSub} we get a corresponding decreasing chain of normal subgroups $N_i\lhd G$, leading to an increasing chain of ideals $I_i$ in $\cO(G)$. By Lemma~\ref{LemAlgGrp} $\cO(G)$ is finitely generated. Since $\cD$ satisfies the Hilbert basis property (see \cite[Lemma~6.4.4]{ComAlg}) $\cO(G)$ is noetherian, showing that the ideals, and therefore the chain of tensor subcategories, must stabilise.
\end{proof}
\begin{remark}
We know of no examples of finitely generated pretannakian categories with a tensor subcategory which is not finitely generated.
In contrast, when considering tensor categories without braiding, there are clear examples. For instance, consider the non-symmetric tensor category $\Vecc_\Gamma$ of finite dimensional 
$\Gamma$-graded vector spaces, for a free group $\Gamma$ in $n\ge 2$ generators. Then $\Vecc_\Gamma$ is finitely generated but $\Vecc_{\Gamma'}\subset \Vecc_{\Gamma}$ is not for infinitely generated (free) subgroups $\Gamma'\subset\Gamma$.
\end{remark}


\section{Structure of the category of tensor categories}

\subsection{Cocones and colimits}

\begin{prop}\label{PropFilt}
Assume $\mathrm{char}(\fk)=0$. The categories $\PTann_{\fk}$ and $\Tens_{\bk}$ are not filtered.
\end{prop}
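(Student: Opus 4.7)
The strategy is to exhibit a parallel pair of tensor functors in $\PTann_{\fk}$ (resp.\ $\Tens_{\fk}$) that admits no coequalizing morphism, establishing the failure of one of the three defining conditions of filteredness. Non-emptiness is obvious, and condition (2), existence of cocones for pairs of objects, is satisfied by the Deligne product, which remains pretannakian for pretannakian inputs (see Section~\ref{DTP}) and serves as the coproduct. So I focus on condition (3): finding parallel $F_1,F_2:\bT\to\bT'$ such that no $h:\bT'\to\bT''$ satisfies $h\circ F_1\simeq h\circ F_2$.

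The guiding heuristic is that for parallel tensor functors with (super)tannakian source, coequalization is automatic: post-composing with a fibre functor $\bT'\to\Vecc_{\fk}$ (whenever available) produces two fibre functors on $\bT$, which are isomorphic by uniqueness of fibre functors in characteristic zero (see Example~\ref{ExGH} and \cite{Del90}). By Deligne's theorem, every moderate growth pretannakian category is supertannakian, hence any failure of (3) must genuinely involve pretannakian categories outside $\MdGr_{\fk}$. In characteristic zero, Deligne's interpolation categories $(\Rep S_t)$ for $t\in \fk\setminus\mZ_{\ge 0}$ furnish such examples: the distinguished object has categorical dimension $t\in\fk\setminus\mZ$, obstructing the existence of any tensor functor to $\sVec_{\fk}$.

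My plan is therefore to build $F_1,F_2$ so that they assign incompatible categorical dimensions (or related tensor invariants) to a distinguished object, blocking all coequalization. Concretely, I would take $\bT'=(\Rep S_{t_1})\boxtimes(\Rep S_{t_2})$ for distinct non-integer $t_1,t_2\in\fk$, exploit the fact that tensor functors preserve categorical dimensions in $\fk$, and construct $F_1,F_2$ whose composites with any $h:\bT'\to\bT''$ would be forced to identify images of different dimension in $\bT''$, producing a contradiction. Once the $\PTann_{\fk}$ case is established, the $\Tens_{\fk}$ statement follows, since any potential coequalizer in $\Tens_{\fk}$ would also contradict the dimensional obstruction (the unit of $\bT''$ is simple and categorical dimensions lie in $\fk$ regardless of finite-length assumptions).

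The main obstacle is pinpointing the precise parallel pair, since naive candidates—such as the two coprojections $(\Rep S_t)\rightrightarrows (\Rep S_t)\boxtimes(\Rep S_t)$—are coequalized by the fold tensor functor $(X,Y)\mapsto X\otimes Y$. The construction must genuinely exploit the non-moderate-growth phenomenon: one wants the "difference" between $F_1$ and $F_2$ to survive every post-composition with a tensor functor, which cannot happen within moderate growth (by the fibre functor argument) but should be achievable using the rigid dimensional invariants of the Deligne categories $(\Rep S_t)$.
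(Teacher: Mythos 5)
Your framing is right in several respects: the failure of filteredness must be located at the equalization of parallel arrows, the examples must live outside moderate growth (since in $\MdGr_{\fk}$ one can always equalize by composing down to $\sVecc_{\fk}$), and Deligne's interpolation categories are indeed where the paper finds its counterexamples. But the obstruction mechanism you propose cannot work. Tensor functors preserve categorical dimension, so for any \emph{parallel} pair $F_1,F_2:\bT\to\bT'$ one automatically has $\dim F_1(X)=\dim X=\dim F_2(X)$ for every $X\in\bT$, and the same remains true after composing with any $h:\bT'\to\bT''$. A dimensional invariant therefore can never distinguish $h\circ F_1$ from $h\circ F_2$; there is no way to ``force $h$ to identify images of different dimension,'' because the images never have different dimensions to begin with. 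This is not a fixable detail of your construction but a structural reason why the whole dimension-based strategy is self-defeating; you correctly sensed the difficulty in your last paragraph but did not resolve it.

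The paper's actual obstructions are of a different nature, and in both of its examples the two functors are deliberately arranged to agree on dimensions. For $\PTann_{\fk}$ it takes the universal category $(\Rep GL)_0^{ab}$ on an object $X_0$ of dimension $0$ and the two endofunctors $\Id$ and $X_0\mapsto X_0\oplus X_0$ (note $\dim(X_0\oplus X_0)=\dim X_0=0$); any equalizing $F$ would produce $F(X_0)\simeq F(X_0)\oplus F(X_0)$, impossible in an artinian, hence Krull--Schmidt, category. For $\Tens_{\bk}$ it takes $F_1,F_2:(\Rep GL)_t\rightrightarrows(\Rep S)_t\boxtimes\sVec$ with $F_1(X_t)=A=(T_t,\unit)$ and $F_2(X_t)=A\oplus(\unit,\bk^{1|1})$; the added summand has dimension $0$, and the obstruction (proved at the end of Section~\ref{SecAlgebras}) is that $A$ is an \'etale algebra, so any common image $C=F(A)\simeq F(B)$ would be separable, yet $C$ would contain the subobject $F(\bk^{0|1})$ generating a nonzero nilpotent ideal --- a contradiction with \cite[Proposition~1.17]{DM}. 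To repair your argument you would need to replace the dimension invariant by an invariant that genuinely distinguishes $F_1$ from $F_2$ and survives post-composition; the Krull--Schmidt property and separability of \'etale algebras are the two such invariants the paper uses.
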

\begin{proof}
We will prove that in both categories there exist parallel arrows which cannot be equalised.

We start with $\PTann_{\bk}$. Let $(\Rep GL)_0^{ab}$ be the universal tensor category on an object $X_0$ of dimension 0 which is not annihilated by any Schur functor, see \cite{EHS, Del07, AbEnv}. We can consider two endo-tensor functors of $(\Rep GL)_0^{ab}$: the identity functor and the one which sends $X_0$ to a direct sum of two copies of itself. Any tensor functor $F$
$$(\Rep GL)_0^{ab}\rightrightarrows (\Rep GL)_0^{ab}\xrightarrow{F}\bT$$
which would equalise the two tensor functors on the left would satisfy $$
F(X_0)\simeq F(X_0)\oplus F(X_0),$$ which is not possible in an artinian (and hence Krull-Schmidt) category $\bT$.

A similar example, which avoids using the abelian envelopes of Deligne's universal categories, but uses Deligne's tensor product, can be obtained from tensor functors 
$$
(\Rep GL)_t\rightrightarrows (\Rep GL)_t\boxtimes\sVec,
$$ 
for $t\in\fk\backslash\mZ$.

Now we consider $\Tens_{\bk}$ (and give an example which also implies the lemma for $\PTann_{\bk}$). We use $(\Rep S)_t$ from \cite{Del07}. For $t\in\bk\backslash \mZ$, consider the tensor functors
$$F_1,F_2: (\Rep GL)_t\;\rightrightarrows\; (\Rep S)_t\boxtimes \sVec,$$
with 
$$F_1(X_t)= A:= (T_t, \unit),\;\mbox{ and }\;\; F_2(X_t)=B:= (T_t, \unit) \oplus (\unit, \bk^{1|1}).$$
We will prove at the end of Section~\ref{SecAlgebras} that these tensor functors cannot be equalised.
\end{proof}

\begin{remark}
If $\fk$ is not algebraically closed, then even $\MdGr_{\fk}$ is not filtered. Indeed, in this case, tensor functors to $\Vecc_{\fk}$ need not be unique, see \cite[Theorem~3.2]{DM}; and there is no possibility of equalising them with a tensor functor out of $\Vecc_{\fk}$ (without extending scalars).

%
\end{remark}

\begin{question}
In the proof of Proposition~\ref{PropFilt} we use the observation that pretannakian categories cannot contain `Banach-Tarski objects', {\it i.e.} objects $X$ satisfying $X\simeq X\oplus X$.
Can tensor categories contain Banach-Tarski objects? Note that this is not allowed in tensor categories of the form $\varinjlim \cC_i$ (for a direct system $\{\cC_i\}$ of pretannakian categories), see for instance~\cite[\S 2.19]{Del90}, or $\prod_{\cU}\cC_n$ (for an ultrafilter~$\cU$ on a labelling set of a family of pretannakian categories), see~\cite{Ha}, which are the currently known examples of tensor categories that are not necessarily pretannakian. 

Such objects do exist if we remove restriction \ref{12345}(3) of rigidity from our definition of tensor categories. Indeed, one can then consider the abelian monoidal category (with exact tensor product) of countably dimensional vector spaces. Furthermore, if we do impose rigidity but omit conditions \ref{12345}(1) and (2), one can easily construct (universal) examples using the techniques from \cite{IKO}. With further work, one can even construct an example which satisfies all conditions except the abelian condition \ref{12345}(1).

\end{question}

Now we reformulate some results from \cite{CEO, Del90}.

\begin{prop}\label{PropFiltLim}
\begin{enumerate}
\item The categories $\MdGr_{\fk} $ and $\Tens_{\fk} $ are closed under filtered colimits, moreover filtered colimits in $\MdGr_{\fk} $ can be calculated in $\Tens_{\fk} $.
\item The category $\PTann_{\fk} $ is not closed under filtered colimits.
\end{enumerate}
\end{prop}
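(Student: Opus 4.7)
The plan is to treat the two parts separately: construct filtered colimits in $\Tens_{\fk}$ explicitly and check they preserve moderate growth for (1), and then exhibit a counterexample where the colimit leaves $\PTann_{\fk}$ for (2).

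For (1), given a filtered diagram $\{\cC_i,F_{ij}\}$ in $\Tens_{\fk}$, I would define $\cC:=\varinjlim\cC_i$ with objects the equivalence classes $[X,i]$, $X\in\cC_i$, under the identification $(X,i)\sim(F_{ij}X,j)$ for $j\ge i$, and hom spaces
$$
\Hom_{\cC}([X,i],[Y,j])\;:=\;\colim_{k\ge i,j}\Hom_{\cC_k}(F_{ik}X,F_{jk}Y).
$$
The tensor product, braiding and duals all descend because the transition functors are tensor functors. Abelianness follows from the standard fact that a filtered colimit of abelian categories along exact faithful functors is abelian: every morphism in $\cC$ is represented at some stage $\cC_k$, where its kernel, cokernel and image can be computed, and further exact transitions make these well-defined in the colimit. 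The simple unit axiom reduces to $\End_{\cC}(\unit)=\colim\End_{\cC_i}(\unit_i)=\fk$, and rigidity transfers directly. One then verifies that $\cC$ has the expected universal property in $\Tens_{\fk}$, proving closure under filtered colimits.

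To upgrade this to $\MdGr_{\fk}$, I would use that for $X=[X_0,i_0]\in\cC$,
$$
\ell_{\cC}(X^{\otimes n})\;=\;\sup_{j\ge i_0}\ell_{\cC_j}(F_{i_0,j}(X_0)^{\otimes n}),
$$
since any chain of subobjects in $\cC$ is realised at some finite stage. The key inputs are: (i) in any pretannakian category of moderate growth, $\ell(Y)\le\FPdim(Y)$ because every simple has $\FPdim\ge 1$; and (ii) $\FPdim$ is preserved by tensor functors between such categories, as the composite $\FPdim_{\cC'}\!\circ K_0(F)$ is a positive ring homomorphism on $K_0(\cC)$ and hence equals $\FPdim_{\cC}$ by uniqueness. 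Together these give the uniform bound $\ell_{\cC_j}(F_{i_0,j}(X_0)^{\otimes n})\le\FPdim(X_0)^n$, so $X$ has moderate growth in $\cC$. Moreover, since moderate growth of the colimit is read off object by object, the colimit computed in $\Tens_{\fk}$ automatically solves the universal problem in $\MdGr_{\fk}$.

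For (2), the plan is to exhibit a filtered diagram of pretannakian categories whose colimit in $\Tens_{\fk}$ has an object of infinite length, and so lies outside $\PTann_{\fk}$. Concretely, I would build a chain $\cC_1\to\cC_2\to\cdots$ of pretannakian categories together with a fixed simple $L\in\cC_1$ whose image $F_{1,n}L\in\cC_n$ has strictly increasing composition length; in the colimit $L$ then acquires an infinite ascending chain of subobjects and cannot have finite length. Such chains can be assembled from restriction functors between representation categories of judiciously chosen affine group schemes, or alternatively extracted from the ultrapower-type constructions of~\cite{Ha} already mentioned in the paper. The main technical obstacle is the Frobenius-Perron preservation step in part (1), which is the substantive content being reformulated from \cite{CEO,Del90}; granting it, both parts reduce to a routine unfolding of the explicit filtered colimit construction in the 2-category of tensor categories.
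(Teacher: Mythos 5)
Your part (1) follows the same route as the paper (explicit construction of the naive filtered colimit, then an object-by-object length bound; the paper simply cites \cite[\S 6, Lemma~6.3]{CEO} for both steps), but the mechanism you propose for the bound does not exist in the required generality. A pretannakian category of moderate growth can have infinitely many simple objects, so $\FPdim$ is not defined, and the ``uniqueness of positive ring homomorphisms on $K_0$'' is a Frobenius--Perron statement valid for finite tensor categories, not for $K_0$ of infinite rank. The correct invariant is the growth rate $\gd(X)=\lim_n\ell(X^{\otimes n})^{1/n}$, and even there your claimed preservation $\gd(FX)=\gd(X)$ is \emph{not} known in general: the paper proves it only under extra hypotheses (Proposition~\ref{PropSurj}, Lemma~\ref{LemFin}) and records the additivity of $\gd$ on $K_0$ as an open conjecture. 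What is available, and what the cited \cite[Lemma~6.3]{CEO} rests on, is the weaker quadratic inequality $\ell(FX)\le\gd(FX)\le\gd(X)^2$ (Lemma~\ref{LemQua}, proved via the symmetric group action on tensor powers); plugging this into your displayed supremum gives $\ell_{\cC_j}(F_{i_0,j}(X_0)^{\otimes n})\le\gd(X_0)^{2n}$, which still yields moderate growth. You do flag this step as the substantive imported content, so the gap is one of justification rather than of architecture.

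Part (2) has a genuine gap. Your first proposed source of counterexamples --- chains of restriction functors between representation categories of affine group schemes --- cannot work: every such category is of moderate growth ($\ell(X^{\otimes n})\le(\dim X)^n$), so by part (1) the colimit of any such chain is again of moderate growth and in particular pretannakian; more concretely, $\ell(\mathrm{res}\,L)\le\dim L$ is bounded along the chain, so the composition length of the image of $L$ cannot grow without bound. Any counterexample must start from categories that are \emph{not} of moderate growth. The paper takes the chain $\cC_i=(\Rep GL)_{t-i}$ from \cite[2.19]{Del90} in characteristic $0$, where the obstruction used is that $\dim_{\fk}\End(X_i)$ is strictly increasing (so no cone in $\PTann_{\fk}$ can exist, since tensor functors are faithful and pretannakian categories have finite-dimensional Hom spaces); your alternative criterion of unbounded composition length would also suffice, and your fallback to \cite[\S 3.3]{Ha} matches the paper's treatment of positive characteristic, but it does not cover characteristic $0$. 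You need to replace the group-scheme construction with an interpolation-category example (or some other non--moderate-growth source) for the statement to be proved over every $\fk$.
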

\begin{proof}
The naive filtered colimit of a system of tensor categories is canonically again a tensor category, see for instance~\cite[\S 6]{CEO}. In~\cite[Lemma~6.3]{CEO}, it is proved that such filtered colimits of categories $\MdGr_{\fk} $ remain in $\MdGr_{\fk} $.

For part (2) it suffices to construct a chain
$$\bT_1\to\bT_2\to\bT_3\to\cdots $$
in $\PTann_{\fk}$ for which there exists $X=X_1\in\bT_1$ such that the function $i\mapsto \dim_{\fk}\End(X_i)$ is strictly increasing, with $X_i$ the image of $X$ in $\bT_i$, since such a diagram cannot have a cone in $\PTann_{\fk}$.
An example for $\mathrm{char}(\fk)=0$ ($\cC_i=(\Rep GL)_{t-i}$) is given in \cite[2.19]{Del90}.
Similar examples in positive characteristic can be derived from~\cite[Section~3.3]{Ha}.
\end{proof}


\subsection{Existence of tensor functors to incompressible categories}

In this section we prove the following stronger (due to Question~\ref{QIncomp}) version of Theorem~\ref{ThmIncom}. 
\begin{theorem}\label{ThmVic}
Every pretannakian category of moderate growth $\cC$ admits a surjective tensor functor to an incompressible category (of moderate growth).
\end{theorem}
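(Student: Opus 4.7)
The plan is a Zorn's lemma argument applied to the collection of surjective tensor quotients of $\cC$; the maximal element will turn out to be incompressible. Let $\mathcal{Q}$ denote the class of equivalence classes of surjective tensor functors $F\colon \cC\to\cD$ with $\cD\in\MdGr_{\bk}$, where $[F_1\colon\cC\to\cD_1]=[F_2\colon\cC\to\cD_2]$ iff the two differ by a tensor equivalence $\cD_1\simeq\cD_2$. I preorder $\mathcal{Q}$ by $[F]\preceq[F']$ iff $F'\cong G\circ F$ for some tensor functor $G$, which is then automatically surjective. First observe that moderate growth is inherited under surjective tensor functors: every $Y\in\cD$ is a subquotient of some $F(X)$, so $\ell(Y^{\otimes n})\le\ell(X^{\otimes n})$ grows at most exponentially in $n$.

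To apply Zorn's lemma I verify that $\mathcal{Q}$ is essentially small and that every chain has an upper bound. For smallness: if $F\colon\cC\to\cD$ is surjective, the image of any abelian generating family of $\cC$ generates $\cD$, so $\sharp\cD\le\sharp\cC$; combined with Lemma~\ref{LemSmall0} this bounds the possible targets to an essentially small class, and for each such target only set-many tensor functors from $\cC$ arise up to natural isomorphism. For chains: given a chain $\{[F_i\colon\cC\to\cD_i]\}_{i\in I}$, I choose representatives and connecting tensor functors $\cD_i\to\cD_j$ (for $i\le j$) to form a filtered pseudo-diagram in $\underline{\MdGr}_{\bk}$, whose (pseudo-)colimit $\cD_\infty:=\colim_{i}\cD_i$ lies in $\MdGr_{\bk}$ by Proposition~\ref{PropFiltLim}(1). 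The induced $F_\infty\colon\cC\to\cD_\infty$ is surjective since every object of $\cD_\infty$ descends to some $\cD_i$ and $F_i$ is surjective, so $[F_\infty]$ bounds the chain. Zorn's lemma therefore produces a maximal element $[F\colon\cC\to\cD]\in\mathcal{Q}$.

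To conclude, I claim the target $\cD$ of any such maximal $F$ is incompressible. Let $G\colon\cD\to\cE$ be a surjective tensor functor into a pretannakian $\cE$; by the opening observation $\cE\in\MdGr_{\bk}$, hence $[GF]\in\mathcal{Q}$ with $[F]\preceq[GF]$. Maximality gives $[GF]\preceq[F]$, producing a tensor functor $H\colon\cE\to\cD$ with $HGF\cong F$. Factoring $HG=J\circ S$ via Corollary~\ref{CorEq}(1) yields a surjective--injective factorization $JSF\cong F$ of the already-surjective functor $F$, and uniqueness in Corollary~\ref{CorEq}(1) forces $J$ to be an equivalence and $SF\cong F$ up to an equivalence of target. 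The main obstacle is then to conclude from $SF\cong F$ that $S\cong\Id_\cD$, which expresses an epimorphism-like property of the surjective tensor functor $F$ (morally, $\cD$ is a Serre-type tensor quotient of $\cC$ by $\ker F$, so factorizations through $F$ are essentially unique). Granting this, $HG$ is an equivalence, whereupon the surjective $G$ is an equivalence by Corollary~\ref{CorEq}(2), establishing incompressibility of $\cD$.
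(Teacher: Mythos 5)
Your architecture matches the paper's (a Zorn-type maximality argument over surjective quotients, with smallness from Lemma~\ref{LemSmall0} and upper bounds from Proposition~\ref{PropFiltLim}(1)), but the step you defer with ``granting this'' is precisely the mathematical content of the theorem, and the justification you sketch for it is false. You want to conclude from $SF\cong F$ that $S\cong\Id_{\cD}$ because ``factorizations through $F$ are essentially unique''; they are not. Surjective tensor functors need not be epimorphisms in $\PTann_{\bk}$: for $F\colon\Rep_{\bk}S_3\to\Rep_{\bk}\mZ/3$ the identity and the inversion-induced auto-equivalence of $\Rep_{\bk}\mZ/3$ are non-isomorphic but become isomorphic after precomposition with $F$ (Example~\ref{ExInnAut}; the paper makes exactly this point in the remark following the proof). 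What is true, and what suffices for your final step, is the weaker statement that any endofunctor $H$ of the target with $H\circ F\cong F$ is an \emph{auto-equivalence} (Lemma~\ref{LemHF}). This is not formal: the paper proves it by reducing to finitely generated pieces and establishing Lemma~\ref{LemVictor}, an argument with finite-dimensional algebras showing that an exact faithful endofunctor of an artinian category fixing a generator must be given by twisting by an algebra automorphism. Without some version of this, your maximal object could a priori admit a non-invertible surjection out of it.

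A secondary gap: because surjections are not epimorphisms, a chain in your preorder $\mathcal{Q}$ does not canonically determine a coherent filtered diagram --- given $[F_i]\preceq[F_j]\preceq[F_k]$, the chosen connecting functors $\cD_i\to\cD_j\to\cD_k$ and $\cD_i\to\cD_k$ need only agree after precomposition with $F_i$, which (by the same counterexample) does not force them to be isomorphic. So ``choose representatives and connecting tensor functors to form a filtered pseudo-diagram'' is not automatic, and the upper-bound verification for arbitrary chains breaks down. The paper circumvents both issues at once by replacing Zorn's lemma with the categorical Lemma~\ref{LemZorn}, whose hypotheses are exactly ``cocones for directed systems'' plus ``$f=g\circ f$ implies $g$ invertible'' (not $g=\id$), and whose proof builds the transfinite chain step by step so that coherence holds by construction.
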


\subsubsection{} We begin preparations for the proof of Theorem \ref{ThmVic}. Fix a pretannakian category $\bT$. Consider the 2-category $\underline{\mathcal{S}}=\underline{\mathcal{S}}(\bT)$ where objects are surjective tensor functors $F:\bT\to\bT_1$ and 1-morphisms from $F_1:\bT\to\bT_1$ to $F_2:\bT\to\bT_2$ are pairs $(G,\alpha)$ of a (necessarily surjective) tensor functor $G:\bT_1\to\bT_2$ and a natural transformation of tensor functors (automatically isomorphism) $\alpha:G\circ F_1\Rightarrow F_2$. Finally, a 2-morphism from $(G,\alpha)$ to $(G',\alpha')$ is a natural transformation $\beta:G\Rightarrow G'$ with $\alpha'\circ(\beta F_1)=\alpha$.

Clearly $\underline{\cS}$ is equivalent to its 1-truncation, which we denote by $\cS=\cS(\bT)$, where morphisms correspond to isomorphism classes of tensor functors.


\begin{lemma}\label{LemSmall}\label{LemLim}
\begin{enumerate}
\item The category $\cS(\bT)$ is essentially small.
\item 
If $\bT$ is of moderate growth, $\cS(\bT)$ is closed under filtered colimits.
\end{enumerate} 
\end{lemma}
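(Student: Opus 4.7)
The plan for part~(1) is to bound the size of the target of any surjective tensor functor. If $F\colon\bT\to\bT_1$ is surjective and $E\subset\bT$ is a set of cardinality $\sharp\bT$ such that every object of $\bT$ is a subquotient of a finite direct sum of elements of $E$, then exactness of $F$ together with the surjectivity hypothesis shows that $F(E)\subset\bT_1$ has the same property for $\bT_1$. Consequently $\sharp\bT_1\le\sharp\bT$, so for any fixed cardinal $\alpha$ strictly larger than both $\sharp\bT$ and $\aleph_0$ we have $\bT_1\in\PTann^{<\alpha}$, which is essentially small by Lemma~\ref{LemSmall0}. For each chosen representative target $\bT_1$ of this small set, the natural-isomorphism classes of tensor functors $\bT\to\bT_1$ form a set (both categories being essentially small), so collecting across the small set of possible targets yields a small skeleton of $\cS(\bT)$.

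For part~(2), I would begin from a filtered diagram $\{(\bT_i,F_i)\}_{i\in I}$ in $\cS(\bT)$. Each $\bT_i$ is a surjective image of $\bT\in\MdGr_{\fk}$ and hence also lies in $\MdGr_{\fk}$, so Proposition~\ref{PropFiltLim}(1) produces a colimit $\bT_\infty:=\varinjlim_i\bT_i$ in $\MdGr_{\fk}$ (with structure tensor functors $\iota_i\colon\bT_i\to\bT_\infty$), and one defines $F_\infty\colon\bT\to\bT_\infty$ as the composite $\iota_i\circ F_i$ for any $i$ (well-defined up to canonical isomorphism by the compatibility in the diagram). Surjectivity of $F_\infty$ is then checked by tracing through the explicit construction of filtered colimits of tensor categories: any $Y\in\bT_\infty$ arises from some $Y_i\in\bT_i$, which by surjectivity of $F_i$ is a subquotient of $F_i(X)$ for some $X\in\bT$, and exactness of $\iota_i$ turns this into a subquotient relation between $Y$ and $F_\infty(X)$.

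The next step is to verify the universal property of $F_\infty$ inside $\cS(\bT)$. Given any cocone under the diagram, consisting of a surjective $F'\colon\bT\to\bT'$ together with a compatible family of tensor functors $G_i\colon\bT_i\to\bT'$ satisfying $G_i\circ F_i\cong F'$, the universal property of $\bT_\infty$ as a colimit in $\Tens_{\fk}$ produces a unique (up to isomorphism) tensor functor $G_\infty\colon\bT_\infty\to\bT'$ extending the $G_i$. The required identification $G_\infty\circ F_\infty\cong F'$ follows immediately upon precomposing with any single $\iota_i$, where it reduces to $G_i\circ F_i\cong F'$.

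I expect the principal obstacles to be pedantic rather than conceptual: part~(1) requires careful treatment of the edge case $\sharp\bT=1$ (where one should take $\alpha=\aleph_1$) and of the equivalence relation on $\cS(\bT)$ (which identifies objects via self-equivalences of the target), while part~(2) requires checking that the 2-morphism redundancy in the underlying 2-category $\underline{\cS}(\bT)$ does not obstruct the colimit construction. Both points dissolve once one remembers that tensor functors are faithful, so 2-morphisms in $\underline{\cS}(\bT)$ are uniquely determined when they exist; this justifies passage to the 1-truncation and reduces everything to ordinary 1-categorical colimits in $\MdGr_{\fk}$.
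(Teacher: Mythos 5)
Your argument is correct and follows the same route as the paper, which disposes of part (1) by invoking Lemma~\ref{LemSmall0} exactly as you do (bounding $\sharp\bT_1\le\sharp\bT$ for surjective images) and of part (2) by combining the moderate growth of surjective images with Proposition~\ref{PropFiltLim}(1); your verification of the colimit's universal property in $\cS(\bT)$ is a detail the paper leaves implicit. The one step you should not wave through is the parenthetical ``each $\bT_i$ is a surjective image of $\bT\in\MdGr_{\fk}$ and hence also lies in $\MdGr_{\fk}$'': a tensor functor can strictly increase length (simples need not go to simples), so $\ell(F(X)^{\otimes n})=\ell(F(X^{\otimes n}))$ is not bounded by $\ell(X^{\otimes n})$ for free. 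This is a genuine (though known) result, cited in the paper as \cite[Lemma~4.9]{CEO}; within the present paper it follows from the estimate $\ell(FX)\le\gd(X)^2$ of Lemma~\ref{LemQua}, proved via the symmetric group action on tensor powers. With that citation supplied, your proof is complete.
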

\begin{proof}
Part (1) follows from Lemma~\ref{LemSmall0}.
It follows from \cite[Lemma~4.9]{CEO} that every target of a surjective tensor functor out of $\bT$ has again moderate growth. Part (2) thus follows from Proposition~\ref{PropFiltLim}(1).
\end{proof}

%
%

\begin{lemma}\label{LemVictor}
Let $\cA$ be an artinian category with $Y\in \cA$ such that every object in $\cA$ is a subquotient of $Y^n$ for some $n\in\mN$ ({\it i.e.} $\sharp\cA=1$). Then every exact faithful $\fk$-linear endofunctor~$G$ of~$\cA$ which satisfies $GY\simeq Y$ is an auto-equivalence.
\end{lemma}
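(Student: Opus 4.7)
The plan is to reduce to a module-category description and then identify $G$ with the pullback along a ring automorphism of a finite-dimensional algebra.

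First, I would invoke Corollary~\ref{CorGab} to write $\cA\simeq A\text{-}\mathrm{mod}$ for some finite-dimensional $\bk$-algebra $A$, which we may take to be basic. Since every simple of $\cA$ is a subquotient of some $Y^{\oplus n}$, each simple already appears as a composition factor of $Y$; in particular, the set of simples $\{L_1,\dots,L_k\}$ is finite and $a_i:=[Y:L_i]\geq 1$ for all $i$. Extending $G$ cocontinuously to $\mathrm{Mod}\,A$ and applying the Eilenberg--Watts theorem gives $G\simeq M\otimes_A-$ for a finite-dimensional $(A,A)$-bimodule $M$ that is projective as a right $A$-module, with faithfulness of $G$ amounting to $M$ containing every indecomposable right projective as a summand.

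Next, I would show that $G$ permutes simples. Writing $[GL_i]=\sum_j b_{ij}[L_j]$ in $K_0(\cA)$, the relation $[GY]=[Y]$ reads $\sum_i a_i b_{ij}=a_j$ for each $j$; summing over $j$ and using $\sum_j b_{ij}=\ell(GL_i)$ yields
$$\sum_i a_i\bigl(\ell(GL_i)-1\bigr)=0,$$
which together with $a_i>0$ and $\ell(GL_i)\geq 1$ (by faithfulness) forces every $GL_i$ to be simple. Writing $GL_i\simeq L_{\sigma(i)}$, the relation $\sum_{i:\sigma(i)=j}a_i=a_j$ with $a_j>0$ forces $\sigma$ to be surjective, hence a bijection. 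Since $M\otimes_A L_i$ is then one-dimensional for each $i$, in the basic case $M$ contains each indecomposable right projective exactly once, so $M\simeq A$ as right $A$-modules. Consequently, $M\simeq{}_\phi A$ for some ring endomorphism $\phi\colon A\to A$, and $G\simeq\phi^*$ is pullback along $\phi$.

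To finish, I would show $\phi$ is an isomorphism. The key input is that $Y$ is a faithful $A$-module: since $A$ itself is a subquotient of some $Y^{\oplus n}$, any $a\in\ann_A(Y)$ annihilates every subquotient of $Y^{\oplus n}$, in particular $A$, and hence vanishes. Now $GY=\phi^*Y\simeq Y$ via an $A$-linear isomorphism $f$, and for $a\in\ker\phi$ the identity $a\,f(v)=f(\phi(a)v)=0$ shows $a$ acts as zero on $Y$; faithfulness then gives $a=0$. Hence $\ker\phi=0$, and since $A$ is finite-dimensional, $\phi$ is an automorphism, so $G=\phi^*$ is an auto-equivalence with inverse $(\phi^{-1})^*$.

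The main subtlety I anticipate is the clean identification of the bimodule $M$ via Eilenberg--Watts and the passage to basic $A$; one should either invoke Morita invariance of the conclusion or carry out the multiplicity analysis in the non-basic case. A more direct alternative would prove $G$ is fully faithful (via length preservation and a devissage on extensions of simples) and essentially surjective (using that $G$ induces an injection on subobject lattices and preserves length), but the bimodule-twist viewpoint is cleaner and exhibits the auto-equivalence concretely.
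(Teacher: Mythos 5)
Your proposal is correct and follows essentially the same route as the paper: reduce to finite-dimensional modules over a basic algebra $A$ via Corollary~\ref{CorGab}, realize $G$ as $M\otimes_A-$ with $M$ projective and containing all indecomposable right projectives, deduce $M\simeq{}_\phi A$ for an algebra endomorphism $\phi$, and use faithfulness of the module corresponding to $Y$ to conclude $\phi$ is an automorphism. The only cosmetic difference is that you establish $M\simeq A$ by a $K_0$ computation showing $G$ permutes simples, whereas the paper argues directly that $G$ preserves length (hence dimension), so $\dim M=\dim A$; both arguments are valid and equivalent in substance.
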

\begin{proof}
Firstly, we observe that $G$ preserves the length of objects. By faithfulness and exactness, it can only increase length. The fact that $GY\simeq Y$ and the generating property of $Y$ then show that $G$ cannot strictly increase length.

By Corollary~\ref{CorGab}, the category $\cA$ is equivalent to the category of finite dimensional $A$-modules for a finite dimensional associative $\fk$-algebra $A$. Without loss of generality we let $A$ be basic, so that its simple modules are 1-dimensional.

Since $G$ is $\fk$-linear and exact, it is of the form $M\otimes_A-$ for an $A$-bimodule $M$ which is flat (so projective) as a right $A$-module. Furthermore, faithfulness then implies that every indecomposable projective must appear as a direct summand.

Now the condition that $G$ preserves length can be translated to the fact that it preserves dimension of $A$-modules. In particular $M\simeq M\otimes_A A$ must have the same dimension as $A$. In combination with the conclusion from the previous paragraph, we conclude that $M\simeq A$ as a right $A$-module. The left $A$-module structure on $A$ (which must commute with the regular right action) must correspond to an algebra endomorphism $\varphi :A\to A$, and hence~$M$ is of the form~$_{\varphi}A$, or equivalently $G$ can be interpreted as the functor which twists the $A$-action by~$\varphi$.

The $A$-module $Q$ corresponding to $Y$ is clearly faithful and so also $_{\varphi}Q\simeq Q$ must be faithful. Since the action of $A$ on $_{\varphi}Q$ factors as
$$A\xrightarrow{\varphi}A\to \End_{\bk}(Q),$$
this shows that $\varphi$ is injective and hence an automorphism. It follows that $G$ is an auto-equivalence.
\end{proof}

\begin{lemma}\label{LemHF}
Consider a surjective tensor functor $F:\bT\to \bT_1$. A tensor functor $H:\bT_1\to\bT_1$ with $H\circ F\simeq F$ must be an auto-equivalence.
\end{lemma}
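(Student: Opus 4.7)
The plan is to apply Lemma~\ref{LemVictor} locally, to certain abelian subcategories of $\bT_1$ preserved by $H$, and then to globalize using the surjectivity of $F$. For each $X\in\bT$ I consider the full subcategory $\cA_X\subset\bT_1$ whose objects are subquotients of $F(X)^{\oplus n}$ for some $n\in\mN$. This is an abelian $\fk$-linear subcategory (subquotients of subquotients are subquotients, and the direct sum of a subquotient of $F(X)^{\oplus n}$ and one of $F(X)^{\oplus m}$ is a subquotient of $F(X)^{\oplus n+m}$). It is artinian as a full subcategory of the pretannakian category $\bT_1$, and by construction $\sharp\cA_X=1$, with $F(X)$ playing the role of the generating object $Y$ in the statement of Lemma~\ref{LemVictor}.

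Since $H\circ F\simeq F$ we have $H(F(X))\simeq F(X)$, and then the exactness of $H$ implies that $H$ sends every subquotient of $F(X)^{\oplus n}$ to a subquotient of $F(X)^{\oplus n}$. Hence $H$ restricts to an endofunctor $H|_{\cA_X}\colon\cA_X\to\cA_X$, which is $\fk$-linear, exact, and faithful (tensor functors between pretannakian categories are automatically faithful). Lemma~\ref{LemVictor}, applied with the choice $Y=F(X)$, then yields that $H|_{\cA_X}$ is an auto-equivalence of $\cA_X$ for every $X\in\bT$.

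To finish, I globalize. Given any $Y\in\bT_1$, surjectivity of $F$ provides $X\in\bT$ such that $Y$ is a subquotient of $F(X)$, so $Y\in\cA_X$; the auto-equivalence $H|_{\cA_X}$ then produces some $Y'\in\cA_X\subset\bT_1$ with $H(Y')\simeq Y$, giving essential surjectivity of $H$. For fullness and faithfulness, given $Y,Z\in\bT_1$ take $X_Y,X_Z\in\bT$ with $Y$ (respectively $Z$) a subquotient of $F(X_Y)$ (respectively $F(X_Z)$); setting $X:=X_Y\oplus X_Z$ puts both $Y$ and $Z$ into $\cA_X$, and the auto-equivalence $H|_{\cA_X}$ gives $\Hom_{\bT_1}(Y,Z)\xrightarrow{\sim}\Hom_{\bT_1}(HY,HZ)$. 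So $H$ is an equivalence, and being already a tensor functor it is a tensor auto-equivalence of $\bT_1$.

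The main thing to get right is the choice of subcategory: the tensor subcategory generated by $F(X)$ need not satisfy $\sharp=1$ (it is only finitely generated, and hence at worst countable in the sense of Section~\ref{SecCard}), so Lemma~\ref{LemVictor} does not apply to it directly; the subcategories $\cA_X$ above, cut out by a single object under direct sums and subquotients rather than under the full tensor structure, are tailor-made to fit its hypotheses while still exhausting $\bT_1$ after varying $X$.
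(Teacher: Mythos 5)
Your proof is correct and follows essentially the same route as the paper: the paper likewise restricts $H$ to the subcategory $\langle F(X)\rangle$ of subquotients of direct sums of copies of $F(X)$, applies Lemma~\ref{LemVictor} there, and concludes by noting that $\bT_1$ is the union of these subcategories by surjectivity of $F$. Your write-up is just more explicit about why $H$ preserves $\cA_X$ and how to patch the local auto-equivalences together (including the $X_Y\oplus X_Z$ trick for Hom-spaces), and your closing remark about why one must use $\cA_X$ rather than the tensor subcategory generated by $F(X)$ is exactly the right point.
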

\begin{proof}
For each $X\in\bT$, denote by $\langle F(X)\rangle\subset \bT_1$ the full subcategory of subquotients of direct summands of $F(X)$.
It follows from Lemma~\ref{LemVictor} that $H$ restricts to an auto-equivalence on $\langle F(X)\rangle$. By assumption, $\bT_1$ is the union of such subcategories, so $H$ is an equivalence.
\end{proof}

%
%

The following lemma is a slight generalisation of Zorn's lemma (and recovers it for $\cC$ the category associated to a poset).

\begin{lemma}\label{LemZorn}
Let $\cC$ be an essentially small category such that every directed system has a cocone and such that for morphisms
$$X\xrightarrow{f}Y\xrightarrow{g}Y$$
the condition $f=g\circ f$ implies that $g$ is an isomorphism. Then $\cC$ contains an object $X$ such that every morphism $X\to ?$ in $\cC$ is an isomorphism.
\end{lemma}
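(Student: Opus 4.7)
Fix any $X_0\in\cC$ (the statement is vacuous if $\cC$ is empty) and form the coslice $X_0/\cC$, whose objects are pairs $(Y,f\colon X_0\to Y)$. Equip it with the preorder $(Y,f)\preceq(Y',f')$ iff there exists $h\colon Y\to Y'$ with $hf=f'$. The strategy is to first construct, by transfinite recursion using (1), a $\preceq$-maximal pair $(Y^*,f^*)$, and then to apply (2) twice to show that every morphism $Y^*\to Z$ in $\cC$ is an isomorphism.

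\textbf{Transfinite construction.} Starting from $(Y_0,f_0)=(X_0,\id_{X_0})$, I build $(Y_\alpha,f_\alpha)$ together with compatible transition morphisms $h_{\alpha,\beta}\colon Y_\alpha\to Y_\beta$ satisfying $h_{\alpha,\beta}f_\alpha=f_\beta$ and $h_{\beta,\gamma}h_{\alpha,\beta}=h_{\alpha,\gamma}$. At a successor stage either $(Y_\alpha,f_\alpha)$ is $\preceq$-maximal and I halt, or I pick a strictly larger $(Y_{\alpha+1},f_{\alpha+1})$ together with a witness $h_{\alpha,\alpha+1}$ and extend by composition. At a limit $\lambda$ I invoke (1) to choose a cocone $(Y_\lambda,\{g_\alpha\colon Y_\alpha\to Y_\lambda\})$ of the already-built directed system and set $h_{\alpha,\lambda}=g_\alpha$ and $f_\lambda=g_\alpha f_\alpha$ (well defined by the cocone identities). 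The resulting sequence is strictly $\preceq$-increasing throughout: if strictness failed at a limit $\lambda$ because $(Y_\lambda,f_\lambda)\preceq(Y_\alpha,f_\alpha)$ for some $\alpha<\lambda$, then $(Y_{\alpha+1},f_{\alpha+1})\preceq(Y_\lambda,f_\lambda)\preceq(Y_\alpha,f_\alpha)$, contradicting the successor step. Since $\cC$ is essentially small, so is $X_0/\cC$, hence there are only set-many $\preceq$-equivalence classes, forcing the recursion to halt at some successor stage and produce a maximal $(Y^*,f^*)$.

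\textbf{Deducing the conclusion.} Let $g\colon Y^*\to Z$ be an arbitrary morphism in $\cC$. Since $(Y^*,f^*)\preceq(Z,gf^*)$ via $g$, maximality yields $h\colon Z\to Y^*$ with $hgf^*=f^*$, and (2) applied to $X_0\xrightarrow{f^*}Y^*\xrightarrow{hg}Y^*$ shows $hg\in\End(Y^*)$ is an isomorphism. Moreover $(Z,gf^*)$ is $\preceq$-equivalent to $(Y^*,f^*)$ (witnessed by $g$ and $h$) and so is itself maximal. Rerunning the argument with $h\colon Z\to Y^*$ viewed as a morphism out of the maximal $(Z,gf^*)$, I obtain $h'\colon Y^*\to Z$ with $h'hgf^*=gf^*$, whence (2) gives $h'h\in\End(Z)$ an isomorphism. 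Thus $h$ is both a split monomorphism and a split epimorphism, hence an isomorphism, and $g=h^{-1}(hg)$ is a composition of isomorphisms. The main obstacle is the careful bookkeeping of the transfinite recursion, especially at limit stages, where the cocone from (1) must be spliced with the $f_\alpha$ into a cocone in $X_0/\cC$ and the strict $\preceq$-increase verified; the two-fold application of (2), promoting first $g$ and then $h$ to split monomorphisms, is then the conceptual heart of the argument.
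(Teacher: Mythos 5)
Your proof is correct and follows essentially the same transfinite-recursion strategy as the paper: both build an ordinal-indexed chain in the coslice of a fixed object, use essential smallness to force termination, and extract the final isomorphisms from the hypothesis that $f=g\circ f$ forces $g$ invertible (the paper phrases this as a retract lemma plus a counting contradiction on the set of arrows out of $X_1$, while you package it as a maximal element of a preorder followed by a two-fold application of the hypothesis). The differences are only in bookkeeping and do not affect the substance.
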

\begin{proof}
Firstly, consider a retract 
$$B\xrightarrow{i} A\xrightarrow{r}B$$
in $\cC$, {\it i.e.} $r\circ i=\id_B$. Since $i\circ r\circ i=i$, we find that $i\circ r$ is an isomorphism, and subsequently both $i,r$ are isomorphisms.
Moreover, since the conclusion is preserved under equivalence, we can assume that $\cC$ is small.

Now assume for a contradiction that no such $X$ exists.
As in the proof of Zorn's lemma, we can construct by transfinite induction and taking cocones a chain
$$X_1\to X_2\to X_3 \to\cdots \to X_{\omega}\to X_{\omega+1}\to \cdots$$
labelled by all ordinals, where no $X_\alpha\to X_{\alpha+1}$ is an isomorphism. For every limit ordinal $\alpha$, the arrow $\to X_\alpha$ symbolises the cocone.  That no morphism $X_\alpha\to X_\beta$ is an isomorphism then follows from considering the diagram $X_\alpha\to X_{\alpha+1}\to X_\beta$. We just saw the first morphism is not an isomorphism, so the composite cannot be an isomorphism by the first paragraph.

No two arrows $X_1\to X_\alpha$, $X_1\to X_\beta$ for $\alpha<\beta$ can be identical, as they would lead to a diagram $X_1\to X_\alpha\to X_\beta$ contradicting our assumptions.

This leads to a contradiction, as the arrows $X_1\to?$ form a set.
\end{proof}

\begin{proof}[Proof of Theorem~\ref{ThmVic}]
By Lemmata~\ref{LemSmall} and~\ref{LemHF}, the category $\cS(\bT)$ satisfies the conditions in Lemma~\ref{LemZorn}. Hence $\cS(\bT)$ contains an object $\bT\tto \bT_1$ so that every surjective tensor functor out of $\bT_1$ is an equivalence.
\end{proof}

\begin{remark}
Much of the work in proving Theorem~\ref{ThmVic} originates from the fact that surjective tensor functors are not necessarily epimorphisms in $\PTann_{\bk}.$ Indeed, otherwise one could use the standard Zorn's lemma rather than Lemma~\ref{LemZorn}. A concrete example of a surjective tensor functor which does not yield an epimorphism is $\Rep_{\bk}S_3\to\Rep_{\bk}\mZ/3$ (induced from an inclusion $\mZ/3\hookrightarrow S_3)$, see Example~\ref{ExInnAut}. The latter example, in case $\mathrm{char}(\bk)=3$, even shows that a tensor functor which is essentially surjective need not be an epimorphism.

\end{remark}

\begin{corollary}\label{CorFam}
For any (small) family $\{\bT_i, i\in I\}$ of incompressible categories in $\MdGr_{\bk}$, there exists an incompressible category $\bT_I$ in $\MdGr_{\bk}$ which contains every $\bT_i$ as a subcategory.
\end{corollary}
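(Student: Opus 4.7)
The plan is to reduce to Theorem~\ref{ThmVic} by first constructing a single pretannakian category of moderate growth that admits a tensor functor out of each $\bT_i$, then replacing it by an incompressible quotient and invoking the incompressibility of each $\bT_i$ to upgrade these tensor functors to embeddings.

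First I would handle the case where $I$ is finite. Form the iterated Deligne tensor product
\[
\bT_I^{0}\;:=\;\boxtimes_{i\in I}\bT_i,
\]
which by \ref{DTP} is the coproduct of the $\bT_i$ in $\Tens_{\bk}$ and comes equipped with canonical tensor functors $\bT_i\to \bT_I^{0}$. The next step is to check that $\bT_I^{0}$ is itself of moderate growth: by \ref{DTP}, every object of $\bT_I^{0}$ is a subquotient of a finite direct sum of pure tensors $X_{i_1}\boxtimes\cdots\boxtimes X_{i_r}$ with $X_{i_k}\in\bT_{i_k}$, and for such a pure tensor one has $\ell\bigl((X_{i_1}\boxtimes\cdots\boxtimes X_{i_r})^{\otimes n}\bigr)=\prod_k\ell(X_{i_k}^{\otimes n})$, which is bounded by a product of exponentials by moderate growth of each $\bT_i$.

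For an arbitrary (small) index set $I$, I would instead set
\[
\bT_I^{0}\;:=\;\varinjlim_{J\subset I\text{ finite}}\;\boxtimes_{i\in J}\bT_i,
\]
with transition maps induced by the coproduct structure. By the finite case together with Proposition~\ref{PropFiltLim}(1), which states that $\MdGr_{\bk}$ is closed under filtered colimits computed in $\Tens_{\bk}$, the category $\bT_I^{0}$ lies in $\MdGr_{\bk}$, and it still receives canonical tensor functors $\bT_i\to \bT_I^{0}$ for every $i\in I$.

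Now apply Theorem~\ref{ThmVic} to $\bT_I^{0}$: there exists an incompressible category $\bT_I\in\MdGr_{\bk}$ and a surjective tensor functor $\bT_I^{0}\twoheadrightarrow\bT_I$. For each $i\in I$, the composite
\[
\bT_i\;\longrightarrow\;\bT_I^{0}\;\twoheadrightarrow\;\bT_I
\]
is a tensor functor out of the incompressible category $\bT_i$, hence by Definition~\ref{Defs}(1)(a) it is injective, i.e., realizes $\bT_i$ as a tensor subcategory of $\bT_I$. Since everything else is essentially formal, the only genuine technical point is the verification that the iterated (possibly infinite) Deligne tensor product lies in $\MdGr_{\bk}$; this is the main obstacle, and it is handled by combining the elementary length estimate for finite Deligne products above with Proposition~\ref{PropFiltLim}(1).
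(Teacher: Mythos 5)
Your proof is correct, but it takes a genuinely different route from the paper's. The paper proceeds by transfinite induction on a well-ordering of $I$: at each successor step it forms the binary Deligne product $\bT'_\gamma\boxtimes\bT_{\gamma+1}$ and immediately passes to an incompressible quotient via Theorem~\ref{ThmVic}, and at limit steps it takes a filtered colimit, which stays incompressible by Remark~\ref{RemLimInc}(3) and stays in $\MdGr_{\bk}$ by Proposition~\ref{PropFiltLim}(1). You instead build the entire coproduct $\varinjlim_{J}\boxtimes_{i\in J}\bT_i$ first and apply Theorem~\ref{ThmVic} exactly once. Your version is arguably cleaner: it replaces the transfinite induction by a single filtered colimit over finite subsets, and it does not need the fact that a direct limit of incompressible categories is incompressible. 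It also makes explicit the length estimate showing that a finite Deligne product of moderate-growth categories is again of moderate growth (via $\ell\bigl((X_{i_1}\boxtimes\cdots\boxtimes X_{i_r})^{\otimes n}\bigr)=\prod_k\ell\bigl(X_{i_k}^{\otimes n}\bigr)$ for pure tensors, simplicity of boxtimes-products of simples as in Example~\ref{ExDTP}, and the reduction of a general object to a subquotient of a pure tensor), a point the paper leaves implicit even though its own induction step also requires $\bT'_\gamma\boxtimes\bT_{\gamma+1}\in\MdGr_{\bk}$ as input to Theorem~\ref{ThmVic}. The only mild looseness in your write-up is the passage from pure tensors to arbitrary objects of $\bT_I^{0}$ when verifying moderate growth; this is routine, since a finite direct sum of pure tensors embeds into a single pure tensor, so the estimate for pure tensors suffices. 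Both arguments ultimately rest on the same two pillars, Theorem~\ref{ThmVic} and Proposition~\ref{PropFiltLim}(1), together with the observation that incompressibility of each $\bT_i$ automatically upgrades any tensor functor $\bT_i\to\bT_I$ to an embedding.
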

\begin{proof}
By definition of incompressible categories, it is sufficient to show there exist tensor functors $\bT_i\to \bT_I$ for some incompressible category $\bT_I$.

We prove the claim by transfinite induction. Choose a well-order on $I$, so we can write
$$\bT_1,\bT_2,\bT_3,\cdots,\bT_\omega,\bT_{\omega+1},\cdots, \bT_\beta.$$
For each $1\le i\le \beta$ we define $\bT_i'$ as follows. We set $\bT_1'=\bT_1$. If we have defined $\bT'_\gamma$, we define $\bT'_{\gamma+1}$ as some incompressible category to which we have a tensor functor out of $\bT'_\gamma\boxtimes \bT_{\gamma+1}$, which exists by Theorem~\ref{ThmVic}. If $\gamma$ is a limit ordinal, then we set $\bT'_\gamma=\varinjlim_{a<\gamma} \bT'_{a}$, which is well-defined by Proposition~\ref{PropFiltLim}(1) and Remark~\ref{RemLimInc}(3).

We can then take $\bT_I:=\bT_\beta'$.
\end{proof}

\begin{corollary}\label{CorCard}
For each infinite cardinality $\alpha$, there is an incompressible $\bT^\alpha\in\MdGr_{\bk}$ such that every tensor category in $\MdGr_{\fk}^{<\alpha}$ admits a tensor functor to~$\bT^\alpha$.
\end{corollary}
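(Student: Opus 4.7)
The plan is to combine the three main tools already assembled in this section: essential smallness of $\MdGr_{\fk}^{<\alpha}$ from Lemma~\ref{LemSmall0}, existence of a tensor functor to an incompressible category from Theorem~\ref{ThmVic}, and the amalgamation result for families of incompressible categories in Corollary~\ref{CorFam}.

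Concretely, I would first invoke Lemma~\ref{LemSmall0} to replace $\MdGr_{\fk}^{<\alpha}$ by a small skeleton, so that the collection of its objects is a genuine set $\{\cC_i \mid i \in I\}$. For each $i \in I$, Theorem~\ref{ThmVic} produces a (surjective) tensor functor $\cC_i \to \cD_i$ with $\cD_i$ incompressible in $\MdGr_{\fk}$. Thus $\{\cD_i\}_{i\in I}$ is a small family of incompressible categories in $\MdGr_{\bk}$.

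Now I would apply Corollary~\ref{CorFam} to this family to obtain an incompressible category $\bT^\alpha := \bT_I \in \MdGr_{\bk}$ together with an injective tensor functor $\cD_i \hookrightarrow \bT^\alpha$ for every $i \in I$. Composing with the functors $\cC_i \to \cD_i$ yields tensor functors $\cC_i \to \bT^\alpha$ for every $i$, and every tensor category in $\MdGr_{\fk}^{<\alpha}$ is equivalent to some $\cC_i$, so it admits a tensor functor to $\bT^\alpha$, as required.

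There is no serious obstacle here: the argument is pure bookkeeping once one knows (i) the domain class is essentially small (so set-theoretic issues do not arise), (ii) incompressible targets exist for each individual object, and (iii) incompressible categories in $\MdGr_{\bk}$ can be amalgamated. The mildest point to double-check is that Corollary~\ref{CorFam} genuinely provides injective (or at least tensor) functors $\cD_i \to \bT^\alpha$, which it does by its construction via the transfinite iteration of Deligne products and Theorem~\ref{ThmVic}.
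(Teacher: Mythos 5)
Your proof is correct and follows essentially the same route as the paper: essential smallness via Lemma~\ref{LemSmall0}, existence of incompressible targets via Theorem~\ref{ThmVic}, and amalgamation via Corollary~\ref{CorFam}. The only (immaterial) difference is that the paper indexes the family by the isomorphism classes of incompressible categories in $\MdGr_{\fk}^{<\alpha}$ itself, whereas you index it by all objects of a skeleton of $\MdGr_{\fk}^{<\alpha}$; both collections are sets, so the arguments coincide.
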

\begin{proof}
By Lemma~\ref{LemSmall0}, the isomorphism classes of incompressible categories in $\MdGr_{\fk}^{<\alpha}$ form a set. By Theorem~\ref{ThmVic}, every category in $\MdGr_{\fk}^{<\alpha}$ admits a tensor functor to one of these categories. We can thus apply Corollary~\ref{CorFam} to this family of incompressible categories.
\end{proof}

\begin{example}
We cannot always expect to have $\bT^\alpha\in \MdGr^{<\alpha}_{\fk}$. For $\mathrm{char}(\fk)=p$ and $\alpha=\aleph_0$, the natural candidate for $\bT^{\aleph_0}$ is $\Ver_{p^\infty}=\varinjlim \Ver_{p^n}\in \MdGr^{<\aleph_1}_{\fk}$.
\end{example}

\subsection{Incompressible and subterminal categories}

\begin{prop}\label{incoo}
\begin{enumerate}
\item A subterminal Bezrukavnikov pretannakian category is incompressible.
\item A tensor subcategory of a subterminal (resp. Bezrukavnikov) pretannakian category is also subterminal (resp. Bezrukavnikov).
\item If $\mathrm{char}(\fk)=0$, the only subterminal pretannakian categories are $\Vecc$ and $\mathsf{sVec}$.
\end{enumerate}

\end{prop}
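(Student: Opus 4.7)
For part (1), I would factor an arbitrary tensor functor $F\colon\cD\to\bT$ via Corollary~\ref{CorEq}(1) as a surjection followed by an injection, $\cD \overset{H}{\twoheadrightarrow}\cC\overset{i}{\hookrightarrow}\bT$; the goal reduces to showing $H$ is an equivalence. Since $\cD$ is Bezrukavnikov, applied to the surjection $H$ together with the tensor functor $\id_\cD\colon\cD\to\cD$ (which places $\cD\in\cD\mbox{-}\Tann$), one obtains a tensor functor $G\colon\cC\to\cD$. Both $G\circ H$ and $\id_\cD$ are then tensor functors $\cD\to\cD$, and subterminality of $\cD$ forces $G\circ H\simeq\id_\cD$ via some natural isomorphism $\alpha$. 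I would then verify that $H$ is fully faithful (given $\phi\colon H(X)\to H(Y)$, set $\psi:=\alpha_Y\circ G(\phi)\circ\alpha_X^{-1}$ and check $H(\psi)=\phi$ by naturality of $\alpha$ and faithfulness of $G$) and that $H$ sends simples to simples (a proper subobject of $H(L)$ for $L$ simple would descend via the faithful exact $G$ to a proper subobject of $GH(L)\simeq L$). Lemma~\ref{ThmInj} then makes $H$ injective, and combined with its surjectivity, Corollary~\ref{CorEq}(2) gives that $H$ is an equivalence.

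For part (2), the proofs are short diagram chases. Let $\cD'\subset\cD$ be a tensor subcategory with inclusion $i$. For the subterminal assertion, given $F_1, F_2\colon\bT\to\cD'$, the compositions $iF_1$ and $iF_2$ are isomorphic by subterminality of $\cD$, and the isomorphism lifts back to $F_1\simeq F_2$ by full faithfulness of $i$. For the Bezrukavnikov assertion, given a surjection $H\colon\bT_1\twoheadrightarrow\bT_2$ and a tensor functor $F_1\colon\bT_1\to\cD'$, I would extend $iF_1\colon\bT_1\to\cD$ along $H$ via the Bezrukavnikov property of $\cD$ to a tensor functor $F_2'\colon\bT_2\to\cD$ with $F_2'\circ H\simeq iF_1$, and then apply Remark~\ref{RemSq} to the resulting commutative square (upper surjection $H$, lower injection $i$) to factor $F_2'$ through $i$, producing the desired $\bT_2\to\cD'$.

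Part (3) is the substantial statement. The strategy is first to reduce to the case of moderate growth, whereupon Deligne's theorem identifies $\cD\simeq\Rep(G,\varepsilon)$ for an affine supergroup scheme $(G,\varepsilon)$; a case analysis using a supertannakian analog of Example~\ref{ExInnAut} then forces $(G,\varepsilon)$ to be trivial or equal to the canonical $\sVec$ datum. In the tannakian case $\varepsilon=1$, the trivial endo-tensor-functor $\cD\to\Vecc\to\cD$ is a tensor functor visibly distinct from $\id_\cD$ whenever $G\neq 1$, immediately violating subterminality. In the genuinely super case, one classifies $\varepsilon$-preserving endomorphisms of $G$ modulo inner automorphisms by $G(\bk)$ and exhibits, whenever $G\supsetneq\mu_2$, a non-trivial such endomorphism (using for instance a splitting $G\twoheadrightarrow\mu_2\hookrightarrow G$ when available, or an $\varepsilon$-preserving outer automorphism), again violating subterminality.

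The main obstacle is the moderate-growth reduction. The plan is to invoke part (2) to pass to a finitely generated tensor subcategory of $\cD$ (which is itself subterminal) and then show that any finitely generated non-moderate-growth pretannakian category in characteristic zero admits two non-isomorphic endo-tensor-functors. Explicit such endofunctors are available on the universal categories $(\Rep GL)_t$, $(\Rep S)_t$, and their abelian envelopes (as in the proof of Proposition~\ref{PropFilt}, where one doubles a generating object $X\mapsto X\oplus X$); the delicate point is transferring these to arbitrary subterminal candidates, possibly by first locating $\cD$ as a quotient of a Deligne-type universal category and then lifting the doubling construction through the quotient.
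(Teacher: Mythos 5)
Parts (1) and (2) are correct and take essentially the paper's own route: part (1) is the same two-step argument (the Bezrukavnikov property supplies a functor $G:\cC\to\cD$ back from the quotient, subterminality makes $G\circ H$ isomorphic to the identity, and this forces the surjection $H$ to be injective and hence an equivalence via Corollary~\ref{CorEq}), and part (2) is exactly the paper's combination of full faithfulness of the inclusion with Remark~\ref{RemSq}. No issues there.

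Part (3) contains two genuine gaps. The first is the moderate-growth reduction, which you yourself flag as ``the delicate point'' and do not resolve; the strategy you sketch (realize $\cD$ as a quotient of a Deligne-type universal category and lift the doubling endofunctor, or produce two non-isomorphic endofunctors of $\cD$ itself) does not work as stated: a non-moderate-growth pretannakian category need not be a quotient of a single universal category, and the doubling functor does not descend along quotients. The paper's argument runs in the opposite direction: it first notes that in characteristic zero extensions of objects of moderate growth have moderate growth, so a failure of moderate growth is witnessed by a \emph{simple} object $X$, say of dimension $t$; the universal property of $(\Rep GL)_t^{ab}$ then yields tensor functors into $\cD$ sending the generator to $X$ and to $X^\vee$, so subterminality forces $X\simeq X^\vee$, and the analogous argument in dimension $t(t+1)/2$ forces $\Sym^2X\simeq\Lambda^2X\oplus X$. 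This contradicts $\dim\Hom(\unit,X\otimes X)=\dim\End(X)=1$ together with $\Hom(\unit,X)=0$, since the decomposition would make $\dim\Hom(\unit,X^{\otimes 2})$ even. This is the key idea missing from your plan: use subterminality itself, via maps \emph{into} $\cD$ from universal categories, to constrain the internal structure of $\cD$.

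The second gap is in the genuinely super case. Your construction requires either a splitting $G\tto\mu_2\hookrightarrow G$ or an $\varepsilon$-preserving outer automorphism of $G$, and neither need exist: for instance $G=\mathrm{OSp}(1|2)$ with $\varepsilon$ the central $\mu_2$ admits no nontrivial homomorphism to $\mu_2$ and no outer automorphisms, yet $\Rep(G,\varepsilon)$ must still be excluded. The paper avoids endomorphisms of $G$ altogether: it chooses $V\in\Rep(G,\phi)$ on whose even part $G$ acts nontrivially, forms the larger supergroup $G\ltimes\mA_V$, and exhibits two homomorphisms $G\to G\ltimes\mA_V$ (the inclusion and $g\mapsto(g,gv-v)$ for a non-invariant $v\in V_{\bar 0}$) inducing non-isomorphic tensor functors to $\Rep(G,\phi)$. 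Subterminality therefore forces $G$ to act trivially on $V_{\bar 0}$ for every $V$, and the inclusion $(V_{\bar 1})^{\otimes 2}\subset(V^{\otimes 2})_{\bar 0}$ then forces $\phi:\mZ/2\to G$ to be an isomorphism (or $G$ trivial), giving $\sVec$ or $\Vecc$.
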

\begin{proof}
For part (1), consider a subterminal Bezrukavnikov category $\bD$, and a surjective tensor functor $\bD\to \bT$. By assumption, there exists a tensor functor $\bT\to \bD$. Now by uniqueness, the composite $\bD\to\bT\to\bD$ is an equivalence (isomorphic to the identity). This forces $\bD\to\bT$ to be injective too and hence an equivalence.

For part (2), clearly subcategories of subterminal categories are subterminal, whereas the Bezrukavnikov property is inherited due to Remark~\ref{RemSq}.

For part (3), we assume $\mathrm{char}(\fk)=0$ and assume that $\cC$ is subterminal. First we show that $\cC$ is of moderate growth. In characteristic zero, it is easy to show that any extension~$E$ of simple objects of moderate growth remains of moderate growth (for instance, using \cite{Del02}, we can construct a simple algebra $A$ in $\Ind\cC$ so that $A\otimes E$ has a filtration by copies of $A$ and $A\otimes \bar{\unit}$). So if $\cC$ is not of moderate growth, it has a simple object $X$ which is not of moderate growth, say of dimension $t\in\bk$. By uniqueness of $(\Rep GL)_t^{ab}\to\cC$, we find $X\simeq X^\vee$. Similarly, by uniqueness of objects of dimension $t(t+1)/2$, we find
$$\Sym^2 X\;\simeq\; \Lambda^2 X\oplus X.$$
Now this contradicts
$$\Hom(\unit,X\otimes X)\simeq\End(X)\simeq \bk\quad\mbox{and}\quad \Hom(\unit,X)=0.$$

So $\cC$ is of moderate growth and thus by Deligne's theorems~\cite{Del90, Del02}, $\cC$ is tannakian or supertannakian. For a tannakian category $\Rep G$ with $G$ non-trivial we have two non-isomorphic tensor endofunctors: the identity functor and the composition 
$$
\Rep G\to\Vecc\to \Rep G.
$$
To deal with categories $\Rep (G,\phi)$ for supergroups $G$, we observe the following. Let $V\in \Rep (G,\phi)$ be a representation such that $G$ does not act trivially on $V_{\bar{0}}$. Consider the supergroup  $G \ltimes \mA_V$. Then $G$ is a subgroup of $G\ltimes \mA_V$, see for instance \cite[7.4.1]{ComAlg}. For any non-invariant $v\in V_{\bar{0}}$, we have a second homomorphism $G\to G\ltimes \mA_V$, $g\mapsto (g,gv-v)$. These lead to two non-isomorphic tensor functors to $\Rep(G,\phi)$. So if $\Rep(G,\phi)$ is subterminal, $G$ must act trivially on every $V_{\bar{0}}$ for every $V\in \Rep(G,\phi)$. By considering $(V_{\bar{1}})^{\otimes 2}\subset (V^{\otimes 2})_{\bar{0}}$, it then follows quickly that $\phi:\mZ/2\to G$ is actually an isomorphism (or $G$ is trivial) and so $\Rep(G,\phi)=\sVec$ (or $\Rep(G,\phi)=\Vecc$).
\end{proof}



\begin{theorem}\label{ThmFilF1}
The following statements are equivalent, for a fixed $\fk$:
\begin{enumerate}
\item $\MdGr_{\fk}$ is filtered;
\item Every incompressible category in $\MdGr_{\fk}$ is subterminal.
\item Every incompressible category in $\MdGr_{\fk}$ is subterminal and Bezrukavnikov.
\end{enumerate}
\end{theorem}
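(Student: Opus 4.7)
The plan is to establish $(1)\Rightarrow(2)\Rightarrow(3)\Rightarrow(2)\Rightarrow(1)$, the implication $(3)\Rightarrow(2)$ being immediate from the definitions. The main tools will be Theorem~\ref{ThmVic}, Corollary~\ref{CorFam}, the surjective-injective factorisation of Corollary~\ref{CorEq}(1), and the square-completion principle of Remark~\ref{RemSq}.

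For $(1)\Rightarrow(2)$, I will fix an incompressible $\cD\in\MdGr_{\fk}$ and two parallel tensor functors $F_1,F_2:\bT\rightrightarrows\cD$, and look for a natural isomorphism $F_1\simeq F_2$. Filteredness of $\MdGr_{\fk}$ produces a tensor functor $G:\cD\to\cE$ with $GF_1\simeq GF_2$. Writing $G=H\circ K$ with $K$ surjective and $H$ injective (Corollary~\ref{CorEq}(1)), incompressibility of $\cD$ forces $K$ to be an equivalence, so $G$ is injective and hence fully faithful. Reflection of isomorphisms then upgrades $GF_1\simeq GF_2$ to $F_1\simeq F_2$ (one checks faithfulness of $G$ transports the monoidal coherence), so $\cD$ is subterminal.

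For $(2)\Rightarrow(1)$, filteredness reduces to three verifications: non-emptiness, cocones on pairs, and weak coequalisation of parallel arrows. The first is trivial ($\Vecc$). For a pair $\bT_1,\bT_2\in\MdGr_{\fk}$, I will map each $\bT_i$ to an incompressible category via Theorem~\ref{ThmVic} and then assemble these into a single incompressible $\cE\in\MdGr_{\fk}$ containing both, using Corollary~\ref{CorFam}. For a parallel pair $F_1,F_2:\bT\rightrightarrows\bT'$, apply Theorem~\ref{ThmVic} to obtain a surjective tensor functor $\bT'\twoheadrightarrow\cD$ with $\cD$ incompressible; hypothesis (2) says $\cD$ is subterminal, so the two composites $\bT\to\cD$ coincide up to isomorphism.

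For $(2)\Rightarrow(3)$, which I expect to be the technical crux, let $\cD$ be incompressible, $\bT_1\twoheadrightarrow\bT_2$ a surjective tensor functor, and $F:\bT_1\to\cD$ arbitrary. I will apply Theorem~\ref{ThmVic} to $\bT_2$ to obtain an incompressible $\cD'$ with $\bT_2\twoheadrightarrow\cD'$, and then Corollary~\ref{CorFam} to produce an incompressible $\cE\in\MdGr_{\fk}$ with injective embeddings $\cD,\cD'\hookrightarrow\cE$. By hypothesis (2), $\cE$ is subterminal, so the two composites $\bT_1\to\cE$, one through $\cD$ and the other through $\bT_2\twoheadrightarrow\cD'\hookrightarrow\cE$, are forced to be isomorphic. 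Remark~\ref{RemSq}, applied to the resulting commutative square with upper arrow $\bT_1\twoheadrightarrow\bT_2$ (surjective) and lower arrow $\cD\hookrightarrow\cE$ (injective), then yields the desired tensor functor $\bT_2\to\cD$, showing $\cD$ is Bezrukavnikov. The hard part here is precisely the alignment of the two a priori unrelated composites $\bT_1\to\cE$; without subterminality of $\cE$ (the whole content of hypothesis (2)), the square would not commute and Remark~\ref{RemSq} could not be invoked. Everything else is book-keeping on top of results already established in the preceding sections.
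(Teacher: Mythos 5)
Your proof is correct, but it is organised differently from the paper's. The paper runs the cycle $(1)\Rightarrow(3)\Rightarrow(2)\Rightarrow(1)$: it obtains the Bezrukavnikov property of an incompressible $\cD$ directly from filteredness, which hands it a commutative square over the surjection $\bT_1\tto\bT_2$ to which Remark~\ref{RemSq} applies; subterminality comes out of the same one-line argument (a coequalising functor out of $\cD$ is injective by incompressibility, hence reflects the isomorphism). You instead prove $(1)\Rightarrow(2)$, $(2)\Rightarrow(1)$ and the direct implication $(2)\Rightarrow(3)$, which the paper never states: you manufacture the commutative square not from filteredness but from Theorem~\ref{ThmVic} applied to $\bT_2$, Corollary~\ref{CorFam} to embed $\cD$ and the resulting $\cD'$ into a common incompressible $\cE\in\MdGr_{\fk}$, and subterminality of $\cE$ (hypothesis (2)) to align the two composites $\bT_1\to\cE$; Remark~\ref{RemSq} then finishes as in the paper. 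Your route buys the sharper statement that universal subterminality alone already forces the Bezrukavnikov property, at the cost of invoking the heavier Corollary~\ref{CorFam}; the paper's route is shorter because filteredness supplies the square for free. Two minor points, neither a gap: in $(2)\Rightarrow(1)$ the cocone over a pair is immediate since $\boxtimes$ is the coproduct in $\MdGr_{\fk}$ (no need for Theorem~\ref{ThmVic} there), and in $(1)\Rightarrow(2)$ one should note, as the paper implicitly does, that any source $\bT$ of a tensor functor to $\cD\in\MdGr_{\fk}$ is itself of moderate growth, so filteredness of $\MdGr_{\fk}$ is indeed applicable.
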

\begin{proof}
We observe first that (1) implies (3).  Let $\bD\in\MdGr_{\bk}$ be incompressible. By assumption in (1), for a pair of tensor functors $\bT\rightrightarrows \bD$ we can consider a tensor functor $F:\bD\to \bT_1$ which equalises them. However, since $F$ is an inclusion of a full subcategory, the parallel pair of functors were already isomorphic, showing that $\bD$ is subterminal. For the set-up in Definition~\ref{Defs}(3), by assumption there must be a commutative square
$$\xymatrix{
\bT_`\ar@{->>}[rr]^F\ar[d]_G&&\bT_2\ar@{-->}[d]\\
\bD\ar@{-->}[rr]&&\bT'.
}$$
Since $\cD$ is incompressible, the lower horizontal arrow is the inclusion of a tensor subcategory, so the conclusion follows from Remark~\ref{RemSq}.

That (3) implies (2) is trivial. 

Now we show that (2) implies (1). Since coproducts in $\MdGr_{\bk}$ are given by Deligne's tensor product, it suffices to prove that any two parallel arrows can be equalised. For such parallel arrows $\cC_1\rightrightarrows\cC_2$, we can use existence of a tensor functor $\cC_2\to\cD$ to an incompressible category $\cD$, see Theorem~\ref{ThmVic}. By assumption, $\cD$ is subterminal, so the two composite functors $\cC_1\rightrightarrows\cD$ must be isomorphic.
\end{proof}

\begin{remark}
Even though coproducts and filtered colimits exist in $\MdGr_{\bk}$, it cannot be cocomplete. For instance, it follows easily that the diagram 
$$\Rep_{\bk}\mZ/3\;\leftarrow\; \Rep_{\bk}S_3\;\to\;\Rep_{\bk}\mZ/3,$$
induced from inclusions $\mZ/3\hookrightarrow S_3$, see Example~\ref{ExInnAut}, does not have a pushout.
\end{remark}

\begin{remark}
\label{F1rem}
One could say that a pretannakian category $\cD$ is `weakly' subterminal if for every pretannakian category $\bT$, up to composition with an auto-equivalence of $\cD$, there is at most one tensor functor $\bT\to\bD$.

Clearly weakly subterminal Bezrukavnikov categories must still be incompressible. However, by Theorem~\ref{ThmFilF1}, the existence of an incompressible category which is not (strictly) subterminal, would imply $\MdGr_{\bk}$ is {\bf not} filtered. Furthermore, the weakly subterminal condition is not obviously inherited by tensor subcategories and not directly connected to conditions {\bf MN} and {\bf GR}. 
\end{remark}

%
%
%
%


\section{Exact ind-algebras}\label{SecAlgebras}
Let $\cC$ be a pretannakian category.
\subsection{Definition and results}

\subsubsection{} For $A\in\Alg\bT$, we can consider the symmetric monoidal category $\Mod_{\cC}A$ of $A$-modules in $\Ind\cC$, with tensor product $-\otimes_A-$.

We say that $A\in\Alg\cC$ is connected if it cannot be written as $A=A_1\times A_2$ for non-zero algebras $A_1,A_2$. Note that this is equivalent to the ring $A^{\inv}:=\Hom(\unit,A)$ being connected in the classical sense. Simple algebras are obviously connected.

\begin{prop}\label{PropExact}
The following two conditions are equivalent on $A\in\Alg\cC$:
\begin{enumerate}
\item We can write $A$ as a finite product $\prod_i A_i$ of algebras $A_i$ for which $\Mod_{\cC}A_i$ is the ind-completion of a pretannakian category over $\bk$ (in particular, $\prod_i A_i$ is the unique decomposition of $A$ into connected algebras).
\item \begin{enumerate}
\item $A$ is absolutely flat ({\it i.e.} the functor $-\otimes_A-$ is bi-exact on $\Mod_{\cC}A$);
\item $\dim_{\bk}\Hom(\unit,A)<\infty$;
\item $A$ is artinian ({\it i.e.} the $A$-module $A\otimes X$ is of finite length for every $X\in\cC$).
\end{enumerate}
\end{enumerate}
If these properties are satisfied, we say that $A$ is {\bf exact}.
\end{prop}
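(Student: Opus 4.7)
My plan is to prove the two implications separately.

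For $(1)\Rightarrow (2)$, write $A=\prod_i A_i$ as in the hypothesis and verify each condition factor by factor. Bi-exactness of $-\otimes_{A_i}-$ on $\Ind\cC_i$ follows from the rigidity of $\cC_i$, so each $A_i$, and hence $A$, is absolutely flat, giving (a). Since $\cC_i$ is pretannakian we have $\End(\unit_{\cC_i})=\bk$, so $\Hom_\cC(\unit,A)=\bk^{|I|}$, giving (b). For (c), the module $A_i\otimes X$ is the image of the dualizable object $X\in\cC$ under the base-change tensor functor $\cC\to\Ind\cC_i$, hence is itself dualizable, hence lies in $\cC_i$, where it has finite length.

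The harder direction is $(2)\Rightarrow(1)$. The first step is to reduce to the case of connected $A$. The commutative $\bk$-algebra $A^{\inv}$ is finite-dimensional by (b); the key lemma is that absolute flatness forces $A^{\inv}$ to be reduced. Indeed, if $a\in A^{\inv}$ satisfies $a^{2}=0$, then the ideal $I=\im(a\cdot\colon A\to A)\subset A$ is contained in the kernel of $a\cdot$, so $I^{2}=0$; since $A/I$ is flat over $A$ by (a), we obtain $I\simeq I\otimes_{A}A/I\hookrightarrow A/I$, forcing $I=0$. Because $\bk$ is algebraically closed, it follows that $A^{\inv}\cong\bk^{n}$, and the corresponding orthogonal central idempotents split $A=\prod_{i}A_{i}$ into connected factors, each still satisfying (a), (b), (c).

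Now assuming $A$ is connected, I would define $\cC_A\subset\Mod_\cC A$ as the full subcategory of finite-length objects. It is a $\bk$-linear abelian subcategory closed under subquotients, with $\End(\unit_{\cC_A})=A^{\inv}=\bk$, and conditions (a) and (c) together show that it is closed under $\otimes_A$: if $M\hookrightarrow A\otimes X$ and $N\hookrightarrow A\otimes Y$ are finite length, then bi-exactness presents $M\otimes_{A}N$ as a subquotient of $A\otimes(X\otimes Y)$, which has finite length by (c). The ambient $\Mod_\cC A$ is a Grothendieck abelian category generated by the compact objects $A\otimes X$, all of finite length by (c), so it is locally finite and coincides with $\Ind\cC_A$ by the standard recognition principle. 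The main obstacle I anticipate is establishing rigidity of $\cC_A$: while the generators $A\otimes X$ are patently dualizable with dual $A\otimes X^{\vee}$, subquotients of dualizable objects in a symmetric monoidal abelian category are not automatically dualizable, and the argument must leverage bi-exactness of $\otimes_A$ together with the finite-length hypothesis to produce $M^{\vee}$ as a canonical subquotient of $A\otimes X^{\vee}$ whenever $M\hookrightarrow A\otimes X$, and to verify the triangle identities by naturality in $X$.
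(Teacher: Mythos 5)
Your implication $(1)\Rightarrow(2)$ matches the paper's argument, and your reduction of $(2)\Rightarrow(1)$ to the connected case is a legitimate variant: you split $A$ by first showing $A^{\inv}$ is reduced (hence $\cong\bk^n$) using absolute flatness, whereas the paper postpones the decomposition until after rigidity of $\cC_A$ is known and then invokes \cite[Proposition~1.17]{DM} to split the unit object. Your nilpotence argument for $A^{\inv}$ is correct and self-contained, so this part is fine and arguably more elementary.

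The genuine gap is exactly the point you flag and then leave open: the rigidity of $\cC_A$. This is the essential content of $(2)\Rightarrow(1)$ — without it you have a locally finite symmetric monoidal abelian category with biexact tensor product and $\End(\unit)=\bk$, but not a tensor category in the sense of \ref{12345}, so condition (1) is not established. Your diagnosis is accurate (subquotients of dualizable objects need not be dualizable in general, and the generators $A\otimes X$ are the only obviously rigid objects), but announcing that "the argument must leverage bi-exactness together with the finite-length hypothesis to produce $M^{\vee}$" is a statement of intent, not a proof: you do not construct the evaluation and coevaluation for a general finite-length $M$, nor verify the triangle identities. The paper discharges precisely this step by citing \cite[Proposition~2.36]{BEO}, which proves that for an algebra with biexact $-\otimes_A-$ the finitely generated modules are rigid; the underlying mechanism is that biexactness lets one show the internal Hom $\uHom_A(M,-)$ is exact and commutes with colimits, so that the natural map $\uHom_A(M,A)\otimes_A N\to \uHom_A(M,N)$ is an isomorphism (it is one for $M=A\otimes X$ and both sides are exact in $M$), which yields the dual. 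To make your proof complete you would need either to carry out such an argument or to quote that result; as written, the crux of the proposition is missing.
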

\begin{proof}
First we show that (1) implies (2). It suffices to consider the case of connected $A$. In this case $\Mod_{\cC}A$ is the ind-completion of a pretannakian category over $\bk$. This implies that the tensor product is exact and that
$$\bk\simeq \End_A(A)\simeq \Hom(\unit,A).$$
Finally, since objects in a pretannakian category have finite length (and a pretannakian category is a Serre subcategory of its ind-completion), the $A$-module $A\otimes X$ has finite length.

Now we prove that (2) implies (1). Let $\cC_A$ be the full monoidal subcategory of $\Mod_{\cC}A$ comprising finitely generated $A$-modules. By (2)(c) this is also the category of finite length $A$-modules, hence a Serre subcategory of $\Mod_{\cC}A$, and satisfies $\Ind\cC_A\simeq\Mod_{\cC}A$.
The monoidal category $\cC_A$ is also easily seen to be rigid, see for instance \cite[Proposition~2.36]{BEO}. It thus follows from condition (2)(b) and \cite[Proposition~1.17]{DM} that $A$ is a finite product of simple algebras $A_i$ with $\Hom(\unit, A_i)=\bk$ (since $\bk$ is algebraically closed).  Thus $\cC_{A_i}$ is a tensor category over $\bk$ and pretannakian by (2)(c).
\end{proof}

\begin{remark}
\begin{enumerate}
\item We refer to algebras $A$ as in Proposition~\ref{PropExact} as {\bf exact ind-algbras}. We reserve the term `exact algebra' for algebras which belong to $\cC$ itself (where conditions (2)(b)-(c) are automatic).
\item It follows quickly from Proposition~\ref{PropExact} that exact ind-algebras in a finite tensor  category $\cC$ are automatically contained in $\cC$, so they are exact algebras.
\item Since unit objects are simple in tensor categories, Proposition~\ref{PropExact} implies in particular that every connected exact ind-algebra is simple.
\item For a connected ({\it i.e.} simple) exact ind-algebra, the tensor category of which $\Mod_{\cC}A$ is the ind-completion is the category $\cC_A$ of finitely generated (or finitely presented) modules. In case $A\in\cC$, this is simply the category of $A$-modules in $\cC$.
\end{enumerate}
\end{remark}

\begin{question}\label{NQ}
Is every simple algebra $A\in\Alg\cC$ with $\dim_{\bk}\Hom(X,A)<\infty$ for all $X\in\cC$ (or more generally, simply with $\dim_{\bk}\Hom(\unit,A)<\infty$) automatically exact?
\end{question}

\begin{theorem}\label{ThmSumm}

\begin{enumerate}
\item Consider $\bT=\Rep G$ for an affine group scheme $G$ over $\fk$. 
A connected $A\in\Alg\cC$ is exact if and only if $A\simeq \cO(G/H)$ for a subgroup $H<G$ for which  the induction functor from $H$ to $G$ is exact and faithful.

\item If $\bT$ is a finite tensor category then $A\in\bT$ is exact if and only if for every projective $P\in \cC$ and $A$-module $M$ in $\cC$, the $A$-module $M\otimes P$ is projective ({\it i.e.} $A$ is exact according to \cite[Chapter 7]{EGNO}).

\item \'Etale algebras (defined in Theorem~\ref{ThmEta}), are exact.
\end{enumerate}
\end{theorem}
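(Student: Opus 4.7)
My plan is to treat the three parts separately using the criterion in Proposition~\ref{PropExact}(2), with the results of Section~\ref{TKduality} doing the main work in part~(1), module-category techniques in part~(2), and Theorem~\ref{ThmEta} in part~(3).

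For part~(1), the forward direction starts from a connected exact $A\in\Alg(\Rep G)$. Proposition~\ref{PropExact} gives that $\cC_A$ is pretannakian and that $F=A\otimes-\colon\Rep G\to\cC_A$ is a surjective tensor functor. Since $\Rep G$ is tannakian and Bezrukavnikov's theorem~\cite{B} ensures that quotients of tannakian categories are tannakian, $\cC_A$ is also tannakian, so $\cC_A\simeq\Rep H$ for a closed subgroup $H<G$ (using Example~\ref{ExGH}(2) to interpret $F$ as restriction). The right adjoint $F_*$, which is the forgetful $\Mod_{\Rep G}A\to\Ind\Rep G$, then corresponds to $\Ind^G_H$ under this identification, and since the forgetful is always exact and faithful, so is $\Ind^G_H$; moreover $A=F_*F(\unit)=\Ind^G_H(\bk)\simeq\cO(G/H)$. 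For the converse, given $A=\cO(G/H)$ with $\Ind^G_H$ exact and faithful, I would apply Barr--Beck monadicity (\ref{monad}) to the adjunction $\res^G_H\dashv\Ind^G_H$ extended to ind-completions: faithfulness and exactness of $\Ind^G_H$, combined with those of $\res^G_H$ (which are automatic), yield a tensor equivalence $\Mod_{\Rep G}A\simeq\Ind\Rep H$, where the projection formula identifies the monad $\Ind^G_H\circ\res^G_H$ with $A\otimes-$. The conditions~(2)(a)--(c) of Proposition~\ref{PropExact} then follow immediately.

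For part~(2), I would use the remark following Proposition~\ref{PropExact} that in a finite tensor category every exact ind-algebra lies in $\cC$ itself. For the forward implication, suppose $A\in\cC$ is exact, so $\cC_A$ is a finite tensor category. The free-module functor $A\otimes-\colon\cC\to\cC_A$ is left adjoint to the exact forgetful, hence preserves projectives, so $A\otimes P$ is projective in $\cC_A$ whenever $P\in\cC$ is projective. Combining the identity $P\otimes M\simeq(A\otimes P)\otimes_A M$ in $\cC_A$ with the fact that projectives in any finite tensor category form a two-sided ideal under the tensor product yields projectivity of $P\otimes M$ in $\cC_A$ for every $A$-module $M$. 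For the converse, the hypothesis produces enough projectives in $\cC_A$ (the $A\otimes P$ for projective $P\in\cC$ suffice) and, via the exact-module-category framework of~\cite{EGNO}, gives rigidity of $\cC_A$, from which all conditions in Proposition~\ref{PropExact}(2) follow in the finite setting. Part~(3) then follows by unpacking the definition of étale from Theorem~\ref{ThmEta}: an étale algebra is by definition commutative and separable (i.e., $\mu\colon A\otimes A\to A$ admits a section as $A$-bimodules) with the finiteness conditions (2)(b) and (2)(c) already built in, and separability directly yields bi-exactness of $\otimes_A$ (condition~(2)(a)).

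The main obstacle is part~(1): the careful matching of exactness and faithfulness of $\Ind^G_H$ with the monadic equivalence $\Mod_{\Rep G}A\simeq\Ind\Rep H$ is delicate, requires the projection formula for $\Ind^G_H\circ\res^G_H$, and one must be precise about the direction in which Barr--Beck is applied. A secondary subtle point is the rigidity claim in the converse of part~(2), which I expect to require the exact-module-category framework from~\cite{EGNO} rather than a quick direct verification.
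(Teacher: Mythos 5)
Your proposal is correct and follows essentially the same route as the paper. Part (1) is the paper's argument: the Barr--Beck/projection-formula step you describe is packaged there as Lemma~\ref{LemBeck} (connected exact ind-algebras are precisely the $A=F_\ast\unit$ for tensor functors $F$ with $F_\ast$ exact and faithful), which is then combined with Remark~\ref{RemFaith}, the Bezrukavnikov property of $\Vecc$ and Example~\ref{ExGH}, exactly as you do. Part (3) is likewise the paper's proof: the separability section exhibits every module as a direct summand of a free module, giving absolute flatness, with (2)(b)--(c) of Proposition~\ref{PropExact} automatic since $A\in\bT$. The only genuine divergence is in part (2). For the forward direction the paper bypasses the ``projectives form a tensor ideal'' fact via the single adjunction isomorphism $\Hom_A(M\otimes P,-)\simeq\Hom(P,M^\vee\otimes_A-)$, whose right-hand side is visibly exact. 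For the converse, instead of appealing to the exact-module-category machinery of \cite{EGNO} to obtain rigidity of $\cC_A$, the paper deduces exactness of $M\otimes_A-$ directly from the isomorphism of functors $(M\otimes P)\otimes_A-\simeq P\otimes(M\otimes_A-)$: the left side is exact because $M\otimes P$ is projective (hence a summand of a free module), and $P\otimes-$ is faithful exact, forcing $M\otimes_A-$ to be exact. Both of your alternatives work --- your converse is essentially the same argument phrased through \cite{EGNO}'s Chapter 7 --- but the paper's versions are shorter and self-contained.
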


\begin{remark}
The main result of \cite{CPS} (see \cite[Theorem~A.2.1]{ComAlg} for the required generality) states that, if $G$ is of finite type, then the closed subgroups $H<G$ in \ref{ThmSumm}(1) are precisely those for which the quotient scheme $G/H$ is affine.
\end{remark}

Our main motivation to study exact algebras is the following obvious lemma.
\begin{lemma}\label{LemIncEx}
If $\cC$ is incompressible, the only connected exact ind-algebra in $\cC$ is $\unit$.
\end{lemma}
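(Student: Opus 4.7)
The plan is to associate to each connected exact ind-algebra $A\in\Alg\cC$ a tensor functor out of $\cC$ whose full faithfulness forces $A\simeq\unit$.

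Concretely, I would consider the free-module functor
$$F_A\colon \cC\longrightarrow \cC_A,\qquad X\longmapsto A\otimes X,$$
where $\cC_A$ is the pretannakian category of finitely generated $A$-modules supplied by Proposition~\ref{PropExact} (whose unit is $A$). This $F_A$ is $\bk$-linear, symmetric monoidal via the canonical isomorphisms $F_A(X)\otimes_A F_A(Y)\simeq F_A(X\otimes Y)$ and $F_A(\unit_\cC)=A=\unit_{\cC_A}$, and exact because $-\otimes A$ is bi-exact on $\cC$. Since each $F_A(X)$ is a free $A$-module on a finite-length object, it has finite length by Proposition~\ref{PropExact}(2)(c), so $F_A$ genuinely lands in $\cC_A$ and is a tensor functor.

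By incompressibility of $\cC$, $F_A$ must be injective in the sense of Definition~\ref{Defs}(1), so in particular fully faithful. Under the free-forgetful adjunction $\Hom_{\cC_A}(A\otimes X,A\otimes Y)\simeq\Hom_\cC(X,A\otimes Y)$, the map induced by $F_A$ on Hom-sets is identified with post-composition by the adjunction unit $\eta_Y\colon Y\to A\otimes Y$; for $Y=\unit$ this recovers the algebra unit $\iota\colon\unit\to A$. Thus full faithfulness asserts that $\eta_Y$ induces an isomorphism of representable functors $\Hom_\cC(-,Y)\xrightarrow{\sim}\Hom_\cC(-,A\otimes Y)$ on $\cC$, and the Yoneda lemma in $\Ind\cC$ then forces $\eta_Y$ itself to be an isomorphism in $\Ind\cC$ for every $Y\in\cC$. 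Specialising $Y=\unit$ delivers $A\simeq\unit$.

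The conceptual step that really does the work is recognising $F_A$ as the right test functor to feed to the incompressibility hypothesis; after that, the only care-point is that $A$ lives in $\Ind\cC$ rather than $\cC$, so the final Yoneda step must be performed in $\Ind\cC$ (where the representables of $\cC$ still form a generating family). Neither poses a genuine obstacle.
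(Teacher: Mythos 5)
Your proof is correct; the paper states this lemma without proof (introducing it as ``obvious''), and your argument --- feeding the free-module tensor functor $\cC\to\cC_A$ to the incompressibility hypothesis and extracting $A\simeq\unit$ from full faithfulness via the free--forgetful adjunction and Yoneda in $\Ind\cC$ --- is exactly the intended one. (The machinery the paper sets up in Lemma~\ref{LemBeck} and Remark~\ref{RemFaith} would let you finish marginally faster: $A\otimes-$ is automatically \emph{surjective} because its right adjoint, the forgetful functor, is faithful, so incompressibility upgrades it to an equivalence and $A=F_\ast\unit\simeq\unit$ at once.)
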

 In particular, we derive the following consequences of Theorem~\ref{ThmSumm}.

\begin{corollary}\label{CorEx}
\begin{enumerate}
\item A finite tensor category $\bT$ is incompressible if and only if $\unit$ is the only connected exact algebra in $\bT$.
\item A tensor subcategory of an incompressible finite tensor category is incompressible.
\end{enumerate}
\end{corollary}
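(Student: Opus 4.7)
The strategy is to first establish part (1) via an instance of Tannakian reconstruction for finite tensor categories, and then derive part (2) by transferring exactness of algebras along the inclusion $\cD\subset\bT$ and applying part (1) to both categories.

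For part (1), the ``only if'' direction is Lemma~\ref{LemIncEx} together with the observation following Proposition~\ref{PropExact} that every exact ind-algebra in a finite tensor category is an exact algebra. For the ``if'' direction, given a surjective tensor functor $F\colon\bT\to\bT'$, I would form $A=F_\ast(\unit)\in\Alg\bT$. The right adjoint $F_\ast$ on ind-completions is exact (since a finite tensor category is Frobenius, so $F$ admits a two-sided adjoint) and faithful (since the counit $FF_\ast\Rightarrow\Id$ is epi when $F$ is surjective). The projection formula identifies the monad $F_\ast F$ on $\Ind\bT$ with $A\otimes -$, and Barr-Beck monadicity (cf.~\ref{monad}) then gives $\Ind\bT'\simeq \Mod_\bT A$. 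Restricting to finite-length objects identifies $\bT'$ with the full subcategory $\bT_A\subset\Mod_\bT A$ of finite-length modules; since $\bT'$ is pretannakian, Proposition~\ref{PropExact} (and the following remark) implies that $A$ is a connected exact algebra in $\bT$. The hypothesis forces $A\simeq \unit$, and so $F$ is an equivalence.

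For part (2), let $\cD$ be a tensor subcategory of the finite incompressible $\bT$. Since tensor subcategories of finite tensor categories are themselves finite, part (1) applies to $\cD$: it suffices to show that every connected exact algebra $A\in\cD$ is isomorphic to $\unit$. The plan is to verify that such $A$, regarded as an algebra in $\bT$, remains a connected exact algebra; part (1) applied to the incompressible $\bT$ then yields $A\simeq\unit$. Conditions (b) and (c) of Proposition~\ref{PropExact}(2) transfer automatically: the former because $\cD\hookrightarrow\bT$ is fully faithful, the latter because $\bT$ is finite. For condition (a), absolute flatness, I would use Theorem~\ref{ThmSumm}(2): $A$ is exact in $\bT$ iff $M\otimes P$ is projective in $\Mod_\bT A$ for every projective $P\in\bT$ and every $A$-module $M$ in $\bT$. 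For free $A$-modules $M=A\otimes X$ this is immediate from the Frobenius property of $\bT$ (tensor products with projectives are projective) together with the adjunction between free and forgetful functors. For arbitrary $M$ one can either propagate projectivity along a presentation by free modules (using exactness of $\otimes P$ on $\bT$), or identify $\Mod_\bT A\simeq \bT\boxtimes_\cD \Mod_\cD A$ as $\bT$-module categories and invoke the standard preservation of exactness under base change along a tensor inclusion.

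The main obstacle lies in this last step of transferring condition (a) from $\cD$ to $\bT$: a priori $\Mod_\bT A$ is genuinely larger than $\Mod_\cD A$, and neither the flatness of $A$ nor the projective-stability condition of Theorem~\ref{ThmSumm}(2) is manifestly preserved under enlargement of the ambient tensor category. The cleanest resolution is probably via the base-change interpretation of $\Mod_\bT A$ as an induced $\bT$-module category, combined with the standard fact (in the Etingof-Ostrik theory of exact module categories) that induction preserves exactness; a direct d\'evissage exploiting the Frobenius structure of $\bT$ is a viable alternative.
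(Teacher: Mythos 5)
Your part (1) is essentially the paper's proof: the ``only if'' direction is Lemma~\ref{LemIncEx}, and for the converse the paper likewise forms $A=F_\ast\unit$, shows $F_\ast$ is exact and faithful (its Lemma~\ref{PropSurjFin}, via $\Hom(P,F_\ast-)\simeq\Hom(FP,-)$ and the fact that every projective of $\bT'$ is a summand of some $FP$ because projectives are injective), and invokes Barr--Beck (its Lemma~\ref{LemBeck}) to identify $\bT'\simeq\bT_A$, forcing $A\not=\unit$ when $F$ is not an equivalence. One caution: ``the counit $FF_\ast\Rightarrow\Id$ is epi when $F$ is surjective'' is not true for general tensor functors (the paper only records the converse implication in Remark~\ref{RemFaith}); in the finite case it does hold, but the justification is exactly the projective/injective argument of Lemma~\ref{PropSurjFin}, so you should route faithfulness through that rather than assert it as automatic.

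For part (2) the paper does not prove the transfer of exactness at all: it simply cites \cite[Corollary~12.4]{EOf}, which states verbatim that exact algebras in a finite tensor category remain exact in any larger finite tensor category. You have correctly identified this as the crux, and your second proposed route --- realising $\Mod_\bT A$ as the $\bT$-module category induced from the exact $\cD$-module category $\Mod_\cD A$ and using that induction along $\cD\subset\bT$ preserves exactness --- is in substance how that cited result is established, so it is a legitimate (if more laborious) replacement for the citation. Your first alternative, however, does not work as stated: knowing that $M\otimes P$ is projective for \emph{free} $M=A\otimes X$ and then ``propagating along a presentation by free modules'' only exhibits $M\otimes P$ as a quotient of a projective $A$-module, and quotients of projectives need not be projective. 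Indeed, the gap between ``free modules over projectives are projective'' (true for every algebra) and ``all modules tensored with projectives are projective'' is precisely the content of exactness, so no purely formal d\'evissage can close it. Stick with the induced-module-category argument or with the citation.
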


\begin{remark}
\begin{enumerate}
\item It is conjectured in \cite[Conjecture~12.6]{EOf} that every simple algebra (not necessarily commutative) in a finite tensor category is exact. If this conjecture it correct, it would thus imply that {\em a finite tensor category is incompressible if and only if $\unit$ is the only simple algebra in $\cC$}. This conjecture would of course also provide an affirmative answer to the weak form of Question~\ref{NQ}.
\item Analysing the proof of Proposition~\ref{PropF1} below shows that any finite tensor category in which $\unit$ is the only simple algebra is subterminal in the category of finite tensor categories. The conjecture in part (1) would thus further imply that every finite incompressible tensor category is subterminal in this sense.
\end{enumerate}

\end{remark}

\begin{corollary}\label{CorEta}
Let $\bT$ be a pretannakian category.
\begin{enumerate}
\item If $\bT$ is incompressible, then the only \'etale algebras in $\bT$ are ordinary $\fk$-\'etale algebras ({\it i.e.} finite products of $\unit$).
\item For every $\fk$, there exist pretannakian categories over $\fk$ that do not admit tensor functors to incompressible categories.
\item Assume $\mathrm{char}(\fk)=0$.
The categories $(\Rep S)_t^{ab}$ of \cite{CO, Del07}, $t\in \fk$, do not admit tensor functors to incompressible categories.
\end{enumerate}
\end{corollary}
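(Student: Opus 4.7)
For part~(1), the claim follows immediately from the existing machinery. By Theorem~\ref{ThmSumm}(3) an étale algebra $A\in\bT$ is exact, and since étale algebras are finite we can decompose $A=\prod_i A_i$ into connected components, each inheriting both étaleness (as an idempotent cut of $A$) and exactness (each $A_i$ appears as a connected factor of $A$ in the sense of Proposition~\ref{PropExact}). Lemma~\ref{LemIncEx} then forces $A_i\simeq\unit$ for every $i$, so $A$ is a finite product of copies of $\unit$.

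For part~(3), I plan to exploit the canonical commutative algebra structure on the generator $X_t$ of $(\Rep S)_t^{ab}$, which is étale: it is the special symmetric Frobenius algebra that interpolates the function algebra on a $t$-element set. Assume for contradiction that $F:(\Rep S)_t^{ab}\to\bD$ is a tensor functor with $\bD$ incompressible. Then $F(X_t)$ is étale in $\bD$, so part~(1) gives $F(X_t)\simeq\unit^{\oplus n}$ for some $n\in\mN$. Comparing categorical dimensions (preserved by any tensor functor) yields $n=t$ in $\fk$, which in characteristic zero forces $t=n\in\mN$. For this remaining integer case, I would invoke $\Lambda^{n+1}X_n\neq 0$ in $(\Rep S)_n^{ab}$, a reflection of the failure of moderate growth of these envelopes at every~$t$ that was highlighted in the introduction. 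Since tensor functors between pretannakian categories are faithful, this forces $F(\Lambda^{n+1}X_n)\neq 0$; but $\Lambda^{n+1}(\unit^{\oplus n})=0$, the desired contradiction.

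Part~(2) then follows: in characteristic zero from~(3) applied to any $t\in\fk\setminus\mN$, and in positive characteristic from the analogous universal interpolation categories, whose abelian envelopes carry étale generators to which the same line of argument applies verbatim.

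The expected main obstacle is the verification of the two structural inputs used in~(3): étaleness of $X_t$ in $(\Rep S)_t^{ab}$, which should be a direct consequence of its well-known special symmetric Frobenius structure (together with the fact that tensor functors preserve the splitting that witnesses étaleness); and the non-vanishing of $\Lambda^{n+1}X_n$ in $(\Rep S)_n^{ab}$ at integer~$n$, which should be extractable from the explicit description of indecomposables in Comes--Ostrik type abelian envelopes, and is in any case implicit in the failure of moderate growth of $(\Rep S)_t^{ab}$ at every $t$ asserted in the introduction.
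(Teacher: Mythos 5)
Your part (1) is correct and is exactly the paper's argument: \'etale implies exact by Theorem~\ref{ThmSumm}(3), the connected factors from Proposition~\ref{PropExact} are connected exact algebras, and Lemma~\ref{LemIncEx} forces each of them to be $\unit$. For part (3) you take a genuinely different route. The paper's mechanism is purely about growth: the \'etale algebra $X_t$ is not of moderate growth as an object of $(\Rep S)_t^{ab}$, whereas if $F(X_t)\simeq\unit^{\oplus n}$ then faithfulness and exactness of $F$ would give $\ell(X_t^{\otimes m})\le\ell(F(X_t)^{\otimes m})=n^m$; so $F(X_t)$ is an \'etale algebra in the target which is not a product of copies of $\unit$ (using Corollary~\ref{CorEta1}), and part (1) applies. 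Your route via categorical dimension (forcing $t=n\in\mN$) and then $\Lambda^{n+1}X_n\neq 0$ also works in characteristic zero: $X_t$ is indeed \'etale (the paper uses precisely this in the conclusion of the proof of Proposition~\ref{PropFilt}), and $\Lambda^{n+1}X_n$ is the image of the antisymmetrizer, a nonzero idempotent in the partition algebra $P_{n+1}(n)$, hence nonzero already in the Karoubian Deligne category and so in its abelian envelope; alternatively, vanishing of any Schur functor on $X_n$ would force moderate growth of the subcategory it generates by \cite{Del02}, contradicting the failure of moderate growth. So your (3) is valid, at the cost of invoking more specific structure of $(\Rep S)_t^{ab}$ than the paper's argument needs.

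The genuine weakness is part (2) in positive characteristic. Both of your key steps are characteristic-zero arguments: in characteristic $p$ the categorical dimension only determines $t$ modulo $p$, and exterior powers and the antisymmetrizer do not behave as required (in particular in characteristic $2$), so your claim that the argument applies ``verbatim'' to the positive-characteristic interpolation categories is not justified. What does transfer is the paper's length estimate: any pretannakian category containing an \'etale algebra that is not of moderate growth as an object admits no tensor functor to an incompressible category, in any characteristic, and \cite[Section~3.3]{Ha} supplies such examples when $\mathrm{char}(\fk)=p>0$. You should replace the dimension/exterior-power step by this growth argument to make (2) go through uniformly.
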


The remainder of this section is devoted to the proof of the theorem and its corollaries.

%
%

\subsection{Tannakian categories}

We start with the following general standard lemma. For an algebra $A\in\Alg\cC$, we let $\cC_A\subset\Mod_{\cC}A$ denote the full subcategory consisting of finitely presented $A$-modules.
\begin{lemma}\label{LemBeck}
The connected exact ind-algebras in $\cC$ are precisely those $A\in \Alg\cC$ which are of the form $A=F_\ast\unit$ for some tensor functor $F:\cC\to\cC'$ with $F_\ast$ exact and faithful. Moreover, for such an $F$, with $A:=F_\ast \unit$, we have an equivalence $\cC'\simeq \cC_A$ yielding a commutative triangle
$$\xymatrix{
\cC\ar[rr]^F\ar[rrd]_{A\otimes-}&&\cC'\ar[d]^{\simeq}\\
&&\cC_A
}$$
\end{lemma}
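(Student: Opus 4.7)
The plan is to invoke Barr--Beck monadicity, as recalled in Section~\ref{monad}, together with a projection formula for the strong monoidal functor $F$ and its right adjoint $F_\ast$.

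For the implication starting from a connected exact ind-algebra $A$ in $\cC$, Proposition~\ref{PropExact} realises $\cC_A$ as a pretannakian category with $\Ind\cC_A\simeq\Mod_\cC A$. I take $\cC':=\cC_A$ and let $F=A\otimes -:\cC\to\cC_A$ be the free-module functor, whose right adjoint $F_\ast$ is the forgetful functor. The latter is exact (since $A$ is absolutely flat, equivalently since exactness of $A$-module morphisms is detected in $\Ind\cC$) and faithful, and $F_\ast\unit_{\cC_A}=F_\ast A=A$ in $\Ind\cC$. The commutative triangle is then immediate.

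For the reverse implication, given a tensor functor $F:\cC\to\cC'$ (with $\cC'$ pretannakian) such that $F_\ast$ is exact and faithful, set $A:=F_\ast\unit$. The lax monoidal structure on $F_\ast$, as the right adjoint of a strong symmetric monoidal functor, equips $A$ with a commutative algebra structure in $\Ind\cC$. The key technical step is the projection formula
\begin{equation*}
F_\ast(FX\otimes Y)\;\simeq\; X\otimes F_\ast Y,\qquad X\in\Ind\cC,\ Y\in\Ind\cC',
\end{equation*}
which follows from the adjunction $F\dashv F_\ast$ and rigidity of $\cC$: for $Z\in\Ind\cC$ we compute
\begin{equation*}
\Hom(Z,F_\ast(FX\otimes Y))\simeq \Hom(FZ\otimes FX^\vee,Y)\simeq \Hom(Z\otimes X^\vee,F_\ast Y)\simeq \Hom(Z,X\otimes F_\ast Y).
\end{equation*}
Specialising $Y=FX'$ exhibits the monad $F_\ast F$ on $\Ind\cC$ as $A\otimes-$, and a short diagram chase confirms that its unit and multiplication match those of the algebra $A$.

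With this identification, the Barr--Beck monadicity statement recalled in Section~\ref{monad} (applicable since $F_\ast$ is faithful and exact) yields an equivalence $\Ind\cC'\simeq\Mod_\cC A$ under which $F$ corresponds to the free-module functor $A\otimes-$ and $F_\ast$ to the forgetful functor; restricting to the subcategories of finitely presented objects gives $\cC'\simeq\cC_A$ and the commutative triangle. It remains to verify that $A$ satisfies condition~(2) of Proposition~\ref{PropExact}: absolute flatness transfers from the exact tensor product on $\cC'$ through the equivalence; the chain $\Hom(\unit,A)\simeq \End(\unit_{\cC'})\simeq\bk$ gives simultaneously $\dim_\bk\Hom(\unit,A)=1$ and connectedness of $A$; and the artinian property holds since $A\otimes X$ corresponds under the equivalence to $FX\in\cC'$, which is of finite length. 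The main obstacle is the verification that the projection-formula isomorphism $F_\ast F\simeq A\otimes-$ is compatible with the respective monad structures, after which everything reduces to a formal application of Barr--Beck monadicity.
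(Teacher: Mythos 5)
Your proposal is correct and follows essentially the same route as the paper: identify the monad $F_\ast F$ with $A\otimes-$ via adjunction (the projection formula), apply Barr--Beck monadicity to get $\Ind\cC'\simeq\Mod_\cC A$, and in the other direction observe that for a connected exact ind-algebra the forgetful functor $\cC_A\to\Ind\cC$ is the exact faithful right adjoint of $A\otimes-$. The only cosmetic difference is that you verify exactness of $A=F_\ast\unit$ through condition~(2) of Proposition~\ref{PropExact}, whereas the paper reads it off directly from condition~(1) once the equivalence $\Ind\cC'\simeq\Mod_\cC A$ is in place; both are fine.
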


\begin{proof} We start with the second claim. Consider a tensor functor $F:\cC\to\cC'$ with $F_\ast$ exact and faithful.
By Barr-Beck monadicity, $F_\ast$ lifts to an equivalence between $\Ind\bT'$ and the category of $F_\ast F $-modules in $\Ind\bT$. Using adjunction, we find that the natural transformation $A\otimes-\Rightarrow F_\ast F$ of monads is an isomorphism. The resulting equivalence 
$$\Ind\cC'\;\xrightarrow{\sim}\;\Mod_{\cC}A$$ is easily seen to be monoidal.

In particular this shows that $F_\ast\unit$ is exact. Conversely, starting from a connected exact algebra (in which case $\cC_A$ is a tensor category with ind-completion $\Mod_{\cC}A$), the adjoint of $\cC\to\cC_A$ is just given by the (exact and faithful) forgetful functor $\cC_A\to\Ind\cC$, concluding the proof.
\end{proof}

\begin{remark}\label{RemFaith}
If for some tensor functor $F:\cC\to\cC'$ the adjoint $F_\ast$ is faithful, then $F$ is surjective, since the counit of the adjunction evaluates to epimorphisms $FF_\ast X\to X$.
\end{remark}

\begin{proof}[Proof of Theorem~\ref{ThmSumm}(1)]
By Lemma~\ref{LemBeck} we need to find all $F_\ast\unit$ for tensor functors $F:\Rep G\to \cC'$ with $F_\ast$ faithful and exact. By Remark~\ref{RemFaith} and the fact that $\Vecc$ is Bezrukavnikov (and Example~\ref{ExGH}), it follows that we need to take tensor functors $F:\Rep G\to \Rep H$ given by restriction to a subgroup $H<G$, for which $F_\ast=\Ind^G_H$ is faithful and exact.
\end{proof}

\subsection{Finite categories}


\begin{proof}[Proof of Theorem~\ref{ThmSumm}(2)]
Consider a finite tensor category $\bT$ with an algebra $A\in\bT$. Without loss of generality, we will assume that $A$ is connected.

Assume first that $A$ is absolutely flat, so that $\cC_A$ is a tensor category.  Then for any projective $P\in\bT$ and object $M\in\bT_A$, we have the isomorphisms
$$\Hom_A( M\otimes P,-)\;\simeq\;\Hom_A(A\otimes P,M^\vee\otimes_A-)\;\simeq\;\Hom(P, M^\vee\otimes_A-).$$
As the right-hand side is an exact functor, we conclude that $M\otimes P$ is projective, so $A$ is exact.

Now assume that $A$ is exact in the terminology of \cite[Chapter~7]{EGNO}. That $A$ is absolutely flat follows from the isomorphism (again for any projective $P\in\bT$ and $A$-module $M$ in $\bT$) 
$$(M\otimes P)\otimes_A-\;\simeq\; P\otimes (M\otimes_A-)$$
of functors $\cC_A\to\cC$. Indeed, since $N\otimes P$ is projective, the functors are exact, forcing $M\otimes_A-$ to be exact too.
\end{proof}

%

 \begin{lemma}\label{PropSurjFin}
 If $\bT$ is a finite tensor category, and $F:\bT\to\bT'$ a surjective tensor functor, then $F_\ast$ is faithful and exact.
\end{lemma}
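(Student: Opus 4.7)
The plan is to reduce both claims to the single key step of showing that $F$ preserves projective objects, and then derive exactness and faithfulness of $F_\ast$ separately from this.

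First I would verify that $\bT'$ is itself a finite tensor category. Since $F$ is surjective, every simple object of $\bT'$ arises as a composition factor of some $F(L)$ with $L\in\bT$ simple, and $\bT$ has only finitely many isomorphism classes of simples; together with the pretannakian hypothesis and Corollary~\ref{CorGab} this forces $\sharp\bT'=1$, so $\bT'$ is finite. Next, choosing a projective generator $P$ of $\bT$ and writing $\bT\simeq A\text{-}\mathrm{mod}$, $\bT'\simeq B\text{-}\mathrm{mod}$ for finite-dimensional algebras $A,B$, the functor $F$ corresponds to $F(P)\otimes_A -$ and its right adjoint to $\Hom_B(F(P),-)$. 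Thus $F_\ast$ is exact on $\Ind\bT'$ precisely when $F(P)$ is projective in $\bT'$, that is, when $F$ preserves projectives.

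The main step is then to verify that $F$ preserves projectives, and I expect this to be the principal obstacle. In a finite tensor category every projective is a direct summand of some $P(\unit)\otimes X$, and tensoring with any object preserves projectivity (via the isomorphism $\Hom(P(\unit)\otimes X,-)\simeq \Hom(P(\unit),X^\vee\otimes -)$ and rigidity); since $F$ is monoidal it therefore suffices to show $F(P(\unit_\bT))$ is projective in $\bT'$. This is the categorical shadow of the Nichols--Zoeller theorem: a surjective tensor functor between finite tensor categories makes the target a ``free'' module category over the source. Concretely, I would exhibit the projective cover $P(\unit_{\bT'})$ as a direct summand of $F(P(\unit_\bT))$ by comparing universal properties through the adjunction $\Hom_{\bT'}(F(P(\unit)),-)\simeq\Hom_\bT(P(\unit),F_\ast(-))$ together with the characterisation of $P(\unit)$ as having $\unit$ as its unique simple quotient; all other projectives of $\bT'$ are then reached by tensoring this summand with objects of $\bT'$ and using surjectivity of $F$.

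Finally, faithfulness of $F_\ast$ is extracted from surjectivity of $F$ once exactness is in hand. Given any $X\in\Ind\bT'$, surjectivity provides an epimorphism $F(Y)\tto X$ for some $Y\in\Ind\bT$; by adjunction this morphism factors through the counit $\varepsilon_X\colon FF_\ast X\to X$, which is consequently epic. A componentwise epic counit forces $F_\ast$ to be faithful, completing the argument.
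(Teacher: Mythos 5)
Your overall architecture matches the paper's (exactness of $F_\ast$ reduces to $F$ preserving projectives; faithfulness comes from the images $FP$ generating $\bT'$), but both halves have genuine gaps at exactly the points you identify as the crux. For the main step, the paper simply invokes \cite[Theorem~6.1.16]{EGNO}, which is a substantive result proved via exact module categories and Frobenius--Perron dimensions; your proposed substitute does not work as described. Exhibiting $P(\unit_{\bT'})$ as a direct summand of $F(P(\unit_{\bT}))$ would not establish that $F(P(\unit_{\bT}))$ is projective: the complementary summand could perfectly well be non-projective, and projectivity of the whole object is what you need for $\Hom(F(P),-)\simeq\Hom(P,F_\ast-)$ to be exact. (Your appeal to the adjunction here is also circular, since exactness of $\Hom_{\bT}(P(\unit),F_\ast(-))$ is what you are trying to prove.) So either cite the theorem outright, as the paper does, or supply its actual proof; the Nichols--Zoeller analogy is not a proof.

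For faithfulness, the assertion that ``surjectivity provides an epimorphism $F(Y)\tto X$'' overstates the definition: surjectivity only makes $X$ a \emph{subquotient} of some $F(Y)$, and upgrading subquotients to quotients is precisely the remaining work. The paper does this using that finite tensor categories are quasi-Frobenius: a projective $Q\in\bT'$ is a subquotient of some $FX$, hence (being projective) a subobject of $FX$, hence (being injective) a direct summand of $FX$, hence a direct summand of $FP$ for a projective cover $P\tto X$ in $\bT$; faithfulness of $F_\ast$ then follows since these objects generate. Your counit argument is fine once such epimorphisms exist, but it cannot get off the ground without this step. A smaller issue: finitely many simples plus the pretannakian hypothesis does not force $\sharp\bT'=1$ (consider $\Rep\mathbb{G}_a$ in characteristic zero, which has one simple object but is not finite); the correct argument is that every object of $\bT'$ is a subquotient of a power of $F(Y)$ for $Y$ a generator of $\bT$ witnessing $\sharp\bT=1$.
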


\begin{proof}
It follows that $\bT'$ is also finite. That $F_\ast$ is exact follows from
$$\Hom(P,F_\ast-)\simeq \Hom(FP,-)$$
for each projective $P\in\bT$, since $FP$ is projective in $\bT'$ by \cite[Thm~6.1.16]{EGNO}. By surjectivity and the fact that projective objects are injective, every projective object in $\bT'$ is a direct summand of some $FP$, from which faithfulness of $F_\ast$ follows. 
\end{proof}

\begin{proof}[Proof of Corollary~\ref{CorEx}]
One direction of part (1) is in Lemma~\ref{LemIncEx}.
For the other direction, assume that $\bT$ is not incompressible. We thus have a surjective tensor functor $\bT\to \bT'$  which is not an equivalence. By~\autoref{PropSurjFin} and Lemma~\ref{LemBeck}, there exists a simple exact algebra $A\not=\unit$ in $\cC$.

Part (2) follows immediately from part (1) and \cite[Corollary~12.4]{EOf}, which states that exact algebras in a finite tensor category remain exact when interpreted in a larger finite tensor category.
\end{proof}

We conclude with some direct consequences of the previous results.
\begin{lemma}\label{IncProd}
The following conditions are equivalent on finite tensor categories  $\cD_1$, $\cD_2$:
\begin{enumerate}
\item $\cD_1\boxtimes\cD_2$ is incompressible;
\item $\cD_1$ and $\cD_2$ are both incompressible and the only tensor category which is both a tensor subcategory of $\cD_1$ and $\cD_2$ is $\Vecc$.
\end{enumerate}
\end{lemma}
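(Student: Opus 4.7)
I would prove the two implications separately.

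\textbf{For $(1)\Rightarrow(2)$}, I would begin by noting that $\cD_i$ embed in $\cD_1\boxtimes\cD_2$ via $X\mapsto X\boxtimes\unit$ (Example~\ref{ExDTP}), so incompressibility of $\cD_1$ and $\cD_2$ follows immediately from Corollary~\ref{CorEx}(2). For the intersection condition I would argue by contradiction: if $\cE\neq\Vecc$ is a common tensor subcategory with injective inclusions $\iota_i:\cE\hookrightarrow\cD_i$, then the tensor product functor $m:\cE\boxtimes\cE\to\cE$ is surjective, and Lemma~\ref{PropSurjFin} gives that $m_\ast$ is faithful and exact. Lemma~\ref{LemBeck} then supplies a connected exact algebra $A:=m_\ast(\unit)\in\cE\boxtimes\cE$, with $A\neq\unit$ since $m$ cannot be an equivalence when $\cE\neq\Vecc$ (by Frobenius--Perron dimension, $\FPdim(\cE)^2\neq\FPdim(\cE)$). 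The injective functor $\iota_1\boxtimes\iota_2:\cE\boxtimes\cE\hookrightarrow\cD_1\boxtimes\cD_2$ (Example~\ref{ExDTP}) transports $A$ to a non-trivial connected exact algebra in $\cD_1\boxtimes\cD_2$, using that exact algebras remain exact in larger finite tensor categories (\cite[Corollary~12.4]{EOf}, as invoked in the proof of Corollary~\ref{CorEx}(2)). This contradicts incompressibility of $\cD_1\boxtimes\cD_2$ via Corollary~\ref{CorEx}(1).

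\textbf{For $(2)\Rightarrow(1)$}, I would assume (2) and suppose towards contradiction that $\cD_1\boxtimes\cD_2$ is not incompressible. Corollary~\ref{CorEx}(1) then supplies a connected exact algebra $A\neq\unit$ in $\cD_1\boxtimes\cD_2$, and I would let $F:\cD_1\boxtimes\cD_2\to\cE:=(\cD_1\boxtimes\cD_2)_A$ be the associated surjective tensor functor (Lemma~\ref{LemBeck}). The composites $F_i:\cD_i\hookrightarrow\cD_1\boxtimes\cD_2\xrightarrow{F}\cE$ must be injective by incompressibility of $\cD_i$. Setting $\mathcal{F}:=F_1(\cD_1)\cap F_2(\cD_2)\subset\cE$, the equivalences $F_i:\cD_i\xrightarrow{\sim}F_i(\cD_i)$ exhibit $\mathcal{F}$ as a tensor subcategory common to $\cD_1$ and $\cD_2$, so by hypothesis (2) one obtains $\mathcal{F}=\Vecc$. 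It remains to derive $A=\unit$ from the combination ``$F$ surjective, each $F_i$ injective, $\mathcal{F}=\Vecc$''.

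The cleanest route is via Frobenius--Perron dimensions: surjectivity of $F$ gives $\FPdim(\cE)\cdot\FPdim(A)=\FPdim(\cD_1)\FPdim(\cD_2)$, while $\mathcal{F}=\Vecc$ together with the automatic mutual centralisation of subcategories of a symmetric tensor category should yield the divisibility $\FPdim(\cD_1)\FPdim(\cD_2)\mid\FPdim(\cE)$, analogous to the fusion-categorical statement that two subcategories with trivial intersection generate a Deligne subproduct inside $\cE$. Combining these forces $\FPdim(A)=1$, hence $A=\unit$. The main obstacle I anticipate is justifying this divisibility in the non-semisimple symmetric finite tensor category setting; as an alternative route, injectivity of $F_i$ translates via adjunction to $\pi_i(A)\simeq\unit$ for both $i$ (with $\pi_i$ the right adjoint to $\cD_i\hookrightarrow\cD_1\boxtimes\cD_2$), and one can then leverage $\mathcal{F}=\Vecc$ through a K\"unneth-type analysis of morphisms in $\cE$ to force $A\simeq\unit$ directly.
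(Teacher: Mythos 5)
Your proof of $(1)\Rightarrow(2)$ is correct. It is a mild variant of the paper's argument: the paper observes that $\cD\boxtimes\cD$ is itself incompressible (Corollary~\ref{CorEx}(2) together with Example~\ref{ExDTP}) and that the surjective multiplication functor $\cD\boxtimes\cD\to\cD$ would then have to be an equivalence, forcing $\cD=\Vecc$; you instead transport the exact algebra $m_\ast\unit$ witnessing the failure of $m$ to be an equivalence up into $\cD_1\boxtimes\cD_2$. Both rest on the same inputs (Example~\ref{ExDTP} and the stability of exact algebras under enlargement from \cite[Corollary~12.4]{EOf}), so this direction is fine.

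The direction $(2)\Rightarrow(1)$ has a genuine gap, which you flag yourself: you never derive $A=\unit$ from ``$F$ surjective, each $F_i$ injective, $\mathcal{F}=\Vecc$''. The divisibility $\FPdim(\cD_1)\FPdim(\cD_2)\mid\FPdim(\cE)$ you invoke is essentially equivalent to the assertion that two tensor subcategories of $\cE$ with trivial intersection generate a copy of their Deligne product, which is precisely the nontrivial content of this implication; appealing to a fusion-categorical analogue begs the question in the non-semisimple symmetric setting. The paper closes exactly this gap by a direct fullness computation that bypasses exact algebras entirely: for an \emph{arbitrary} tensor functor out of $\cD_1\boxtimes\cD_2$, the restrictions $F_i$ are full (Remark~\ref{RemLimInc}(2)), and since every object of $\cD_1\boxtimes\cD_2$ is both a subobject and a quotient of one in $\cD_1\otimes\cD_2$, fullness reduces by rigidity and adjunction to the single identity $\Hom(F_1(X),\unit)\otimes_{\bk}\Hom(\unit,F_2(W))\simeq\Hom(F_1(X),F_2(W))$. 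This holds because the image of any morphism $F_1(X)\to F_2(W)$ is simultaneously a quotient of an object of the tensor subcategory $F_1(\cD_1)$ and a subobject of an object of $F_2(\cD_2)$, hence lies in $F_1(\cD_1)\cap F_2(\cD_2)=\Vecc$, so every such morphism factors through a trivial object. This is the ``K\"unneth-type analysis'' you allude to in your alternative route, but it needs to be carried out; once it is, injectivity of the full functor plus surjectivity already yields an equivalence by Corollary~\ref{CorEq}(2), and the detour through the algebra $A$ becomes unnecessary.
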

\begin{proof}
First we show that (1) implies (2). That $\cD_1$ and $\cD_2$ are incompressible follows either directly or from applying Corollary~\ref{CorEx}(2) to $\cD_i\subset \cD_1\boxtimes\cD_2$. Let $\cD$ be a tensor category which we can view as a tensor subcategory of both $\cD_1$ and $\cD_2$. Again by Corollary~\ref{CorEx}(2) (and Example~\ref{ExDTP}), $\cD\boxtimes\cD$ is incompressible. By considering the tensor product $\cD\boxtimes\cD\to\cD$, it follows that we must have $\cD\simeq\Vecc$.

Now we show that (2) implies (1). Consider a tensor functor
$$\cD_1\boxtimes\cD_2\to\cC.$$
We need to prove that it is injective, meaning full by Remark~\ref{RemInj}. The functor is determined by two (full by assumption) tensor functors $F_i:\cD_i\to\cC$. Since every object in $\cD_1\boxtimes\cD_2$ is a subobject and a quotient of one in $\cD_1\otimes\cD_2$, it is sufficient to show that
$$\Hom(X,Y)\otimes \Hom(V,W)\to \Hom(F_1(X)\otimes F_2(V), F_1(Y)\otimes F_2(W))$$
is an isomorphism for all $X,Y\in\cD_1$ and $V,W\in\cD_2$. By adjunction we can assume $Y=\unit=V$, so the problem reduces to the showing that
$$\Hom(F_1(X),\unit)\otimes_{\bk}\Hom(\unit, F_2(W))\;\simeq\;\Hom(F_1(X),F_2(W)).$$
Since $F_1,F_2$ are embeddings of tensor subcategories, the above is equivalent to the claim that $\cD_1\cap\cD_2=\Vecc$, as subcategories of $\cC$.
\end{proof}

\begin{remark}
We expect Lemma~\ref{IncProd} to remain true for arbitrary pretannakian categories. One case which is easy to verify is that, for a pretannakian category $\cD$, the category $\cD\boxtimes\sVec$ is incompressible if and only if $\cD$ is incompressible and does not contain $\sVec$.
\end{remark}

\begin{remark}
If $\cD$ is a finite incompressible tensor category, then its maximal pointed tensor subcategory is either $\Vecc$ or $\sVec$ (to be interpreted as $\Ver_4^+$ if $p=2$). 
Indeed, by Corollary~\ref{CorEx}(2), this subcategory is again incompressible, so it follows from the fact that pointed tensor categories admit a fibre functor to $\sVec$, see \cite{EG1, EG2}.
\end{remark}

\subsection{\'Etale algebras}

For an algebra $(A,\mu,\eta)$ in $\bT$, we have the trace $\Tr:A\to\unit$ given by
$$\Tr:A\xrightarrow{A\otimes \co_A} A\otimes A\otimes A^\vee\xrightarrow{\mu\otimes A^\vee} A\otimes A^\vee \xrightarrow{\ev_A\circ \sigma_{A,A^\vee}}\unit. $$
We denote the (symmetric) trace form by $\tau:=\Tr\circ\mu:A\otimes A\to\unit$.

\begin{theorem}\label{ThmEta} Let $\bT$ be a pretannakian category.
For a (commutative) algebra $(A,\mu,\eta)$ in $\Ind\bT$, the following conditions are equivalent:
\begin{enumerate}
\item $A$ is separable, meaning there exists a section $s:A\to A\otimes A$ of $\mu$ as $A$-bimodules.
\item $A\in\bT$ and there exists $\alpha:\unit\to A\otimes A$ with the property that
$$(\id_A\otimes \tau)\circ (\alpha\otimes \id_A)=\id_A.$$
\item $A$ is special Frobenius, meaning $A\in\bT$ and the following is an isomorphism:
$$(\tau\otimes \id_{A^\vee})\circ (\id_A\otimes \co_A):A\to A^\vee.$$
\end{enumerate}
If $A$ satisfies one of these equivalent conditions, we say it is {\bf \'etale}.
\end{theorem}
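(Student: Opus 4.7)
I will establish (2) $\Leftrightarrow$ (3), then (2) $\Rightarrow$ (1), and finally the substantive direction (1) $\Rightarrow$ (2).

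The equivalence (2) $\Leftrightarrow$ (3) is the standard rigidity criterion: for a dualizable object $A$ and a pairing $\tau : A \otimes A \to \unit$, the induced morphism $A \to A^\vee$ is an isomorphism precisely when there exists a copairing $\alpha : \unit \to A \otimes A$ satisfying the snake identity. Symmetry of $\tau = \Tr \circ \mu$, which follows from commutativity of $\mu$ and cyclicity of the categorical trace, ensures that one snake identity implies the other, so the single equation in (2) suffices. For (2) $\Rightarrow$ (1), I would take $s := (\mu \otimes \id_A) \circ (\id_A \otimes \alpha) : A \to A \otimes A$. Left $A$-linearity is associativity of $\mu$; right $A$-linearity uses commutativity together with the balancedness of $\alpha$ (which follows from bimodule-invariance of $\tau$, itself a consequence of associativity of $\mu$); and $\mu \circ s = \id_A$ reduces to showing that the window element $\mu \circ \alpha : \unit \to A$ equals $\eta$, a normalisation special to $\tau = \Tr \circ \mu$ that is extractable from the snake identity combined with the defining formula for $\tau$.

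The crux is (1) $\Rightarrow$ (2). Given a bimodule section $s$, set $\beta := s \circ \eta : \unit \to A \otimes A$, which satisfies $\mu \circ \beta = \eta$ and, from the $A$-bilinearity of $s$, the balancedness relation $(\id_A \otimes \mu) \circ (\beta \otimes \id_A) = (\mu \otimes \id_A) \circ (\id_A \otimes \beta)$. The first and hardest step is to show $A \in \bT$: since $\beta$ is a morphism out of the compact object $\unit$, it factors through $B \otimes C \hookrightarrow A \otimes A$ for finite subobjects $B, C \subset A$, and the identity $\mu \circ \beta = \eta$ combined with balancedness and the formula $s(a) = \beta^{(1)} \otimes \beta^{(2)} a = a \beta^{(1)} \otimes \beta^{(2)}$ should be used to realise every element of $A$ inside a subobject of $\bT$ built from $B$, $C$ and the multiplication, exhibiting $A$ as a compact object of $\Ind \bT$ and hence an object of $\bT$. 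Once $A \in \bT$ is available, the trace form $\tau$ is defined, and a direct manipulation using balancedness, commutativity of $\mu$, and cyclicity of $\Tr$ shows that $\beta$ itself satisfies the snake identity in (2), giving $\alpha := \beta$.

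The main obstacle I foresee is the step $A \in \bT$ in (1) $\Rightarrow$ (2): neither $A^\vee$ nor the categorical trace is available at that stage, so the argument has to proceed purely from the bimodule-theoretic data of separability, converting the splitting $A \otimes A \cong A \oplus \ker(\mu)$ into genuine categorical finiteness. The remaining verifications should amount to systematic diagram chases.
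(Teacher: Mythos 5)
Your proposal is correct and follows essentially the same route as the paper: the equivalence (2)$\Leftrightarrow$(3) via the isomorphism $\Hom(\unit,A\otimes A)\simeq\Hom(A^\vee,A)$ together with the fact that a one-sided inverse is automatically two-sided here, the compactness-of-$\unit$ plus bimodule argument that confines the image of $s$ to $B\otimes C$ and hence forces $A\in\bT$, and the trace manipulations yielding the snake identity for $\beta=s\circ\eta$. The only cosmetic difference is that you close the cycle with (2)$\Rightarrow$(1) where the paper does (3)$\Rightarrow$(1); since $\alpha$ and the inverse $\phi:A^\vee\to A$ of the map in (3) correspond under the hom-isomorphism, your section $s=(\mu\otimes\id_A)\circ(\id_A\otimes\alpha)$ coincides with the paper's $s=(\mu\otimes\phi)\circ(\id_A\otimes\co_A)$, and the normalisation $\mu\circ\alpha=\eta$ you flag as "extractable" is precisely the chain of equalities the paper carries out using nondegeneracy of $\tau$ and the definition of $\Tr$.
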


\begin{proof}
By considering the image of $\alpha$ under
$$\Hom(\unit,A\otimes A)\simeq \Hom(A^\vee,A),$$
it follows that the condition in (2) is equivalent to the displayed morphism in (3) being a retract (which is equivalent to being an isomorphism, as follows for instance from the finite length assumption, or from Krull-Schmidt). Hence (2) and (3) are equivalent.

Now we prove that (1) implies (2). For a section $s$, we define $\alpha: s\circ\eta:\unit\to A\otimes A$. First we argue that $A\in\bT$. By compactness of $\unit$, the morphism $\alpha$ lands in $A'\otimes A''$ for subobjects $A',A''\in\bT$ of $A$. Now using that $s$ is a morphism of left (resp.) right $A$-modules shows that the image of $s$ is contained in $A\otimes A''$ (resp. $A'\otimes A$). In conclusion, $s$ corresponds to a morphism $A\to A'\otimes A''$ and $A$ is a direct summand of $A'\otimes A''\in\bT$.

By definition of $s$, we have
\begin{equation}\label{eq1}\eta\;=\;\mu\circ \alpha.\end{equation}
From the fact that $s$ is a right and left $A$-module morphism, we find the two relations:
\begin{equation}\label{mualpha}(A\otimes \mu)\circ (\alpha \otimes A)\;=\; s\;=\; (\mu\otimes A)\circ (A\otimes \alpha).\end{equation}
Next we calculate, using the definition of $\Tr$, then the equality between the left and right term in \eqref{mualpha}, then commutativity of $\mu$ and finally \eqref{eq1}:
\begin{eqnarray*}(A\otimes \Tr)\circ \alpha &=&(A\otimes \widetilde\ev_A)\circ (A\otimes \mu\otimes A^\vee)\circ(\alpha\otimes \co_A)\\
&=& (\mu\otimes \widetilde\ev_A)\circ (A\otimes \alpha\otimes A^\vee)\circ\co_A\\
&=& (\mu\otimes \ev_A)\circ (A\otimes \co_A\otimes A)\circ\alpha=\mu\circ\alpha\\
&=& \eta.
\end{eqnarray*}
From that result we find
$$(\mu\otimes \Tr)\circ (A\otimes\alpha)=\id_A.$$
By applying \eqref{mualpha} again, it follows that this can be rewritten as the identity in (2).

Finally, we prove that (3) implies (1). Denote by $\phi:A^\vee\to A$ a (two-sided) inverse to the morphism in (3). We will prove that
$$s:=(\mu\otimes \phi)\circ(A\otimes \co_A):A\to A\otimes A$$
is a section of $\mu$ as bimodules. We start with the following calculation, using the definition of $\tau$, the property of $\phi$ as an inverse and finally the snake relation:
\begin{eqnarray*}
\tau\circ (\eta\otimes A)&=& \widetilde{\ev}_A\circ(\mu\otimes A^\vee)\circ(A\otimes \co_A)\\
&=&\widetilde{\ev}_A\circ(A\otimes \tau\otimes A^\vee)\circ(A\otimes A\otimes \co_A) \circ(\mu\otimes \phi)\circ(A\otimes \co_A)\\
&=&\tau \circ(\mu\otimes \phi)\circ(A\otimes \co_A).
\end{eqnarray*}
Now we use the property of $\phi$ as an inverse, the above equality, the associativity and finally symmetry of $\tau$ and again the property of $\phi$ as an inverse to calculate
\begin{eqnarray*}
\eta&=& (\tau\otimes\phi)\circ(\eta\otimes\co_A)\\
&=&(\tau\otimes \phi)\circ(\mu\otimes \phi\otimes A^\vee)(A\otimes \co_A\otimes A^\vee)\circ\co_A\\
&=&(\tau\otimes \phi)\circ(A\otimes \mu\otimes A^\vee)\circ(A\otimes A\otimes \phi\otimes A^\vee)(A\otimes \co_A\otimes A^\vee)\circ\co_A\\
&=& \mu\circ (A\otimes\phi)\circ \co_A.
\end{eqnarray*}
Using the above relation together with associativity of $\mu$ then finally shows that $s$ is a section of $\mu$.

By construction, $s$ is a morphism of left $A$-modules. To complete the proof, we show that $s$ is also a morphism of right $A$-modules. By first applying the property of $\phi$ as a right inverse, then associativity of $\mu$ and finally, the adjoint of the property of $\phi$ as a left inverse, we calculate
\begin{eqnarray*}
&&(A\otimes\mu)\circ (A\otimes\phi\otimes A)\circ (\co_A\otimes A)\\
&=&(A\otimes \tau\otimes \phi)\circ(A\otimes A\otimes\co_A)\circ(A\otimes\mu)\circ (A\otimes\phi\otimes A)\circ (\co_A\otimes A) \\
&=&(A\otimes \tau\otimes A)\circ (A\otimes\phi\otimes \mu\otimes \phi)\circ (\co_A\otimes A\otimes \co_A) \\
&=&(\mu\otimes\phi)\circ (A\otimes \co_A).
\end{eqnarray*}
That $s$ is a morphism of right $A$-modules is an easy consequence of the above relation.
\end{proof}

\begin{remark}
\begin{enumerate}
\item The proof of Theorem~\ref{ThmEta} remains valid in any Krull-Schmidt additive rigid symmetric monoidal category.
\item In symmetric (even braided) fusion categories, \'etale algebras were defined and studied in \cite{DMNO}.
\end{enumerate}

\end{remark}
\begin{corollary}\label{CorEta1}
Consider a tensor functor $F:\bT\to\bT'$ between pretannakian categories and an algebra $A\in \bT$. Then $A$ is \'etale if and only if $F(A)$ is \'etale.
\end{corollary}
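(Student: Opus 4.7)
The plan is to use characterisation (3) of Theorem~\ref{ThmEta}, since it is expressed purely via the multiplication, the unit, the duality data and a single morphism in $\bT$ whose invertibility is the whole content.

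For the forward direction, assume $A\in\bT$ is \'etale and pick a retraction (equivalently an isomorphism) $\phi:A\to A^\vee$ as in Theorem~\ref{ThmEta}(3). A tensor functor preserves tensor products, units, evaluation and coevaluation, hence also composition, so it sends $\mu$, $\eta$, $\Tr$, $\tau$, $\co_A$ and $\ev_A$ to the corresponding data attached to $F(A)\in\bT'$. Consequently $F(\phi)$ is the Frobenius map attached to $F(A)$, and it is an isomorphism because $F$ preserves isomorphisms. Thus $F(A)$ satisfies Theorem~\ref{ThmEta}(3) and is \'etale.

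For the reverse direction, assume $F(A)$ is \'etale. By the same structural compatibility, the Frobenius morphism $\psi_A:A\to A^\vee$ attached to $A$ is sent by $F$ to the corresponding Frobenius morphism $\psi_{F(A)}:F(A)\to F(A)^\vee$ attached to $F(A)$, which by Theorem~\ref{ThmEta}(3) is an isomorphism. It therefore suffices to argue that $F$ reflects isomorphisms on $\bT$: if $F(\psi_A)$ is invertible then the kernel and cokernel of $\psi_A$ are sent to $0$ by the faithful exact functor $F$, and a faithful additive functor sends no non-zero object to $0$ (since $\id_X=0$ would force $X=0$). Hence $\psi_A$ is itself an isomorphism, and $A$ is \'etale.

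There is no serious obstacle here: the substance of the corollary is entirely in Theorem~\ref{ThmEta}, which reduces \'etaleness to the invertibility of a single explicit morphism built from the monoidal-rigid data. Once that characterisation is in place, the statement is reduced to the general fact that tensor functors between pretannakian categories, being faithful and exact, both preserve and reflect isomorphisms.
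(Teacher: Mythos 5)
Your proof is correct and follows exactly the route the paper takes: the paper's entire proof is the single sentence ``This follows immediately from Theorem~\ref{ThmEta}(3),'' and your write-up simply makes explicit the two facts being invoked, namely that a tensor functor carries the Frobenius morphism of $A$ to that of $F(A)$, and that tensor functors between pretannakian categories are faithful and exact and hence both preserve and reflect isomorphisms.
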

\begin{proof}
This follows immediately from Theorem~\ref{ThmEta}(3).
\end{proof}

\begin{proof}[Proof of Theorem~\ref{ThmSumm}(3) and Corollary~\ref{CorEta}]
Let $A\in \cC$ be an \'etale algebra and $X$ an $A$-module with underlying object $X_0\in\bT$. It follows from the definition in Theorem~\ref{ThmEta}(1) that, as $A$-modules,
$$X\simeq A\otimes_A X\quad\mbox{is a direct summand of}\quad (A\otimes A)\otimes_AX\simeq A\otimes X_0.$$
This shows that $A$ is absolutely flat and hence exact.


Corollary~\ref{CorEta}(1) is an obvious consequence of the above and Lemma~\ref{LemIncEx}.

Now consider a pretannakian category $\bT$ with an \'etale algebra $A$ which is not of moderate growth (as an object in $\bT$). By Corollary~\ref{CorEta1}, under any tensor functor $F:\bT\to \bT'$, the algebra $F(A)$ is an \'etale algebra which cannot be a product of copies of $\unit$. Hence $\bT'$ is not incompressible, by Corollary~\ref{CorEta}(1).

This implies Corollary~\ref{CorEta}(3), and similar examples in positive characteristic can be derived from \cite[Section~3.3]{Ha}, proving Corollary~\ref{CorEta}(2).
\end{proof}

%

\begin{proof}[Conclusion of the proof of \autoref{PropFilt}]
Assume for a contradiction that we have a tensor functor 
$$F:(\Rep S)_t \boxtimes \sVec\;\to\; \bT$$
which equalises $F_1,F_2$. Since $A$ is an \'etale algebra, it follows from Theorem~\ref{ThmEta} that $A$ and therefore also $C:=F(A)\simeq F(B)$ is separable in $\bT$. The observation that $\bT_C$ is therefore an abelian rigid monoidal category is still valid (even though $\bT$ might not be artinian). However, $C$ has a subobject $F(\bk^{0|1})$ with vanishing symmetric power. This thus generates a nilpotent ideal $I<C$ (in fact $I^2=0$). This is in particular a subobject (of the tensor unit) in $\bT_C$ which, by \cite[Proposition~1.17]{DM} and its proof, implies that we have a decomposition as $C$-modules 
$$C\;\simeq\; I\oplus I^{\perp},\quad \mbox{with}\quad I\otimes_C I^{\perp}=0.$$
However, this in turn implies 
$$I\simeq I\otimes_CC\simeq I^2+I\otimes_C I^{\perp}=0,$$
a contradiction.
\end{proof}

\begin{remark}
While the authors were finalising the manuscript some of the results on \'etale algebras appeared independently in \cite{HS}. In fact, the results {\em loc. cit.} and references therein imply that there exist a whole range of pretannakian categories which do not admit tensor functors to incompressible categories. Concretely, `discrete' pretannakian categories only admit tensor functors to incompressible categories if they are of moderate growth.
\end{remark}


\section{Maximally nilpotent tensor categories}\label{SecMN}

For a morphism $\alpha:\unit\to X$ in a tensor category, we denote by $\alpha^j$ the composition
$$\unit\xrightarrow{\alpha^{\otimes j}} X^{\otimes j}\tto\Sym^j X.$$
We let $\bT$ denote a pretannakian category.

\subsection{Symmetric growth dimension}

\subsubsection{} For $X\in \bT$ we define its {\bf symmetric growth dimension} $\sgd(X)$ as
$$\sgd(X)\:=\;\limsup_{n\to\infty}\frac{ \log(\ell (\Sym^{\le n}X))}{\log (n)}\;\in [0,+\infty]=\mR_{\ge 0}\cup\{\infty\}.$$

\begin{remark}\label{RemFP}
In the definition of $\sgd$ we can replace the length function $\ell$ with any other group homomorphism $\chi:K_0(\bT)\to\mR$ which sends the classes of simple objects to positive numbers. We denote the resulting function by $\sgd_{\chi}$.

\begin{enumerate}
\item If $\bT$ only has a finite number of simple objects up to isomorphism (or is a union of such tensor categories), then $\sgd_{\chi}=\sgd$ for all $\chi$.
\item In case $\chi$ is a ring homomorphism, $\sgd_{\chi}(X)$ can be viewed as the Gelfand-Kirillov dimension of the symmetric algebra of $X$ and enjoys additional natural properties. 
\item If $\bT$ is finite, the unique $\chi$ as in (2) is $\FPdim$. More generally, we can take $\chi=\gd$, Section~\ref{Secgd} below, in many cases.
\end{enumerate} 
\end{remark}
\begin{example}\label{Extd}
\begin{enumerate}
\item Using Remark~\ref{RemFP}, for $G$ a finite group and $V\in \Rep_{\bk}G$, we find
$$\sgd (V)\;=\; \dim_{\bk} V.$$
Similarly, if $\mathrm{char}(\bk)\not=2$ for a central element $z\in G$ of order 2 and $V$ in the supertannakian category $\Rep_{\bk}(G,\phi)$ for $\phi:\mZ/2\hookrightarrow G$ the inclusion of the subgroup generated by $z$, we find
$$\sgd(V)\;=\;\dim_{\bk}V_{\bar{0}}.$$
\end{enumerate}
\noindent More generally, for affine group schemes, for $\chi=\mathsf{gd}=\dim_{\bk}$, we clearly have 
$$\sgd_{\chi}(V)\;=\;\mathsf{gd}(V)\;=\;\dim_{\bk}V.$$ However, this is no longer true for $\sgd$:
 \begin{enumerate}
 \setcounter{enumi}{1}
\item
For $\bk=\mC$ and $V_n\in \Rep_{\mC}GL_n$ the tautological representation on $\mC^n$,
$$\sgd (V_n)\;=\;1, \quad\mbox{and}\quad  \sgd(V_n^{\otimes 2})=n.$$ 
\item For $\bk=\mC$ and $V_n\in \Rep_{\mC}O_n$ the tautological representation, by Howe duality we have $\sgd V_n=2$.
\item For $\mathrm{char}(\bk)=2$ and $V$ the natural representation of $SL_2$, we can calculate, using $\Sym^nV\simeq \nabla(n)$, the Weyl module, and 
$$\ell(\nabla(2n))=\ell(\nabla(n))+\ell(\nabla(n-1))\quad\mbox{and}\quad \ell(\nabla(2n+1))=\ell(\nabla(n))$$
that
$$\sgd(V)\;=\;\log_2(3)\;\approx\;1.585.$$
\end{enumerate}
In general, $\sgd(X)$ need not be finite:
\begin{enumerate}\setcounter{enumi}{4}
\item Assume $k=\mC$ and $t\in \mC\backslash\mZ$. For $X_t$ the generating object of $(\Rep GL)_t$ we have 
$$\sgd (X_t) =1,\quad\mbox{and}\quad \sgd(X_t^{\otimes 2})=\infty.$$
Indeed this follows either from (2) by interpolation, or by observing that $\ell(\Sym^n(X_t^{\otimes 2}))$ equals the number of partitions of $n$.
\end{enumerate}
\end{example}

\begin{question}
\begin{enumerate}
\item  Is $\sgd(X)$ always finite in tensor categories of moderate growth?
\item In finite tensor categories, do we have
$$\sgd(X)\;\le\; \FPdim(X)?$$
\item Can we replace $\limsup$ by $\lim$ in the definition of $\sgd$?
\item What is $\sgd(V_n)$ for $V_n$ the tautological $GL_n$-representation in positive characteristic? (The case $n=2=p$ follows from Example~\ref{Extd}(4).)
\end{enumerate}

\end{question}


\begin{lemma}\label{Lemtd} Consider a short exact sequence $0\to X\to Y\to Z\to 0$ in $\cC$.
\begin{enumerate}
\item\label{ZY} We have $\sgd(Z)\le \sgd(Y).$
\item\label{Zm} If $Z=\unit^m$ with $m\in\mN$, then the image of $\Sym (X)\to\Sym (Y)$ is finite if and only if
$\sgd(Y)=m.$ 
\end{enumerate}
\end{lemma}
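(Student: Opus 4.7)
The plan is to prove (1) by a direct length comparison, and (2) by applying Theorem~\ref{ThmSym} to obtain a precise length formula for $\Sym^{\le n}Y$ and then treating the two implications via a simple degree-one generation argument for $R$. For (1), I would apply symmetric powers to the surjection $Y\tto Z$ to get $\Sym^n Y\tto\Sym^n Z$ for every $n$, so $\ell(\Sym^{\le n}Z)\le\ell(\Sym^{\le n}Y)$, and taking $\limsup$ of $\log(\cdot)/\log n$ yields $\sgd(Z)\le\sgd(Y)$.

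For (2), set $a_i:=\ell(R_i)$, where $R\in\Alg\cC$ is the image of $\Sym X\to \Sym Y$. Since $Z=\unit^m$, we have $\ell(\Sym^j Z)=\binom{j+m-1}{m-1}$, and Theorem~\ref{ThmSym} gives $\gr\Sym(Y)\simeq R\otimes\Sym(Z)$; summing lengths in each degree and applying the hockey-stick identity produces
$$\ell(\Sym^{\le n}Y)\;=\;\sum_{i=0}^n a_i\binom{n-i+m}{m}.$$
If $R$ has finite total length $N$, the right-hand side is bounded by $N\binom{n+m}{m}=O(n^m)$, so $\sgd(Y)\le m$; combined with $\sgd(Y)\ge\sgd(\unit^m)=m$ (which follows from (1) together with the direct computation $\ell(\Sym^{\le n}\unit^m)=\binom{n+m}{m}$), this forces $\sgd(Y)=m$.

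For the converse direction, suppose $R$ has infinite length. The key point will be that $R$, being a graded algebra quotient of $\Sym X$, is generated in degree~$1$, i.e.\ $R_n=R_1^n$ for every $n$. If $R_n=0$ for some $n\ge 1$, then $R_1^n=0$, and hence for every $k\ge 0$ the algebra multiplication gives $R_{n+k}=R_1^{n+k}=R_1^n\cdot R_1^k=0$, contradicting infinite length. Therefore $a_n\ge 1$ for all $n$, and
$$\ell(\Sym^{\le n}Y)\;\ge\;\sum_{i=0}^n\binom{n-i+m}{m}\;=\;\binom{n+m+1}{m+1}\;\sim\;\frac{n^{m+1}}{(m+1)!},$$
forcing $\sgd(Y)\ge m+1$, and in particular $\sgd(Y)\ne m$. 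The only subtle point is precisely this degree-one generation step: without it one would only get $\sgd(Y)\ge m$ in the infinite-length case, which does not suffice; generation in degree~$1$ is what upgrades the bound to the strict inequality needed for the contrapositive.
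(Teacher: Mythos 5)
Your proof is correct and follows the same route as the paper: part (1) via the surjections $\Sym^nY\tto\Sym^nZ$, and part (2) via Theorem~\ref{ThmSym}, which identifies $\gr\Sym(Y)$ with $R\otimes\bk[x_1,\dots,x_m]$ (the paper leaves the resulting length estimate as an exercise, saying the conclusion "follows quickly"). Your write-up simply supplies the omitted details, and the degree-one generation observation you highlight is indeed the point that makes the converse direction work.
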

\begin{proof}

Part (1) follows directly from the epimorphisms $\Sym^nY\tto\Sym^nZ$, so we focus on part (2). 
By Theorem~\ref{ThmSym} we have
$$\gr \Sym (Y)\;\simeq\; A\otimes \bk[x_1,\cdots, x_m]$$
with $A$ the image of $\Sym (X)\to \Sym (Y)$ and $\bk[x_1,\cdots, x_m]$ a polynomial algebra in $m$ variables. The conclusion then follows quickly.
\end{proof}

%
%
%

\begin{remark}
In a finite tensor category (or in general if we replace $\sgd$ with $\sgd_{\chi}$ as in Remark~\ref{RemFP}(2)) the above lemma can be strengthened. For instance, we have $\sgd(Y)\le \sgd(X)+\sgd(Z)$, with equality if the short exact sequence is split.
\end{remark}

%
%
%
%

\subsection{The nilpotent property}

For $X\in\cC$, denote by $\widetilde{X}$ its subobject which fits into the short exact sequence
$$0\to \widetilde{X}\to X\to\Hom(X,\unit)^\ast\otimes \unit\to0.$$

\begin{prop}\label{Thmtilde} For $X\in\bT$, we have
\begin{equation}\label{ineq}
\dim\Hom(X,\unit)\;\le\; \sgd(X). 
\end{equation}
Furthermore, the following properties on $X$ are equivalent:
\begin{enumerate}
\item The inequality \eqref{ineq} is an equality.
\item For every tensor functor $F:\bT\to\cC'$ to a pretannakian category $\cC'$, we have
$$\dim\Hom(FX,\unit)\;=\; \sgd(X). $$
So in particular, every $F$ yields an isomorphism $\Hom(X,\unit)\xrightarrow{\sim}\Hom(FX,\unit)$.
\item The image of $\Sym (\widetilde{X})\to\Sym (X)$ is finite.
\end{enumerate}
\end{prop}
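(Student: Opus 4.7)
The plan is to derive all four assertions from Lemma~\ref{Lemtd} applied to the defining short exact sequence
\[
0 \to \widetilde{X} \to X \to \unit^m \to 0, \qquad m := \dim_{\bk}\Hom(X,\unit).
\]
First I would establish the inequality \eqref{ineq} by a direct computation of $\sgd(\unit^m)$: since $\Sym^n(\unit^m)\simeq \unit^{\binom{n+m-1}{m-1}}$, one has $\ell(\Sym^{\le n}(\unit^m)) = \binom{n+m}{m}\sim n^m/m!$, so $\sgd(\unit^m)=m$. Lemma~\ref{Lemtd}(1) applied to the sequence above then yields $m\le\sgd(X)$. With this in place, the equivalence $(1)\Leftrightarrow(3)$ is an immediate reading of Lemma~\ref{Lemtd}(2) for the same sequence: its hypothesis that the image of $\Sym(\widetilde{X})\to\Sym(X)$ be finite is exactly the equality $\sgd(X)=m$.

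For $(3)\Rightarrow(2)$, given a tensor functor $F:\bT\to\cC'$ I would apply $F$ to the SES to get $0\to F\widetilde{X}\to FX\to \unit^m\to 0$ in $\cC'$. Since $F$ is symmetric monoidal (so commutes with $\Sym^n$) and exact (so commutes with formation of images), the image of $\Sym(F\widetilde{X})\to\Sym(FX)$ is $F(R)$, where $R:=\im(\Sym(\widetilde{X})\to\Sym(X))$. Finiteness of $R$ passes to $F(R)$ because tensor functors between pretannakian categories preserve finite length. Lemma~\ref{Lemtd}(2) applied inside $\cC'$ then gives $\sgd(FX)=m$, and combining this with the inequality already proved, applied in $\cC'$, yields $\dim\Hom(FX,\unit)\le m$; the reverse inequality is the canonical embedding $\bk^m\hookrightarrow \Hom(FX,\unit)$ induced by the surjection $FX\tto\unit^m$. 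Hence $\dim\Hom(FX,\unit)=m=\sgd(X)$. Finally, $(2)\Rightarrow(1)$ will follow by specialising to $F=\Id_\bT$.

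The only genuinely non-formal step is the stability of the finiteness of $R$ under a tensor functor, which is where I expect to need care. It reduces to the routine facts that an exact symmetric monoidal $F$ commutes with both $\Sym^n$ and the formation of images, and that finite-length objects in $\bT$ remain of finite length in $\cC'$; everything else in the argument is a bookkeeping application of Lemma~\ref{Lemtd} to the canonical sequence above.
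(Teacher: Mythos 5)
Your proposal is correct and follows essentially the same route as the paper: both establish \eqref{ineq} and the equivalence $(1)\Leftrightarrow(3)$ by applying Lemma~\ref{Lemtd} to the defining sequence $0\to\widetilde{X}\to X\to\unit^m\to0$, and both prove $(3)\Rightarrow(2)$ by applying $F$ to that sequence and observing that finiteness of the image of $\Sym(\widetilde{X})\to\Sym(X)$ passes to the image of $\Sym(F\widetilde{X})\to\Sym(FX)$, with $(2)\Rightarrow(1)$ obtained by specialising to the identity functor. The only (immaterial) difference is that you get the lower bound on $\dim\Hom(FX,\unit)$ from the surjection $FX\tto\unit^m$ where the paper invokes faithfulness of $F$ together with \eqref{ineq}.
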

\begin{proof}
The inequality \eqref{ineq} is a special case of Lemma~\ref{Lemtd}(\ref{ZY}). 
Clearly (2) implies (1), and that (1) implies (3) follows from Lemma~\ref{Lemtd}(\ref{Zm}). It thus suffices to prove that (3) implies (2).

We consider the short exact sequence in $\bT'$
$$0\to F\widetilde{X}\to FX\to \unit^{d}\to 0 $$
 with $d:=\dim\Hom(X,\unit)$. By assumption, the image of $\Sym (F\widetilde X)\to\Sym (FX)$ is finite, so
Lemma~\ref{Lemtd}(\ref{Zm}) implies $\sgd(FX)=d$. Now, let $m$ be either side of the proposed equality in condition (2), then we have (using \eqref{ineq} and faithfulness of $F$)
$$d=\dim\Hom(X,\unit)\;\le m\;\le \sgd(FX)=d,$$
demonstrating in particular the equality.
\end{proof}

\begin{theorem}\label{Cond4Fib}
The following conditions are equivalent on $\bT$:
\begin{enumerate}
\item $\sgd Y=\dim \Hom(Y,\unit)$, for all $Y\in \bT$;
\item $\cC$ is {maximally nilpotent}, meaning:
\begin{enumerate}
\item For every non-trivial simple $L\in\bT$, the algebra $\Sym (L)$ is finite, and
\item For every non-split $\unit\to X$ in $\bT$, $\unit\to\Sym^n X$ is zero for some $n\in\mN$.
\end{enumerate}
\item 
\begin{enumerate}
\item For every non-trivial simple $L\in\bT$, the algebra $\Sym (L)$ is finite, and
\item For every simple $L\in\bT$ and indecomposable extension
$$ 0\to \unit^m\to X\to L\to 0,$$
the image of $\Sym(\unit^m)\to \Sym (X)$ is finite.
\end{enumerate}
\item The image of $\Sym\widetilde{X}\to\Sym(X)$ is finite for every $X\in\bT$.
\end{enumerate}
\end{theorem}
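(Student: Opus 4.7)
The plan is to establish the cycle $(1) \Leftrightarrow (4)$, $(4) \Rightarrow (2) \Rightarrow (3) \Rightarrow (4)$. The equivalence $(1)\Leftrightarrow(4)$ is immediate from Proposition~\ref{Thmtilde}: conditions (1) and (3) of that proposition are pointwise equivalent for each $Y\in\bT$, so universally quantifying over $Y$ yields the stated equivalence.

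For $(4)\Rightarrow(2)$, part (a) follows by specialising (4) to $X=L$ for $L$ non-trivial simple, since then $\widetilde L=L$ forces $\Sym L$ itself to be finite. For (b), the crucial observation is that a non-split $\alpha:\unit\to X$ must factor through $\widetilde X$: if some $\phi:X\to\unit$ satisfied $\phi\circ\alpha\neq 0$, then $(\phi\circ\alpha)^{-1}\phi$ would be a retraction of $\alpha$. Therefore $\alpha^n$ lies in the image of $\Sym^n\widetilde X\to\Sym^n X$, which vanishes for $n$ large by (4).

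For $(2)\Rightarrow(3)$, only (b) requires work. Given an indecomposable $0\to\unit^m\to X\to L\to 0$ with $L$ non-trivial simple (the case $L=\unit$ being similar, and vacuous when $\Ext^1(\unit,\unit)=0$), write $\alpha_1,\ldots,\alpha_m:\unit\to X$ for the coordinate inclusions, $\pi_1,\ldots,\pi_m$ for the dual projections, and $\xi_1,\ldots,\xi_m\in\Ext^1(L,\unit)$ for the components of the extension class. A nontrivial relation $\sum d_i\xi_i=0$ would let $\sum d_i\pi_i:\unit^m\to\unit$ extend to some $\phi:X\to\unit$; rescaling by any nonzero $d_j$ would produce a retraction of $\alpha_j$ and force $X\cong\unit\oplus X'$, contradicting indecomposability (since $L\not\cong\unit$). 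Hence the $\xi_i$ are linearly independent, which together with $\Hom(L,\unit)=0$ gives $\Hom(X,\unit)=0$, so every nonzero $\beta=\sum c_i\alpha_i$ is non-split. By (2)(b) each $\beta^n$ vanishes in $\Sym X$, so the subalgebra $A\subseteq\bigoplus_j\Hom(\unit,\Sym^j X)$ generated by $\alpha_1,\ldots,\alpha_m$---a finitely generated commutative $\bk$-algebra $\bk[x_1,\ldots,x_m]/I$---has every linear form nilpotent. The vanishing locus $V(I)\subseteq\bA^m$ is then contained in every hyperplane $\{\sum c_ix_i=0\}$ and hence reduces to the origin, making $A$ Artinian and finite-dimensional over $\bk$. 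Since each $\Sym^j(\unit^m)$ is a sum of copies of $\unit$, this is equivalent to finiteness of the image of $\Sym(\unit^m)\to\Sym X$ as an ind-object.

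The principal obstacle is $(3)\Rightarrow(4)$. My plan is strong induction on $\ell(Y)$. By additivity of $\Sym$, $\widetilde{(-)}$, and their images under direct sums, it suffices to treat $Y$ indecomposable; the base case $\ell(Y)=1$ is handled by (3)(a) (and trivially for $Y=\unit$). For indecomposable $Y$ of length at least two with $\widetilde Y\neq 0$, the canonical sequence $0\to\widetilde Y\to Y\to\unit^d\to 0$ combined with Theorem~\ref{ThmSym} gives $\gr\Sym Y\simeq R\otimes\Sym(\unit^d)$, reducing (4) to finiteness of $R=\im(\Sym\widetilde Y\to\Sym Y)$. To bound $R$, I will propagate finiteness across a composition series of $Y$ refining $\widetilde Y\subset Y$, using Theorem~\ref{ThmSym} at each step and invoking (3)(a) to dispose of non-trivial simple factors (whose symmetric algebras are finite) and (3)(b) to control the indecomposable extensions by copies of $\unit$ that appear. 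The hard part is that for a subobject $Z\subset Y$ the image $\im(\Sym Z\to\Sym Y)$ is generally a proper quotient of $\Sym Z$, so any inductive hypothesis about shorter objects in isolation is insufficient; Theorem~\ref{ThmSym} must be iterated carefully at each layer of the filtration to track the interaction between these images, and this cascade is where I expect most of the technical work to lie.
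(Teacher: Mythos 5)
Your handling of $(1)\Leftrightarrow(4)$, $(4)\Rightarrow(2)$ and $(2)\Rightarrow(3)$ is correct, and in fact supplies details that the paper dismisses as "special cases" or "a direct exercise": the observation that a non-split $\unit\to X$ factors through $\widetilde X$, the linear independence of the extension classes $\xi_i$ forcing $\Hom(X,\unit)=0$ for the indecomposable extensions in (3)(b), and the reduction of finiteness of $\im(\Sym(\unit^m)\to\Sym X)$ to nilpotence of the generators of the commutative $\bk$-algebra $\bigoplus_j\Hom(\unit,\Sym^jX)$ are all sound (indeed, nilpotence of the $x_i$ alone already bounds the monomials, so the vanishing-locus detour is not even needed).

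The gap is in $(3)\Rightarrow(4)$, which is the substantive implication of the theorem, and you flag it yourself: the plan to "propagate finiteness across a composition series" founders on exactly the obstacle you name, namely that for $Z\subset Y$ the image of $\Sym Z\to\Sym Y$ is only a quotient of $\Sym Z$, so an inductive hypothesis stated for the shorter object in isolation does not apply; moreover "reducing (4) to finiteness of $R=\im(\Sym\widetilde Y\to\Sym Y)$" is not a reduction but a restatement of (4). The missing idea, which is how the paper closes the induction, is to descend to the \emph{quotient} rather than climb up a composition series. Take a counterexample $X$ of minimal length and a simple subobject $L\neq\unit$ (if one exists); since $\Hom(L,\unit)=0$ one has $L\subseteq\widetilde X$, and setting $Q=X/L$ the canonical sequences for $X$ and $Q$ fit into a commutative diagram with $\widetilde Q=\widetilde X/L$ and $\dim\Hom(Q,\unit)=\dim\Hom(X,\unit)$. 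Filtering both symmetric algebras by the vertical sequences and applying Theorem~\ref{ThmSym} to each column exhibits $\gr\Sym\widetilde X\to\gr\Sym X$ as $A_1\otimes\Sym\widetilde Q\to A_2\otimes\Sym Q$ with $A_1,A_2$ finite, being quotients of $\Sym L$, which is finite by (3)(a); infiniteness of the image then forces $\Sym\widetilde Q\to\Sym Q$ to have infinite image, contradicting minimality since $\ell(Q)<\ell(X)$. This shows the socle of a minimal counterexample consists of copies of $\unit$, after which the same descent is run with $L$ replaced by the radical $\unit^m$ of an indecomposable length-$(m+1)$ subobject of $X$, condition (3)(b) now supplying the finiteness of the tensor factor playing the role of $A_2$. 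Without this descent-to-the-quotient device (or an equivalent one) your induction does not close.
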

\begin{proof}
That (1) and (4) are equivalent is part of \autoref{Thmtilde}. That (4) implies (2) follows by observing the latter consists of special cases of the former.
That (2) implies (3) (even that (2)(b) implies (3)(b)) is a direct exercise.

To conclude the proof, we show that (3) implies (4). Assume that we have counterexamples to property (4), then we can take such a counterexample $X\in\bT$ of minimal length. First we observe that the socle of $X$ can only consist of copies of $\unit$. Indeed, consider a simple subobject $L\not=\unit$, leading to a commutative diagram
$$\xymatrix{
L\ar@{=}[r]\ar@{^{(}->}[d]&L\ar@{^{(}->}[d]\\
\widetilde{X}\ar@{^{(}->}[r]\ar@{->>}[d]&X\ar@{->>}[r]\ar@{->>}[d] &\unit^d\ar@{=}[d]\\
\widetilde{Q}\ar@{^{(}->}[r]&Q\ar@{->>}[r]& \unit^d
}$$
with exact rows and columns. By assumption, the image of $\Sym(\widetilde{X})\to\Sym(X)$ is infinite. We consider the filtrations on the symmetric algebras corresponding to the vertical short exact sequences. Also $\gr\Sym\widetilde{X}\to\gr\Sym(X)$ must have infinite image. By Theorem~\ref{ThmSym}, this morphism can be written as a product of algebra morphisms of the form
$$A_1\otimes \Sym\widetilde{Q}\;\to\; A_2\otimes \Sym(Q),$$
where $A_2$ (and also $A_1$) is a finite algebra (as a quotient of $\Sym(L)$). This forces $\Sym\widetilde{Q}\to \Sym(Q)$ to have infinite image, contradicting minimality of $X$. We can now conclude the proof by following the same reasoning, but replacing $L$ by $\unit^m$, the radical of an indecomposable length $m+1$ subobject of $X$.\end{proof}


\begin{theorem}\label{LemNoSim}
Let $\bD$ be a maximally nilpotent pretannakian category.
\begin{enumerate}
\item Every tensor functor $F:\bD\to\bT$ is full.
\item If $\bD$ is a finite tensor category, it is incompressible.
\item If $\bD$ is a tensor subcategory of a pretannakian category $\bT$, then for every simple algebra $A$ in $\Ind\bT$ and $X\in \bD$, the map
$$\Hom(X,\unit)\otimes_{\bk}\Hom(\unit,A)\;\to\; \Hom(X,A)$$
is an isomorphism.
\item The only simple algebras in $\Ind\bD$ are field extensions $K/\bk$.
\item $\cD$ satisfies the Hilbert Basis Property.
\end{enumerate}
\end{theorem}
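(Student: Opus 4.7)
Plan.

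For part~(1), the idea is to combine Proposition~\ref{Thmtilde} with the MN hypothesis on $\bD$. By Theorem~\ref{Cond4Fib}, maximal nilpotence translates to the equality $\dim_\bk\Hom_\bD(X,\unit)=\sgd(X)$ for every $X\in\bD$, and Proposition~\ref{Thmtilde}(2) asserts that this invariant is preserved by every tensor functor $F\colon\bD\to\bT$. Since $F$ is automatically faithful, the natural injection $\Hom_\bD(X,\unit)\hookrightarrow\Hom_\bT(FX,\unit_\bT)$ must then be an isomorphism for every $X$. Via the rigidity isomorphism $\Hom(X,Y)\simeq\Hom(X\otimes Y^\vee,\unit)$, which is preserved by $F$, we upgrade this to $\Hom_\bD(X,Y)\xrightarrow{\sim}\Hom_\bT(FX,FY)$, proving fullness. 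Part~(2) is then immediate from (1) together with Remark~\ref{RemLimInc}(2).

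For part~(3), injectivity of $\Hom(X,\unit)\otimes_\bk K\to\Hom(X,A)$ comes from the inclusion $K\otimes\unit\hookrightarrow A$. For surjectivity, using the canonical sequence $0\to\widetilde{X}\to X\to\unit^d\to 0$, the task reduces to the key claim that $\Hom_\bT(Z,A)=0$ whenever $Z\in\bD$ satisfies $\Hom(Z,\unit)=0$. Suppose $\phi\colon Z\to A$ is nonzero; by commutativity of $A$ it extends to an algebra morphism $\Sym Z\to A$, and its image $S\subseteq A$ is a finite commutative subalgebra since $\widetilde{Z}=Z$ forces $\Sym Z$ to have finite length by Theorem~\ref{Cond4Fib}(4). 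The invariants $S^{inv}$ form a finite $\bk$-subalgebra of the field $K=A^{inv}$, hence equal $\bk$ as $\bk$ is algebraically closed. The structural lemma below will then yield $S=\unit$, after which the factoring of $\Sym Z\to A$ through $\unit$ kills the degree-one part~$Z$, contradicting $\phi\neq 0$.

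The structural lemma states that a finite commutative subalgebra $S\subseteq A$ of a simple algebra $A$ with $S^{inv}=\bk$ must equal $\unit$. For its proof I use both conditions of MN. For the unit $\eta\colon\unit\to S$, iterated multiplication factored through $\Sym^n$ yields $\bar\mu_n\colon\Sym^n S\to S$ satisfying $\bar\mu_n\circ\Sym^n(\eta)=\eta$ (the product of $n$ copies of the unit is the unit), so $\Sym^n(\eta)\neq 0$ for every~$n$ whenever $S\neq 0$; the contrapositive of MN(b) therefore forces $\eta$ to split, giving $S=\unit\oplus S'$ with $(S')^{inv}=0$. Assuming $S'\neq 0$, its socle contains a simple $L\neq\unit$ (inherited from $\bD$). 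MN(a) makes $\Sym L$ finite, so $\Sym^{N+1}L=0$ for some $N$; passing through the algebra map $\Sym L\to A$ this forces the iterated product $L^{N+1}\subseteq A$ to vanish. The ideal $L\cdot A\lhd A$ is nonzero (it contains $L$), hence equals $A$ by simplicity of $A$; but then, using commutativity and $A\cdot A=A$, $A=A^{N+1}=(L\cdot A)^{N+1}=L^{N+1}\cdot A=0$, a contradiction. Therefore $S'=0$ and $S=\unit$.

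Parts~(4) and~(5) follow more directly. For~(4), apply~(3) with $\bT=\bD$: the natural map $K\otimes\unit\to A$ induces isomorphisms on $\Hom_\bD(X,-)$ for every $X\in\bD$, so by Yoneda for $\Ind\bD$ it is itself an isomorphism of ind-objects, identifying the simple algebra $A$ with the field extension $K/\bk$. For~(5), given a finitely generated $B\in\Alg(\Ind\bD)$, realise $B$ as a quotient of some $\Sym X$ with $X\in\bD$; Theorem~\ref{ThmSym} applied to $0\to\widetilde{X}\to X\to\unit^d\to 0$ yields $\gr\Sym X\simeq R\otimes\bk[t_1,\dots,t_d]$, where $R=\im(\Sym\widetilde{X}\to\Sym X)$ is finite by Theorem~\ref{Cond4Fib}(4); this associated graded is noetherian by the classical Hilbert basis theorem, and a standard filtration argument lifts noetherianity to $\Sym X$ and hence to $B$. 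The most delicate step of the whole proof is the structural lemma underlying~(3), specifically verifying that MN(b) rigorously forces the splitting of $\eta$ and that the commutativity of $A$ genuinely reduces $(L\cdot A)^{N+1}$ to $L^{N+1}\cdot A$ in the pretannakian setting.
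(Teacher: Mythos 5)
Parts (1), (2), (4) and the sketch of (5) are fine and essentially coincide with the paper's route: (1) is exactly the content of Proposition~\ref{Thmtilde}, (2) follows from (1) and Remark~\ref{RemLimInc}(2), (4) is (3) applied with $\bT=\bD$, and for (5) the paper simply cites \cite[Lemma~6.4.4]{ComAlg}, whose proof is the filtration argument you outline. Your verification of the ``structural lemma'' itself (splitting of $\eta$ via {\bf MNb}, and $(L\cdot A)^{N+1}\subseteq L^{N+1}\cdot A=0$ against simplicity of $A$) is also correct.

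The genuine gap is the reduction in part (3). You claim that, via $0\to\widetilde X\to X\to\unit^d\to 0$, surjectivity of $\Hom(X,\unit)\otimes_{\bk}K\to\Hom(X,A)$ reduces to the key claim ``$\Hom(Z,A)=0$ whenever $\Hom(Z,\unit)=0$''. Surjectivity is equivalent to every $\phi\colon X\to A$ vanishing on $\widetilde X$, and your key claim gives this only when $\Hom(\widetilde X,\unit)=0$ --- which fails in general: $\widetilde X$ is only the kernel of the maximal trivial \emph{quotient}, and it can itself admit nonzero maps to $\unit$. Concretely, a nonsplit self-extension $E$ of $\unit$ (these exist in the MN category $\Ver_4^+$, where $\Ext^1(\unit,\unit)=\bk$) has $\widetilde E=\unit$, so neither $E$ nor $\widetilde E$ is covered by your key claim, and an induction on length via iterated $\widetilde{(\cdot)}$'s only reduces you to such iterated extensions of $\unit$ without disposing of them. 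The fix is to argue as the paper does: reduce to a subobject $X\subset A$ lying in $\bD$ with $\widetilde X\neq 0$, and observe that the image of the algebra map $\Sym\widetilde X\to A$ is finite because it is a quotient of the image of $\Sym\widetilde X\to\Sym X$, which is finite by Theorem~\ref{Cond4Fib}(4); hence the graded pieces $\widetilde X^{\,n}\subset A$ vanish for large $n$ and $\widetilde X$ generates a nilpotent, hence zero, ideal --- contradicting $\widetilde X\neq0$ in the simple algebra $A$. This argument needs no hypothesis on $\Hom(\widetilde X,\unit)$ and in particular absorbs the nonsplit-extension-of-$\unit$ case (there it amounts to {\bf MNb} forcing the corresponding element of the field $K=A^{\mathrm{inv}}$ to be nilpotent, hence zero). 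With this replacement your proof of (3), and hence of the whole theorem, goes through.
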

\begin{proof}
Part (1) follows from \autoref{Thmtilde}.
Part (2) follows from part (1) and Remark~\ref{RemLimInc}(2).

The morphism in part (3) is always injective. If it were not surjective, there would exist $\bD\ni X\subset A$ for which $\widetilde{X}\not=0$ (in other words $X\not\in\mathsf{Vec}$). However, by assumption, the image of the algebra morphism
$\Sym \widetilde{X}\to A$ is finite, showing that $\widetilde{X}\subset A$ generates a nilpotent ideal, a contradiction.

Part (4) is the special case of part (3) where $\bT=\bD$.

Part (5) is proved in \cite[Lemma~6.4.4]{ComAlg}, in fact the property in \ref{Cond4Fib}(2)(a) is sufficient.
\end{proof}

\subsection{Subterminal categories}

Following Deligne's treatment of $\bD=\sVec$, we can prove that maximally nilpotent geometrically reductive pretannakian categories are subterminal.

\begin{theorem}\label{ThmF1}
If a pretannakian category $\cD$ is {\bf MN} and {\bf GR}, then it is subterminal.
\end{theorem}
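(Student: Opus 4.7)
I will adapt Deligne's proof of subterminality of $\sVec$ (see \cite{Del02} and \cite[Proposition~3.3.5]{Tann}) to the MN$+$GR setting. Given two tensor functors $F_1, F_2 \colon \bT \to \cD$, the plan is to first apply Tannakian reconstruction (Theorem~\ref{ThmTann}) to $F_1$: we may replace $\bT$ with $\Rep(G,\phi)$, where $(G,\phi) := (\uAut^\otimes(F_1),\psi_1)$ is the associated $\cD$-group, so that $F_1 = \omega$ becomes the forgetful functor. Next, form the commutative algebra
\[
A \;:=\; \int^{X \in \bT} \omega(X) \otimes F_2(X)^\vee \;\in\; \Alg(\Ind\cD).
\]
Unwinding the coend shows that algebra maps $A \to \unit$ in $\Ind\cD$ correspond bijectively to $\otimes$-natural transformations $\omega \Rightarrow F_2$, every one of which is automatically an isomorphism by faithfulness of $\omega$. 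The right $G$-action on $\omega$ endows $A$ with the structure of a right $G$-torsor in $\Ind\cD$ (so in particular $A \otimes A \simeq A \otimes \cO(G)$ canonically), and the problem reduces to exhibiting a character $A \to \unit$.

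The second step is a reduction to the finite-type setting. Writing $\bT = \bigcup_\beta \bT_\beta$ as the filtered union of its finitely generated tensor subcategories, Lemma~\ref{LemAlgGrp} gives $\bT_\beta = \Rep(G_\beta,\phi_\beta)$ with $G_\beta$ of finite type, and correspondingly $A = \colim_\beta A_\beta$ for finitely generated $G_\beta$-torsor algebras $A_\beta$ in $\Ind\cD$. Producing characters $A_\beta \to \unit$ compatibly, via a Zorn-style argument in the spirit of the proof of Theorem~\ref{ThmVic}, would yield the desired character of $A$ in the colimit. The problem therefore reduces to producing a single character of the finitely generated algebra $A_\beta$.

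The main step, and the principal obstacle, is this character construction. By the Hilbert basis property of $\cD$ (Theorem~\ref{LemNoSim}(5)), $A_\beta$ is noetherian and admits a maximal proper algebra ideal $\mathfrak{m}$. The quotient $A_\beta/\mathfrak{m}$ is a simple algebra in $\Ind\cD$, and hence by Theorem~\ref{LemNoSim}(4) takes the form $K \otimes_\bk \unit$ for a field extension $K/\bk$. Since $\bk$ is algebraically closed, $K = \bk$ provided $K$ is finitely generated as a $\bk$-algebra, and the composite $A_\beta \twoheadrightarrow A_\beta/\mathfrak{m} \simeq \unit$ is then the sought character. Establishing the finite generation of $K$ is where both MN and GR must intervene: maximal nilpotence (via Theorem~\ref{ThmSym} and Theorem~\ref{Cond4Fib}(4)) implies that $\Hom(\unit,\Sym X)$ is a finitely generated $\bk$-algebra for every $X \in \cD$---polynomial of rank $\dim \Hom(X,\unit)$ modulo a finite quotient of $\Sym \widetilde{X}$---while geometric reductivity will be used to propagate this finite generation through algebra quotients such as $A_\beta$. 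In positive characteristic this propagation genuinely requires the $\Sym^n$-splittings of morphisms $Y \twoheadrightarrow \unit$ that define GR. The hard part is precisely this interaction between MN and GR in controlling invariants of finitely generated algebras in $\Ind\cD$: MN alone governs nilpotent directions but not invariants of quotients, while GR alone does not bound lengths, so that neither condition suffices in isolation (cf.\ Remark~\ref{F1rem}).
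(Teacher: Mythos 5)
Your overall architecture coincides with the paper's: reduce to finitely generated $\bT$, form the coend algebra $\Lambda(F_1,F_2)=\int^{X\in\bT}F_1(X)^\vee\otimes F_2(X)$, and produce a character $\Lambda(F_1,F_2)\to\unit$ by passing to the quotient by a maximal ideal and invoking Theorem~\ref{LemNoSim}(4) to identify the resulting simple algebra with a field extension $K/\bk$, which must equal $\bk$ once it is known to be finitely generated. The gap is in your main step. Everything hinges on showing that $\Lambda(F_1,F_2)$ (your $A_\beta$) is a \emph{finitely generated algebra in $\Ind\cD$}, i.e.\ a quotient of $\Sym X$ for some $X\in\cD$: this is what makes the Hilbert basis property applicable and, more importantly, what forces the residue field $K$ to be finitely generated over $\bk$ (so that Zariski's lemma gives $K=\bk$). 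You assert finite generation of $A_\beta$ without proof and then try to extract finite generation of $K$ from a different mechanism, namely finite generation of the invariant rings $\Hom(\unit,\Sym X)$ under {\bf MN}, ``propagated through quotients'' by {\bf GR}. That mechanism concerns $\bk$-algebras of invariants and controls neither finite generation of $A_\beta$ as an algebra in $\Ind\cD$ nor the residue field at a maximal ideal; it is essentially the argument behind Theorem~\ref{ThmVer22}, not the one needed here.

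The paper's actual mechanism is: for a generator $Z$ of $\bT$, evaluation at $Z$ gives a monomorphism $\mG_{F,F}\to GL(FZ)$ of affine group schemes in $\cD$; by \ref{SecFaith} (which is precisely where {\bf MN} and {\bf GR} enter, via \cite{ComAlg}), monomorphisms are closed immersions, so $\cO(GL(FZ))\tto\Lambda(F,F)$ is an epimorphism in $\Ind\cD$ and $\Lambda(F,F)$ is finitely generated; the torsor isomorphism $\Lambda(F,G)\otimes\Lambda(F,G)\simeq\Lambda(F,G)\otimes\Lambda(F,F)$ (which you correctly identify) then transfers finite generation to $\Lambda(F,G)$ by faithfully flat descent. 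Two smaller omissions: you never argue that $A\neq 0$ (the paper cites \cite[Lemma~3.6]{Del02}; without this there is no character to find), and your ``Zorn-style'' globalisation over the finitely generated subcategories needs the uniqueness statements of \cite[7.2.6 and 7.2.7]{ComAlg} to glue the isomorphisms consistently --- mere existence of an isomorphism on each $\bT_\beta$ does not by itself yield a compatible system.
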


We start by proving the following `finite' version, see also \cite[Proposition~1.5.9]{Tann}.
\begin{prop}\label{PropF1}
For $\cD$ as in Theorem~\ref{ThmF1} and a finitely generated pretannakian category $\bT$, there exists at most one tensor functor $\bT\to\bD$, up to isomorphism.
\end{prop}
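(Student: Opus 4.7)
The plan is to produce an isomorphism $F_1\simeq F_2$ by exhibiting a character on the representing algebra of the scheme of tensor isomorphisms between $F_1$ and $F_2$, and to construct such a character using finite generation of $\bT$ together with the MN and GR hypotheses on $\cD$.

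Given two tensor functors $F_1,F_2\colon\bT\to\cD$, I would form the commutative ind-algebra
$$A \;=\; \int^{X\in\bT} F_2(X)\otimes F_1(X)^\vee\;\in\;\Ind\cD,$$
representing the affine scheme $\underline{\mathrm{Isom}}^{\otimes}(F_1,F_2)$ in $\cD$; an algebra morphism $A\to\unit$ in $\Ind\cD$ is precisely a tensor-natural isomorphism $F_1\Rightarrow F_2$. The identity natural transformations on $F_i$ produce a unit section $\unit\to A$, so $A\neq 0$, and since $\bT$ is finitely generated, by some $X_0$ say, $A$ is a quotient of $\Sym\bigl(F_2(X_0)\otimes F_1(X_0)^\vee\oplus F_1(X_0)\otimes F_2(X_0)^\vee\bigr)$, hence finitely generated in $\Alg\cD$. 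The Hilbert basis property of Theorem~\ref{LemNoSim}(5) then makes $A$ noetherian, and a standard Zorn argument yields a surjection $A\twoheadrightarrow B$ onto a simple algebra $B\in\Ind\cD$. By Theorem~\ref{LemNoSim}(4), which uses MN, such a $B$ must be a field extension $K/\bk$; finite generation of $B$ (inherited from $A$) together with $\bk=\overline{\bk}$ and the classical weak Nullstellensatz force $K=\bk$, i.e.\ $B=\unit$. The composite $A\twoheadrightarrow\unit$ is the desired character, and the induced natural transformation $F_1\Rightarrow F_2$ is automatically a monoidal isomorphism by the universal property of the coend defining $A$.

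The delicate step, where I expect the main work to lie, is to justify the passage from Theorem~\ref{LemNoSim}(4)---whose conclusion that ``$B$ is a field extension $K/\bk$'' is an intrinsic statement in $\Ind\cD$---to a genuinely classical finitely generated commutative $\bk$-algebra to which the weak Nullstellensatz applies. This is precisely where the GR hypothesis should intervene: the splitting of $\Sym^n X\twoheadrightarrow\unit$ for surjections $X\twoheadrightarrow\unit$ in $\cD$ should be used to trade any would-be transcendental generator of $B$ coming from a non-unit subquotient of $\cD$ for a $\unit$-valued root, thereby confining $B$ to $\unit\in\Ind\cD$. The remaining verification---that the character obtained is compatible with the symmetry and unit constraints of $F_1,F_2$---is formal from the coend presentation of $A$.
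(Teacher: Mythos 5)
You have the right architecture---it is the paper's: form the coend algebra $\Lambda(F_1,F_2)=\int^{X}F_2(X)\otimes F_1(X)^\vee$, observe that an algebra morphism $\Lambda(F_1,F_2)\to\unit$ is exactly an isomorphism between the two functors, pass to a simple quotient, identify it as a field extension of $\bk$ using maximal nilpotence (Theorem~\ref{LemNoSim}(4)), and kill it with Zariski's lemma. But the step you dispose of in half a sentence---``since $\bT$ is finitely generated, $A$ is a quotient of $\Sym\bigl(F_2(X_0)\otimes F_1(X_0)^\vee\oplus F_1(X_0)\otimes F_2(X_0)^\vee\bigr)$, hence finitely generated''---is precisely where the entire difficulty of the proposition sits, and you give no argument for it. In the paper this occupies most of the proof: even in the group case $F_1=F_2=F$, finite generation of $\Lambda(F,F)=\cO(\uAut(F))$ is deduced from the fact that the monomorphism $\mG_{F,F}\to GL(FZ)$ (evaluation at a generator $Z$) induces a genuine epimorphism $\cO(GL(FZ))\tto\Lambda(F,F)$ in $\Ind\cD$, and the passage from ``monomorphism of affine group schemes'' to ``closed immersion'' is exactly where {\bf GR} and {\bf MN} are consumed, via \cite[Lemma~7.1.3 and Theorem~6.3.1]{ComAlg} (see \ref{SecFaith}). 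For $F_1\neq F_2$ the scheme $\underline{\mathrm{Isom}}^{\otimes}(F_1,F_2)$ is not a group, so this cannot be applied directly; the paper instead uses the torsor identity $\Lambda(F,G)\otimes\Lambda(F,G)\simeq\Lambda(F,G)\otimes\Lambda(F,F)$ together with a faithfully flat descent argument to transfer finite generation from $\Lambda(F,F)$ to $\Lambda(F,G)$. None of this appears in your proposal, and correspondingly {\bf GR} never enters your argument at the point where it is actually needed.

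Two further corrections. Your justification that $A\neq 0$ is wrong: the identity natural transformations of the $F_i$ give points of $\Lambda(F_i,F_i)$, not of $\Lambda(F_1,F_2)$, and there is no a priori section $\unit\to\Lambda(F_1,F_2)$ beyond the algebra unit, whose nonvanishing is exactly the issue; this nonvanishing is a genuine theorem, cited in the paper as \cite[Lemma~3.6]{Del02}. Finally, the step you single out as delicate at the end---descending from ``the simple quotient is a field extension $K/\bk$'' to a situation where the classical weak Nullstellensatz applies---is in fact the easy part and needs no further input from {\bf GR}: once $\Lambda(F_1,F_2)$ is known to be finitely generated, its quotient $K$ is generated by the image of some $V\in\cD$, and since $K$ is a trivial object that image factors through $\Hom(V,\unit)^{\ast}\otimes\unit$, so $K$ is generated by at most $\dim_{\bk}\Hom(V,\unit)$ classical elements and Zariski's lemma applies verbatim.
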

\begin{proof}
All necessary ingredients for the proof can be found in \cite{Del02}. For two tensor functors $F,G:\bT\to\bD$,
we consider the Hopf algebra
$$\Lambda(F,G):=\int^{X\in \bT}F(X)^\vee\otimes G(X)$$
from \cite[\S 3]{Del02} or \cite{Del90}. A direct consequence of \cite[Lemma~3.6]{Del02} is that $\Lambda(F,G)\not=0$.

It is immediate to see that for $M\in\Ind\cD$, there is a bijection between morphisms $\Lambda(F,G)\to M$ and natural transformations $G\Rightarrow M\otimes F(-)$. Moreover, algebra morphisms $\Lambda(F,G)\to A$ correspond to natural transformations $A\otimes G\Rightarrow A\otimes F$ of monoidal functors $\cC\to\Mod_{\cD}A$. The latter transformations are automatically automorphisms. If we let $\mG_{F,G}$ denote the affine group scheme in $\cD$ corresponding to the Hopf algebra $\Lambda(F,G)$, the above thus interprets $\mG_{F,G}(A)$ for all $A\in\Alg\cC$.

In particular, $F$ and $G$ are isomorphic if and only if there exists an algebra morphism
$$\Lambda(F,G)\;\to\; \unit.$$
 By taking a quotient of $\Lambda(F,G)$ with respect to a maximal ideal, and Theorem~\ref{LemNoSim}(4), it therefore suffices to demonstrate that $\Lambda(F,G)$ is finitely generated as an algebra.

The latter follows from the arguments in the proof of \cite[Proposition~4.1]{Del02}: Firstly, if we let $Z\in\bT$ be a generator, then the homomorphism
$$\mG_{F,F}\;\to\; GL(FZ)$$
which evaluates a natural transformation at $Z$ is a monomorphism. We thus know from \cite{ComAlg}, see \ref{SecFaith}, that $\cO(GL(FZ))\to\Lambda(F,F)$ is an epimorphism in $\Ind\cC$ (not just in $\Alg\cC$), implying that $\Lambda(F,F)$ is finitely generated. Furthermore, by their universal properties, we have an isomorphism
$$\Lambda(F,G)\otimes \Lambda(F,G)\;\simeq\; \Lambda(F,G)\otimes \Lambda(F,F)$$
of $\Lambda(F,G)$-algebras (with the morphism from $\Lambda(F,G)$ coming from the identity of the left factors).
In conclusion, $\Lambda\otimes\Lambda$ is finitely generated as a $\Lambda$-algebra, for $\Lambda=\Lambda(F,G)$. By (faithfully flat) descent it follows that $\Lambda$ is itself indeed finitely generated. Concretely, for algebras $A,B$ such that $A\otimes B$ is finitely generated over $A$, write $B=\varinjlim B_\alpha$ for finitely generated subalgebras $B_\alpha\subset B$. It follows easily that one of the $A\otimes B_\alpha\subset A\otimes B$ must be an equality. As $A\otimes -$ is faithful and exact, we conclude $B_\alpha=B$.
\end{proof}

\begin{proof}[Proof of Theorem~\ref{ThmF1}]
We can reduce to the finitely generated case in Proposition~\ref{PropF1}, using Deligne's technique worked out in \cite[\S 6.4]{Tann}. Indeed, the method there is written out for the special case of $\bD=\sVec$, but works more generally in categories where certain assumptions on group schemes are satisfied, listed as \cite[6.3.1 - 6.3.5]{Tann}. That these are satisfied in our generality follows from \cite[7.2.6 and 7.2.7]{ComAlg} and Theorem~\ref{ThmNSub}, Lemma~\ref{LemAlgGrp} and Corollary~\ref{CorAlgGrp}.

As the result we need is a by-product of a more general result in \cite[\S 6.4]{Tann}, we sketch a proof here too. By transfinite induction, it is sufficient to prove the following claim. `Assume that a pretannakian category $\cC$ is finitely generated over a tensor subcategory $\cC_1\subset \cC$. Any two tensor functors $F_1,F_2:\cC\to\cD$ which become isomorphic when restricted to $\cC_1$ (say via $f:F_1|_{\cC_1}\Rightarrow F_2|_{\cC_1}$) are already isomorphic on $\cC$.'

To prove the claim we write $\cC_1=\cup_\alpha \cC_\alpha$ for all finitely generated tensor subcategories $\cC_\alpha\subset \cC$. We denote by $X\in\cC$ a generator of $\cC$ over $\cC_1$ and by $\cC_\alpha'\subset \cC$ the tensor subcategory generated by $\cC_\alpha$ and $X$. Clearly $\cC=\cup_\alpha\cC_\alpha'$. We denote by $\cC_X\subset\cC$ the tensor subcategory generated by $X$. There exists an isomorphism $g:F_1|_{\cC_X}\Rightarrow F_2|_{\cC_X}$ by \autoref{PropF1}. By \cite[7.2.6]{ComAlg} we can assume that $g$ and $f$ agree on $\cC_1\cap\cC_X$. Also by \autoref{PropF1} there exists an isomorphism $F_1|_{\cC_\alpha'}\Rightarrow F_2|_{\cC'_\alpha}$.  Subsequently, by \cite[7.2.6 and 7.2.7]{ComAlg} there actually exists a {\em unique} isomorphism $F_1|_{\cC_\alpha'}\Rightarrow F_2|_{\cC'_\alpha}$ which restricts to $f$ and $g$ on $\cC_\alpha$ and $\cC_X$. By uniqueness these isomorphisms can be glued together on $\cC=\cup_\alpha\cC'_\alpha$, showing $F_1\simeq F_2$.
\end{proof}

\subsection{Some technical tools}
In this section we derive some tools for verifying the maximally nilpotent condition. Recall from \cite{Tann}, the (additive) functor $\Fr_+:\cC\to\cC$, which sends an object $X$ to the image of the canonical morphism from the $S_p$-invariants $\Gamma^pX$ in $X^{\otimes p}$ to the co-invariants $\Sym^pX$.

\begin{lemma}\label{LemFr}
For a morphism $\alpha:\unit\to Y$, the following conditions are equivalent:
\begin{enumerate}
\item $\alpha^j=0$ for some $j\in\mN$;
\item $\Fr_+^i(\alpha^j)=0$ for some $i,j\in\mN$;
\item $(\Fr_+^i(\alpha))^j=0$ for some $i,j\in\mN$.
\end{enumerate}
\end{lemma}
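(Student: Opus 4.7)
The plan is as follows. Since $(1)$ is the case $i=0$ of both $(2)$ and $(3)$, the implications $(1)\Rightarrow(2)$ and $(1)\Rightarrow(3)$ are trivial, and it remains to prove $(2)\Rightarrow(1)$ and $(3)\Rightarrow(1)$. I will establish both by showing that the relevant hypothesis actually forces $\alpha^{jp^i}=0$.

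The key technical ingredient is a natural morphism $\iota_i\colon\Fr_+^i(Z)\to\Sym^{p^i}(Z)$, defined for every $Z\in\cC$ and satisfying the identity $\iota_i\circ\Fr_+^i(\beta)=\beta^{p^i}$ for every $\beta\colon\unit\to Z$. I construct $\iota_i$ by induction on $i$: set $\iota_0=\id$, and let $\iota_{i+1}$ be the composite
\[
\Fr_+^{i+1}(Z)=\Fr_+(\Fr_+^iZ)\;\hookrightarrow\;\Sym^p(\Fr_+^iZ)\;\xrightarrow{\Sym^p(\iota_i)}\;\Sym^p(\Sym^{p^i}Z)\;\xrightarrow{\mu}\;\Sym^{p^{i+1}}(Z),
\]
where the first arrow is the canonical inclusion $\Fr_+(W)\hookrightarrow\Sym^p(W)$ coming from the image description of $\Fr_+$, and $\mu$ is the multiplication in the symmetric algebra. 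The base case $\iota_1\circ\Fr_+(\beta)=\beta^p$ follows from the functoriality of $\Fr_+$ applied to $\beta\colon\unit\to Z$, combined with the identifications $\Fr_+(\unit)=\unit=\Sym^p(\unit)$ under which the inclusion $\Fr_+(\unit)\hookrightarrow\Sym^p(\unit)$ is the identity on $\unit$. The inductive step uses the naturality $\Sym^p(f)\circ y^p=(f\circ y)^p$ (for $y\colon\unit\to X$ and $f\colon X\to Y$) applied to $y=\Fr_+^i\beta$ and $f=\iota_i$, together with the basic identity $\mu\circ\gamma^p=\gamma^p$ for $\gamma\colon\unit\to\Sym^mZ$ (the two interpretations of $\gamma^p$ being the internal $p$th power in $\Sym^p(\Sym^mZ)$ versus the $p$-fold product in the commutative algebra $\Sym Z$).

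With this lemma in hand, both converse implications reduce to short diagram chases. For $(2)\Rightarrow(1)$, apply the lemma with $Z=\Sym^jY$ and $\beta=\alpha^j$: the composite
\[
\Fr_+^i(\Sym^jY)\;\xrightarrow{\iota_i}\;\Sym^{p^i}(\Sym^jY)\;\xrightarrow{\mu}\;\Sym^{jp^i}(Y)
\]
sends $\Fr_+^i(\alpha^j)$ first to $(\alpha^j)^{p^i}$ and then to $\alpha^{jp^i}$, forcing $\alpha^{jp^i}=0$ when the hypothesis $\Fr_+^i(\alpha^j)=0$ holds. For $(3)\Rightarrow(1)$, apply $\Sym^j(\iota_i)\colon\Sym^j(\Fr_+^iY)\to\Sym^j(\Sym^{p^i}Y)$ to $(\Fr_+^i\alpha)^j$; by the same naturality this equals $(\iota_i\circ\Fr_+^i\alpha)^j=(\alpha^{p^i})^j$ in $\Sym^j(\Sym^{p^i}Y)$, and a further application of $\mu\colon\Sym^j(\Sym^{p^i}Y)\to\Sym^{jp^i}(Y)$ yields $\alpha^{jp^i}$, so once again the vanishing of $(\Fr_+^i\alpha)^j$ forces $\alpha^{jp^i}=0$. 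The main obstacle is simply the bookkeeping for the inductive construction of $\iota_i$ and verification of the key identity $\iota_i\circ\Fr_+^i(\beta)=\beta^{p^i}$; once that is in place, both converses fall out immediately.
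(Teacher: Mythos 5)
Your proof is correct and follows essentially the same route as the paper: the key fact in both is the factorization $\beta^p=\iota\circ\Fr_+(\beta)$ through the inclusion $\Fr_+Z\subset\Sym^pZ$, which the paper iterates step by step and you package once and for all as the comparison map $\iota_i$ with $\iota_i\circ\Fr_+^i(\beta)=\beta^{p^i}$. The deductions of $(2)\Rightarrow(1)$ and $(3)\Rightarrow(1)$ via postcomposition with $\mu$ (using $\mu\circ(\alpha^a)^b=\alpha^{ab}$) then match the paper's argument exactly, just in closed form rather than by induction on $i$.
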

\begin{proof}
That (1) implies (2) and (3) is trivial.

We will freely use that $(\alpha^a)^b=0$ implies $\alpha^{ab}=0$. Moreover, we can observe that $\alpha^p$ factors as 
\begin{equation}\label{ap}
\alpha^p:\;\,\unit\xrightarrow{\Fr_+(\alpha)} \Fr_+ Y\subset \Sym^p Y,
\end{equation}

In particular, this shows that $\Fr_+^i(\alpha)=0$ implies that $\Fr_{+}^{i-1}(\alpha^p)=0$. By iteration this demonstrates that (2) implies (1).

Another consequence of equation~\eqref{ap} is that $(\Fr_+\alpha)^n=0$ implies $(\alpha^{p})^n=0$. Therefore that (3) implies (1) also follows by iteration.
\end{proof}

\begin{lemma}\label{LemPavn}
Let $L$ be a non-invertible simple object in $\bT$ with a non-zero morphism $\phi:\otimes^n L\to S$ for $S\in\cC$ invertible.  For $E=\ker\phi$, we have $$E\otimes L+L\otimes E=L^{\otimes n+1}.$$
\end{lemma}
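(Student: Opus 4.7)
My plan is to argue by contradiction: assume $E\otimes L+L\otimes E\subsetneq L^{\otimes n+1}$ and derive that $L$ must be invertible. First, since $S$ is invertible it is simple, and since $\phi\neq 0$ the map $\phi\colon L^{\otimes n}\to S$ is surjective with kernel $E$. Tensoring the resulting short exact sequence with $L$ on each side yields two surjections $\phi\otimes\id_L\colon L^{\otimes n+1}\tto S\otimes L$ and $\id_L\otimes\phi\colon L^{\otimes n+1}\tto L\otimes S$, each with simple target (a simple tensored with an invertible is simple). The quotient $Q:=L^{\otimes n+1}/(E\otimes L+L\otimes E)$ is simultaneously a quotient of both simples, so either $Q=0$ (and we are done) or $Q\simeq S\otimes L\simeq L\otimes S$ with both projections isomorphisms. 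The latter forces $E\otimes L=L\otimes E$, whence the two surjections $\phi\otimes\id_L$ and $c_{L,S}\circ(\id_L\otimes\phi)$ to $S\otimes L$ share the same kernel and are therefore proportional: there exists $\lambda\in\bk^\times$ with
\[
\phi\otimes\id_L=\lambda\,c_{L,S}\circ(\id_L\otimes\phi).
\]

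The crucial step is extracting a contradiction from this scaling relation via rigidity. Using the adjunction $\Hom(L^{\otimes n+1},S\otimes L)\simeq\Hom(L^{\otimes n},S\otimes L\otimes L^\vee)$ on the last $L$-factor of the source, the LHS corresponds to $\phi\otimes\co_L$, whose image is the subobject $S\otimes\co_L(\unit)\cong S$ of $S\otimes L\otimes L^\vee$. The RHS, after unwinding the braiding, corresponds to the composition
\[
L\otimes L^{\otimes n-1}\xrightarrow{\id_L\otimes h}L\otimes S\otimes L^\vee\xrightarrow{c_{L,S}\otimes\id_{L^\vee}}S\otimes L\otimes L^\vee,
\]
where $h\colon L^{\otimes n-1}\to S\otimes L^\vee$ is the adjoint of $\phi$. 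Since $\phi\neq 0$ we have $h\neq 0$, hence $\mathrm{im}(h)\neq 0$. The image of this composition is (an isomorphic copy of) $L\otimes\mathrm{im}(h)$, and by the scaling identity it must coincide with the image of the LHS. Since $S$ is simple and $L\otimes\mathrm{im}(h)\neq 0$, we conclude $L\otimes\mathrm{im}(h)\simeq S$.

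Finally, from $L\otimes M\simeq S$ with $M:=\mathrm{im}(h)$ and $S$ invertible, tensoring with $(L\otimes M)^\vee$ and rearranging via the symmetric braiding yields $(L\otimes L^\vee)\otimes(M\otimes M^\vee)\simeq\unit$. A standard rigidity argument (if $A\otimes B\simeq\unit$ then $B\simeq A^\vee$ and $A$ is invertible, since the composition $\unit\xrightarrow{\co_A}A\otimes A^\vee\xrightarrow{\sim}\unit$ is nonzero) shows that $L\otimes L^\vee$ is invertible. But an invertible object is simple, and it contains $\unit$ as a subobject via $\co_L$; hence $L\otimes L^\vee\simeq\unit$, i.e.\ $L$ is invertible, contradicting the hypothesis. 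The main technical obstacle I anticipate is the careful bookkeeping of the braidings and adjunctions needed to identify the image of the RHS as $L\otimes\mathrm{im}(h)$; this is purely formal but requires attention.
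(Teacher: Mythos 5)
Your argument is correct and is essentially the paper's proof in different clothing: both hinge on passing to the partial adjoint $h=\psi=(\phi\otimes\id_{L^\vee})\circ(\id_{L^{\otimes n-1}}\otimes\co_L)$ of $\phi$ and on the observation that $L\otimes S\otimes L^\vee$ cannot be isomorphic to $S$ when $L$ is non-invertible. The paper skips the contradiction/proportionality setup by directly showing that $\ker(L\otimes\psi)$ is strictly contained in the kernel of the contracted map $L^{\otimes n}\to S$ and comparing lengths with $E$, but the mathematical content is the same.
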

\begin{proof}
We need to prove that the composition
$$E\otimes L\;\hookrightarrow\;L^{\otimes n+1}\;\stackrel{L\otimes \phi}{\tto}\; L\otimes S$$
is an epimorphism, or equivalently, is not zero. Define
$$\psi:= (\phi\otimes L^\vee)\circ (L^{\otimes n-1}\otimes \co_L)\;:\; L^{\otimes n-1}\to S\otimes L^\vee,$$
which is non-zero and therefore an epimorphism.
By adjunction it is sufficient to prove that the kernel $K\subset L^{\otimes n}$ of $L\otimes \psi$ does not contain $E$. By postcomposing the latter morphism (and denoting the braiding by $\gamma$), we find
$$(\ev_L\otimes S)\circ \gamma_{L\otimes S,L^\vee}\circ (L\otimes \psi) \;=\;\phi\circ \gamma_{L,L^{\otimes n-1}}.$$ The displayed morphism has kernel $K'\subset L^{\otimes n}$, which {\em strictly} contains $K$ (as $L$ is not invertible) and by construction satisfies $L^{\otimes n}/K'\simeq S$. Clearly this prevents $E\subset K$, as we already have $L^{\otimes n}/E\simeq S$, which concludes the proof.
\end{proof}

\begin{corollary}\label{CorNInv}
Let $L$ be a non-invertible simple object in $\bT$ with a non-zero morphism $\phi:\Sym^n L\to S$ to an invertible $S\in\cC$. Assume that the kernel $K=\ker \phi$ satisfies either
\begin{enumerate}
\item $K$ is nilpotent as a degree 1 subobject in $\Sym (\Sym^n L)$, or;
\item $K$ is nilpotent as a degree n subobject in $\Sym(L)$; 
\end{enumerate}
 then the algebra $\Sym(L)$ is finite.
\end{corollary}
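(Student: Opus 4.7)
The plan is to show that each hypothesis forces $\Sym^m L = 0$ for all sufficiently large $m$, from which finiteness of $\Sym(L)$ is immediate, as it becomes a finite direct sum $\bigoplus_{m\leq N}\Sym^m L$ of finite-length objects in $\cC$. The bridge between the two cases is the vanishing of $K^j$, the image of the natural map $\Sym^j K\to\Sym^{nj}L$ inside $\Sym(L)$, for some $j\geq 1$.

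Reconciling the hypotheses with this single condition is the first step. The canonical algebra morphism $\Sym(\Sym^n L)\to\Sym(L)$ carries $\Sym^j(\Sym^n L)$ into $\Sym^{nj}L$, and the natural map $\Sym^j K\to\Sym^{nj}L$ factors as $\Sym^j K\to\Sym^j(\Sym^n L)\to\Sym^{nj}L$. Case~(2) is by definition the vanishing of $\Sym^j K\to\Sym^{nj}L$, while case~(1) is the vanishing of the first factor $\Sym^j K\to\Sym^j(\Sym^n L)$; in either case $K^j=0$ in $\Sym(L)$.

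The core computation, showing that $K^j = 0$ forces $\Sym^m L = 0$ for $m$ large, rests on Lemma~\ref{LemPavn}. I would apply that lemma to the composite $\tilde\phi:L^{\otimes n}\twoheadrightarrow\Sym^n L\xrightarrow{\phi}S$, whose kernel $\tilde E$ projects onto $K$ under $L^{\otimes n}\twoheadrightarrow\Sym^n L$. The lemma yields $L^{\otimes(n+1)}=\tilde E\otimes L + L\otimes\tilde E$; applying the quotient $L^{\otimes(n+1)}\twoheadrightarrow\Sym^{n+1}L$ and using commutativity of $\Sym(L)$ gives $\Sym^{n+1}L = K\cdot L$. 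Multiplying both sides by $\Sym^{k-1}L$, and using surjectivity of the multiplication $\Sym^a L\otimes\Sym^b L\twoheadrightarrow\Sym^{a+b}L$, extends this to $\Sym^{n+k}L = K\cdot\Sym^k L$ for every $k\geq 1$, and a routine induction on $j$ then yields $\Sym^{jn+k}L = K^j\cdot\Sym^k L$ for all $j,k\geq 1$. Combined with $K^j = 0$, this produces $\Sym^m L = 0$ for every $m\geq jn+1$. The only real subtlety lies in recognising that both nilpotence conditions collapse to $K^j=0$ inside $\Sym(L)$; once that is set up, Lemma~\ref{LemPavn} carries the rest in a direct induction and I expect no serious obstacles.
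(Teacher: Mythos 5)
Your proposal is correct and follows essentially the same route as the paper: reduce case (1) to case (2) via the universal algebra morphism $\Sym(\Sym^n L)\to\Sym(L)$, then use Lemma~\ref{LemPavn} (applied to the composite $L^{\otimes n}\tto\Sym^n L\to S$) to show $\Sym^{n+1}L$ lies in the ideal generated by $K$, and conclude from nilpotency of $K$. The paper's proof is just a terser version of the same argument; your explicit induction giving $\Sym^{jn+k}L=K^j\cdot\Sym^kL$ is a correct way to make its final step precise.
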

\begin{proof}
One can observe from the universal algebra morphism $\Sym(\Sym^nL)\to \Sym(L)$ that condition (1) actually implies condition (2). We thus prove the corollary assuming (2).

By Lemma~~\ref{LemPavn}, $\Sym^{n+1}L\subset\Sym(L)$ is contained in the ideal generated by $K$, so by nilpotency of $K$, we find $\Sym^mL=0$ for some multiple $m$ of $n+1$, and consequently $\Sym(L)$ is finite.
\end{proof}
We also state the following special case:
\begin{corollary}\label{Corn1}
Let $L$ be a non-invertible simple object in $\bT$ for which $\Sym^n L$ is invertible for some $n>1$, then $\Sym(L)$ is finite (in fact $\Sym^{n+1}L=0$).
\end{corollary}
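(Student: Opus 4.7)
The plan is that \autoref{Corn1} is an immediate specialisation of \autoref{CorNInv}. Take $S:=\Sym^n L$, which is invertible by hypothesis, and let $\phi:\Sym^n L\to S$ be the identity. Then $K=\ker\phi=0$, and both nilpotency conditions in \autoref{CorNInv} are trivially satisfied, so $\Sym(L)$ is finite.

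For the stronger claim $\Sym^{n+1}L=0$, I would trace through the proof of \autoref{CorNInv}: via \autoref{LemPavn} applied to the canonical surjection $L^{\otimes n}\tto\Sym^n L$ (whose kernel maps to $0$ in $\Sym^{n+1}L$ in fact), one obtains that $\Sym^{n+1}L$ lies in the ideal of $\Sym(L)$ generated by $K=0$. Hence $\Sym^{n+1}L=0$. Since $\Sym(L)$ is generated in degree one, multiplication gives epimorphisms $L^{\otimes m-n-1}\otimes\Sym^{n+1}L\tto \Sym^m L$ for every $m\ge n+1$, so $\Sym^m L=0$ for all such $m$, and $\Sym(L)=\unit\oplus L\oplus\Sym^2 L\oplus\cdots\oplus \Sym^n L$ is finite, with no further work required.

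There is no real obstacle here; the only subtlety is making sure one does not need to invoke the full strength of the nilpotency hypothesis in \autoref{CorNInv}, since $K=0$ makes both conditions vacuous. Consequently the proof is essentially a one-line invocation of the preceding corollary, with a sentence added to extract the sharper quantitative statement $\Sym^{n+1}L=0$ from the proof of \autoref{CorNInv} rather than from its statement.
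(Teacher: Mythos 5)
Your proof is correct and matches the paper's intent exactly: the paper states this corollary without proof as ``the following special case'' of Corollary~\ref{CorNInv}, and the sharper claim $\Sym^{n+1}L=0$ is read off, just as you do, from the step in the proof of Corollary~\ref{CorNInv} where Lemma~\ref{LemPavn} places $\Sym^{n+1}L$ in the ideal generated by $K$, which here is zero.
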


We conclude this section with the following `converse' observation.

\begin{lemma}\label{LemInv}
For $X\in\cC$ with $\Sym(X)$ finite, $\Sym^d X$ is invertible, for the maximal $d\in\mN$ with $\Sym^dX\not=0$. Moreover, we have isomorphisms
$$\Sym^iX\;\xrightarrow{\sim}\; (\Sym^{d-i}X)^\vee\otimes\Sym^dX,\quad\mbox{for }\; 0\le i\le d.$$
\end{lemma}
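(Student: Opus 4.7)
The plan is to view $A := \Sym X$ as an $\mN$-graded commutative cocommutative Hopf algebra in $\cC$, concentrated in degrees $[0,d]$ with $A[0] = \unit$, $X = A[1]$ primitive, and $A[d] \neq 0$, and to prove both assertions simultaneously by exhibiting a Frobenius structure.

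Concretely, multiplication followed by projection to the top graded piece defines a pairing
\[
\beta : A \otimes A \xrightarrow{\mu} A \xrightarrow{\pi_d} A[d],
\]
which restricted to bi-degree $(i, d-i)$ is precisely the multiplication $\Sym^i X \otimes \Sym^{d-i} X \to \Sym^d X$ appearing in the statement. Its adjoint is a graded morphism $\phi : A \to A^\vee \otimes A[d]$ with components $\phi_i : \Sym^i X \to (\Sym^{d-i} X)^\vee \otimes \Sym^d X$. The component $\phi_0$ is exactly the coevaluation $\unit \to A[d]^\vee \otimes A[d]$, so its being an isomorphism is equivalent to $A[d]$ being invertible; in that case the remaining $\phi_i$ being isomorphisms gives the duality statement. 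Hence both conclusions follow at once from the non-degeneracy of $\beta$.

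I plan to establish this non-degeneracy as a consequence of the Frobenius property of the finite commutative cocommutative Hopf algebra $A$: such Hopf algebras admit a (nonzero) left integral $\lambda : A \to L$ valued in an invertible object $L$, with respect to which the pairing $(a,b) \mapsto \lambda(ab)$ is non-degenerate, i.e.\ $A \xrightarrow{\sim} A^\vee \otimes L$. In the graded setting, because $X$ is primitive and the integral is unique up to scalar, $\lambda$ is supported in top degree, so $L = A[d]$ and $\lambda = \pi_d$, which matches the pairing $\beta$ above.

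The main obstacle is justifying the existence and uniqueness (up to scalar) of the integral in an arbitrary pretannakian category, since the classical Larson--Sweedler argument is written for vector spaces. I would execute this diagrammatically: the object of right integrals can be identified with $\uHom_A(\unit, A^\vee)$, where $A^\vee$ carries the left $A$-module structure dual to right multiplication; one checks this is invertible and that it generates $A^\vee$ freely as an $A$-module, yielding the Frobenius isomorphism $A \simeq A^\vee \otimes L^{-1}$. Alternatively, a more self-contained route is to argue by induction on $d$ using Theorem~\ref{ThmSym}: replacing $X$ by $\widetilde X$ one may reduce to the case $\Hom(X,\unit)=0$, and then Lemma~\ref{LemPavn} applied to a simple summand $S \subset A[d]$ (combined with the observation that $S$ is automatically an ideal of $A$, since $\mathfrak{m} \cdot A[d] \subset A[d+1] = 0$) lets one pin down $A[d]$ as an invertible object and verify perfectness of the pairing degree by degree.
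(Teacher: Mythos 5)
Your main route rests on exactly the same pillar as the paper's proof: the finite Hopf algebra $A=\Sym(X)$ in $\cC$ is Frobenius, i.e.\ $A\simeq A^\vee\otimes L$ as left $A$-modules for an invertible $L$. The obstacle you single out --- existence of integrals/a Frobenius structure for finite Hopf algebras beyond $\Vecc$ --- is not something you need to re-derive diagrammatically: it is precisely Takeuchi's theorem for finite Hopf algebras in braided tensor categories, \cite[Corollary~4.5]{Ta}, which the paper simply cites. Where you genuinely diverge is in the execution. You want to identify the Frobenius form explicitly as $\pi_d\circ\mu$ via the integral; this works, but costs several extra steps (homogeneous components of an integral are again integrals, the space of integrals is one-dimensional over $\bk$, non-degeneracy forces the surviving component into top degree, and a triangularity/associated-graded argument to pass from the full pairing to its degree-$d$ part). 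The paper never identifies the form: it restricts the left-$A$-module isomorphism $A\to A^\vee\otimes L$ to the generator $\unit=\Sym^0X$, observes that the $A$-module structure on $A^\vee$ lowers degree so that $\Sym^iX$ lands in $(\Sym^{\le d-i}X)^\vee\otimes L$, and concludes by surjectivity plus a length count that each component $\Sym^iX\to(\Sym^{d-i}X)^\vee\otimes L$ is an isomorphism; the case $i=d$ gives $\Sym^dX\simeq L$, hence invertibility. Your version buys an explicit description of the duality as the multiplication pairing into top degree, at the price of more bookkeeping.

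One concrete warning: the ``more self-contained'' fallback via Lemma~\ref{LemPavn} is circular as stated. That lemma takes as hypothesis a non-zero map $L^{\otimes n}\to S$ with $S$ \emph{invertible}, which is exactly what you would need to prove about (a summand of) $A[d]$; the observation that any subobject of $A[d]$ is an ideal does not by itself pin $A[d]$ down as invertible. Do not lean on that route.
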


\begin{proof}
Recall that an algebra $A$ (not necessarily commutative) in $\cC$ is Frobenius if for some $L\in\cC$ (automatically invertible), there is an isomorphism of left $A$-modules
$$A\;\xrightarrow{\sim}\; A^\vee\otimes L.$$

Since $A:=\Sym(X)$ is a finite Hopf algebra, it is a Frobenius algebra, see \cite[Corollary~4.5]{Ta}.
We restrict the displayed isomorphism to the generator 
$$\unit=\Sym^0 X\to (\Sym(X))^\vee\otimes L.$$ Then the restriction to $X=\Sym^1 X$ takes values in $(\Sym^{<d} X)^\vee\otimes L$, by the module structure on $(\Sym(X))^\vee$. By these observations for all degrees and the fact that $A\to A^\vee\otimes L$ is an epimorphism, we thus find epimorphisms
$$\Sym^iX\;\tto\; (\Sym^{d-i}X)^\vee\otimes L.$$
They must then all be isomorphisms, for instance by length consideration.
\end{proof}


\section{The Verlinde category}
In this section we set $\mathrm{char}(\bk)=p>0$.
In \cite{CEO}, it was proved that $\Ver_p$ is subterminal (see also Theorem~\ref{ThmF1}). In this section we demonstrate that it is also Bezrukavnikov.

\subsection{Growth dimension}\label{Secgd}
Fix a pretannakian category $\cC$ of moderate growth over a field of arbitrary characteristic.

\subsubsection{}Recall (see for instance \cite[\S 4]{CEO}) that for every $X\in\cC$ we have a well-defined limit
$$\gd(X)=\lim_{n\to\infty} d_n(X)^{1/n}\in\mR,\quad\mbox{with }\; d_n(X)=\ell(X^{\otimes n}).$$
Clearly $\gd(X)\ge 1$ if and only if $X\not=0$.
This yields a function, denoted by the same symbol,
$$\gd:\; K_0(\cC)\to\mR.$$
For $X,Y\in\cC$ and a short exact sequence $X\hookrightarrow Z\tto Y$ we have
$$\gd(X\otimes Y)\ge \gd(X)\gd(Y)\quad\mbox{and}\quad \gd(Z)\ge \gd(X)+\gd(Y).$$
On the other hand, $\gd(X^{\otimes n})=\gd(X)^n$ follows by definition. Consequently, $\gd$ is a ring homomorphism whenever it is a group homomorphism.
\begin{lemma}\label{LemQua}
Let $F:\cC\to\cC'$ be a tensor functor to a second pretannakian category $\cC'$.
 For any $X\in\cC$, we have 
$$\ell(FX)\le \gd(FX)\le \gd(X)^2.$$
\end{lemma}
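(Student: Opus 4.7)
The plan is to treat the two inequalities separately. For the first, $\ell(FX)\le\gd(FX)$, I will use the superadditivity of $\gd$ recalled at the beginning of Section~\ref{Secgd}. Taking a Jordan--Hölder filtration $0=Z_0\subset Z_1\subset\cdots\subset Z_m=FX$ with $m=\ell(FX)$ and simple subquotients $L_i=Z_i/Z_{i-1}$, iterated superadditivity gives $\gd(FX)\ge\sum_{i=1}^m\gd(L_i)\ge m=\ell(FX)$, since $\gd(L)\ge 1$ for any nonzero object (as $\ell(L^{\otimes n})\ge 1$ for every $n$).

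For the second inequality, I would exploit the elementary bound, valid in any pretannakian category,
\[
\ell(Y)\;\le\;\dim_\bk\End(Y)\;=\;\dim_\bk\Hom(\unit,Y\otimes Y^\vee)\;\le\;\ell(Y\otimes Y^\vee),
\]
where the first step counts idempotents attached to composition factors in a fixed Jordan--Hölder series, the middle is rigidity, and the last bounds the multiplicity of $\unit$ as a composition factor by the total length. Applying this in $\cC'$ to $Y=(FX)^{\otimes n}=F(X^{\otimes n})$ and using that $F$ is a tensor functor (so $FY\otimes FY^\vee=F((X\otimes X^\vee)^{\otimes n})$) yields
\[
\ell\bigl((FX)^{\otimes n}\bigr)\;\le\;\ell_{\cC'}\bigl(F((X\otimes X^\vee)^{\otimes n})\bigr).
\]
Taking $n$-th roots and passing to the limit produces $\gd(FX)\le\gd_{\cC'}(F(X\otimes X^\vee))$.

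To finish, I need to show that tensor functors do not increase the growth dimension: $\gd_{\cC'}(FW)\le\gd_\cC(W)$ for every $W\in\cC$. Applied with $W=X\otimes X^\vee$ and using the multiplicativity $\gd(X\otimes X^\vee)=\gd(X)^2$ (valid in the moderate-growth category $\cC$), this completes the proof. My strategy is to reduce to simple composition factors: by exactness of $F$, $\ell(FW)=\sum_i m_i\ell(FL_i)$ for the composition factors $L_i$ of $W$ with multiplicities $m_i$, and by superadditivity of $\gd$, $\gd(W)\ge\sum_i m_i\gd(L_i)$, so it suffices to prove $\ell(FL)\le\gd(L)$ for each simple $L\in\cC$; applying this to tensor powers then yields $\gd(FW)\le\gd(W)$.

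The main obstacle is the case of simple $L$: since $\cC'$ is not assumed to be of moderate growth, there is no purely formal bound on $\ell(FL)$ by invariants of $L$ alone. I would attack this by iterating the endomorphism bound $\ell(FL)\le\ell_{\cC'}(F(L\otimes L^\vee))$ combined with induction on the $\cC$-length of $L\otimes L^\vee$, using that the multiplicity of $\unit$ in $L\otimes L^\vee$ is exactly $1$ (because $\End(L)=\bk$) and that $\gd(L\otimes L^\vee)=\gd(L)^2$, thereby closing the loop in the base case. Carefully propagating the induction, probably by bringing in the adjoint $F_*$ and the projection formula $F_*F(-)\cong F_*(\unit)\otimes(-)$ recalled in Section~\ref{monad}, will be the main technical challenge.
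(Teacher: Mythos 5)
Your overall architecture has a fatal gap, and one of its elementary ingredients is false. The inequality $\ell(Y)\le\dim_\bk\End(Y)$ does not hold in a non-semisimple pretannakian category: if $Y$ is a non-split extension of two non-isomorphic simple objects, a short Fitting-type argument shows $\End(Y)=\bk$ while $\ell(Y)=2$ (concretely, take the standard $2$-dimensional representation of the Borel of upper-triangular matrices in $\Rep B\subset$ restrictions from $GL_2$). There are no ``idempotents attached to composition factors'' of a non-split filtration. The paper only uses the opposite inequality $\dim_\bk\End(Y)\le\ell(Y)^2$. (The endpoint $\ell(Y)\le\ell(Y\otimes Y^\vee)$ that you extract is true, but only via $\ell(A\otimes B)\ge\ell(A)\ell(B)$, not via $\dim\End$.)

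More seriously, your plan reduces the lemma to the assertion $\gd(FW)\le\gd(W)$ for all $W\in\cC$, which is \emph{stronger} than the lemma being proved (the lemma only claims the square; equality $\gd(FX)=\gd(X)$ is established in the paper only under the extra hypothesis $D(X)=1$, in Proposition~\ref{PropSurj}). No workable proof of this stronger claim is offered: the proposed induction on the $\cC$-length of $L\otimes L^\vee$ runs the wrong way, since $\ell(L\otimes L^\vee)\ge\ell(L)^2$ is \emph{larger} than $\ell(L)$, and $\dim\End(FL)\ge\dim\End(L)=1$ carries no information. You also invoke $\gd(X\otimes X^\vee)=\gd(X)^2$ in $\cC$, but moderate growth only gives $\gd(X\otimes X^\vee)\ge\gd(X)^2$; the reverse inequality (multiplicativity of $\gd$) is exactly what Lemmas~\ref{LemC} and~\ref{LemFin} are needed for and requires further hypotheses. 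The mechanism you are missing is the paper's: bound $\ell(X)$ from below by $\limsup_n(\dim\im\phi_n)^{1/n}$, where $\phi_n:\bk S_n\to\End(X^{\otimes n})$ comes from the braiding. This quantity is unchanged under a faithful symmetric tensor functor, is bounded above by $\ell(X^{\otimes n})^2$ via $\dim\End\le\ell^2$, and is bounded below by $(n\ell)!/(n!)^{\ell}$ by exhibiting linearly independent coset representatives of $S_{n\ell}/S_n^{\times\ell}$ acting on $\gr X^{\otimes n\ell}$; Stirling then gives $\ell(FX)\le\gd(X)^2$, and applying this to $X^{\otimes n}$ gives the middle inequality.
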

\begin{proof}
It suffices to prove $\ell(FX)\le \gd(X)^2$, the sharper inequality $\gd(FX)\le \gd(X)^2$ is then obtained from applying the former to $X^{\otimes n}$.

Let $\phi_n: \bk S_n\to \End(X^{\otimes n})$ denote the algebra morphisms coming from the braiding. We have inequalities
$$\dim \im\phi_n \;\le\; \dim \End(X^{\otimes n})\;\le\; \ell(X^{\otimes n})^2.$$
It now suffices to prove 
$$\ell:=\ell(X)\le \limsup_{n\to\infty}(\dim \im \phi_n)^{1/n}.$$ 
Indeed, since $F$ is symmetric and faithful, the same bound for $\ell(FX)$ instead of $\ell(X)$ then automatically holds too.

For some filtration on $X$, we have ${\rm gr} X=Y_1\oplus...\oplus Y_\ell$ for simple $Y_i$. Denote by $A_n,A_n^0$ the images of 
$\bk S_{n\ell}$ in the endomorphism algebras of $X^{\otimes n\ell}$, ${\rm gr}X^{\otimes n\ell}$, respectively; so 
$\dim A_n\ge \dim A_n^0$. Let $T_n$ be a set of representatives in $S_{n\ell}$ for $S_{n\ell}/S_n^{\times \ell}$. It is easy to see (by looking at its action on $\bigotimes_{i=1}^n Y_i^{\otimes \ell}$) that the image of $T_n$ in $A_n^0$ is a linearly independent set. 
Thus, by Stirling's formula 
$$
(\dim A_n)^{\frac{1}{n\ell}}\ge (\dim A_n^0)^{\frac{1}{n\ell}}
\ge \left(\frac{(n\ell)!}{n!^\ell}\right)^{\frac{1}{n\ell}}\to \ell,\ n\to \infty,
$$
as claimed. 
\end{proof}
\begin{remark}
Lemma~\ref{LemQua} extends easily to the braided (rather than symmetric) analogue.
\end{remark}

\subsubsection{} For $X,Y\in\cC$ we introduce two values.
We let $C_n(X,Y)$ be the maximal length of $L_1\otimes L_2$ where $L_1$, resp. $L_2$, is a simple composition factor of $X^{\otimes i}$, resp. $Y^{\otimes j}$, with $i,j\le n$.
We let $D_n(X)$ denote the maximum of $\gd(L)$ for $L$ a simple composition factor of $X^{\otimes i}$ with $i\le n$. 
Then we set
$$C(X,Y):=\limsup_{n\to\infty}C_n(X,Y)^{1/n}\quad\mbox{and}\quad D(X)=\limsup_{n\to\infty}D_n(X)^{1/n}.$$

The following lemma is straightforward.
\begin{lemma} \label{LemC}
If $C(X,Y)=1$ for all $X,Y\in\cC$, then the function $\gd:K_0(\cC)\to\mR$ is a ring homomorphism.
\end{lemma}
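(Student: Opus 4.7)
The plan is to upgrade the ``trivial'' inequalities $\gd(X\otimes Y)\ge\gd(X)\gd(Y)$ and $\gd(Z)\ge\gd(X)+\gd(Y)$ (for an extension $X\hookrightarrow Z\tto Y$), already noted in the preamble, to equalities using the assumption $C(X,Y)=1$. The key technical input is the estimate
\begin{equation}\label{pl:key}
\ell(X^{\otimes i}\otimes Y^{\otimes j})\;\le\; C_n(X,Y)\,d_i(X)\,d_j(Y)\qquad\text{for all }i,j\le n,
\end{equation}
obtained by taking Jordan--H\"older filtrations of $X^{\otimes i}$ and $Y^{\otimes j}$: the induced filtration on the tensor product has composition factors of the form $L_1\otimes L_2$ with $L_1,L_2$ simple subquotients of $X^{\otimes i}$, $Y^{\otimes j}$, and each such $L_1\otimes L_2$ has length at most $C_n(X,Y)$ by definition.

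First I would establish multiplicativity. Applying \eqref{pl:key} to $(X\otimes Y)^{\otimes n}\simeq X^{\otimes n}\otimes Y^{\otimes n}$ yields $d_n(X\otimes Y)\le C_n(X,Y)\,d_n(X)\,d_n(Y)$; taking $n$-th roots and passing to $\limsup$ gives $\gd(X\otimes Y)\le C(X,Y)\gd(X)\gd(Y)=\gd(X)\gd(Y)$, so equality holds.

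Next I would handle additivity. In a symmetric monoidal category one has $(X\oplus Y)^{\otimes n}\;\simeq\;\bigoplus_{i+j=n}\binom{n}{i}X^{\otimes i}\otimes Y^{\otimes j}$, so \eqref{pl:key} gives
$$d_n(X\oplus Y)\;\le\;C_n(X,Y)\sum_{i+j=n}\binom{n}{i}d_i(X)d_j(Y).$$
For any $\epsilon>0$ there exists $M>0$ with $d_i(X)\le M(\gd(X)+\epsilon)^i$ for all $i$ (and similarly for $Y$), since $d_i(X)^{1/i}\to\gd(X)$ controls all but finitely many values. The binomial sum is then bounded by $M^2\bigl(\gd(X)+\gd(Y)+2\epsilon\bigr)^n$, so taking $n$-th roots and letting $\epsilon\to 0$ yields $\gd(X\oplus Y)\le\gd(X)+\gd(Y)$, hence equality. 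For a general extension $X\hookrightarrow Z\tto Y$, the induced filtration on $Z^{\otimes n}$ has associated graded isomorphic to $\bigoplus_{i+j=n}\binom{n}{i}X^{\otimes i}\otimes Y^{\otimes j}$, so the identical bound gives $\gd(Z)=\gd(X)+\gd(Y)$. This shows $\gd$ descends to a well-defined group homomorphism $K_0(\cC)\to\mR$, and combined with multiplicativity we get the claimed ring homomorphism.

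The main obstacle is the asymptotic analysis of the binomial convolution $\sum_{i+j=n}\binom{n}{i}d_i(X)d_j(Y)$: morally its $n$-th root tends to $\gd(X)+\gd(Y)$, but rigorously one needs the uniform bound $d_i\le M(\gd+\epsilon)^i$ to absorb the small-index regime where the asymptotic rate has not yet kicked in. Everything else is a direct combination of \eqref{pl:key} with the standard tensor-power decompositions and the reverse inequalities recalled in the preamble.
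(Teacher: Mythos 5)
Your argument is correct, and it fills in a proof that the paper itself omits (the lemma is introduced there only with the remark that it is ``straightforward''); the estimate $\ell(X^{\otimes i}\otimes Y^{\otimes j})\le C_n(X,Y)\,d_i(X)\,d_j(Y)$ via Jordan--H\"older filtrations, combined with $\gr(Z^{\otimes n})\simeq\bigoplus_{i+j=n}\binom{n}{i}X^{\otimes i}\otimes Y^{\otimes j}$ and the uniform bound $d_i\le M(\gd+\epsilon)^i$, is exactly the intended routine verification. The only redundancy is your separate treatment of multiplicativity: the paper's preamble already records that $\gd$ is a ring homomorphism as soon as it is a group homomorphism (via $\gd(X^{\otimes n})=\gd(X)^n$ applied to $(X\oplus Y)^{\otimes 2}$), so once additivity is established you could stop there, though your direct argument is equally valid.
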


\begin{lemma}\label{LemFin} Consider a tensor functor $F:\cC\to\cD$ to a pretannakian category in which every finitely generated tensor subcategory contains finitely many simple objects (up to isomorphism).
\begin{enumerate}
\item For every $X\in \cC$, we have $D(X)=1$.
\item The function $\gd:K_0(\cC)\to\mR$ is a ring homomorphism.
\end{enumerate}
\end{lemma}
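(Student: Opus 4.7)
I would prove (1) first and then deduce (2) via Lemma~\ref{LemC}. For (1), pass to the tensor subcategory $\cE := \langle FX\rangle_{\cD}$ of $\cD$ generated by $FX$. By hypothesis $\cE$ has finitely many isomorphism classes of simple objects; since $\cD$ is pretannakian, this makes $\cE$ a finite tensor category, in which $\gd$ coincides with the Frobenius--Perron dimension $\FPdim$, a ring homomorphism bounded on the (finite) collection of simples $J_1,\dots,J_s$ by some constant $M$.

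For a simple $L \in \cC$ appearing as a composition factor of $X^{\otimes i}$ with $i \le n$, the image $FL$ lies in $\cE$ and is a subquotient of $(FX)^{\otimes i}$. Faithfulness of $F$ gives $\gd_{\cC}(L) \le \gd_{\cD}(FL) = \FPdim_{\cE}(FL)$, while Lemma~\ref{LemQua} applied to $L$ yields $\ell_{\cE}(FL) \le \gd_{\cC}(L)^2$; finiteness of $\cE$ in turn gives $\FPdim(FL) \le M \cdot \ell_{\cE}(FL)$ and $\FPdim(FL) \le \FPdim(FX)^i$. The plan is to combine these ingredients---most likely via Perron--Frobenius asymptotics of the multiplicities $[(FX)^{\otimes i} : J_k]$ in $\cE$, together with the constraint that $L$ is simple in $\cC$---to extract a sub-exponential upper bound on $\gd_{\cC}(L)$ as a function of $n$, forcing $D(X)=1$.

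For (2), invoke Lemma~\ref{LemC}: it suffices to show $C(X,Y) = 1$ for all $X, Y \in \cC$. Let $\cE' := \langle FX, FY\rangle_{\cD}$, again a finite tensor category by hypothesis. For simples $L_1$ in $X^{\otimes i}$ and $L_2$ in $Y^{\otimes j}$ with $i, j \le n$, faithfulness of $F$ and multiplicativity of $\FPdim$ in $\cE'$ give
\[
\ell_{\cC}(L_1 \otimes L_2) \;\le\; \ell_{\cE'}(FL_1 \otimes FL_2) \;\le\; K \cdot \FPdim(FL_1)\, \FPdim(FL_2),
\]
where $K$ depends only on $\cE'$ (it absorbs the maximum length of $J_k \otimes J_l$ for simples $J_k,J_l$ in $\cE'$). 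Applying (1), each factor $\FPdim(FL_k) \le \gd_{\cC}(L_k)^2 \le D_n(\cdot)^2$ grows sub-exponentially in $n$, so $C_n(X,Y)^{1/n} \to 1$ and $C(X,Y) = 1$.

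The main obstacle is (1): the naive chain $\gd_{\cC}(L) \le \FPdim(FL) \le M \cdot \ell_{\cE}(FL) \le M\, \gd_{\cC}(L)^2$ only yields the trivial lower bound $\gd_{\cC}(L) \ge 1/M$, and more crudely $\gd_{\cC}(L) \le \gd_{\cC}(X)^{2i}$, which falls short of the needed sub-exponential bound. Extracting the decisive sub-exponential estimate requires sharper use of the finite-tensor-category structure of $\cE$ and the fact that $L$ is genuinely simple in $\cC$ (not an arbitrary subquotient of $X^{\otimes i}$); this is where I expect the bulk of the technical work to lie.
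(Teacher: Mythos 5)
Your proof of part (1) is not complete, and you say so yourself: the "decisive sub-exponential estimate" on $\gd(L)$ for simple constituents $L$ of $X^{\otimes i}$ is exactly the content of the lemma, and none of the inequalities you list ($\gd_{\cC}(L)\le\gd(FL)$, $\ell(FL)\le\gd_{\cC}(L)^2$, $\FPdim(FL)\le\FPdim(FX)^i$) gets you there. The paper closes this gap by a different route and in the opposite logical order: it first establishes (2) by citing [CEO, Lemmata~8.3 and~8.5], the latter of which also gives the invariance $\gd(FX)=\gd(X)$ for tensor functors to such a $\cD$. This invariance is then used to transfer the quantity $B_n(X):=\max_{i\le n}\gd(\Sym^i(X^\vee\otimes X))$ to the target: $B_n(X)=\max_{i\le n}\gd(\Sym^i(FX^\vee\otimes FX))$, and in $\cD$ (where finitely generated subcategories have finitely many simples) [EOf, Proposition~11.1] gives $\lim_n B_n(X)^{1/n}=1$. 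Combined with the estimate $D_n(X)\le D_n(X)^2\le B_n(X)$ from [CEO, Lemma~8.2], this forces $D(X)=1$. The idea you are missing is thus to control $\gd(L)^2=\gd(L\otimes L^\vee)$ by symmetric powers of $X^\vee\otimes X$ rather than by lengths or multiplicities inside $\langle FX\rangle$; no Perron--Frobenius asymptotics of multiplicities is needed.

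Two smaller points. First, your assertion that $\cE=\langle FX\rangle_{\cD}$ is a \emph{finite} tensor category is false in general: a finitely generated pretannakian category with finitely many simples need not be finite (e.g.\ $\Rep\mathbb{G}_a$ in characteristic zero has a single simple object). What survives is only that $K_0(\cE)$ has finite rank, so $\gd$ on $\cE$ is the Frobenius--Perron eigenvalue function and is multiplicative; your arguments only use this weaker fact, so this is a misstatement rather than a fatal error. Second, your derivation of (2) from (1) via Lemma~\ref{LemC} is essentially sound (it mirrors the argument the paper gives later for Proposition~\ref{PropSurj}(3)), but since you have not proved (1), and since the paper in fact needs (2) as an input to its proof of (1), this does not rescue the proposal.
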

\begin{proof}
Part (2) follows from \cite[Lemmata~8.3 and~8.5]{CEO}. In particular, \cite[Lemma~8.5]{CEO} states that $\gd(FX)=\gd(X)$. Therefore \cite[Proposition~11.1]{EOf} shows that, for 
$$B_n(X)\;:=\; \max_{i\le n}\gd(\Sym^i(X^\vee\otimes X))\;=\; \max_{i\le n}\gd(\Sym^i(FX^\vee\otimes FX)),$$
we have $\lim_{n\to\infty} B_n(X)^{1/n}=1$. By \cite[Lemma~8.2]{CEO} and part (2), we also have
$$D_n(X) \le D_n(X)^2\le B_n(X),$$
from which part (1) follows.
\end{proof}

In particular, \cite[Conjecture~1.4]{BEO} would imply:
\begin{conjecture}
The function $\gd:K_0(\cC)\to\mR$ is always a ring homomorphism (for every pretannakian category $\cC$ of moderate growth).
\end{conjecture}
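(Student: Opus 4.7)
The strategy is to reduce the conjecture to Lemma~\ref{LemFin}(2) by producing, for every $\cC \in \MdGr_{\bk}$, a tensor functor $F: \cC \to \cD$ into some pretannakian category $\cD$ in which every finitely generated tensor subcategory contains only finitely many simple objects up to isomorphism.

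In characteristic zero this plan goes through unconditionally: Deligne's theorem \cite{Del02} furnishes a tensor functor $\cC \to \sVec_{\bk}$, and since $\sVec_{\bk}$ is a finite tensor category every one of its tensor subcategories has finitely many simple objects. Hence Lemma~\ref{LemFin}(2) delivers the ring homomorphism property for $\gd$.

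In characteristic $p>0$, my plan is first to invoke Theorem~\ref{ThmVic} to obtain a surjective tensor functor $F: \cC \to \cD$ onto some incompressible $\cD \in \MdGr_{\bk}$. The remaining goal would be to establish that $\cD$ is a filtered union $\cD = \bigcup_i \cD_i$ of finite tensor subcategories. Granted this, any finitely generated tensor subcategory of $\cD$ is generated by finitely many objects, each of which lies in some $\cD_i$; by directedness all these generators fit into a single $\cD_{i_0}$, so the subcategory they generate sits inside the finite tensor category $\cD_{i_0}$ and therefore has only finitely many simples. Lemma~\ref{LemFin}(2) applied to the composition $\cC \to \cD \to \cD$ then gives the conclusion.

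The main obstacle is precisely the structural assertion that every incompressible category of moderate growth in characteristic $p$ is a filtered union of finite tensor subcategories. This is implied by Conjecture B (equivalently \cite[Conjecture~1.4]{BEO}): under it, $\cD$ is a tensor subcategory of $\Ver_{p^\infty} = \bigcup_n \Ver_{p^n}$, and the classification \cite[Corollary~4.61]{BEO} shows each tensor subcategory is either one of the finite $\Ver_{p^n}$, $\Ver_{p^n}^+$, or their full union, so the filtered union property is immediate. Without invoking Conjecture B, one would need an independent handle on the internal structure of incompressible categories of moderate growth --- for instance, a direct proof that every object of such a $\cD$ lies in a finite tensor subcategory, perhaps by combining Corollary~\ref{CorAlgGrp} with some form of \cite[Corollary~12.4]{EOf} and the tools of Sections~\ref{TKduality} and~\ref{SecMN}. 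I expect any feasible unconditional proof of the conjecture to hinge on exactly this missing structural input, so in practice the proof plan reduces to (partial) progress on Conjecture B itself.
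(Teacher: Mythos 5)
The statement you were asked to prove is labelled a \emph{conjecture} in the paper, and the paper gives no proof of it: it is stated immediately after Lemma~\ref{LemFin} with the remark that \cite[Conjecture~1.4]{BEO} would imply it, which is precisely the reduction you describe. So there is no proof in the paper to compare against, and your proposal --- as you yourself acknowledge in the final paragraph --- is not a proof either, but a conditional reduction to Conjecture~B. That reduction is correct and coincides with the paper's own reasoning: given a tensor functor to $\Ver_{p^\infty}$ (or to any filtered union of finite tensor categories), Lemma~\ref{LemFin}(2) yields the ring homomorphism property. Your characteristic-zero argument via Deligne's theorem and $\sVec_{\bk}$ is unconditional and correct, though in that case the conjecture is already known. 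Two minor remarks on the positive-characteristic part: the detour through Theorem~\ref{ThmVic} is unnecessary once you assume Conjecture~B, since \cite[Conjecture~1.4]{BEO} directly furnishes a tensor functor $\cC\to\Ver_{p^\infty}$ to which Lemma~\ref{LemFin} applies; and the "missing structural input" you identify (that incompressible categories of moderate growth are filtered unions of finite tensor subcategories) is genuinely open --- the paper offers no unconditional substitute for it, so your proposal correctly locates the obstruction but does not overcome it.
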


\begin{prop}\label{PropSurj}
Assume that $D(X)=1$ for all $X\in \cC$, and consider a tensor functor $F:\cC\to\cC'$ to a pretannakian category $\cC'$.
\begin{enumerate}
\item For every $X\in\cC$, we have 
$$\gd F(X)\;=\; \gd X.$$
\item For every $X\in\cC$ and $n\in\mN$, we have 
$$D_n(FX)\;\le\; D_n( X).$$
\item Assume that $F$ is surjective. Then $D(Y)=1$ for all $Y\in\cC'$. Furthermore, if $\gd: K_0(\cC)\to\mR$ is a ring homomorphism, then so is $\gd: K_0(\cC')\to\mR$. 

\end{enumerate}
\end{prop}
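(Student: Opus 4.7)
\medskip

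\noindent\emph{Proof plan.} For part (1), the lower bound $\gd(FX)\ge \gd(X)$ follows from exactness and faithfulness of $F$: any composition series of $X^{\otimes n}$ is sent by $F$ to a filtration of $(FX)^{\otimes n}=F(X^{\otimes n})$ with non-zero (by faithfulness) subquotients, so $\ell((FX)^{\otimes n})\ge \ell(X^{\otimes n})$, and taking $n$-th roots gives the estimate. For the upper bound, apply $F$ to a composition series of $X^{\otimes n}$ with simple factors $L$ appearing with total multiplicity $\ell(X^{\otimes n})$; by Lemma~\ref{LemQua} each $FL$ has length at most $\gd(FL)^2\le \gd(L)^2$, and $\gd(L)\le D_n(X)$ by definition, giving
$$\ell((FX)^{\otimes n})\;\le\; \ell(X^{\otimes n})\cdot D_n(X)^2.$$
Taking $n$-th roots and using $D(X)=1$ yields $\gd(FX)\le \gd(X)$.

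For part (2), let $L'$ be a simple composition factor of $(FX)^{\otimes i}=F(X^{\otimes i})$ for some $i\le n$. Exactness of $F$, applied to a composition series of $X^{\otimes i}$, shows that $L'$ is a simple composition factor of $FL$ for some simple composition factor $L$ of $X^{\otimes i}$. The standard subquotient-of-tensor-power argument (apply the Hopf-like filtration on $(FL)^{\otimes k}$ coming from $L'\subset \mathrm{subq}(FL)$) gives $\gd(L')\le \gd(FL)$; combining with $\gd(FL)=\gd(L)$ from part~(1) and $\gd(L)\le D_n(X)$ yields $\gd(L')\le D_n(X)$, hence $D_n(FX)\le D_n(X)$.

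For part (3), given $Y\in \cC'$, surjectivity of $F$ supplies $X\in \cC$ with $Y$ a subquotient of $FX$; then $Y^{\otimes i}$ is a subquotient of $F(X^{\otimes i})$, so any simple composition factor of $Y^{\otimes i}$ is a simple composition factor of $F(X^{\otimes i})$, and the argument of part~(2) bounds its $\gd$ by $D_i(X)$. Thus $D_n(Y)\le D_n(X)$, so $D(Y)\le D(X)=1$, with equality since $Y\ne 0$ forces $\gd\ge 1$ on composition factors. Now assume $\gd$ is a ring homomorphism on $K_0(\cC)$. By Lemma~\ref{LemC} it suffices to prove $C_{\cC'}(Y_1,Y_2)=1$ for all $Y_1,Y_2\in \cC'$. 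Choose $X_j\in \cC$ with $Y_j$ a subquotient of $FX_j$, and let $M_j$ be a simple composition factor of $Y_j^{\otimes i_j}$ with $i_j\le n$; arguing as above, $M_j$ is a simple composition factor of $FL_j$ for some simple composition factor $L_j$ of $X_j^{\otimes i_j}$. Then $M_1\otimes M_2$ is a subquotient of $F(L_1\otimes L_2)$, so by Lemma~\ref{LemQua} and the ring-homomorphism hypothesis on $\cC$,
$$\ell(M_1\otimes M_2)\;\le\; \ell(F(L_1\otimes L_2))\;\le\; \gd(L_1\otimes L_2)^2\;=\; \gd(L_1)^2\gd(L_2)^2\;\le\; D_n(X_1)^2 D_n(X_2)^2.$$
Taking $n$-th roots and using $D(X_1)=D(X_2)=1$ gives $C(Y_1,Y_2)\le 1$, and the reverse inequality is trivial (e.g.\ take $M_1=M_2=\unit$).

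The main obstacle is the last step: without the ring-homomorphism hypothesis on $\cC$ one only has the inequality $\gd(L_1\otimes L_2)\ge \gd(L_1)\gd(L_2)$, which runs the wrong way, so the argument genuinely needs that hypothesis to convert the quadratic blow-up coming from Lemma~\ref{LemQua} into a bound that becomes trivial after taking $n$-th roots.
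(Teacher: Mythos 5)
Your proposal is correct and follows essentially the same route as the paper: part (1) via the bound $d_n(FX)\le d_n(X)D_n(X)^2$ from Lemma~\ref{LemQua}, part (2) by tracing simple constituents through $F$ of a composition series, and part (3) via Lemma~\ref{LemC} applied to $C(FX_1,FX_2)$. The only cosmetic difference is that in part (3) you use the quadratic estimate $\ell(F(L_1\otimes L_2))\le\gd(L_1\otimes L_2)^2$ where the paper uses the sharper $\ell(F(L_1\otimes L_2))\le\gd(F(L_1\otimes L_2))=\gd(L_1\otimes L_2)$ from part (1); both disappear after taking $n$-th roots.
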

\begin{proof}
Applying Lemma~\ref{LemQua} to every simple consituent in $X^{\otimes n}$, we find
$$d_n(FX)\;\le\; d_n(X) D_n(X)^2,$$
proving part (1).

For part (2), it follows easily that $D_n(FX)$ is bounded by the maximum of the values $\gd(FL)$, where $L$ runs over simple constituents of $X^{\otimes i}$ for $i\le n$. By part (1), we have $\gd(FL)=\gd(L)$, so the conclusion follows.

Now we prove part (3). If $Y$ is a subquotient of $FX$ then clearly $D(Y)\le D(FX)$, so $D(Y)=1$ follows from part (2).
By Lemma~\ref{LemC} (and because the property $C(A,B)=1$ implies the same for all subquotients of $A$ and $B$), it suffices to prove that 
$$C(FX,FY)=1,$$
for all $X,Y\in\cC$. Observe that the maximum length of tensor products of simples appearing in powers of $FX$ and $FY$ is bounded by the maximum length of objects $F(L_1\otimes L_2)$ with $L_1,L_2$ simples appearing in powers of $X$ and $Y$. For $L_1$, resp. $L_2$, a simple constituent of $X^{\otimes i}$, resp. $Y^{\otimes j}$, with $i,j\le n$, we have (by part (1) and assumption)
$$\ell(F(L_1\otimes L_2))\le \gd(F(L_1\otimes L_2))= \gd (L_1\otimes L_2) =\gd(L_1) \gd(L_2)\le D_n(X)D_n(Y).$$ 
Consequently, $C_n(FX,FY)\le D_n(X)D_n(Y)$, from which the conclusion follows.
\end{proof}

\subsection{Application to the Verlinde category}

Now we assume that $\mathrm{char}(\bk)=p>0$.
The main result of \cite{CEO} states that a pretannakian category admits a tensor functor to $\Ver_p$ if and only if it is of moderate growth and Frobenius exact. Here `Frobenius exact' means that the Frobenius functor
$$\Fr:\cC\to \cC\boxtimes\Ver_p,$$
see \cite{Tann, EOf}, is exact (and thus a tensor functor).

\begin{theorem}\label{ThmVerp}
 $\Ver_p$ is a Bezrukavnikov category. In other words, if $F:\cC\to\cC'$ is a surjective tensor functor, where $\cC$ is of moderate growth and Frobenius exact, then also $\cC'$ is of moderate growth and Frobenius exact.
\end{theorem}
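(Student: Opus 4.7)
The plan is to establish the two conclusions of the theorem separately: that $\cC'$ is of moderate growth and that $\cC'$ is Frobenius exact.

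For moderate growth I invoke the growth-dimension machinery of Section~\ref{Secgd}. By the hypothesis on $\cC$ together with \cite{CEO}, there is a tensor functor $G : \cC \to \Ver_p$; since $\Ver_p$ has finitely many simple objects, Lemma~\ref{LemFin} yields $D(X) = 1$ for every $X \in \cC$ and that $\gd : K_0(\cC) \to \mR$ is a ring homomorphism. Proposition~\ref{PropSurj}(1) then gives $\gd(FX) = \gd(X)$, and so by Lemma~\ref{LemQua} applied to $X^{\otimes n}$, for any $Y \in \cC'$ realized as a subquotient of $FX$ via surjectivity of $F$ we obtain
\[
\ell(Y^{\otimes n}) \;\le\; \ell(F(X^{\otimes n})) \;\le\; \gd(X^{\otimes n})^{2} \;=\; \gd(X)^{2n},
\]
an exponential bound. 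Hence $\cC'$ is of moderate growth. Proposition~\ref{PropSurj}(3) moreover transfers the properties $D(Y) = 1$ and $\gd$ being a ring homomorphism to $\cC'$.

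For Frobenius exactness of $\cC'$, the starting point is the natural commutative square
\[
\Fr_{\cC'} \circ F \;\simeq\; (F \boxtimes \id_{\Ver_p}) \circ \Fr_\cC.
\]
Both $\Fr_\cC$ (by the hypothesis that $\cC$ is Frobenius exact) and $F \boxtimes \id_{\Ver_p}$ (as a tensor functor) are exact, so $\Fr_{\cC'} \circ F$ is exact. I will use Theorem~\ref{ThmSym} to recast Frobenius exactness of $\cC'$ as the statement that $\Sym^n$ preserves monomorphisms, equivalently that the canonical epimorphism $\Sym(X) \otimes \Sym(Z) \twoheadrightarrow \gr\Sym(Y)$ is an isomorphism for every short exact sequence $0 \to X \to Y \to Z \to 0$ in $\cC'$. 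By surjectivity of $F$, any mono $A \hookrightarrow B$ in $\cC'$ fits into a presentation $B = FX_2/FX_1$, $A = \tilde{A}/FX_1$ with $FX_1 \subset \tilde{A} \subset FX_2$; in the favorable case $\tilde{A} = FX_3$ for some $X_1 \subset X_3 \subset X_2$ in $\cC$, Frobenius exactness of $\cC$ together with exactness of $F$ immediately gives $\Sym^n(A) \hookrightarrow \Sym^n(B)$ after quotienting by $FX_1$.

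The main obstacle is that in general $\tilde{A}$ need not be of the form $FX_3$, since $F$ is not injective on subobject lattices. To circumvent this I will use the Barr--Beck comonadicity of Section~\ref{monad}, which identifies $\Ind\cC$ with the category of comodules over the comonad $FF_\ast$ on $\Ind\cC'$. Under this equivalence, Frobenius exactness of $\cC$ translates into a compatibility of $\Fr_\cC$ with the comonad structure; because $FF_\ast$ is determined intrinsically on $\Ind\cC'$, this compatibility then forces $\Fr_{\cC'}$ to be exact on all short exact sequences in $\cC'$, not merely those of the form $F(\mathrm{SES})$. Combined with the moderate growth established above, this verifies both conclusions of the theorem, so $\Ver_p$ is a Bezrukavnikov category.
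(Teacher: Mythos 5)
The first half of your argument (moderate growth of $\cC'$) is fine and is essentially the paper's route: Lemma~\ref{LemFin}, Proposition~\ref{PropSurj}(1) and Lemma~\ref{LemQua} do give $\ell(Y^{\otimes n})\le \ell(F(X^{\otimes n}))\le \gd(X)^{2n}$ for any subquotient $Y$ of $FX$, and Proposition~\ref{PropSurj}(3) transfers $D(\cdot)=1$ and multiplicativity of $\gd$ to $\cC'$.

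The Frobenius exactness half has a genuine gap. You correctly identify the obstacle --- a short exact sequence in $\cC'$ need not lift along $F$ to one in $\cC$ --- but the proposed resolution via Barr--Beck comonadicity is not an argument. The equivalence $\Ind\cC\simeq (FF_\ast)\text{-comodules in }\Ind\cC'$ describes $\cC$ in terms of $\cC'$, i.e.\ it goes the wrong way: it gives you no handle on subobjects of $FX$ in $\cC'$ that are not of the form $FX'$, which is exactly where the difficulty lives. The sentence ``this compatibility then forces $\Fr_{\cC'}$ to be exact on all short exact sequences'' is a restatement of the conclusion, not a deduction; no mechanism is given by which a compatibility of $\Fr_\cC$ with the comonad would control $\Fr_{\cC'}$ on an arbitrary filtration of $FX$. (Also, your presentation $B=FX_2/FX_1$ of a general subquotient is already too restrictive: a subquotient of $FX_2$ is $\widetilde B/\widetilde B'$ for arbitrary subobjects $\widetilde B'\subset\widetilde B$ in $\cC'$, neither of which need be in the image of the subobject lattice of $X_2$.)

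What the paper actually does is replace the categorical statement by a numerical one. Lemma~\ref{esti}(2) shows that, under the standing hypotheses ($D(\cdot)=1$ and $\gd$ a ring homomorphism, which you have already transferred to $\cC'$), Frobenius exactness of $\cC'$ is \emph{equivalent} to the single scalar identity $\gd(\Fr Y)=\gd(Y)$ for all $Y\in\cC'$, and Lemma~\ref{esti}(1) gives subadditivity $\gd(\Fr Y_2)\le\gd(\Fr Y_1)+\gd(\Fr Y_3)$ on short exact sequences. One first proves $\gd(\Fr Y)\le\gd(Y)$ for every subquotient $Y$ of $FX$, by expanding $Y^{\otimes n}$ into simples $Z$, bounding $\gd(\Fr Z)\le D_n(X)^p$ via Proposition~\ref{PropSurj}(2), and letting $n\to\infty$; then a filtration of $FX$ with successive quotients $T_1,Y,T_2$ together with $\gd(\Fr F(X))=\gd(F(X))$ (from Frobenius exactness of $\cC$) sandwiches all the inequalities into equalities, yielding $\gd(\Fr Y)=\gd(Y)$. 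This sandwich argument, together with the criterion of Lemma~\ref{esti}(2), is the essential content of the proof and is entirely absent from your proposal; without it (or a genuine substitute) the Frobenius exactness claim is not established.
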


The remainder of the section is devoted to the proof of the theorem. Firstly, we make the assumption that $\cC$ (and so also $\cC'$) contains $\Ver_p$ as a subcategory. This is no loss of generality as we can always replace $\cC$ by $\cC\boxtimes\Ver_p$. We will subsequently use the same notation $\Fr$ for the composite 
$$\cC\xrightarrow{\Fr}\cC\boxtimes\Ver_p\xrightarrow{\otimes}\cC.$$
Using that $\cC$ admits a tensor functor to $\Ver_p$, Lemma~\ref{LemFin} and Proposition~\ref{PropSurj} show that~$\gd$ is a ring homomorphism on $K_0(\cC)$ and $K_0(\cC')$, and we have
\begin{equation}\label{eqgdF}
\gd(FX)=\gd(X)=\gd(\Fr X),\quad\mbox{for all $X\in\cC$.}
\end{equation}

\begin{proof}[Proof of Theorem~\ref{ThmVerp}]
As $\cC$ is Frobenius exact, by equation~\eqref{eqgdF}, for all $X\in \cC$ we have 
\begin{equation}
\label{longeq}
\gd({\rm Fr}F(X))=\gd(F({\rm Fr}X))=\gd({\rm Fr}X)=\gd(X)=\gd(F(X)).
\end{equation}

Now let $Y\in \cC'$ and $X\in \cC$ be such that $Y$ is a subquotient of $F(X)$. Since ${\rm Fr}Y$ is a subquotient of $Y^{\otimes p}$, we 
have 
\begin{equation}\label{estFr}
\gd({\rm Fr}Y)\le \gd(Y^{\otimes p}) =\gd(Y)^p.
\end{equation}

In $K_0(\cC')$ we have 
$$
Y^{\otimes n}=\sum_Z N_n(Y,Z)Z,
$$
where the $Z$ are simple, so 
\begin{equation}\label{gdN}
\gd(Y)^n=\gd(Y^{\otimes n})\ge \sum_Z N_n(Y,Z).
\end{equation}
Since $Z$ is a simple constituent of $FX^{\otimes n}$, Proposition~\ref{PropSurj}(2) implies that
$$\gd(Z)\le D_n(FX)\le D_n(X).$$

Thus, by applying \eqref{estFr} to $Z$, we find that $\gd({\rm Fr}Z)\le D_n(X)^p$. This, combined with Lemma \ref{esti}(1) below and equation \eqref{gdN}, implies
$$
\gd({\rm Fr}Y)^n=\gd({\rm Fr}Y^{\otimes n})\le \sum_Z N_n(Y,Z)\gd({\rm Fr}Z)\le 
(\sum_Z N_n(Y,Z))D_n(X)^p\le \gd(Y)^n  D_n(X)^p.
$$
It follows that 
$$
\gd({\rm Fr}Y)\le \gd(Y)D_n(X)^{\frac{p}{n}}.
$$
Thus Lemma \ref{LemFin}(1) implies that 
$$
\gd({\rm Fr}Y)\le \gd(Y).
$$
Consider the filtration on $F(X)$ with successive quotients $T_1,Y,T_2$. 
By Lemma \ref{esti}(1) below and equation~\eqref{longeq} we thus have 
$$
\gd(F(X))=\gd({\rm Fr}F(X))\le \gd({\rm Fr}T_1)+\gd({\rm Fr}Y)+\gd({\rm Fr}T_1)
$$
$$
\le 
\gd(T_1)+\gd(Y)+\gd(T_2)=\gd(F(X)). 
$$
Thus all inequalities in this chain are equalities, so $\gd({\rm Fr}Y)=\gd(Y)$. Lemma~\ref{esti}(2) below then shows that $\cC_1$ is Frobenius exact. 
\end{proof}

\begin{lemma}\label{esti}
Let $\cD$ be a pretannakian category of moderate growth with $D(X)=1$ for all $X\in\cD$ for which $\gd: K_0(\cD)\to\mR$ is a ring homomorphism. Assume $\Ver_p\subset\cD$, so that we can consider $\Fr:\cD\to\cD$.

\begin{enumerate}
\item  If $0\to Y_1\to Y_2\to Y_3\to 0$ is a short exact sequence in $\cD$
then 
$$
\gd({\rm Fr}Y_2)\le \gd({\rm FrY}_1)+\gd({\rm Fr}Y_3).
$$
\item The following conditions are equivalent:
\begin{enumerate}
\item $\Fr$ is exact on $\cD$;
\item $\gd(\Fr X)=\gd X$ for all $X\in\cD$;
\item The inequality of part (1) is always an equality.
\end{enumerate}
\end{enumerate}

\end{lemma}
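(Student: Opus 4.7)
The plan is to prove part (1) first and then derive the equivalences in part (2) in the cyclic order $(a) \Rightarrow (b) \Rightarrow (c) \Rightarrow (a)$. For part (1), I will exploit the construction of $\Fr$ as the composite of $X \mapsto X^{\otimes p}$ (with its canonical $S_p$-equivariant structure) followed by the monoidal semi-simplification $\Rep S_p \to \Ver_p \subset \cD$. The inclusion $Y_1 \hookrightarrow Y_2$ induces an $S_p$-equivariant filtration
\[
0 = F_{-1} \subset F_0 \subset F_1 \subset \cdots \subset F_p = Y_2^{\otimes p}
\]
with $F_0 = Y_1^{\otimes p}$, $F_p/F_{p-1} \cong Y_3^{\otimes p}$, and successive quotients $F_k/F_{k-1} \cong \Ind_{S_{p-k}\times S_k}^{S_p}\bigl(Y_1^{\otimes p-k}\otimes Y_3^{\otimes k}\bigr)$ for $0 < k < p$. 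After semi-simplification, the extremal pieces ($k = 0$ and $k = p$) contribute $\Fr Y_1$ and $\Fr Y_3$ to $\Fr Y_2$, while the intermediate ``mixed'' pieces form a defect. Comparing with the split case $Y_2 = Y_1\oplus Y_3$ --- in which additivity of $\Fr$ on direct sums forces $\Fr(Y_1\oplus Y_3) = \Fr Y_1 \oplus \Fr Y_3$ and hence the defect vanishes --- the non-split version is bounded above in the Grothendieck group by the split version, and applying the ring homomorphism $\gd$ gives $\gd \Fr Y_2 \le \gd \Fr Y_1 + \gd \Fr Y_3$.

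For the implications in part (2), $(a) \Rightarrow (b)$ is immediate, as exactness upgrades the additive symmetric monoidal functor $\Fr$ to a tensor functor $\cD \to \cD$ and Proposition~\ref{PropSurj}(1) then applies (its hypothesis $D(X) = 1$ is part of the standing assumption on $\cD$), giving $\gd \Fr X = \gd X$. For $(b) \Rightarrow (c)$, additivity of $\gd$ on short exact sequences (a consequence of $\gd$ being a ring homomorphism on $K_0(\cD)$) combined with (1) and (b) yields the chain
\[
\gd \Fr Y_2 \le \gd \Fr Y_1 + \gd \Fr Y_3 = \gd Y_1 + \gd Y_3 = \gd Y_2 = \gd \Fr Y_2,
\]
forcing equality throughout.

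For $(c) \Rightarrow (a)$, I will apply $\Fr$ to $0 \to Y_1 \to Y_2 \to Y_3 \to 0$ and analyse the induced complex $\Fr Y_1 \to \Fr Y_2 \to \Fr Y_3$ (whose composite vanishes by functoriality). Setting $K_1 = \ker(\Fr Y_1 \to \Fr Y_2)$, $M$ for the middle cohomology, and $C = \coker(\Fr Y_2 \to \Fr Y_3)$, additivity of $\gd$ on short exact sequences shows that the equality in (c) is equivalent to the numerical identity $\gd M = \gd K_1 + \gd C$. Promoting this to the individual vanishings $K_1 = M = C = 0$ (which is exactly the exactness condition (a)) will come from revisiting the refined filtration of part (1): the defect pieces produced by semi-simplification can be identified with $K_1$, $M$ and $C$ via comparison with the split case, where all of them are zero by additivity. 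The main obstacle, governing both part (1) and this final step, is a precise analysis of how the non-exact semi-simplification functor $\Rep S_p \to \Ver_p$ interacts with the induced filtration on $Y_2^{\otimes p}$; controlling the intermediate ``mixed'' contributions by comparison with the split case is where the bulk of the technical work will reside.
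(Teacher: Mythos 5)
Your treatment of part (2)(a)$\Rightarrow$(b) and (b)$\Rightarrow$(c) is correct and is exactly what the paper does: exactness makes $\Fr$ a tensor functor so Proposition~\ref{PropSurj}(1) applies, and the squeeze $\gd(\Fr Y_2)\le\gd(\Fr Y_1)+\gd(\Fr Y_3)=\gd(Y_1)+\gd(Y_3)=\gd(Y_2)=\gd(\Fr Y_2)$ forces equality. The problem is that everything else in your proposal rests on a structural claim you never establish. The paper's own proof outsources precisely this content to \cite{EOf}: part (1) is deduced from [EO, 3.10 and 3.11] as in [EO, Proposition~6.1], and (c)$\Rightarrow$(a) from the proof of [EO, 6.4]. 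Those results give a precise description of the failure of exactness of $\Fr$ on a short exact sequence --- concretely, they relate the three defect objects $K_1=\ker(\Fr Y_1\to\Fr Y_2)$, the middle homology $M$, and $C=\coker(\Fr Y_2\to\Fr Y_3)$ to one another (up to twist, $M$ is dominated by each of $K_1$ and $C$). That domination is what turns the identity $[\Fr Y_2]=[\Fr Y_1]+[\Fr Y_3]+[M]-[K_1]-[C]$ into the inequality of part (1), and what lets one promote the numerical equality $\gd(M)=\gd(K_1)+\gd(C)$ in (c) to the vanishing $K_1=M=C=0$. Your proposal replaces this with ``comparison with the split case,'' but the split case tells you nothing: there the defects vanish by additivity of $\Fr$, and the whole point of the lemma is to control how far the non-split case can deviate. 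In particular, the bare identity $\gd(M)=\gd(K_1)+\gd(C)$ is perfectly consistent with all three objects being nonzero (e.g.\ if $M$ were isomorphic to $K_1\oplus C$), so your (c)$\Rightarrow$(a) does not close without the extra input. You flag this as ``where the bulk of the technical work will reside,'' which is accurate --- but that work is the proof.

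Two further cautions on the filtration argument itself. First, $\Fr$ is not the composite of $X\mapsto X^{\otimes p}$ with a monoidal semisimplification of $\Rep S_p$: it is built from the $\mZ/p$-equivariant structure on $X^{\otimes p}$ (the Jordan-block decomposition of $1-c$ for the cyclic permutation $c$), landing in $\cD\boxtimes\Ver_p$ with $\Ver_p$ the semisimplification of $\Rep\mZ/p$ (equivalently of $\Tilt SL_2$), not of $\Rep S_p$. This matters: the mixed graded pieces $\Ind_{S_{p-k}\times S_k}^{S_p}(Y_1^{\otimes p-k}\otimes Y_3^{\otimes k})$ for $0<k<p$ die because $\mZ/p$ acts freely on the corresponding cosets. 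Second, the construction is not exact, so it does not send the filtration of $Y_2^{\otimes p}$ to a filtration of $\Fr Y_2$ computing its class in $K_0$; the assertion that ``the non-split version is bounded above in the Grothendieck group by the split version'' is therefore not a formal consequence of the filtration but is equivalent to the effectivity of $[K_1]+[C]-[M]$, i.e.\ again to the structural results you have deferred.
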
 

\begin{proof} 
Both parts are proved in \cite{EOf} for finite tensor categories (in which case the condition on $\gd=\FPdim$ and $D(-)$ are automatically satisfied. The same methods apply to our setting. Concretely, just like \cite[Proposition~6.1]{EOf}, part (1) follows from \cite[3.10 and 3.11]{EOf}. That (2)(a) implies (2)(b) is an example of Proposition~\ref{PropSurj}(1). That (2)(b) implies (2)(c) is obvious. That (2)(c) implies (2)(a) follows from the proof of \cite[6.4 (v) $\Rightarrow$ (vi)$\Rightarrow$(iii)]{EOf}, which one can copy verbatim.
\end{proof} 

\begin{remark}
As can be verified directly from the proof, part (1) and the implications (b)$\Rightarrow$(c)$\Rightarrow$(a) remain valid without the condition $D(X)=1$.
\end{remark}



\section{The higher Verlinde categories}

In this section, we investigate whether $\Ver_{p^\infty}$ is subterminal. By Theorem~\ref{ThmF1} we can focus on verifying whether the categories $\Ver_{p^n}$ are {\bf MN} and {\bf GR}. In \cite{ComAlg} it was already proved that $\Ver_{p^2}$ satisfies is {\bf GR} and here we mainly focus on {\bf MN}. This is also motivated by Theorem~\ref{LemNoSim}(2), which begs the question of whether every (finite) incompressible tensor category is {\bf MN}.

By \autoref{Cond4Fib}(3), a category is maximally nilpotent if and only if the following two conditions are satisfied:
\begin{enumerate}
\item[{\bf MNa}] For every non-trivial simple $L\in\bT$, the algebra $\Sym(L)$ is finite, and
\item[{\bf MNb}] For every simple $L\in\bT$ and indecomposable extension
$$ 0\to \unit^m\to X\to L\to 0,$$
the image of $\Sym(\unit^m)\to \Sym(X)$ is finite.
\end{enumerate}

For the remainder of the section we assume $\mathrm{char}(\bk)=p>0$.

\subsection{{\bf MNb} is always satisfied}

\begin{prop}\label{Proppb}
The category $\Ver_{p^n}$ satisfies {\bf MNb}.
\end{prop}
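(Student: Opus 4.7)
My plan is a reduction-plus-Frobenius-iteration argument.

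First, I reduce {\bf MNb} to a single-generator nilpotency statement. Decompose the injection $\unit^m\hookrightarrow X$ into its $m$ components $\alpha_j:\unit\hookrightarrow X$ for $j=1,\ldots,m$; indecomposability of $X$ together with $L\ne\unit$ rules out any $\alpha_j$ admitting a retraction $X\to\unit$ (such a retraction would split off a $\unit$-summand of $X$; the case $L=\unit$ with $m\ge 1$ is vacuous because $\Ext^1_{\Ver_{p^n}}(\unit,\unit)=0$ forces the extension to split). The image $R=\im(\Sym(\unit^m)\to\Sym(X))$ is the commutative graded $\bk$-subalgebra of $(\Sym X)^{\inv}$ generated by $\alpha_1,\ldots,\alpha_m$; equivalently, by Theorem~\ref{ThmSym}, $\gr\Sym(X)\simeq R\otimes\Sym(L)$. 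Hence $R$ is finite if and only if each $\alpha_j$ is nilpotent in $\Sym X$, reducing the problem to: every non-split $\alpha:\unit\hookrightarrow X$ in $\Ver_{p^n}$ satisfies $\alpha^N=0$ in $\Sym^N(X)$ for some $N$.

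Next, I apply the Frobenius translation. Iterating equation~\eqref{ap}, $\alpha^{p^i}$ factors as $\unit\xrightarrow{\Fr_+^i\alpha}\Fr_+^i(X)\hookrightarrow\Sym^{p^i}(X)$, so by Lemma~\ref{LemFr} it suffices to produce some $i\ge 0$ with $\Fr_+^i\alpha=0$.

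Finally, I use the Steinberg tensor product theorem (Section~\ref{DefsVer}) to analyse $\Fr_+$ on simples of $\Ver_{p^n}$: it shifts $L_j\mapsto L_{pj}$ whenever $pj$ remains in the simple-index range, and sends $L_j$ to zero (or a controlled boundary object) otherwise. Consequently, $\Fr_+^k L_j=0$ for $j>0$ and $k$ sufficiently large (as soon as $p^k j\ge p^{n-1}(p-1)$). Propagating this through the composition series of $X$, one sees that $\Fr_+^K(X)$ eventually has only $\unit$-composition factors, and by the vanishing of $\Ext^1(\unit,\unit)$ it decomposes as a trivial sum $\unit^r$; the non-splitness of $\alpha$ then forces $\Fr_+^K\alpha=0$.

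The main obstacle is the last step. Since $\Fr_+$ is not exact on $\Ver_{p^n}$ for $n\ge 2$, its interaction with extensions requires careful handling via the Steinberg filtration. Moreover, showing that non-splitness of $\alpha$ genuinely descends under iterated Frobenius to give $\Fr_+^K\alpha=0$—rather than producing a spurious split $\unit$-inclusion into $\Fr_+^K(X)\simeq\unit^r$—is delicate and likely requires a case analysis of the possible indecomposable extensions $0\to\unit^m\to X\to L\to 0$ in $\Ver_{p^n}$, as controlled by the projective covers and the Steinberg decomposition of $L$.
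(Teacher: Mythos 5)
Your overall skeleton --- reduce to single morphisms $\unit\to X$, factor $\alpha^{p^i}$ through $\Fr_+^i\alpha$ via \eqref{ap}, and invoke Lemma~\ref{LemFr} --- is exactly the scaffolding the paper uses. But two of your claims are false or unsubstantiated, and they sit precisely where the content of the proof lives. First, the assertion that the case $L=\unit$ is vacuous because $\Ext^1(\unit,\unit)=0$ fails in characteristic $2$: in $\Ver_4^+\subset\Ver_{2^n}$ one has $\Ext^1(\unit,\unit)=\bk$, and the nonsplit self-extension $0\to\unit\to E\to\unit\to 0$ is a genuine case that the paper must (and does) treat separately, by pushing $\Sym(E)$ into $\Vecc$ and computing $\bk[y]\to\bk[x,y]/y^2$ explicitly. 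This also destroys your final mechanism in characteristic $2$: you cannot conclude that an object with only $\unit$-composition factors splits as $\unit^r$.

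Second, and more seriously, the last step is not a proof. Even granting (for $p>2$) that $\Fr_+^K(X)$ eventually becomes $\unit^r$ --- which is itself unjustified, since $\Fr_+$ is only additive, not exact, so composition factors of $\Fr_+X$ are not controlled by those of $X$, and your description of $\Fr_+$ on simples is only heuristic --- nothing forces $\Fr_+^K\alpha=0$. If $\Fr_+^K\alpha$ were a nonzero map $\unit\to\unit^r$ it would be split, and additivity of $\Fr_+$ would then keep it split under all further iterations, so Lemma~\ref{LemFr} would give you nothing; ``non-splitness of $\alpha$'' does not descend through an additive functor. The paper's actual argument avoids this entirely: it uses the explicit computation of $\Ext^1_{\Ver_{p^n}}(L_a,\unit)$ from \cite{BEO} to reduce to $m=1$ and to a short list of extensions $E_i^n$, the identity $\Fr_+E_i^n\simeq E_i^{n-1}$ from \cite{CEH} to collapse everything to a single object $E_1\in\Ver_{p^2}$, and then shows that $\alpha^2$ factors through the projective cover $P$ of $\unit$, for which $\Fr_+P=0$ --- i.e.\ the target is killed outright rather than trivialized. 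These inputs (the $\Ext^1$ classification, the $\Fr_+$ computation on the $E_i^n$, and the analysis of $\Sym^2E_1$) are the substance of the proof, and none of them appears in your proposal.
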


First we need the following technical result.
\begin{lemma}\label{Lem2p2}
Assume $p>2$. For $L_{2p-2}\in \Ver_{p^2}$, the object $\unit$ is a direct summand of $\Sym^2L_{2p-2}$.
\end{lemma}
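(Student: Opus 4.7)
My strategy is to exploit the Steinberg tensor product theorem together with a Frobenius--Schur analysis. Writing $2p-2=p\cdot 1+(p-2)$ with $1\le p-2$, \ref{DefsVer} gives $L_{2p-2}\simeq L_p\otimes L_{p-2}$. Because $p>2$, the standard characteristic-${\ne}\,2$ splitting
\[
\Sym^2(L_p\otimes L_{p-2})\;\simeq\;\Sym^2 L_p\otimes\Sym^2 L_{p-2}\;\oplus\;\Lambda^2 L_p\otimes\Lambda^2 L_{p-2}
\]
holds in $\Ver_{p^2}$, so it suffices to locate a copy of $\unit$ in one of the two summands and verify that it splits off.

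Next I claim that both $L_p$ and $L_{p-2}$ are simple self-dual objects with Frobenius--Schur indicator $-1$ (i.e.\ the sign $\epsilon\in\{\pm 1\}$ recording whether $\unit$ embeds into $\Sym^2$ rather than $\Lambda^2$ equals $-1$). For $L_p$, use the embedding $\Ver_p\hookrightarrow\Ver_{p^2}$, $L_i\mapsto L_{pi}$: the object $L_1\in\Ver_p$ is the image of $V(1)\in\Tilt SL_2$, and since $\Sym^2 V(1)\simeq V(2)$ while $\Lambda^2 V(1)\simeq\unit$, we get $\epsilon_{L_p}=-1$. For $L_{p-2}$, use the defining monoidal functor $\Tilt SL_2\to\Ver_{p^2}$, $T_{p-2}\mapsto L_{p-2}$: the integral Clebsch--Gordan decomposition places $V(0)$ in $\Sym^2 V(p-2)$ when $p-2$ is even and in $\Lambda^2 V(p-2)$ when $p-2$ is odd, and since $p$ is odd, $\epsilon_{L_{p-2}}=-1$; here the corresponding copy of $\unit$ is a genuine direct summand in $\Tilt SL_2$ since the composite $\unit\to\Lambda^2 V(p-2)\to\unit$ equals $\dim V(p-2)=p-1\not\equiv 0\pmod p$. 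The product rule $\epsilon_{X\otimes Y}=\epsilon_X\epsilon_Y$ for self-dual simples whose tensor product is simple (a standard consequence of the symmetric structure) then yields $\epsilon_{L_{2p-2}}=+1$, producing an inclusion $\iota:\unit\hookrightarrow\Sym^2 L_{2p-2}$.

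To check that $\iota$ splits, use the self-duality of $L_{2p-2}$ and observe that the evaluation $L_{2p-2}\otimes L_{2p-2}\to\unit$ factors through $\Sym^2 L_{2p-2}$ (exactly because $\epsilon_{L_{2p-2}}=+1$), giving $e:\Sym^2 L_{2p-2}\to\unit$ with $e\circ\iota=\dim L_{2p-2}\in\bk$. Categorical dimension is preserved by symmetric tensor functors, so $\dim L_p=\dim V(1)=2$ and $\dim L_{p-2}=\dim V(p-2)=p-1\equiv -1\pmod p$, whence $\dim L_{2p-2}=-2\in\bk^\times$ (using $p>2$ once more). Therefore $e\circ\iota$ is an automorphism of $\unit$ and $\unit$ is a direct summand of $\Sym^2 L_{2p-2}$. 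The principal step requiring care is the Frobenius--Schur bookkeeping, in particular verifying the product rule in the symmetric (rather than merely braided) setting; everything else reduces to standard rigidity and the basic structure of $\Tilt SL_2$ and the embedding $\Ver_p\hookrightarrow\Ver_{p^2}$.
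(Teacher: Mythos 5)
Your proof is correct and follows essentially the same route as the paper: the Steinberg factorization $L_{2p-2}\simeq L_p\otimes L_{p-2}$, the characteristic-$\neq 2$ decomposition $\Sym^2(X\otimes Y)\simeq(\Sym^2X\otimes\Sym^2Y)\oplus(\Lambda^2X\otimes\Lambda^2Y)$, and $\Lambda^2 L_p\simeq\unit$, which reduces everything to showing that $\unit$ splits off $\Lambda^2 T_{p-2}$ in $\Tilt SL_2$. The only point of divergence is that last step: you use the antisymmetric invariant form on $T_{p-2}$ (since $p-2$ is odd) together with the nonvanishing of $\dim T_{p-2}=p-1\not\equiv 0\pmod p$, whereas the paper observes that $T_{p-2}$ maps to the odd line of $\sVec\subset\Ver_p$ whose exterior square is $\unit$; both are valid, and your closing dimension count $\dim L_{2p-2}=-2\in\bk^{\times}$ is a clean way to certify the splitting.
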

\begin{proof}
We have
$L_{2p-2}\simeq L_p\otimes L_{p-2}.$
By the rules for $\Ver_p\subset \Ver_{p^2}$, we have $\Sym^2L_p\simeq L_{2p}$ and $\wedge^2 L_p\simeq\unit$. Hence, $\Sym^2 L_{2p-2}$ has a direct summand
$\wedge^2 L_{p-2}$. Since the defining monoidal functor satisfies
$$\Tilt SL_2\;\to\; \Ver_{p^2},\quad T_{p-2}\mapsto L_{p-2},$$
it suffices to observe that $T_0$ is a direct summand in $\wedge^2 T_{p-2}$. This follows for instance because
$\Tilt SL_2\to \Ver_{p}$ sends $T_{p-2}$ to the odd line of $\sVec\subset \Ver_p$.
\end{proof}
\begin{proof}[Proof of Proposition~\ref{Proppb}]
We start by considering the case $p>2$.
From the description in \cite[Proposition~4.65]{BEO} we find
$$
\dim_k\Ext^1_{\Ver_{p^n}}(L_a,\unit)=\begin{cases}
1&\mbox{ if $a=p^i+p^{i-1}(p-2)$ for some $1\le i\le n-1$;}\\
0&\mbox{ otherwise,}
\end{cases}
$$
see \cite[Lemma~4.2.3]{CEH}. So it suffices to consider $m=1$ in {\bf MNb}. Denote by $E_i^{n}\in \Ver_{p^n}$ the object in $\Ver_{p^n}$ resulting from the above extension for $a=p^i+p^{i-1}(p-2)$. 

We have $E_i^n\in \Ver_{p^{n-1}}\subset \Ver_{p^n}$ for $i>1$, which thus implies $E_i^n\simeq E_{i-1}^{n-1}$. Furthermore, by \cite[Lemma~4.4.6]{CEH}, we have 
$$\Fr_+ E^n_i\;\simeq\; E^{n-1}_i,\qquad\mbox{if } i<n-1.$$
By Lemma~\ref{LemFr}, it is therefore sufficient to consider $E_{n-1}^n$ for all $n$. However, by the above, we have $E^n_{n-1}\simeq E^2_1=:E_1\in \Ver_{p^2}$.

We thus consider the non-split short exact sequence
$$0\to \unit\xrightarrow{\alpha} E_1\to L_{2p-2}\to 0$$
in $\Ver_{p^2}$. We will prove that $(\alpha^2)^i=0$ for some $i$ (so in particular $\alpha^{2i}=0$).
Since $p>2$, the morphism $\alpha^2$ is a composite
$$\unit\subset E_1\subset \Sym^2 E_1.$$
Let $P$ be the projective cover of $\unit$. It is also the injective hull of $\unit$ and has length three with $L_{2p-2}$ in the middle, see \cite[\S 5.1]{BEO}. We claim the induced $\Sym^2E_1\to P$ is an epimorphism, and therefore split (in fact one can easily show it must even be an isomorphism). This allows us to consider $\beta:\unit\hookrightarrow P$ instead of $\alpha^2$. 

To prove this claim, we can observe that $[\Sym^2 E_1:\unit]>1$ by Lemma~\ref{Lem2p2}. Furthermore, by adjunction we find $\Hom(\unit, E_1^{\otimes 2})$ has dimension one, so the same is true for $\Hom(\unit, \Sym^2E_1)$, as $\Sym^2 E_1$ is a direct summand of $E_1^{\otimes 2}$.
Moreover, one can calculate from \cite[Corollary~4.50]{BEO} that $[L_{2p-2}^{\otimes 2}:L_{2p-2}]=0$ and therefore $[\Sym^2 E_1:L_{2p-2}]=1$, where $L_{2p-2}$ is the only simple object in $\Ver_{p^2}$ which extends with~$\unit$. The claim now follows.

We can thus turn to $\beta:\unit\hookrightarrow P$. We have $\Fr P=0$, see \cite{BEO}. This also follows for instance because $\Fr$ is monoidal but not faithful (on $\Ver_{p^2}$) and projective objects are contained in any non-zero tensor ideal. In particular, we have $\Fr_+ P=0$, since $\Fr_+P$ is always a subquotient of $\Fr P$.
The conclusion now follows from Lemma~\ref{LemFr}.

Now we focus on $p=2$. Extensions between simple objects are described in \cite[Proposition~4.12]{BE}. Similarly to the case $p>2$, we can reduce to short exact sequences (unique up to scalar)
$$0\to \unit\to E\to \unit\to 0$$
in ${\Ver_4}^+=\mathcal{C}_1$ and 
$$0\to \unit \to E_1\to V\to 0$$
in $\Ver_8^+=\mathcal{C}_3$, with $V$ the non-trivial simple object $\Ver_8^+$ (which already lives in $\Ver_4=\cC_2$). The first case is easy since, after applying the non-symmetric tensor functor $\Ver_4^+\to \Vecc$, the relevant algebra morphism $\Sym\unit\to \Sym E$ becomes, see \cite[5.2.1]{BE},
$$  \bk[y]\to \bk[x,y]/y^2,\quad y\mapsto y.$$

For the second case, we claim that
$$E_1\otimes E_1\;\simeq\; P/\unit,$$
with $P$ the projective cover and injective hull of $\unit$. Indeed, if we denote by $P_V$ the projective cover of $V$, see \cite[\S 5.3]{BE}, then we have a short exact sequence
$$0\to E_1\otimes E_1^\vee\to E_1\otimes P_V\to E_1\otimes E_1\to 0.$$
One can combinatorially verify that the middle term is $P_V\oplus P$. The fusion rules, the fact that the categorical dimension of $E_1$ is not zero and the observation (by adjunction) that
$$\Hom(E_1\otimes E_1^\vee,\unit)\;=\;\bk\;=\; \Hom(\unit, E_1\otimes E_1^\vee),$$
show that $E_1\otimes E_1^\vee$ is a direct sum of $\unit$ and $P_V$, implying the claim.

We know that $\Sym^2 E_1$ is the cokernel of a (non-zero) morphism $E_1^{\otimes 2}\to E_1^{\otimes 2}$. This leads to two possibilities: either $\unit\to \Sym^2E_1$ is zero (this option can actually easily be verified not to be true, but we do not need that), or we have
$$\Sym^2E_1\simeq V_2^{\otimes 2}$$
with $V_2\in \cC_4$ as in \cite[\S 5.4]{BE}. Since $\Fr_+^2V_2=0$ (see \cite[Theorem~2.1(xi)]{BE}) and $\Fr_+$ is monoidal, the conclusion follows from Lemma~\ref{LemFr}.
\end{proof}

%

\subsection{Partial results on {\bf MNa}}

\begin{theorem}\label{PropVer2}
$\Ver_{2^\infty}$ is maximally nilpotent.
\end{theorem}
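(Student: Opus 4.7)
Since conditions \textbf{MNa} and \textbf{MNb} refer only to finite-length data, they pass to filtered unions, so it suffices to prove that each $\Ver_{2^n}$ is maximally nilpotent. By Proposition~\ref{Proppb} (whose proof covers $p=2$), condition \textbf{MNb} is already established, so the task reduces to verifying \textbf{MNa}: for every non-trivial simple $L \in \Ver_{2^n}$, the symmetric algebra $\Sym(L)$ is finite.

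I would argue by induction on $n$, with vacuous base case $n=1$ since $\Ver_2 = \Vecc$ has no non-trivial simple objects. For the inductive step, the Steinberg tensor product theorem of~\ref{DefsVer} identifies the simples of $\Ver_{2^n}$ as tensor products of the atomic objects $L_{2^j}$ for $0 \le j \le n-2$, where $L_{2^j}$ is the image of the generator $L_1 \in \Ver_{2^{j+2}}$ under the chain of embeddings $\Ver_{2^{j+2}} \subset \cdots \subset \Ver_{2^n}$. Simples not involving the top atom $L_1 \in \Ver_{2^n}$ already lie in $\Ver_{2^{n-1}}$ and are handled by the inductive hypothesis. The remaining simples are precisely those of the form $L = L_1 \otimes L'$ with $L' \in \Ver_{2^{n-1}}$ (allowing $L' = \unit$).

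For each such $L$, the strategy is to exhibit an integer $m \ge 2$ and an invertible object $S \in \Ver_{2^n}$ together with an isomorphism $\Sym^m L \simeq S$; by Corollary~\ref{Corn1} this forces $\Sym^{m+1} L = 0$, whence $\Sym(L)$ is finite. To produce such $m$, I would exploit the defining monoidal functor $\Tilt SL_2 \to \Ver_{2^n}$ (which sends $T_1 \mapsto L_1$ for $n \ge 2$). In characteristic two, $\Sym^m T_1$ coincides with the Weyl module $\nabla(m)$ for $SL_2$, and for an appropriate choice of $m$ of order $2^{n-1}$ (adjusted by the Steinberg index of $L'$) the image of $\nabla(m)$ in $\Ver_{2^n}$ collapses to a single invertible simple. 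For the case $L = L_1$ itself one expects $m = 2^{n-1}$; the general case $L = L_1 \otimes L'$ is handled by combining this with the Steinberg factorisation of $L'$.

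The principal obstacle is precisely this last structural claim: identifying an explicit invertible symmetric power of each new simple $L_1 \otimes L' \in \Ver_{2^n}$. This step is not a formal consequence of the abstract theory developed earlier and instead relies on detailed information about the higher Verlinde categories at $p=2$---in particular, on the behaviour of the Weyl modules $\nabla(2^k)$ for $SL_2$ under the tilting-to-semisimplification functor $\Tilt SL_2 \to \Ver_{2^n}$, and on how this interacts with the Steinberg decomposition. Once the desired invertible symmetric power is in hand, Corollary~\ref{Corn1} immediately completes the induction, and together with Proposition~\ref{Proppb} yields that every $\Ver_{2^n}$, and hence $\Ver_{2^\infty}$, is maximally nilpotent.
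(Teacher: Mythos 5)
Your reduction to \textbf{MNa} via Proposition~\ref{Proppb}, the passage to the filtered union, and the induction on $n$ all match the paper's set-up. But the core of the argument is missing, and the route you sketch for it does not work as stated. You propose to exhibit, for each new simple $L=L_1\otimes L'$, an invertible object $\Sym^m L$ and then invoke Corollary~\ref{Corn1}; you acknowledge yourself that you cannot carry out this step. Beyond being unproven, the claim is quantitatively off: for $L=L_1\in\Ver_{2^n}$ the top non-vanishing symmetric power occurs in degree $2^n-2$, not $2^{n-1}$ (this is exactly the statement $\Dim_+L_1=2-2^n$ together with Lemma~\ref{LemInv}, and is consistent with $\Sym^{2^n-1}L_1=0$ proved later in the paper). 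More seriously, symmetric powers do not respect the Steinberg factorisation: $\Sym^m(L_1\otimes L')$ is not built from symmetric powers of $L_1$ and $L'$, so even a complete answer for the atomic simples would not formally yield the general case. Demanding that some $\Sym^mL$ be literally invertible is also much stronger than what is needed or easily verifiable.

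The paper's proof circumvents all of this. For a simple $L\in\Ver_{2^n}$ it uses that $L^{\otimes 2}$ lies in $\Ver_{2^n}^+$, which is maximally nilpotent by the induction hypothesis together with Proposition~\ref{Proppb}. It then applies Corollary~\ref{CorNInv}(1) to the (at most one-dimensional) map $\Sym^2L\tto\unit$: one only needs the kernel $K$ to be \emph{nilpotent} in $\Sym(\Sym^2L)$, not zero. If $\Hom(K,\unit)=0$ this follows because $\Sym K$ is finite by maximal nilpotence of $\Ver_{2^n}^+$; otherwise $K$ surjects onto the self-extension $E$ of $\unit$ with kernel $K'$ satisfying $\Hom(K',\unit)=0$, and the vanishing of $\unit\to\Sym^2E$ (from the proof of Proposition~\ref{Proppb}) places $K^2$ inside the nilpotent ideal generated by $K'$. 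You should replace your search for an invertible symmetric power with an argument of this kind; as it stands, your proposal reduces the theorem to a structural claim about $\Sym^m$ of simples in $\Ver_{2^n}$ that is neither established nor correctly formulated.
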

\begin{proof}
By Proposition~\ref{Proppb}, it is sufficient to prove {\bf MNa} for each $\cC_{2n-2}=\Ver_{2^n}$.

We prove, by induction on $n$, that the condition is satisfied for every simple object in $\Ver_{2^n}$ (which has the same simple objects as $\cC_{2n-1}=\Ver_{2^{n+1}}^+$). Firstly, by \cite[Theorem~2.1]{BE}, for such a simple $L\in\Ver_{2^n}$, the power $L^{\otimes 2}$ belongs to $\Ver_{2^{n}}^+$. By the induction hypothesis and Proposition~\ref{Proppb}, $\Ver_{2^n}^+$ is {\bf MN}.

We have $\Hom(\Sym^2 L,\unit)=\fk$ (in any case, the alternative $\Hom(\Sym^2 L,\unit)=0$ would conclude the proof immediately). By Corollary~\ref{CorNInv}(1), it is sufficient to show that~$K$ (the kernel of $\Sym^2L\tto\unit$) is nilpotent in $\Sym(\Sym^2 L)$, which we do now.

In case $\Hom(K,\unit)=0$, we know $\Sym K$ is finite, by the first paragraph, so certainly~$K$ is nilpotent in $\Sym(\Sym^2 L)$. Since $\Ext^1(\unit,\unit)=\fk$, see \cite[\S 4.1]{BE}, the other option is $\Hom(K,\unit)=\fk$, when we have a short exact sequence
$$0\to K'\to \Sym^2 L\to E\to 0,$$
where $E$ is the self-extension of $\unit$ (the indecomposable projective object in $\Ver_4^+$), and moreover $\Hom(K',\unit)=0$. The latter implies again that $\Sym K'$ is finite. Since the composite $\unit \hookrightarrow E^{\otimes 2}\tto \Sym^2 E$ is zero (see the proof of \autoref{Proppb}), it follows that the subobject $K^2\subset \Sym^2 (\Sym^2L)$ is in the ideal of $\Sym(\Sym^2 L)$ generated by $K'$. As the latter ideal is nilpotent ($\Sym K'$ is finite), it follows that indeed $K$ is nilpotent. 
\end{proof}


\begin{lemma}\label{Ver9}
$\Ver_9$ is maximally nilpotent.
\end{lemma}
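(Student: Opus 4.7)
The strategy is to apply Theorem~\ref{Cond4Fib}: since Proposition~\ref{Proppb} already gives condition \textbf{MNb} in $\Ver_9$, it suffices to verify \textbf{MNa}, namely that $\Sym L$ is finite for every non-trivial simple $L \in \Ver_9$. By the Steinberg tensor product theorem recalled in \ref{DefsVer}, the six simples of $\Ver_9$ are $L_0=\unit,L_1,L_2,L_3,L_4,L_5$, with $L_4\simeq L_3\otimes L_1$ and $L_5\simeq L_3\otimes L_2$; moreover, $L_3$ is the image of the non-trivial simple of $\Ver_3\simeq\sVec_{\bk}$ (since $p=3$). So $L_3$ is the odd line, giving $\Sym^2 L_3=0$, $\wedge^2 L_3\simeq \unit$, and $\Sym L_3$ finite. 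This disposes of $L_3$ and shows it is invertible of order two.

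Next, I would use the characteristic--$\ne 2$ identity $\Sym^2(X\otimes Y)\simeq \Sym^2 X\otimes \Sym^2 Y\;\oplus\;\wedge^2 X\otimes \wedge^2 Y$ applied to $L_{3+i}=L_3\otimes L_i$. Combined with the previous step this collapses to
$$\Sym^2 L_{3+i}\;\simeq\;\wedge^2 L_i,\qquad i=1,2.$$
The tilting decomposition $T_1\otimes T_1\simeq T_0\oplus T_2$ pulls back through $F:\Tilt SL_2\to\Ver_9$ to $L_1\otimes L_1\simeq \unit\oplus L_2$, with $\wedge^2 L_1\simeq\unit$. Hence $\Sym^2 L_4\simeq\unit$ is invertible and Corollary~\ref{Corn1} gives $\Sym L_4$ finite (in fact $\Sym^3 L_4=0$). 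For $L_5$, I would pull back the tilting decomposition $T_2\otimes T_2\simeq T_4\oplus T_2$ in $\Tilt SL_2$ (char $3$) through $F$ and identify $\wedge^2 L_2$ as the appropriate invertible summand; Corollary~\ref{Corn1} then finishes $L_5$.

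It remains to treat $L_1$ and $L_2$, which I plan to handle uniformly using the Frobenius $\Fr_+:\Ver_9\to\Ver_9$ of Lemma~\ref{LemFr}. One has a canonical factorisation $\Fr_+ L_i\subset \Sym^3 L_i$ (equation~\eqref{ap}), and $\Fr_+$ of a simple in $\Ver_9$ is expected to lie in $\Ver_3\subset\Ver_9$; indeed one verifies $\Fr_+ L_1\simeq L_3$ and $\Fr_+ L_2\simeq \unit$ (or $L_3$, depending on parity), both of which are invertible. I would then apply Corollary~\ref{CorNInv}(2) to the composite $\Sym^3 L_i\tto\Fr_+ L_i$: the kernel $K$ is a subobject of $\Sym^3 L_i$ with strictly smaller constituents, and Theorem~\ref{ThmSym} lets one filter $\Sym L_i$ so that $K$ generates an ideal controlled by the symmetric algebras of lower-length subquotients, which by the preceding cases are finite. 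This would force $K$ nilpotent in $\Sym L_i$, and hence $\Sym L_i$ finite.

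The hard part will be the final step: identifying $\Fr_+ L_1$ and $\Fr_+ L_2$ precisely, and controlling the kernel of $\Sym^3 L_i\to\Fr_+ L_i$ well enough to verify the nilpotency hypothesis of Corollary~\ref{CorNInv}. Both rely on a hands-on analysis of the images $F(T_n)\in\Ver_9$ for $n\ge 3$ using the characteristic--$3$ representation theory of $SL_2$ together with the Steinberg decomposition, and of the $S_3$-isotypic structure of $L_i^{\otimes 3}$ (where $p=3$ makes the classical Schur decomposition fail). Once these pieces are in place, the cases assemble along the order $L_3\Rightarrow L_4\Rightarrow L_5\Rightarrow L_2\Rightarrow L_1$ so that earlier cases feed the nilpotency argument for later ones.
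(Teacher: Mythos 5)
Your reduction to \textbf{MNa} via Proposition~\ref{Proppb}, and your treatment of $L_3$ (odd line) and $L_4$ (namely $\Sym^2 L_4\simeq \wedge^2L_1\simeq\unit$, then Corollary~\ref{Corn1}) match the paper exactly. But the rest has genuine gaps. First, your step for $L_5$ fails: from $T_2\otimes T_2\simeq T_4\oplus T_2$ one gets $\wedge^2 L_2\simeq L_2$, which is \emph{not} invertible, so Corollary~\ref{Corn1} does not apply to $\Sym^2 L_5\simeq\wedge^2L_2\simeq L_2$. The only way to conclude for $L_5$ (and likewise for $L_1$, since $\Sym^2L_1\simeq L_2$) is to first know that $\Sym(L_2)$ is finite and then observe that the image of $\Sym(\Sym^2 L_i)\to\Sym(L_i)$ covers all even degrees; your proposed order $L_5\Rightarrow L_2\Rightarrow L_1$ is therefore backwards, and everything hinges on $L_2$.

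Second, your Frobenius-based plan for $L_1$ and $L_2$ is both unexecuted and structurally problematic: $\Fr_+L$ is by definition a \emph{subobject} of $\Sym^pL$ (the image of $\Gamma^pL\to\Sym^pL$), so the surjection $\Sym^3L_i\tto\Fr_+L_i$ you want to feed into Corollary~\ref{CorNInv} does not exist as described, and you have not identified $\Fr_+L_i$ nor verified the nilpotency of any kernel. The paper's argument for $L_2$ is quite different and is the real content of the lemma: $\Sym^2L_2$ is the image of $\Sym^2T_2\simeq T_4$, hence the projective cover $P$ of $\unit$, of length $3$ with radical $X$ sitting in $0\to\unit\to X\to L_4\to 0$. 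Applying \textbf{MNb} (Proposition~\ref{Proppb}) to this extension shows the ideal of $\Sym(X)$ generated by $\unit$ is nilpotent, while the quotient is a quotient of $\Sym(L_4)$, already known to be finite; hence $\Sym(X)$ is finite, $X=\ker(P\tto\unit)$ is nilpotent, and Corollary~\ref{CorNInv}(1) applied to $\Sym^2L_2\tto\unit$ gives $\Sym(L_2)$ finite. This interplay between the projective cover of $\unit$, \textbf{MNb}, and the previously settled case $L_4$ is the missing idea; without it (or a complete substitute) your proof does not close.
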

\begin{proof}
Recall the fundamental properties from~\ref{DefsVer}. In particular the simple objects in $\Ver_9$ are labelled by $L_i$, $0\le i \le 5$.
We will also use that $L_3$  is the odd line (in particular $\Sym^2L_3=0$) and that $L_0, L_1, L_2$ are the images of the correspondingly labelled $SL_2$-tilting modules. Since $L_4\simeq L_3\otimes L_1$, we have
$$\Sym^2 L_4\simeq \wedge^2L_1\simeq \unit.$$
Consequently, $\Sym^3L_4=0$ by Corollary~\ref{Corn1}.

Since $\Sym^2 L_2$ is the image of $\Sym^2T_2\simeq T_4$, it is the projective cover of $\unit$, see \cite[\S 5.1]{BEO}. Its radical $X$ is of length 2, with top $L_4$ and socle $\unit$. By Proposition~\ref{Proppb}, the ideal in $\Sym(X)$ generated by $\unit$ in degree 1 is nilpotent, and by the previous paragraph, the quotient with respect to the ideal is finite. Hence $\Sym(X)$ is a finite algebra. That $\Sym(L)_2$ is finite now follows from Corollary~\ref{CorNInv}. 

Finally, we can observe, using the defining monoidal functor, resp. $L_5\simeq L_3\otimes L_2$, that
$$\Sym^2 L_1\;\simeq\; L_2\;\simeq\;\wedge^2 L_2\;\simeq\; \Sym^2 L_5 .$$
That $\Sym(L)_1$ and $\Sym(L)_5$ are finite thus follows from the fact that $\Sym(L)_2$ is finite.
\end{proof}

Using similar methods as in the proof of Lemma~\ref{Ver9}, we can prove the following lemma.

\begin{lemma}
$\Ver_{27}^+$ satisfies is {\bf MN} if and only if $\Sym(L)_8$ is finite.
\end{lemma}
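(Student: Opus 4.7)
The plan is to invoke Proposition~\ref{Proppb} (which gives \textbf{MNb} automatically) to reduce \textbf{MN} for $\Ver_{27}^+$ to \textbf{MNa}: finiteness of $\Sym L$ for every non-trivial simple $L \in \Ver_{27}^+$. Since the parity of $L_i \in \Ver_{27}$ equals $i \bmod 2$ (with $L_9$ the odd line), these simples are $L_2, L_4, L_6, L_8, L_{10}, L_{12}, L_{14}, L_{16}$; the forward direction is immediate, so I assume $\Sym L_8$ is finite and verify \textbf{MNa}. Two cases are handled at once: $L_6, L_{12}$ are the images of $L_2, L_4 \in \Ver_9$ under the tensor embedding $\Ver_9 \hookrightarrow \Ver_{27}$ ($L_i \mapsto L_{3i}$), and hence have finite $\Sym$ by Lemma~\ref{Ver9}. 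For $L_{10} = L_9 \otimes L_1$, the $\Sym^2$-plethysm $\Sym^2(A \otimes B) \cong \Sym^2 A \otimes \Sym^2 B \oplus \wedge^2 A \otimes \wedge^2 B$ (valid in characteristic $\neq 2$) with $\Sym^2 L_9 = 0$ and $\wedge^2 L_9 \otimes \wedge^2 L_1 = \unit$ yields $\Sym^2 L_{10} = \unit$, so Corollary~\ref{Corn1} applies.

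For the remaining simples $L_4, L_2, L_{14}, L_{16}$, I would uniformly invoke Corollary~\ref{CorNInv}(2) with $n = 2$ applied to the natural surjection $\phi\colon \Sym^2 L_i \tto \unit$. Using the Steinberg identities $L_3 \otimes L_1 = L_4$ and $L_6 \otimes L_2 = L_8$, the $\Sym^2$-plethysm, and transport along $\Ver_9 \hookrightarrow \Ver_{27}$, one computes: $\Sym^2 L_4 = L_8 \oplus \unit$; $\Sym^2 L_2$ is the length-$3$ indecomposable image of $T_4 = \Sym^2 T_2$ under the defining functor $\Tilt SL_2 \to \Ver_{27}$, with composition factors $\unit, L_4, \unit$; $\Sym^2 L_{14} = \Sym^2 L_2 \oplus L_8$ from $L_{14} = L_{12} \otimes L_2$ (using $\Sym^2 L_{12} = \unit$ and $\wedge^2 L_{12} = L_6$ transported from $\Ver_9$); and $\Sym^2 L_{16} = Y \oplus L_8$ from $L_{16} = L_{15} \otimes L_1$, where $Y$ is the length-$3$ indecomposable with composition factors $\unit, L_{12}, \unit$ (the image of the $\Ver_9$-object $\wedge^2 L_5$, i.e.\ the projective cover of $\unit$ in $\Ver_9$). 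In every case $\ker \phi = L_8 \oplus K'$, where $K'$ is either zero (for $L_4$) or a non-split length-$2$ extension $0 \to \unit \to K' \to L \to 0$ with $L \in \{L_4, L_{12}\}$.

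It remains to check that $L_8 \oplus K'$ is nilpotent as a degree-$2$ subobject of $\Sym L_i$. The $L_8$-summand is nilpotent by the hypothesis on $\Sym L_8$. For $K'$, Proposition~\ref{Proppb} applied to its defining extension makes $\unit \hookrightarrow K'$ nilpotent in $\Sym K'$; Theorem~\ref{ThmSym} combined with the finiteness of $\Sym L_4$ (resp.\ $\Sym L_{12}$) then gives $\Sym K'$ finite, so $K'$ is nilpotent in $\Sym L_i$. Nilpotency of the direct sum then follows from $\Sym^N(L_8 \oplus K') = \bigoplus_j \Sym^j L_8 \otimes \Sym^{N-j} K'$ by taking $N$ larger than the top nonzero degrees of $\Sym L_8$ and $\Sym K'$. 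This produces the implication chain $\Sym L_8 \text{ finite} \Rightarrow \Sym L_4 \text{ finite} \Rightarrow \Sym L_2 \text{ finite}$, and handles $L_{14}, L_{16}$ in parallel. The main technical step is the careful composition-structure computation of each $\Sym^2 L_i$, which rests on Steinberg's tensor product theorem and on pulling back structural information from $\Ver_9$.
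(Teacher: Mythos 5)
Your proposal is correct, and it is precisely the argument the paper leaves to the reader: the paper offers no written proof beyond the pointer ``using similar methods as in the proof of Lemma~\ref{Ver9}'', and your reduction to \textbf{MNa} via Proposition~\ref{Proppb}, the Steinberg factorisations with the $\Sym^2/\wedge^2$ plethysm, the transport of $L_2,L_4\in\Ver_9$ to $L_6,L_{12}$, and the combination of Corollary~\ref{Corn1}, Corollary~\ref{CorNInv}, Theorem~\ref{ThmSym} and \textbf{MNb} is exactly that method carried out (your computations of $\Sym^2L_i$ check out, e.g.\ $\gd(\Sym^2L_2)=2+\gd(L_4)$ confirms the factors $\unit,L_4,\unit$). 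The only blemish is the phrase ``in every case $\ker\phi=L_8\oplus K'$'': for $L_2$ the kernel is just $K'$ with no $L_8$ summand, but since you treat the summands separately and order the cases as $L_8\Rightarrow L_4\Rightarrow L_2$, this is cosmetic and does not affect the proof.
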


Part (1) of the following lemma was also observed in \cite[Corollary~6.4]{BE2}.

\begin{lemma}Consider $\Ver_{p^n}$.
\begin{enumerate}
\item For $0\le i<n$, we have
$$\Sym^{p^{n-i}-1}L_{p^i}=0.$$
\item For $0\le j<n-1$ and $0\le i<p$, we have
$$\Sym^{i+2}L_{p^{n-1}(p-2)+p^ji}=0.$$
\item For $0< i<p-1$, we have
$$\Sym^{p-i}L_{p^{n-1}i}=0.$$
\end{enumerate}
\end{lemma}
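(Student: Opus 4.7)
My plan is to prove the three vanishings via a common strategy built on the Steinberg tensor product theorem, the chain of tensor embeddings $\Ver_{p^{n-1}} \hookrightarrow \Ver_{p^n}$ (sending $L_j \mapsto L_{pj}$), the defining monoidal functor $F\colon \Tilt SL_2 \to \Ver_{p^n}$, and Corollary~\ref{Corn1}. Throughout I would assume $p$ is odd, handling $p=2$ separately using Theorem~\ref{PropVer2}.

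For part~(1), I argue by induction on $n$: for $i \ge 1$ the embedding reduces $\Sym^{p^{n-i}-1}L_{p^i}$ in $\Ver_{p^n}$ to the same expression in $\Ver_{p^{n-1}}$ with index $i-1$. The essential case is $i=0$, that is, $\Sym^{p^m-1}L_1=0$ in $\Ver_{p^m}$. Since $L_1 = F(T_1) = F(V)$ and $F$ is symmetric monoidal, $\Sym^{p^m-1}L_1 = F(\nabla(p^m-1))$; as $p^m-1$ has all base-$p$ digits equal to $p-1$, Steinberg's theorem gives $\dim L(p^m-1) = p^m = \dim\nabla(p^m-1)$, forcing $\nabla(p^m-1) = L(p^m-1) = T(p^m-1)$. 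Since $p^m-1$ exceeds the maximal simple index $p^{m-1}(p-1)-1$ in $\Ver_{p^m}$, the image $F(T(p^m-1))$ vanishes.

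For part~(3), iterated embedding reduces to $\Sym^{p-i}L_i = 0$ in the semisimple category $\Ver_p$ for $0<i<p-1$. A plethysm decomposition of $\Sym^{p-1-i}V_i$ combined with the affine reflection rule of $\Ver_p$ identifies $\Sym^{p-1-i}L_i$ as invertible (either $\unit$ or the odd line $\epsilon := L_{p-2}$), consistent with $\binom{p-1}{i}\equiv(-1)^i \pmod p$. For $0<i<p-2$, Corollary~\ref{Corn1} applied with exponent $p-1-i>1$ yields $\Sym^{p-i}L_i = 0$; the boundary $i=p-2$ follows from $\Sym^2 L_{p-2}=0$, since $L_{p-2}$ is an odd line. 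For part~(2), Steinberg's theorem gives $L_{p^{n-1}(p-2)+p^j i} \simeq \epsilon \otimes L_{p^j i}$ with $\epsilon := L_{p^{n-1}(p-2)}$ the odd invertible, and the super-Cauchy identity for tensor products with an odd $1$-dimensional invertible then produces
$$\Sym^{i+2}(\epsilon \otimes L_{p^j i}) \simeq \Lambda^{i+2}L_{p^j i} \otimes \epsilon^{\otimes(i+2)}.$$
Since $L_{p^j i}$ corresponds to $L_i = F(V_i)$ under $\Ver_{p^{n-j}} \hookrightarrow \Ver_{p^n}$, the classical vanishing $\Lambda^{i+2}V_i = 0$ (as $\dim V_i = i+1 < i+2$) transfers to $\Lambda^{i+2}L_{p^j i} = 0$, closing the proof.

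The main obstacle I anticipate is making the super-Cauchy step in part~(2) rigorous for the boundary values $i\in\{p-2, p-1\}$, where $i+2 \ge p$ and the naive Young-symmetrizer/Schur-functor constructions fail in characteristic $p$; these cases will likely require a direct analysis of $\Lambda^{i+2}L_{p^j i}$ via the block structure of $\Ver_{p^n}$ from \cite{BEO, BE}, or an alternative categorical argument exhibiting $\Sym^{i+2}L_{p^{n-1}(p-2)+p^j i}$ directly as a quotient of a vanishing tilting module. A related subtlety in part~(1) is ensuring that $\Sym^{p^m-1}L_1$, computed internally in the non-semisimple category $\Ver_{p^m}$, does indeed agree with $F(T(p^m-1))$; this should hold because $T(p^m-1) = \nabla(p^m-1)$ is already tilting, so the symmetrizer projecting onto $\Sym^{p^m-1}T_1$ inside $T_1^{\otimes(p^m-1)}$ is a genuine endomorphism in $\Tilt SL_2$ that $F$ transports to the corresponding projector in $\Ver_{p^m}$.
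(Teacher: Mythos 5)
Your overall strategy coincides with the paper's: reduce part (1) to the Steinberg module via the embeddings $\Ver_{p^{n-1}}\subset\Ver_{p^n}$, twist by the odd line to turn part (2) into a statement about exterior powers of $L_i$, and invoke Corollary~\ref{Corn1} once an intermediate symmetric power is known to be invertible. However, the two points you flag as ``anticipated obstacles'' are genuine gaps, and your proposed fix for the first does not work. In part (1), the identification of $\Sym^{p^m-1}L_1$ with $F(T_{p^m-1})$ cannot be justified by ``the symmetrizer projecting onto $\Sym^{p^m-1}T_1$'': for $N=p^m-1\ge p$ the element $\frac{1}{N!}\sum_\sigma\sigma$ does not exist in characteristic $p$, so $\Sym^N$ is only a coinvariant quotient, not the image of a canonical idempotent, and an additive monoidal functor out of $\Tilt SL_2$ has no a priori reason to preserve such a quotient. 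The paper instead proves that the presentation $(T_1^{\otimes N})^{N-2}\to T_1^{\otimes N}\to T_{p^m-1}\to 0$ is \emph{split}, by highest-weight and block considerations (the Steinberg constituent separates), so that the cokernel computation is preserved by any additive functor and \cite[Proposition~3.5]{BEO} applies. Some argument of this kind is indispensable here.

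Second, the cases $i=p-2$ and $i=p-1$ of part (2) are left unresolved in your proposal; these are precisely where $\Lambda^{i+2}$ ceases to be cut out by an idempotent and your super-Cauchy transfer breaks down. The paper closes them with the same device you already deploy in part (3): for $i=p-2$ one has $\wedge^{p-1}T_{p-2}=\bk$, hence $\Sym^{p-1}L_{p^{n-1}(p-2)+p-2}=\unit$ is invertible and Corollary~\ref{Corn1} forces $\Sym^{p}=0$; for $i=p-1$ one shows $\wedge^{p}L_{p-1}=\unit$ by observing that the highest weight $p^2-p$ occurring in $T_{p-1}^{\otimes p}$ lies below $p^2-1$, so no tilting summand in the canonical presentation of $\wedge^{p}T_{p-1}$ lies in the kernel of the defining functor and \cite[Proposition~3.5]{BEO} again applies, after which Corollary~\ref{Corn1} gives $\Sym^{p+1}=0$. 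With these two repairs, your reductions for part (1), the interior cases $i+2<p$ of part (2), and part (3) match the paper's argument.
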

\begin{proof}
For part (1), by letting $n$ vary, it suffices to prove that
$$\Sym^{p^{n}-1}L_1=0.$$
For this we can observe that in $\Rep SL_2$, the $p^{n}-1$-th symmetric tensor power of $T_1$ is the Steinberg module $T_{p^{n}-1}$. By highest weight considerations and the block decomposition, it follows that the exact sequence
$$(T_1^{\otimes p^{n}-1})^{p^n-2}\;\to\;T_1^{\otimes p^{n}-1}\;\to\; T_{p^n-1}\;\to\; 0$$
 is split. Since $T_{p^n-1}$ is in the ideal of the defining monoidal functor to $\Ver_{p^n}$ (it is in fact the canonical generator of this ideal), the conclusion follows from \cite[Proposition~3.5]{BEO}.

For part (2), it suffices to consider the case $j=0$. So for $n>1$ we need to show that
$$\Sym^{i+2}L_{p^{n-1}(p-2)+i}=0.$$
Since
$$L_{p^{n-1}(p-2)+i}\;\simeq\; L_{p^{n-1}(p-2)}\otimes L_i$$
and $L_{p^{n-1}(p-2)}$ is the odd line in $\sVec$, the statement reduces to $\wedge^{i+2}L_i=0$. The latter is true if $i+2<p$ by $\wedge^{i+2}T_i=0$. For the case $i=p-2$, we can use a similar reasoning: Since $\wedge^{p-1} T_{p-2}=\bk$, and hence $\Sym^{p-1}L_{p^{n-1}(p-2)+p-2}=\unit$, the conclusion follows from Corollary~\ref{Corn1}. Finally, consider $i=p-1$. In this case $\wedge^pT_{p-1}=\bk$, and moreover the highest weight appearing in $T_{p-1}^{\otimes p}$ is $p^2-p$, below $p^2-1$. Hence none of the tilting objects in the canonical presentation of $\wedge^p T_{p-1}$ are in the kernel of the defining monoidal functor. It follows again via \cite[Proposition~3.5]{BEO} that $\wedge^p L_{p-1}=\unit$ and we can again conclude via Corollary~\ref{Corn1}.

Part (3) follows from a simpler but analogous reasoning as in (2).
\end{proof}

\subsection{Algebras of finite type}


\begin{definition}
An algebra $A\in\Alg\bT$ is of {\bf finite type} if one of the following equivalent conditions is satisfied:
\begin{enumerate}
\item The $\bk$-algebra $A^{\inv}:=\Hom(\unit, A)$ is finitely generated and $A$ is a finitely generated $A^{\inv}$-module;
\item There exists a finitely generated $\bk$-algebra $R\subset A^{\inv}$ over which $A$ is a finitely generated module.
\end{enumerate}
\end{definition}

\begin{remark}
It is obvious that condition (1) implies (2). On the other hand, it is easy to see that a finitely generated $\bk$-algebra remains noetherian as an algebra in $\Ind\bT$. From this it follows indeed that (2) also implies (1).
\end{remark}

We omit the short proof of the following lemma.
\begin{lemma}\label{LemNoe}
\begin{enumerate}
\item Algebras of finite type are finitely generated and noetherian.
\item A quotient of an algebra of finite type is again of finite type.
\end{enumerate}
\end{lemma}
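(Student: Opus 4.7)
My plan is to establish (1) by treating finite generation and noetherianity separately, and then to derive (2) by constructing the data required for the equivalent characterization of finite type in the definition.

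For finite generation in (1), I would choose $\bk$-algebra generators $a_1,\dots,a_n \in A^{\inv}$ of $A^{\inv}$ together with an object $Z \in \bT$ and a surjection $A^{\inv}\otimes Z \tto A$ of $A^{\inv}$-modules. Taking $X := \unit^n \oplus Z$ with the evident morphism to $A$, the subalgebra of $A$ generated by $X$ contains $A^{\inv}$ (being closed under products of the $a_i$) as well as the image of $Z$, and hence contains the $A^{\inv}$-submodule of $A$ generated by this image, which is all of $A$. For noetherianity, given an $A$-submodule $N \subset M$ of a finitely generated $A$-module $M$, I would first note that $M$ is also finitely generated as an $A^{\inv}$-module (since $A$ is), say via a surjection $A^{\inv}\otimes Y \tto M$; replacing $N$ by its preimage $N' \subset A^{\inv}\otimes Y$, it suffices to show $N'$ is finitely generated as an $A^{\inv}$-module, since any such generation extends to $A$-generation via $A^{\inv} \hookrightarrow A$. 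Dévissage along a composition series of $Y$ then reduces the problem to the case $Y = L$ for a simple $L \in \bT$. The key assertion is that every subobject of $A^{\inv}\otimes L \simeq \bigoplus_I L$ in $\Ind\bT$ is of the form $V\otimes L$ for a subspace $V \subset A^{\inv}$; this holds because $\Hom(L',L) = 0$ for non-isomorphic simples forces the socle of any subobject to consist only of copies of $L$, and any non-trivial self-extension of $L$ admits no embedding into $\bigoplus_I L$ (every morphism from such an extension to $L$ factors through its top $L$-quotient, producing a non-trivial kernel). Granting this, $A^{\inv}$-submodules of $A^{\inv}\otimes L$ correspond bijectively to ideals of $A^{\inv}$, which are finitely generated by the classical Hilbert basis theorem applied to the finitely generated $\bk$-algebra $A^{\inv}$.

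For (2), given a surjective algebra morphism $\pi: A \tto A'$, I would set $R \subset (A')^{\inv}$ to be the image of the induced $\bk$-algebra map $A^{\inv} \to (A')^{\inv}$. Then $R$ is finitely generated as a $\bk$-algebra (as a quotient of $A^{\inv}$), and the composition $A^{\inv}\otimes Z \tto A \xrightarrow{\pi} A'$ is $A^{\inv}$-linear with the $A^{\inv}$-action factoring through $R$, so it descends to a surjection $R\otimes Z \tto A'$; hence $A'$ is finitely generated as an $R$-module. The equivalent characterization of finite type in terms of a subring $R \subset (A')^{\inv}$ then gives that $A'$ is of finite type.

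The main obstacle I anticipate is the structural claim about subobjects of $A^{\inv}\otimes L$ in $\Ind\bT$ used in the noetherianity argument; once that small lemma is established, the remaining steps amount to routine dévissage combined with Hilbert's basis theorem, and the argument for part (2) is essentially bookkeeping.
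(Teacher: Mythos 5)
Your argument is correct and supplies exactly the details the paper omits (the authors state ``we omit the short proof''): the key sublemma that $A^{\inv}\otimes Y$ is a noetherian $A^{\inv}$-module in $\Ind\bT$, proved via d\'evissage and the standard fact that subobjects of an isotypic object $E\otimes L$ are of the form $V\otimes L$ with $V\subset E$, is precisely the point flagged in the remark following the definition (``a finitely generated $\bk$-algebra remains noetherian as an algebra in $\Ind\bT$''). Both the finite-generation argument in (1) and the descent to $R=\mathrm{im}(A^{\inv}\to (A')^{\inv})$ in (2) are sound, so this is a complete proof along the intended lines.
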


\begin{corollary}\label{CorFinTyp}
\begin{enumerate}
\item If every finitely generated algebra in $\Alg\bT$ is of finite type, then~$\bT$ (and every pretannakian category with a tensor functor to $\bT$) satisfies the Hilbert Basis Property.
\item If every finitely generated algebra in $\bT$ is of finite type, then $\bT$ is {\bf GR}.
\end{enumerate}
\end{corollary}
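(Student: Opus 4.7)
For part (1), the claim for $\bT$ itself is immediate: any finitely generated $A\in\Alg\bT$ is of finite type by hypothesis, hence noetherian by Lemma~\ref{LemNoe}(1). For the extension to a pretannakian category $\cC$ equipped with a tensor functor $F:\cC\to\bT$, let $A\in\Alg\cC$ be finitely generated, $M$ a finitely generated $A$-module, and $N\subset M$ an $A$-submodule. Then $F(A)\in\Alg\bT$ is finitely generated, $F(M)$ is finitely generated over $F(A)$, and HBP for $\bT$ yields that $F(N)\subset F(M)$ is finitely generated over $F(A)$. Writing $N=\varinjlim_i N_i$ as the filtered union of its finitely generated $A$-submodules and using cocontinuity of $F$, we get $F(N)=\varinjlim_i F(N_i)$. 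Finite generation of $F(N)$ makes its identity factor through some $F(N_i)$, forcing $F(N/N_i)=0$ and hence $N=N_i$ by faithfulness of the ind-extension of $F$.

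For part (2), consider an epimorphism $p:X\tto\unit$ in $\bT$. The induced map $\epsilon:\Sym X\to\Sym\unit$ is an algebra epimorphism, because each graded component $\Sym^n p:\Sym^n X\to\unit$ receives the epimorphism $p^{\otimes n}:X^{\otimes n}\tto\unit$ via the quotient $X^{\otimes n}\tto\Sym^n X$. Identifying $\Sym\unit$ with the polynomial ring object whose invariants form $\bk[t]$, and setting $A:=\Sym X$, our hypothesis provides that $A$ is of finite type, so $A$ is finitely generated as a module over $A^{\inv}$. Applying $\Hom(\unit,-)$ to $\epsilon$ yields a graded $\bk$-algebra morphism $\epsilon^{\inv}:A^{\inv}\to\bk[t]$.

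Since $\epsilon$ is an $A$-module epimorphism, $\bk[t]$ is finitely generated over $A^{\inv}$, and the $A^{\inv}$-action on $\bk[t]$ factors through $\im(\epsilon^{\inv})\subset\bk[t]$; hence $\bk[t]$ is already finitely generated as a module over $\im(\epsilon^{\inv})$. If $\im(\epsilon^{\inv})=\bk$, then $\bk[t]$ would be a finite-dimensional $\bk$-vector space, which is absurd. Hence $\im(\epsilon^{\inv})$ contains a nonzero homogeneous element of positive degree, that is, some $s\in \Hom(\unit,\Sym^n X)$ with $\Sym^n p\circ s\neq 0$. A suitable rescaling of $s$ produces the desired section of $\Sym^n p:\Sym^n X\tto\unit$.

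The main technical point is the faithfulness statement underpinning part (1): we need the ind-extension of $F$ to detect vanishing of morphisms between ind-objects, which follows by writing such morphisms as filtered colimits of morphisms between compact objects and invoking faithfulness of $F$ on $\cC$ itself. Part (2) is conceptually simple, resting on the observation that the polynomial ring in one variable is never finitely generated as a module over its scalars.
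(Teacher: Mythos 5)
Your argument is correct, but it is considerably more self-contained than the paper's proof, which for part (1) simply invokes Lemma~\ref{LemNoe}(1) (finite type $\Rightarrow$ noetherian) and for part (2) defers entirely to an external result, \cite[Theorem~3.1.5(1)]{ComAlg}. What you add is genuine content: the descent-of-noetherianity argument along a tensor functor $F$ (writing $N=\varinjlim N_i$, using compactness of a generating object of $F(N)$ to force $F(N)=F(N_i)$, and then exactness plus faithfulness of the ind-extension of $F$ to conclude $N=N_i$) is exactly the standard mechanism the paper leaves implicit in the parenthetical of (1); and your proof of (2) is the Nagata-style argument that presumably underlies the cited theorem of \cite{ComAlg}. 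One step in part (2) deserves to be made explicit: the assertion that ``$\bk[t]$ is finitely generated over $A^{\inv}$'' passes from the \emph{categorical} statement that $\Sym\unit$ is a quotient of the finitely generated $A^{\inv}$-module $A$ (an epimorphism $A^{\inv}\otimes Y\tto\Sym\unit$ in $\Ind\bT$) to the statement that the vector space $\Hom(\unit,\Sym\unit)=\bk[t]$ is finitely generated over $\im(\epsilon^{\inv})$. Since $\Hom(\unit,-)$ is not right exact, this is not automatic for a general target; it works here because $\Sym\unit$ is a direct sum of copies of the simple object $\unit$, so every subobject is of the form $\unit\otimes W$ for a subspace $W\subset\bk[t]$, and because the composite $A^{\inv}\otimes Y\to A\xrightarrow{\epsilon}\Sym\unit$ factors, via multiplicativity of $\epsilon$, as $\epsilon^{\inv}$ on the first factor times the fixed morphism $\epsilon|_Y\in\Hom(Y,\unit)\otimes\bk[t]$; the image is then $\unit\otimes\bigl(\im(\epsilon^{\inv})\cdot W_0\bigr)$ for a finite-dimensional $W_0$, and surjectivity forces $\im(\epsilon^{\inv})\cdot W_0=\bk[t]$, whence $\im(\epsilon^{\inv})\neq\bk$ as you conclude. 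With that point spelled out, your proof is complete and gives a proof of (2) that the paper itself does not reproduce.
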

\begin{proof}
The first part follows from Lemma~\ref{LemNoe}(1). The second part follows directly from the definition and \cite[Theorem~3.1.5(1)]{ComAlg}.
\end{proof}

\begin{conjecture}\label{ConjFin}
If $\mathrm{char}(\bk)>0$, then every finitely generated algebra in a finite tensor category over $\bk$ is of finite type.
\end{conjecture}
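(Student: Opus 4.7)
The plan is to first reduce to the case of free symmetric algebras. A finitely generated algebra $A\in\bT$ is by definition a quotient of $\Sym(X)$ for some $X\in\bT$ (via the universal property), and by Lemma~\ref{LemNoe}(2) quotients of algebras of finite type remain of finite type. So it suffices to prove that $\Sym(X)$ is of finite type for every $X\in\bT$. Write $R(X):=\Sym(X)^{\inv}=\Hom(\unit,\Sym X)$; what must be shown is that $R(X)$ is a finitely generated $\bk$-algebra and that $\Sym(X)$ is a finitely generated $R(X)$-module.

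For the first half I would try to exploit the finiteness of $\bT$ together with the Frobenius structure available in positive characteristic. Since $\bT$ has only finitely many simple objects up to isomorphism, the decomposition types of the subobjects $\Sym^n X\subset \Sym(X)$ are drawn from a finite list, which already forces strong recurrence patterns. More concretely, iterating the (additive) Frobenius $\Fr_+:\bT\to\bT$ produces morphisms $\Fr_+^k(\alpha):\unit\to\Sym^{p^k}(X)$ from any $\alpha:\unit\to X$, and composing the canonical $\unit\to\Sym^p Y$ with $Y\subset \Sym^m X$ provides a plentiful supply of invariants in higher and higher degrees, as in the proof of Theorem~\ref{LemNoSim}. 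Combining this with the fact that $\Sym(X)$ has (by the finite-tensor-category hypothesis) at most $|{\rm Irr}(\bT)|$ isotypic components in each degree, I would aim for a Noether-type boundedness: produce an explicit finitely generated $\bk$-subalgebra $R\subset R(X)$ (built from iterated Frobenius lifts of the simple constituents of some low symmetric powers of $X$) which already has the right "growth rate" — i.e., such that $\Sym(X)$ becomes a finite-type $R$-module. If this succeeds, then $R(X)$ is finitely generated over $\bk$ as a subalgebra of the noetherian $R$-module $\Sym(X)$.

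The main obstacle is the second half: showing $\Sym(X)$ is a finitely generated module over its invariants $R(X)$ (or over the explicit subring $R$ above). This is a categorical analogue of Hilbert's finiteness theorem. In the classical setting of $\Rep G$ for a finite group scheme $G$, it follows from Haboush's theorem / geometric reductivity of $G$, combined with integrality of coordinate rings of quotients; the analogue here should follow from a {\bf GR} statement for finite tensor categories. However, we do not currently know that a finite tensor category is geometrically reductive in the sense of Definition~\ref{Defs}(4), and, more seriously, even {\bf GR} by itself does not obviously deliver module-finiteness. The natural route would be to upgrade {\bf GR} to a categorical "integral extension" statement: given an epimorphism $\Sym(X)\tto\unit$ (or more generally onto a simple), the kernel is annihilated by some non-zero invariant from a controlled degree; iterating over a set of generators of $\Sym(X)/R(X)\Sym(X)$ would then produce the desired module-finiteness via a Nakayama-type argument.

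A promising way to attack both halves simultaneously would be to first establish the conjecture from \cite[Conjecture~12.6]{EOf} (every simple algebra in a finite tensor category is exact), which — combined with the structure theory for exact algebras developed in Section~\ref{SecAlgebras} — would allow one to argue by induction on the length of $\Sym(X)/I$ along a composition series of maximal ideals $I\triangleleft\Sym(X)$, reducing in each step to a statement in the pretannakian category $\bT_{\Sym(X)/I}$ over a possibly larger base ring, mimicking Noether normalisation.
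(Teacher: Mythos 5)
This statement is labelled as a \emph{conjecture} in the paper (Conjecture~\ref{ConjFin}); the authors do not prove it, and indeed note that it fails in characteristic zero and that the only nontrivial case they can handle is $\Ver_{2^\infty}$ (Theorem~\ref{ThmVer22}). So there is no proof in the paper to compare against, and your text is, by your own account, a research plan rather than a proof. The correct parts are the easy reductions: a finitely generated algebra is a quotient of some $\Sym(X)$, and Lemma~\ref{LemNoe}(2) lets you pass to quotients, so it suffices to treat $\Sym(X)$. Everything after that is left open. For the ``first half'' you propose harvesting invariants from iterates of $\Fr_+$ and from the maps $\unit\to\Sym^pY$ for $Y\subset\Sym^mX$, but you give no argument that the subalgebra they generate is large enough for module-finiteness; the claimed ``Noether-type boundedness'' from having finitely many isotypic components per degree is not a proof, since boundedness of decomposition types says nothing about how invariants in different degrees multiply. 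For the ``second half'' you explicitly concede that module-finiteness of $\Sym(X)$ over its invariants is not known, that {\bf GR} for general finite tensor categories is not known, and that even {\bf GR} would not obviously suffice. The final suggestion routes the problem through \cite[Conjecture~12.6]{EOf}, i.e.\ replaces one open conjecture by another.

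For contrast, in the one case the paper does settle (Theorem~\ref{ThmVer22}), the argument is not a generic Frobenius-averaging scheme: it proceeds by induction on $n$ along the chain $\Ver_{2^n}$, realizing $\cC_{2n-1}$ as modules over a concrete triangular Hopf algebra $H=(\wedge X_{n-1},R)$ inside $\cC_{2n-2}$, filtering $\Sym(Y)$ so that the associated graded lands in the smaller category, and then correcting the resulting invariants by hand via the elements $y_i=x_i^2-a(x_i)x_i$ to get genuine invariants in the larger category. Your sketch contains no analogue of this descent-and-correction mechanism, which is exactly where the real work lies. As it stands, the proposal identifies the right difficulties but does not resolve any of them, so it cannot be accepted as a proof of the conjecture.
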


\begin{remark}
\begin{enumerate}
\item Conjecture~\ref{ConjFin} is false in characteristic zero, see \cite[Remark~2]{Ve}.
\item Conjecture~\ref{ConjFin} is true (for all characteristics in fact) for finite tannakian categories, by the Hilbert-Noether theorem.
\item Conjecture~\ref{ConjFin} would in particular provide an affirmative answer to the question in \cite[Remark~3.1.7(3)]{ComAlg}.
\end{enumerate}
\end{remark}

The following theorem extends a result on $\Ver_{4}^+$ from \cite[Theorem~1.8]{Ve}.

\begin{theorem}\label{ThmVer22}
Every finitely generated algebra in $\Ver_{2^\infty}$ is of finite type. In particular, $\Ver_{2^\infty}$ is geometrically reductive.
\end{theorem}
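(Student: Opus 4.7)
The plan is to reduce the first assertion to showing that, for every $V \in \Ver_{2^n}$, the symmetric algebra $\Sym V$ is of finite type. Indeed, any finitely generated algebra in $\Ver_{2^\infty}$ is a quotient of $\Sym V$ for some finite-length $V$, which then lies in $\Ver_{2^n}$ for some $n$, and quotients of finite-type algebras remain of finite type by Lemma~\ref{LemNoe}(2). The second assertion, that $\Ver_{2^\infty}$ is geometrically reductive, then follows immediately from Corollary~\ref{CorFinTyp}(2).

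The key structural input is that $\Ver_{2^\infty}$ is maximally nilpotent by Theorem~\ref{PropVer2}. Applying condition~(4) of Theorem~\ref{Cond4Fib} to the canonical short exact sequence
$$0 \to \widetilde V \to V \to \unit^m \to 0, \qquad m = \dim_{\bk}\Hom(V,\unit),$$
the image $R$ of the algebra morphism $\Sym(\widetilde V) \to \Sym V$ has finite length in $\Ver_{2^n}$. Combining this with Theorem~\ref{ThmSym}, the induced filtration on $\Sym V$ satisfies
$$\gr \Sym V \;\simeq\; R \otimes \bk[x_1,\ldots,x_m].$$
In particular $(\gr \Sym V)^{\inv} \simeq R^{\inv}[x_1,\ldots,x_m]$ is a finitely generated $\bk$-algebra, and $\gr \Sym V$ is generated as a module over its invariants by the finite-length object $R$. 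So $\gr \Sym V$ is already of finite type.

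The main obstacle is to transfer the finite-type property from $\gr \Sym V$ back to $\Sym V$ itself. I would choose: a finite-length subobject of $\Sym V$ lifting $R$, finitely many elements of $(\Sym V)^{\inv}$ lifting a set of $\bk$-algebra generators of $R^{\inv}$, and lifts to $(\Sym V)^{\inv}$ of suitable monomials in the polynomial generators $x_1,\ldots,x_m$. The subtlety is that the $x_i$ themselves need not lift to invariants of $\Sym V$, since the sequence for $\widetilde V$ need not split on invariants; one instead lifts sufficiently high-degree monomials $x_1^{a_1}\cdots x_m^{a_m}$ to elements of $(\Sym V)^{\inv}$ in higher symmetric powers, using that $R$ has only finitely many nonzero graded components to control the $\Ext^1$ obstructions. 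A standard filtered-to-graded argument then shows that the chosen lifts generate $(\Sym V)^{\inv}$ as a $\bk$-algebra and $\Sym V$ as a module over it. The overall strategy parallels Ostrik's treatment of $\Ver_4^+$ in \cite[Theorem~1.8]{Ve}, where the absence of non-trivial simple objects makes the nilpotent piece $R$ automatic; the extension to $\Ver_{2^n}$ crucially relies on Theorem~\ref{PropVer2} to keep $R$ finite despite the presence of the additional simple objects of $\Ver_{2^n}$.
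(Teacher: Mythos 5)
Your reduction to showing that $\Sym V$ is of finite type for each $V\in\Ver_{2^n}$, and the computation $\gr\Sym V\simeq R\otimes\bk[x_1,\dots,x_m]$ with $R$ of finite length (via maximal nilpotence and Theorem~\ref{ThmSym}), are both correct. But the step you flag as ``the main obstacle'' is not a technicality to be dispatched by ``a standard filtered-to-graded argument'' --- it is the entire content of the theorem, and the mechanism you propose for it does not work. To transfer finite type from $\gr\Sym V$ back to $\Sym V$ you must exhibit actual elements of $(\Sym^N V)^{\inv}$ whose images under $\Sym^N V\tto\Sym^N(\unit^m)$ are prescribed monomials $x^a$; already for $m=1$ this is literally the assertion that some $\Sym^N V\tto\unit$ splits, i.e.\ geometric reductivity for $V\tto\unit$, which is exactly what the theorem is supposed to yield as a corollary --- so your argument is circular at its decisive step unless you supply an independent construction. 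The obstruction to lifting $x^a$ lies in $\Ext^1(\unit,K_N)$ with $K_N=\ker(\Sym^N V\to\Sym^N(\unit^m))$; since $\Ext^1(\unit,\unit)=\bk$ in $\Ver_{2^n}$ and $K_N$ acquires unboundedly many trivial composition factors as $N$ grows (coming from $R^{\inv}$ tensored with lower-degree monomials), the finiteness of $R$ bounds only the width of the filtration, not these obstruction groups, and no vanishing is in sight.

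The paper's proof takes a different route precisely in order to manufacture the needed invariants. It inducts on $n$ through the Benson--Etingof chain: reduce to $\Sym(X_n^{\otimes m})$ for the generating object $X_n$, pass to the even-degree subalgebra so that the relevant object lies in $\cC_{2n-1}=\Ver_{2^n}^+$, realize $\cC_{2n-1}$ as modules in $\cC_{2n-2}=\Ver_{2^{n-1}}$ over the triangular Hopf algebra $H=(\wedge X_{n-1},R)$, and filter by the $H$-submodules $Y_{\ge j}$ so that the associated graded lands in $\cC_{2n-2}$, where the induction hypothesis supplies generators $x_i$ of the invariants. The lifting problem is then solved by an explicit formula: since $H$ is generated by a primitive object acting by derivations and $a(x)$ is central, the elements $y_i=x_i^2-a(x_i)x_i$ are $H$-stable, hence genuine invariants in $\cC_{2n-1}$, and their leading terms $x_i^2$ still generate enough of the graded invariant ring. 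Your outline contains no substitute for this construction, so the proof is incomplete at the one point where all the difficulty is concentrated. (Incidentally, the paper's argument does not use maximal nilpotence in this proof at all; that input enters only for the subterminality statement.)
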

\begin{proof}
It suffices to prove this for every category $\Ver_{2^n}=\cC_{2n-2}$. We will prove this by induction on $n$ (with $\Ver_{2}=\Vecc=\cC_0$ being trivial) and will use $\Ver_{2^n}^+=\cC_{2n-3}$. Assume the property is satisfied for $\Ver_{2^{n-1}}$, and we will prove it for $\Ver_{2^n}$.

Every finitely generated algebra in $\Ind\cC_{2n}$ is a quotient of an algebra $\Sym P$ for a projective $P\in\cC_{2n}$. Furthermore, every projective object appears as a direct summand in a tensor power of the generating object $X_n$, see for instance the final paragraphs of \cite[\S 3.8]{BE}. By Lemma~\ref{LemNoe}(2), it thus suffices to prove that $B_m:=\Sym(X_n^{\otimes m})$ is of finite type for $m\in\mN$.

Consider the even degree subalgebra $B_{m+}$ inside $B_m$. Since $B_m$ is a finite module over $B_{m+}$, it suffices to show that $B_{m+}$ is of finite type. But $B_{m+}$ is a quotient of $B_{2m}$, so it suffices to show that $B_{2m}=\Sym(X_n^{\otimes 2m})$ is of finite type. Now $X_n^{\otimes 2m}$ belongs to $\cC_{2n-1}$, see \cite[Theorem~2.1(vi) and (ix)]{BE}.

Recall that $\cC_{2n-1}$ can be realized as the category of modules in $\cC_{2n-2}$ over a certain triangular Hopf algebra $H=(\wedge X_{n-1},R)$, 
where 
$$
R=1\otimes 1+\tau+a\otimes a, 
$$
where $\tau$ is a generator of the invariants in $X_{n-1}\otimes X_{n-1}\subset H\otimes H$ 
and $a$ is a generator of the socle of $H$. Moreover, $X_n^{\otimes 2m}=(X_n\otimes X_n)^{\otimes m}=H^{\otimes m}$ is a graded $H$-module. 
So it suffices to prove that $A:=\Sym(Y)$ is of finite type, with
$Y\in \cC_{2n-1}$ a graded $H$-module. So $A$ may be regarded as a (possibly non-commutative) algebra in  $\cC_{2n-2}$
with an $H$-action. When we view it as such, we denote it by $B$ (formally, $B$ is the image of $A$ under the non-symmetric tensor functor $\cC_{2n-1}\to\cC_{2n-2}$).

Now observe that the module $Y$ has a decreasing filtration by $H$-submodules $Y_{\ge n}:=\oplus_{i\ge n}Y[i]$, and ${\rm gr}Y$ is a trivial $H$-module which coincides with $Y$ as a graded object of $\cC_{2n-2}$. This induces a filtration on $A=\Sym(Y)$ compatible with the grading, such that ${\rm gr}A$ is in $\cC_{2n-2}\subset\cC_{2n-1}$ (or the ind-completion) as a quotient of $\Sym({\rm gr}Y)$. 
By the induction assumption, ${\rm gr}A$
is a finite module over its finitely generated subalgebra of invariants $({\rm gr}A)^{\rm inv}$. 
Let $x_1,...,x_n$ be homogeneous (in both gradings) generators of  $({\rm gr}A)^{\rm inv}$. 

Since $\gr A=\gr B\simeq B$ (the latter isomorphism not as algebras), these generators may also be viewed as generators of $B^{\rm inv}=\Hom(\unit, B)$, the subalgebra of invariants in $B$ in the category $\cC_{2n-1}$, and $B$ is a finite module over ${B^{\rm inv}}$, as ${\rm gr}({B^{\rm inv}})=({\rm gr}B)^{\rm inv}$. We are not done yet, though, as the elements $x_i$ in ${B^{\rm inv}}$ may not be in $A^{\inv}$, since
$$\Hom_{\cC_{2n-1}}(\unit, A)\;\subset\;\Hom_{\cC_{2n-2}}(\unit, B).$$

However, we can use the $x_i$ to construct elements which do belong to $A^{\inv}$. Namely, since $H$ is generated by a primitive object which therefore acts on $A=\Sym(Y)$ by derivations, and since $a(x)$ is central for all $x\in {B^{\rm inv}}$, it is easy to show that the elements\footnote{If $n=2$ then $\tau=0$ and for any $x\in {B^{\rm inv}}$ one has $a(x)^2=0$. Thus $y_i^2=x_i^4$. These are the elements used in \cite{Ve}. Moreover, one can show that 
for general $n$, $a(x)^{2^N}=0$ for some $N$ (it seems $N=n-1$ should work), so instead of $y_i$ we may use $y_i^{2^N}=x_i^{2^{N+1}}$.} $y_i:=x_i^2-a(x_i)x_i$ 
are $H$-stable, i.e., belong to $B^{\rm inv}$. 
Let $A_0$ be the subalgebra of $A^{\rm inv}$ generated by the $y_i$. 
Then ${B^{\rm inv}}$ is a finite module over $A_0$, as 
${\rm gr}(A_0)$ contains the elements $x_i^2$ of the commutative algebra $({\rm gr}A)^{\rm inv}$ (the leading parts of $y_i$), and this algebra is generated by the $x_i$. Hence $A$ is also a finite module over $A_0$. 
This completes the induction step. 
\end{proof}

\begin{remark}
By Lemma~\ref{LemNoe}, the conclusion of Theorem~\ref{ThmVer22} yields another proof that $\Ver_{2^\infty}$ satisfies the Hilbert Basis Property (see Theorems~\ref{LemNoSim}(5) and~\ref{PropVer2}).
\end{remark}

\subsection{Higher Verlinde categories are subterminal in characteristic 2}

\begin{theorem}\label{Thm2in}
The category $\Ver_{2^\infty}$ is subterminal. Concretely, if $\mathrm{char}(\bk)=2$, then for an arbitrary tensor category $\bT$ over $\bk$, there is at most one tensor functor (up to isomorphism)
$$\bT\;\to\; \Ver_{2^\infty}.$$
\end{theorem}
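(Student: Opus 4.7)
The plan is to derive Theorem~\ref{Thm2in} as an immediate synthesis of results already established earlier in the paper, invoking the general criterion for subterminality proved in Section~\ref{SecMN}. Concretely, Theorem~\ref{ThmF1} tells us that any pretannakian category which is simultaneously maximally nilpotent and geometrically reductive is subterminal, and the explicit reformulation in the statement (``at most one tensor functor $\bT\to\Ver_{2^\infty}$ from an arbitrary tensor category $\bT$'') is literally the content of Definition~\ref{Defs}(2). So the entire task reduces to verifying the two intrinsic hypotheses for $\cD=\Ver_{2^\infty}$.

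First, I would invoke Theorem~\ref{PropVer2}, which asserts that $\Ver_{2^\infty}$ is maximally nilpotent. That result was proved by induction on $n$ for each $\Ver_{2^n}$, using condition~{\bf MNb} from Proposition~\ref{Proppb} together with a direct analysis of the kernel of $\Sym^2L\tto\unit$ for simple $L$ via Corollary~\ref{CorNInv}. Hence condition~{\bf MN} is available.

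Second, I would invoke Theorem~\ref{ThmVer22}, which establishes the stronger statement that every finitely generated algebra in $\Ver_{2^\infty}$ is of finite type; by Corollary~\ref{CorFinTyp}(2) this implies that $\Ver_{2^\infty}$ is geometrically reductive. Thus both the {\bf MN} and {\bf GR} hypotheses of Theorem~\ref{ThmF1} hold for $\cD=\Ver_{2^\infty}$, and applying that theorem yields the subterminality of $\Ver_{2^\infty}$. The explicit formulation in the theorem then follows by unwinding the definition.

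In this strategy there is essentially no technical obstacle left to overcome at this point: both maximal nilpotence and geometric reductivity of $\Ver_{2^\infty}$ have already been carried out as separate (and genuinely nontrivial) theorems, and the general mechanism translating {\bf MN}+{\bf GR} into subterminality is provided by Theorem~\ref{ThmF1}, whose proof (via Proposition~\ref{PropF1} and Deligne-style transfinite induction on finitely generated tensor subcategories) applies verbatim here. The only thing to check is that the hypothesis ``pretannakian $\bT$'' implicit in the reduction to Proposition~\ref{PropF1} is not more restrictive than needed; since Theorem~\ref{ThmF1} is stated for subterminality in the sense of Definition~\ref{Defs}(2), which quantifies over all tensor categories, this is already built in.
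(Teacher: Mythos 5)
Your proposal matches the paper's proof exactly: the paper also deduces Theorem~\ref{Thm2in} by combining Theorem~\ref{PropVer2} (maximal nilpotence) and Theorem~\ref{ThmVer22} (geometric reductivity, via the finite-type property) and then applying Theorem~\ref{ThmF1}. The argument is correct and there is nothing to add.
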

\begin{proof}
By \autoref{PropVer2} and Theorem~\ref{ThmVer22}, $\Ver_{2^\infty}$ is {\bf MN} and {\bf GR}. The conclusion thus follows from Theorem~\ref{ThmF1}.
\end{proof}

Theorem~\ref{Thm2in} implies that \cite[Question~1.2]{BE} is equivalent to the question of whether $\Ver_{2^\infty}$ is the terminal object in $\MdGr_{\bk}$ when $\mathrm{char}(\bk)=2$.

\section{Some consequences for $p$-adic dimensions}\label{SecEHO}

Let $\cC$ be a pretannakian category over a field $\bk$ of characteristic $p>0$.

\subsection{Questions}

\subsubsection{} For $X\in \cC$, we consider the Hilbert series of its symmetric algebra:
$$\HS_X(t):=\sum_{i\ge 0}\dim(\Sym^i X)\;\in\; \mF_p[[t]],$$
where we use that categorical dimensions take values in $\mF_p\subset\bk$, see \cite[Lemma~2.2]{EHO}.
By \cite[Theorem~2.3]{EHO}, there exists a $p$-adic integer $\Dim_+X\in\mZ_p$ such that
\begin{equation}\label{EqEHO}
\HS_X(t)\;=\; (1-t)^{-\Dim_+X},
\end{equation}
where for $d\in\mZ_p$ we set
$$(1-t)^{d}\;:=\; \prod_{j\ge 0}(1-t^{p^j})^{d_j}\in\mF_p[[t]],\quad\mbox{for }\; d=\sum_{j\ge 0}d_jp^j\quad\mbox{ (with $0\le d_j<p$)}.$$

\subsubsection{} In \cite[Question~3.13.2]{EHO} it is asked whether $\Dim_+(X)$ always equals $\Dim_+(X^\vee)$. In this section we show this is not the case. This also answers \cite[Question~3.13.1]{EHO}, of whether $\Sym^i X$ and $\Gamma^i X$ always produce the same element of the Grothendieck group~$K_0(\cC)$, in the negative.

Since we will obtain counterexamples for $p=3$, this also shows that the corresponding questions about exterior powers have a negative answer, by consider $\cC\boxtimes\sVec$.

\subsection{Negative answers}

The following is a general method to produce counterexamples (as either $V$ or $E$ must be a counterexample).
\begin{prop}\label{Proppadic}

Consider a short exact sequence
$$0\to \unit \to E\to V\to 0$$
in $\cC$ such that 
\begin{enumerate}
\item The image of $\Sym\unit\to \Sym E$ has finite length, say, $n\in\mZ_{>0}$;
\item The algebra $\Sym(V)$ is finite;
\item The morphism $\Sym (V^\vee)\to \Sym (E^\vee)$ is a monomorphism.
\end{enumerate} 
Then we have
$$\Dim_+(E)=\Dim_+(V)+1-n\qquad\mbox{and}\qquad \Dim_+(E^\vee)=\Dim_+(V^\vee)+1.$$
\end{prop}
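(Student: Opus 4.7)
The plan is to apply Theorem~\ref{ThmSym} both to the given short exact sequence and to its rigid dual $0 \to V^\vee \to E^\vee \to \unit \to 0$, and then combine the resulting Hilbert-series identities with~\eqref{EqEHO} to extract the two $p$-adic dimensions.

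First I would treat $\Dim_+(E^\vee)$. By hypothesis~(3) the morphism $\Sym V^\vee \to \Sym E^\vee$ is a monomorphism, so the image $R'$ appearing in Theorem~\ref{ThmSym} (applied to the dual sequence) is $\Sym V^\vee$ itself, giving $\gr \Sym E^\vee \simeq \Sym V^\vee \otimes \Sym \unit$. Taking Hilbert series in $\mF_p[[t]]$ and applying~\eqref{EqEHO} to $V^\vee$ and $\unit$,
$$\HS_{E^\vee}(t) \;=\; \HS_{V^\vee}(t)\cdot \HS_\unit(t) \;=\; (1-t)^{-\Dim_+(V^\vee)}(1-t)^{-1} \;=\; (1-t)^{-(\Dim_+(V^\vee)+1)},$$
which yields $\Dim_+(E^\vee) = \Dim_+(V^\vee)+1$.

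Next I would treat $\Dim_+(E)$. The image $R$ of $\Sym\unit \to \Sym E$ is a graded quotient of $\Sym\unit \simeq \bk[x]$ (with $x$ in degree one), and by hypothesis~(1) has total length $n$; hence $R \simeq \bk[x]/(x^n) \simeq \bigoplus_{i=0}^{n-1}\unit$, with $\HS_R(t) = (1-t^n)/(1-t)$. Theorem~\ref{ThmSym} then gives $\gr \Sym E \simeq R \otimes \Sym V$, so
$$\HS_E(t) \;=\; \HS_R(t)\cdot \HS_V(t) \;=\; (1-t^n)(1-t)^{-\Dim_+(V)-1}.$$
Comparing with $\HS_E(t) = (1-t)^{-\Dim_+(E)}$ from~\eqref{EqEHO} yields $1-t^n = (1-t)^{\Dim_+(V)+1-\Dim_+(E)}$ in $\mF_p[[t]]$; invoking the Frobenius identity $(1-t)^{p^j} = 1-t^{p^j}$ then identifies the exponent as $n$, giving $\Dim_+(E) = \Dim_+(V)+1-n$.

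The essential input is Theorem~\ref{ThmSym}, which turns associated-graded symmetric algebras into tensor products and so converts Hilbert series into products; the multiplicativity of $\HS$ over the tensor product and the explicit description of $\Sym\unit$ as a polynomial ring then do the rest. The main subtlety is the final identification: the equation $1-t^n = (1-t)^N$ in $\mF_p[[t]]$ only admits a solution $N \in \mZ_p$ when $n$ is a power of $p$ (with $N=n$), so the hypotheses of the proposition implicitly constrain $n$ to be a $p$-power, which is precisely what is needed for~\eqref{EqEHO} to represent $\HS_E(t)$ in the prescribed form.
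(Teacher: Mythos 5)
Your argument is correct, and for the second equality it coincides with the paper's (apply Theorem~\ref{ThmSym} to the dual sequence, where hypothesis~(3) forces the image $R$ to be all of $\Sym(V^\vee)$). For the first equality you take a genuinely different, though closely related, route. The paper also starts from $\gr\Sym E\simeq \bk[x]/x^n\otimes\Sym(V)$, but then concludes that $\Sym E$ is a \emph{finite} algebra (using hypothesis~(2)) and invokes Lemma~\ref{LemDim}, which identifies $\Dim_+$ of a finite symmetric algebra with minus its top nonvanishing degree; the equality $\Dim_+(E)=\Dim_+(V)+1-n$ then drops out from comparing top degrees. You instead stay entirely at the level of Hilbert series: multiplicativity over the bigraded tensor product gives $\HS_E(t)=(1-t^n)(1-t)^{-\Dim_+(V)-1}$, and solving $1-t^n=(1-t)^N$ in $\mF_p[[t]]$ pins down $N=n$. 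This bypasses Lemma~\ref{LemDim} (and hence the Frobenius-algebra input of Lemma~\ref{LemInv}) at the cost of the small extra argument that $1-t^n=(1-t)^N$ forces $n$ to be a $p$-power and $N=n$ --- a fact you correctly flag, and which the paper records separately in the remark following the proposition (with reference to \cite{ComAlg}); your proof has the pleasant side effect of rederiving it. One point worth making explicit in your final step is the injectivity of $d\mapsto(1-t)^d$ on $\mZ_p$, which you use implicitly both there and in the $E^\vee$ computation; it follows from looking at the lowest-degree nonconstant coefficient of $\prod_j(1-t^{p^j})^{d_j}$ and is implicit in the well-definedness of $\Dim_+$ in \cite{EHO}.
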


\begin{remark}
As observed in \cite[\S 3.2]{ComAlg}, the element $n$ from Proposition~\ref{Proppadic} must be a power of $p$.
\end{remark}

We start with the following lemma.
\begin{lemma}\label{LemDim}
Assume that $\Sym(X)$ is a finite algebra, then
$$\Dim_+(X)\;=\; -\max\{i\in\mN\,|\, \Sym^iX\not=0\}\in\mZ\subset\mZ_p.$$
If $X\not=0$, then $\dim(\Sym(X))=0$.
\end{lemma}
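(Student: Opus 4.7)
The claim will follow almost immediately by combining the previous Lemma~\ref{LemInv} with the defining formula~\eqref{EqEHO} for $\Dim_+$.

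First I would set $d := \max\{i \in \mN : \Sym^i X \neq 0\}$, which exists by the finiteness hypothesis on $\Sym(X)$. Lemma~\ref{LemInv} then tells us that $\Sym^d X$ is invertible, so its categorical dimension lies in $\{\pm 1\}$, and in particular is a nonzero element of $\mF_p$. Consequently $\HS_X(t) \in \mF_p[t]$ is an actual \emph{polynomial} whose leading coefficient sits in degree~$d$, so $\deg \HS_X(t) = d$.

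The key observation is then the following: for $n = \sum_{j \geq 0} n_j p^j \in \mZ_p$ with $0 \leq n_j < p$, the expression $(1-t)^n = \prod_j (1 - t^{p^j})^{n_j}$ is a polynomial in $\mF_p[[t]]$ if and only if only finitely many $n_j$ are nonzero, i.e.\ $n \in \mN$, in which case its degree equals exactly $n$. Indeed, each factor $(1 - t^{p^j})^{n_j}$ is a polynomial of degree $n_j p^j$ with nonzero leading coefficient $(-1)^{n_j}$, so in a finite product the degrees add up and the leading coefficients multiply to $\pm 1$. Applying this to $n = -\Dim_+ X$ and comparing with the polynomial $\HS_X(t)$ of degree $d$ via~\eqref{EqEHO} immediately yields $-\Dim_+ X = d$, as required.

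For the second assertion, if $X \neq 0$ then $\Sym^1 X = X \neq 0$ forces $d \geq 1$, so evaluating the identity $\HS_X(t) = (1 - t)^d$ at $t = 1$ gives
$$\dim \Sym(X) \;=\; \sum_i \dim \Sym^i X \;=\; \HS_X(1) \;=\; 0.$$
No step here presents a serious obstacle; the only place one has to pause is the polynomial characterization of $(1-t)^n$ for $n \in \mZ_p$, which is however a direct unraveling of the definition.
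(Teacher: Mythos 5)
Your proof is correct and follows essentially the same route as the paper: both deduce from Lemma~\ref{LemInv} that $\HS_X(t)$ is a polynomial of degree $d$ with invertible leading coefficient and then match this against \eqref{EqEHO}, finishing with $\dim\Sym(X)=\HS_X(1)=(1-1)^d=0$. The only difference is that you spell out the (correct) characterization of when $(1-t)^n$, $n\in\mZ_p$, is a polynomial, a step the paper leaves implicit.
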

\begin{proof}
By Lemma~\ref{LemInv}, we have
$$\HS_X(t)\;=\; 1 + a_1 t+\cdots + a_{d-1}t^{d-1}\pm t^d.$$
Equation~\eqref{EqEHO} shows that we must have $d=-\Dim_+(X)$, so in particular
$$\HS_X(t)\;=\; (1-t)^d.$$
As $\dim (\Sym(X))=\HS_X(1)$, this concludes the proof.
\end{proof}

\begin{example}(\cite[Remark~6.5]{BE2})
In $\Ver_{p^n}$ we have
$$\Dim_+ L_{1}\;=\; 2-p^n.$$
\end{example}

\begin{proof}[Proof of Proposition~\ref{Proppadic}]
The second equality follows immediately from Theorem~\ref{ThmSym}. That theorem also implies that
$$\gr \Sym E\;\simeq\;\bk[x]/x^{n}\otimes \Sym(V).$$
Hence $\Sym E$ is a finite algebra and the conclusion follows from applying Lemma~\ref{LemDim} to $E$ and $V$.
\end{proof}

We use the proposition to obtain explicit counterexamples to \cite[Question~3.13]{EHO} for $p\in\{2,3\}$, expecting that similar examples occur for $p>3$.
\begin{example}
 Consider the extension
$$0\to \unit \to E_1\to V\to 0$$
in $\Ver_8$ or $\Ver_9$ from the proof of Proposition~\ref{Proppb} (with $V^\vee\simeq V$ simple). By Theorem~\ref{PropVer2} and Lemma~\ref{Ver9}, conditions~\ref{Proppadic}(1) and~\ref{Proppadic}(2) are satisfied. Concretely, it is shown that $\Sym^2V=\unit$ (so $\Sym^3 V=0)$.
Condition \ref{Proppadic}(3) is thus equivalent to injectivity of
$$\unit=\Sym^2V\;\to\;\Sym^2 E^\vee_1.$$
This is automatically satisfied for the $p=3$ case ({\it i.e.} $\Ver_9$), since $2<3$. For the $\Ver_8$ example one can calculate $\Sym^2 E_2^\vee$ directly, similarly to the case $\Sym^2E_1$ in the proof of Proposition~\ref{Proppb}.

Moreover, one can show that for the $p=2$ case we have
$$\Dim_+ E_1 =-5\qquad\mbox{and}\qquad \Dim_+ E_1^\vee = -1.$$
\end{example}

\end{document}